\numberwithin{equation}{section}
\newtheorem{theorem}{Theorem}[section]
\newtheorem{lemma}[theorem]{Lemma}
\newtheorem{proposition}[theorem]{Proposition}
\numberwithin{equation}{section}
\newtheorem{remark}[theorem]{Remark}
\newtheorem{definition}[theorem]{Definition}
\newcounter{thm}
\newtheorem{thm}[thm]{}
\newcommand{\T}{\mathbb{T}}
\newcommand{\N}{\mathbb{N}}
\newcommand{\R}{\mathbb{R}}
\newcommand{\Z}{\mathbb{Z}}
\newcommand{\E}{\mathbb{E}}
\newcommand{\Pb}{\mathbb{P}}
\newcommand{\supp}{\operatorname{supp}}
\newcommand{\re}{\operatorname{Re}}
\newcommand{\dist}{\operatorname{dist}}
\newcommand{\Lip}{\operatorname{Lip}}
\tikzstyle{process} = [rectangle, rounded corners, minimum width=4cm, minimum height=1cm, text centered, draw=black, align=center]
\tikzstyle{point} = [coordinate, on grid]
\tikzstyle{arrow} = [->,>=stealth]
\tikzstyle{dasharrow} = [dashed,->,>=stealth]
\begin{document}
	
\title[Mixing for random NLS]{\large E{\MakeLowercase{xponential mixing for the randomly forced} NLS \MakeLowercase{equation}}}

\author[Y. Chen, S. Xiang, Z. Zhang, J.-C. Zhao]{Y\MakeLowercase{uxuan} C\MakeLowercase{hen}, S\MakeLowercase{hengquan} X\MakeLowercase{iang}, Z\MakeLowercase{hifei} Z\MakeLowercase{hang}, J\MakeLowercase{ia-}C\MakeLowercase{heng} Z\MakeLowercase{hao}}
		
\address[Yuxuan Chen]{School of Mathematical Sciences, Peking University, 100871, Beijing, China.}
\email{chen\underline{ }yuxuan@pku.edu.cn}

\address[Shengquan  Xiang]{School of Mathematical Sciences, Peking University, 100871, Beijing, China.}
\email{shengquan.xiang@math.pku.edu.cn}

\address[Zhifei  Zhang]{School of Mathematical Sciences, Peking University, 100871, Beijing, China.}
\email{zfzhang@math.pku.edu.cn}

\address[Jia-Cheng Zhao]{School of Mathematical Sciences, Shenzhen University, 518061, Shenzhen, China.}
\email{zjc@szu.edu.cn}

\subjclass[2020]{
35Q55, %NLS equations (nonlinear Schr¨odinger equations)
37A25,  % (2000–now)Ergodicity, mixing, rates of mixing 
37L15, % Stability problems for infinite-dimensional dissipative dynamical systems
93C20. %(1973–now)Control/observation systems governed by partial differential equations
    }
	
\keywords{Exponential mixing; Nonlinear Schr\"odinger equations; Exponential asymptotic compactness; Global stability; Control property}

\begin{abstract}
This paper investigates exponential mixing of the invariant measure 
for randomly forced nonlinear Schr\"{o}dinger equation, with damping and random noise localized in space. Our study emphasizes the crucial role of exponential asymptotic compactness and control properties in establishing the ergodic properties of random dynamical systems. This work extends the series \cite{LWX-24,CLXZ-24} on the statistical behavior of randomly forced dispersive equations.

\end{abstract}
	
\maketitle
\vspace{-.6cm}	
	
\setcounter{tocdepth}{1}
\tableofcontents

\section{Introduction}

{\it Exponential mixing} is a significant topic in statistical mechanics, random PDEs, stochastic processes, and finance; see, e.g.~\cite{DPZ-96}. It describes that the law of random field $x\mapsto u^\omega (t,x)$ converges, as $t\rightarrow\infty$, to the unique invariant measure at an exponential rate. 

While the research on
statistical behaviors of parabolic equations has produced rich results by now, much less is known for dispersive equations.
Our aim of this paper is to investigate the exponential mixing for a randomly forced nonlinear Schr\"odinger (NLS) equation on 1D torus $\T:=\R/2\pi\Z$, reading
\begin{equation}\label{Random-problem}
\left\{\begin{array}{ll}
iu_t+u_{xx}+ia(x)u=|u|^{p-1}u+\eta(t,x),\\
u(0,x)=u_0(x),
\end{array}\right.
\end{equation}
where the order $p\geq 3$ of nonlinearity is odd. The symbols $a(x)\ge 0$ and $\eta(t,x)$ represent the damping and random noise, respectively; both of them may vanish outside an open subset of $\T$.

NLS equations serve as basic models in diverse areas of science, including plasma, nonlinear optics, hydrodynamics, and quantum chemistry. In physical models, the noise represents random spatial influences or temporal fluctuations of certain parameters; and the damping effect corresponds to dissipative phenomena such as wave collapse, Landau damping, and ion cyclotron resonance in plasma. For further physical background, see, e.g.~\cite{SS99}. As the energy is injected to the system by the noise, and simultaneously dissipated by the damping, investigations on the equilibrium state become both meaningful and intriguing.

\medskip

In Section \ref{Section-setup} we state our main theorem, followed by a review of background and previous works in Section \ref{Section-review}. Then in Section \ref{Section-strategy} we overview the strategy and new challenges.

\subsection{Setup and main theorem}\label{Section-setup}

Our setting on $a(x)$ and $\eta(t,x)$ is summarized as follows.

\begin{itemize}
\item[$(\mathbf{S1})$] (Localized structure) 
The damping coefficient $a: \mathbb{T}\rightarrow [0,\infty)$ is smooth, non-negative, and localized: 
there exists a constant $a_0>0$ and an open subset $\mathcal{I}_1$ of $\T$ such that
\[a(x)\geq a_0,\ \forall x\in \mathcal{I}_1.\]
In addition, a smooth function $\chi\colon \mathbb{T}\to \mathbb{R}$ will appear in the noise structure, indicating that the noise is also localized: there exists $\chi_0>0$ and open subset $\mathcal{I}_2$ of $\T$ such that
\[\chi(x)\geq \chi_0,\ \forall x\in\mathcal I_2.\]
\end{itemize}

\noindent To describe the random noise, for arbitrary $T>0$, denote 
by $\{\alpha^T_j;j\in\N^+\}$ an orthonormal basis of $L^2(0,T)$. We also define the trigonometric  basis of $L^2(\T)$ by
\[e_k(x)=\frac{1}{\sqrt{2\pi}}
e^{ikx},\quad k\in \mathbb{Z}.\]
Then $\{\alpha_j^T(t) e_k(x);j\in \mathbb{N}^+, k\in \mathbb{Z}\}$ serves as an orthonormal basis of $L^2([0,T]\times \mathbb{T})$.

\medskip

\begin{itemize}
\item[$(\mathbf{S2})$] (Noise structure) The law of $\eta(t,x)$ is statistically $T$-periodic:
\[\eta(t,x)= \eta_n(t-nT,x),\quad t\in [nT,(n+1)T),\ n\in\N,\]
and $\eta_n$ are i.i.d.~random variables in $L^2([0,T]\times \mathbb{T})$. Specifically,
\[\displaystyle\eta_n(t,x)=\chi(x)\sum_{j\in\N^+,\, k\in\Z}b_{j,k}(\theta^{n}_{j,k,1}+i\theta_{j,k,2}^{n})\alpha^{\scriptscriptstyle T}_j(t)e_k(x),\quad t\in[0,T).\]
Here $b_{j,k}\ge 0$ are deterministic numbers tending to $0$ sufficiently fast, and $\theta_{j,k,l}^{n}$ are independent real random variables. Moreover, $\theta_{j,k,l}^{n}$ admits a probability density function $\rho_{j,k,l}$ supported by the interval $[-1,1]$, which is $C^1$ and satisfies $\rho_{j,k,l}(0)>0$.
\end{itemize}

\medskip

With the above settings, the solution $u(t)$ of (\ref{Random-problem}) at discrete times $nT$, defines a Markov process $u_n:=u(nT)$. We introduce $B(r)$ with $r\ge 1$, representing the strength of noise $\eta(t,x)$:
$$
B(r):=\sum_{j\in\mathbb{N}^+,\, k\in\mathbb{Z}} |b_{j,k}|^2 \langle k\rangle^{2r}.
$$

The following theorem is the main result of this paper.

\begin{thm}\hypertarget{thm1}{}
Let $s\geq 1$ and $T,B_0,\sigma>0$ be arbitrarily given. In addition to the settings $(\mathbf{S1}),(\mathbf{S2})$, assume that the non-negative numbers $b_{j,k}$ satisfy 
\begin{equation}\label{bounded-noise-0}
B(s+\sigma)\leq B_0.
\end{equation}
Then there exists a constant $N\in \N^+$ such that if 
\begin{equation}\label{non-degenerate}
b_{j,k}\neq 0,\ \forall j,|k|\leq N,
\end{equation}
the Markov process $u_n:=u(nT)$ admits a unique invariant measure $\mu$ in $H^s(\mathbb{T})$, and $\mu$ is supported in an $H^{s+\sigma}(\mathbb{T})$-bounded set. Moreover, there exist  constants $C,\gamma>0$ such that 
\begin{equation*}
\|\mathscr D(u_n)-\mu\|_{L}^*\leq V_s(u_0)e^{-\gamma n}
\end{equation*}
for any $u_0\in H^s(\mathbb{T})$ and $n\in\N$,
where $\|\cdot\|_L^*$ is the dual-Lipschitz distance in $H^s(\mathbb{T})$ (defined below in the notations), $\mathscr D(u_n)$ stands for the law of $u_n$, and 
$$
V_s(u_0)=\begin{cases}
C(1+E(u_0))&\text{\rm for } s=1,\\
C(1+\|u_0\|_{_{H^s}})^{C(1+E(u_0))^{(p-1)\lceil 4s-3\rceil/2}}&\text{\rm for } s>1.
\end{cases}
$$
Here $E(\cdot)$ is the $H^1$-energy functional defined by \eqref{energy-function}.
\end{thm}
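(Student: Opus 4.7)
The plan is to invoke an abstract exponential-mixing criterion in the spirit of Kuksin--Shirikyan, as adapted to dispersive equations in \cite{LWX-24,CLXZ-24}, which reduces the theorem to three ingredients: (i) an \emph{exponential asymptotic compactness} property, showing that trajectories from any bounded set in $H^s$ relax at an exponential rate into a random absorbing set bounded in $H^{s+\sigma}$; (ii) a \emph{global stability} property, showing that two suitably coupled trajectories inside the absorbing set contract exponentially in $H^s$; and (iii) a \emph{global approximate controllability} of \eqref{Random-problem} by forces localized in $\mathcal{I}_2$, which, combined with the non-degeneracy $b_{j,k}\neq 0$ for $|j|,k\le N$ and $\rho_{j,k}(0)>0$, delivers irreducibility of the Markov chain $\{u_n\}$. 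Given these three ingredients, a Kantorovich-type coupling produces a meeting time between trajectories with an exponential tail, yielding both the uniqueness of $\mu$ and the dual-Lipschitz convergence. The explicit form of the Lyapunov function $V_s(u_0)$ in the conclusion emerges naturally from the moment/smoothing bounds used in (i).

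For (i), I would first combine the mass identity with an observability/unique-continuation estimate for the damped Schr\"odinger operator on $\mathcal{I}_1$ to obtain exponential decay of the deterministic semigroup in $L^2$, which, together with the $H^1$ Hamiltonian identity, yields a Lyapunov functional of the form $C(1+E(u))$ matching the $s=1$ case. Bootstrapping the Duhamel formula in Bourgain $X^{s,b}$ spaces then produces the higher-order Lyapunov functional with the polynomial exponent $(1+E(u))^{(p-1)\lceil 4s-3\rceil/2}$, and hence a compact absorbing set bounded in $H^{s+\sigma}$. For (ii), a Foias--Prodi-type squeezing argument on the difference $w=u-\tilde u$ with coupled low-frequency noise is the natural route: observability of the damping on $\mathcal{I}_1$ together with the smoothing of the high-mode projection force both $P_N w$ and $Q_N w$ to decay exponentially on a positive-probability event, the probability being controlled thanks to $\rho_{j,k}(0)>0$ and to $b_{j,k}\ne 0$ on the low modes.

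The hardest step is (iii). Since \eqref{Random-problem} is dispersive rather than parabolic, there is no intrinsic smoothing to compensate for the fact that $\eta(t,x)$ is supported only on $\mathcal{I}_2$; the controllability must be extracted from the geometry of the damping and noise supports. My plan is a three-stage argument: first, prove linear controllability of $iv_t+v_{xx}+ia(x)v=\chi(x)f$ via HUM combined with Lebeau's propagation-of-compactness and unique-continuation machinery, which imposes a mild geometric condition on $\mathcal{I}_1\cup\mathcal{I}_2$; second, upgrade to a small-data nonlinear controllability by a Banach fixed point on a short time window in $X^{s,b}$, where the algebraic nonlinearity $|u|^{p-1}u$ with $p\ge 3$ odd is absorbed by its multilinear estimates; third, exploit the damping to drive arbitrary large data into the small-data regime on a deterministic time scale, and then chain with the small-data control. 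Because the randomness $\theta^{n}_{j,k,l}$ is supported in $[-1,1]$, admissible controls have uniformly bounded amplitude, so the scheme must be iterated over many $T$-periods, and quantitative tracking of constants against the $V_s$ Lyapunov function is the central technical burden. Once this is achieved, the three ingredients fit into the abstract framework and produce the stated exponential mixing.
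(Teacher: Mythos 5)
Your high-level skeleton agrees with the paper's: both reduce the theorem to the abstract criterion of \cite{LWX-24}, verified through exponential asymptotic compactness, irreducibility, and a coupling/squeezing condition. However, two of your key steps would fail as described, and you have misplaced where the real difficulty lies. In (i), you assert that ``bootstrapping the Duhamel formula in Bourgain $X^{s,b}$ spaces'' yields an absorbing set bounded in $H^{s+\sigma}$. For NLS on $\mathbb{T}$ the Duhamel integral $\int_0^t S_a(t-\tau)(|u|^{p-1}u)\,d\tau$ gains no derivatives: the single-resonant frequencies of $|u|^{p-1}u$ have exactly the regularity of $u$. The gain of $\sigma$ derivatives holds only for the non-resonant part $\mathcal{T}_N$ (Lemma~\ref{Lemma-smoothing}), and the resonant part $\frac{p+1}{4\pi}\|u\|_{L^{p-1}}^{p-1}u$ must be removed by the gauge transformation \eqref{w-variable}; without this resonant decomposition your bootstrap does not close. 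Likewise, global $H^1$ dissipation with \emph{localized} damping is itself nontrivial (the flux term does not dominate $E_u$), and the paper obtains it from a Carleman estimate (Proposition~\ref{Proposition-Carleman}), which your appeal to ``observability'' only gestures at.

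In (ii), you claim that ``the smoothing of the high-mode projection'' forces $Q_N w$ to decay. There is no such smoothing for Schr\"odinger. The high-frequency dissipation for the linearization along $\tilde u$ is precisely where the extra $H^{s+\sigma}$-regularity of the reference trajectory is indispensable, combined with nonlinear smoothing applied to the potential terms and an equivalent norm $\|\cdot\|_{\tilde{H}^s}$ in which $S_a(T)$ is a strict contraction for \emph{every} $T>0$. The paper exhibits a counterexample (Remark~\ref{counterexample}): if $\tilde u$ is merely $X^{s,b}$-bounded, the potential can be chosen so that the high modes of $v$ are conserved, so your Foias--Prodi squeezing as stated is false without the asymptotic-compactness input. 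Finally, you identify irreducibility as the hardest step and propose a three-stage exact-controllability scheme; in fact irreducibility is the easy part, since $0$ lies in the support of $\mathscr{D}(\eta_n)$ and the unforced equation is globally exponentially stable (Proposition~\ref{Corollary-stability}). The HUM and propagation-of-compactness machinery you describe is indeed used in the paper, but only for the low-frequency component of the coupling step, to produce a finite-dimensional, trajectory-dependent control achieving the squeezing \eqref{stabilization-form} --- not to steer arbitrary data, and with no geometric condition on $\mathcal{I}_1\cup\mathcal{I}_2$ beyond nonemptiness.
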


To the best of our knowledge,
this provides a first result on the exponential mixing for randomly forced NLS. The generality and difficulty of equation \eqref{Random-problem} lies in the localized feature of damping and noise; both $a(\cdot)$ and $\eta(t,\cdot)$ may vanish outside open subsets $\mathcal I_1,\mathcal I_2$ of $\T$, respectively. See Section {\rm\ref{Section-review-NLS}} for a review of previous ergodicity results on random NLS.

\begin{remark}
This paper is part of a serial study toward the statistical behavior of random dispersive PDEs. In the previous work {\rm\cite{LWX-24}}, a general criterion of exponential mixing has been established, and successfully applied to nonlinear wave equations (see also {\rm\cite{CLXZ-24}} for the corresponding large deviation principle). 
We believe that this methodology is effective for a wide range of dispersive and hyperbolic equations. In this paper, we extend the approach to NLS equations, while the case of KdV equation will be addressed in a forthcoming work. It would be of interest to investigate whether this technique can also be applied to the challenging problem involving the 2D Euler equation with a linear zero-order damping (the existence of invariant measure has been established in {\rm\cite{BF-20}}, while the uniqueness remains open).

The highlight of this abstract criterion is the new concept ``exponential asymptotic compactness''\footnote{In \cite{LWX-24} this concept is originally referred to as ``asymptotic compactness''.  Adding the attribute ``exponential'' in the present paper is for concreteness. In fact, the terminology ``asymptotic compactness'' is widely used in the theory of dynamical systems, and does not require an exponential attraction in those settings.}, whose optimality will be illustrated in the main 
content; see Section {\rm\ref{Section-133}} for more discussions. The verification for PDE models involves multiple subjects: asymptotic dynamics, global stabilization and control properties.
\end{remark}

\begin{remark}[Comparison with wave equations]
The study of NLS is more intricate than wave equations involved in {\rm\cite{LWX-24}}. One reason is that, unlike the wave propagator which gains one derivative with respect to the source term (implying extra regularity for nonlinear term in the Duhamel formula), no such regularization holds for the NLS. As a result, the nonlinearity becomes an inherent difficulty.
To confront this, 
we utilize Carleman estimate to study asymptotic dynamics and nonlinear smoothing to study control problems, which are novel methods in this paper. 

Another notable point is that, in the case of wave equations {\rm\cite{LWX-24}}, the time period $T>0$ cannot be too small, due to the finite speed of propagation; while in the \hyperlink{thm1}{\color{black}Main Theorem}, period $T$ is arbitrary. This will be achieved by new observations on damped Schr\"odinger equations.

More explanations on the above statements can be found in Section {\rm\ref{Section-strategy-AC}}.
\end{remark}

To conclude this subsection, we mention some other statistical consequences of exponential mixing. Firstly, owing to the Kolmogorov–Chapman relation, the exponential mixing remains valid when $u_0$ is a random variable (independent of the noise) provided $\mathbb{E} V_s(u_0)<\infty$:
\[\|\mathcal{D}(u_n)-\mu\|_L^*\le \mathbb{E} V_s(u_0) e^{-\gamma n};\] 
see Proposition~\ref{Proposition-EX}. Secondly, the continuous-time version of exponential mixing follows from an argument similar to \cite[Lemma 3.2]{Shi-21}.
Thirdly, the law of large numbers and central limit theorem are corollaries of the exponential mixing \cite[Proposition 2.1]{LWX-24}. In addition, the mixing property is deeply related to large deviations of Donsker--Varadhan type \cite[Theorem 2.4]{CLXZ-24}. 

\subsection{Prior works}\label{Section-review}

We review briefly previous results on the mixing of random PDEs. As the literature is now extensive, we emphasize only the most relevant works.

\subsubsection{Parabolic equations}

The research on mixing starts with the interest on parabolic equations. A typical model is the 2D Navier--Stokes system:
\begin{equation}\label{NS-problem}
u_t-\Delta u+(u\cdot\nabla)u+\nabla p=\eta(t,x),\quad {\rm div}(u)=0.
\end{equation}
First results can be found in, e.g.~\cite{EM-01, Hairer-02,FM-95}. Exploiting Malliavin calculus and smoothing properties of Markov semigroup, Hairer and Mattingly \cite{HM-06,HM-08} establish exponential mixing for (\ref{NS-problem}) when the noise $\eta$ is white in time and highly-degenerate in Fourier modes. Liu and Lu \cite{LL-25} address the issue in which (\ref{NS-problem}) is also driven by a quasi-periodic deterministic force.
Using coupling method and control theory, Shirikyan \cite{Shi-15} proves exponential mixing for (\ref{NS-problem}) with interior localized noise (see also \cite{Shi-21} for boundary noise), Kuksin, Nersesyan and Shirikyan \cite{KNS-20} deal with the setting where $\eta$ is bounded Haar noise,
and Nersesyan \cite{Ner-22} treats the case where (\ref{NS-problem}) is defined on an unbounded domain.
These works \cite{KNS-20,Shi-15,Shi-21,Ner-22} convey a link between mixing and controllability. 

We remark that \cite{Shi-15} provides a first result on the localized noise which we also consider in this paper. More recently, Nersesyan \cite{Nersesyan-24} establishes exponential mixing for the complex Ginzburg--Landau equation driven by spatially localized Haar noise that involves only a few Fourier modes. For further research it is of great interest to consider exponential mixing for the NLS with noise localized both 
in physical space and Fourier modes.

\subsubsection{Wave equations}

A natural question is if the methodologies inspired by parabolic problems can be adapted to dispersive equations. 
There have been several efforts to the wave equation:
\begin{equation}\label{Wave-problem}
\boxempty u+a(x)u_t+f(u)=\eta(t,x).
\end{equation}
An early result on the existence and uniqueness of invariant measure can be found in Barbu and Da Prato \cite{BDP-02}. The exponential mixing is later studied by
Martirosyan \cite{Martirosyan-14}, where the damping coefficient $a(x)\equiv a>0$ is a constant acting on the entire domain, and the nonlinearity $f(u)$ is typically of the form $u^{3-\varepsilon}$. The recent work \cite{LWX-24} establishes exponential mixing for (\ref{Wave-problem}) with
localized damping $a(x)$, cubic nonlinearity $f(u)=u^3$ and localized noise $\eta(t,x)$. 
In particular, a new abstract criterion based on exponential asymptotic compactness is proposed, at the level of random dynamical systems; see also Section \ref{Section-131}.

\subsubsection{Schr\"{o}dinger equations}\label{Section-review-NLS}

In comparison with parabolic equations and wave equations, the research on exponential mixing for Schr\"{o}dinger equations is almost a vacuum. A related contribution is due to Debussche and Odasso \cite{DO-05}, where the authors establish a polynomial mixing for cubic NLS with constant damping and white noise. To the best of our knowledge, there are hardly further results achieving the mixing property for NLS with an exponential rate. 

For other ergodic results of NLS, the reader is referred to, e.g.~\cite{BFZ-23,BFZ-24,EKZ-17}. We also mention that recent years have witnessed considerable interest in Schr\"odinger equations with random initial data; see, e.g. \cite{BT-24,BTT-18,GGKS-23, DNY-24,OPT-21,BKTV-24}.

\subsection{Difficulties and the strategy}\label{Section-strategy}

We briefly illustrate our strategy on the $H^1$-setting (the issue in higher $H^s$ requires extra techniques to be handled in Section~\ref{Section-stability}), which can be summarized and depicted as in Figure~\ref{fig_outline}.

\setlength{\abovecaptionskip}{0.3cm}

\begin{figure}[H]
\centering
\begin{tikzpicture}[node distance=2cm]

{\footnotesize
\node (Dynamic) [process] {Asymptotic dynamics\\
(Thm.~\ref{Theorem-AC} \& \ref{Theorem-AC-Hs})};
\node (EAC) [process, right of=Dynamic, xshift=3cm] {Exponential\\ asymptotic compactness};

\node (Dispersive) [process, below of=Dynamic] {Global stabilization\\(Prop.~\ref{Corollary-stability} \& \ref{Theorem-H^sstability})};
\node (I) [process, right of=Dispersive, xshift=3cm] {Irreducibility};

\node (Control) [process, below of=Dispersive] {Control property\\
(Thm.~\ref{Theorem-control})};
\node (C) [process, right of=Control, xshift=3cm] {Coupling condition};

\node (EMLDP) [process, right of=I, xshift=4.6cm] {Mixing for NLS\\
(\hyperlink{thm1}{\color{black}Main Theorem})};

\draw[arrow] (Dynamic) -- (EAC);
\draw[arrow] (Dispersive) -- (I);
\draw[arrow] (Control) -- (C);

\node (pt1) [point, right of=EAC, xshift=0.5cm]{};
\node (pt2) [point, right of=I, xshift=0.5cm]{};
\node (pt3) [point, right of=C, xshift=0.5cm]{};

\draw (EAC) -- (pt1);
\draw (C) -- (pt3);
\draw (pt1) -- (pt2);
\draw (I) -- (pt2);
\draw (pt3) -- (pt2);
\draw[arrow] (pt2) --node[yshift=2.5mm]{Criterion} node[yshift=-2.5mm]{(Prop.~\ref{Proposition-EX})} (EMLDP) ;

}
\end{tikzpicture}
\caption{Outline of the proof}
\label{fig_outline}
\end{figure}

Roughly speaking, we invoke the previous result \cite[Theorem 2.1]{LWX-24} for a general criterion of exponential mixing in random dynamical systems. This criterion consists of three hypotheses on Markov process: exponential asymptotic compactness, irreducibility and coupling condition on compact set. 

The exponential asymptotic compactness is related to the asymptotic dynamics for the NLS with deterministic force, while the irreducibility is a consequence of global stabilization for the unforced equation. Finally, we interpret the coupling condition as a control property. See Sections \ref{Section-coupling} and \ref{Section-verification} for more details on the connection of the probabilistic hypotheses with these deterministic problems.

\medskip

In Section \ref{Section-131} below, we briefly describe the general criterion, followed by 
corresponding interpretations for deterministic equations in Section \ref{Section-strategy-AC}. Further discussions on exponential asymptotic compactness, the key ingredient of the criterion, are included in Section \ref{Section-133}.

\subsubsection{Abstract criterion}\label{Section-131}

Let us consider $u_n=u(nT)$, where $u\in C(\mathbb{R}^+;H^1)$ stands for the solution of random NLS equation (\ref{Random-problem}). Due to the setting $(\mathbf{S2})$, it is easy to see $u_n$ forms a Markov process. 
The criterion of exponential mixing requires the following three properties on $u_n$. See Section~\ref{Section-framework} for precise statements. 

\begin{itemize}
\item \textbf{Exponential asymptotic compactness:} there exists a bounded subset $\mathcal Y$ of $H^{1+\sigma}$ (which is in particular compact in $H^1$) attracting $u_n$ exponentially in a pathwise manner:
\[\dist_{H^1}(u_n,\mathcal{Y})\le Ce^{-\kappa n}\quad a.s.\]

\medskip

\item \textbf{Irreducibility on $\mathcal Y$:} the trajectory $u_n$ has positive probability to enter into small neighborhoods of $0$: for any $\varepsilon>0$, one can find $n\in \mathbb{N}$ such that
\[\inf_{u_0\in \mathcal{Y}} \mathbb{P}(\|u_n\|_{_{H^1}}<\varepsilon)>0.\]

\medskip

\item \textbf{Coupling condition on $\mathcal Y$:} any two processes in $\mathcal{Y}$ become closer with high probability: for any $u_0,\tilde{u}_0\in \mathcal{Y}$, one can construct a coupling $(\mathcal{R},\mathcal{R}')$ of $(u_1,\tilde{u}_1)$, such that
\[\mathbb{P}(\|\mathcal R-\mathcal R'\|_{_{\tilde H^1}}>q \|u_0-\tilde{u}_0\|_{_{\tilde H^1}})\le C \|u_0-\tilde{u}_0\|_{_{\tilde H^1}}\quad \text{for some }q\in(0,1).\]
Here $\|\cdot\|_{_{\tilde H^1}}$ is an equivalent norm on $H^1$ to be specified later.

\end{itemize} 
\medskip

The irreducibility and coupling condition are widely used in the study of mixing; see, e.g.~the monograph {\rm\cite{KS-12}} and references therein. 
Meanwhile, the exponential asymptotic compactness 
is introduced in {\rm\cite{LWX-24}} towards the exponential mixing for nonlinear wave equations. In addition, a qualitative asymptotic compactness is proposed in \cite{Ner-22} to study Navier--Stokes system. 

We also mention that there have been several approaches applied to exponential mixing for various models. For instance, the reader is referred to \cite{HMS-11,HM-08,GM-05} for abstract results 
classified as Harris-type theorems, and \cite{Hairer-02,Mattingly-02,KS-01,KS-02,KPS-02} for some frameworks based on coupling method.

\subsubsection{Verification of hypotheses}\label{Section-strategy-AC} We exhibit the key results related to the three hypotheses in the abstract criterion, and glance through the difficulties and new ingredients.

\medskip

\noindent\textbf{Exponential asymptotic compactness via dispersive dynamics.} This issue is related to the deterministic equation:
\begin{equation*}
iu_t+u_{xx}+ia(x)u=|u|^{p-1}u+f(t,x),
\end{equation*}
where $f\in L_b^2(\R^+;H^{1+\sigma})$; the space $L_b^2(\mathbb{R}^+;H^{1+\sigma})$ consists of functions $f\colon \R^+\rightarrow H^{1+\sigma}$ with
\[\sup_{t\in \mathbb{R}^+} \int_t^{t+1} \|f(t)\|^2_{_{H^{1+\sigma}}}dt<\infty.\]
This boundedness of $f$ is justified by condition (\ref{bounded-noise-0}) on $\eta$. In Theorem \ref{Theorem-AC} we demonstrate that:

\medskip

\begin{center}
{\it There exists an $H^{1+\sigma}$-bounded set $\mathscr B_{1,1+\sigma}$ such that ${\rm dist}_{H^1}(u(t),\mathscr B_{1,1+\sigma})\lesssim e^{-\kappa t}$.}
\end{center}

\medskip

The existing literature on asymptotic compactness concentrates on the setting where $a(x)\equiv a> 0$ is a constant and $f(t,x)=f(x)$ is time-independent~\cite{McConnell-22,ET-13,Goubet-00}. In the present paper, the localized structure of  $a(x)$ leads to main challenge, causing the standard energy method to fail. 

To overcome this issue, we borrow the idea of Carleman estimates from control theory (see, e.g.~Coron \cite{Coron-07}) and invoke nonlinear smoothing effect.
To the best of our knowledge,
this is the first time that Carleman estimates are applied to the study of asymptotic compactness. See Section \ref{Section-Scheme-AC} for more information on our scheme. 

The nonlinear smoothing means the nonlinear part of the solution possesses extra regularity compared to the initial data. Among others, Bourgain \cite{Bourgain-98} and Keraani and Vargas \cite{KV-09} prove nonlinear smoothing on $\mathbb{R}^d$. After removing the troublesome resonance, the revised version of nonlinear smoothing on $\mathbb{T}$ is demonstrated by Erdo\u{g}an and Tzirakis \cite{ET-13} for $p=3$, and McConnell \cite{McConnell-22} for odd $p\geq 5$. Invoking Bourgain spaces (or restricted norm spaces), the crux is 
\[\sup_{k\in \mathbb{Z}} \left[\sum_{\substack{k=k_1-k_2+\cdots +k_p\\\text{No single resonance}}} \frac{\langle k\rangle^{2(s+\sigma)}}{\langle k^2-k_1^2+k_2^2-\cdots -k_p^2\rangle^{1-\varepsilon} \prod_{l=1}^p\langle k_l\rangle^{2s}}\right]<\infty\]
The difficulty lies in the extra power $\sigma>0$ of numerator. Arguments in \cite{McConnell-22} involve normal form reduction and multilinear Strichartz estimate; instead, we provide in Lemma~\ref{Lemma-smoothing} a more elementary proof, the conclusion of which suffices for our purpose. We also mention that nonlinear smoothing will also participate our arguments of $H^s$-dynamics $(s>1)$ and control property.

\medskip

\noindent \textbf{Irreducibility via global stabilization.} Owing to setting $(\mathbf{S2})$, the vanishing force $0$ belongs to the support of $\mathscr{D}(\eta_n)$. As a consequence, the irreducibility for the process $u_n$ follows from the global stabilization for the unforced equation, stated in Proposition \ref{Corollary-stability}:

\medskip

\begin{center}
{\it When $\eta(t,x)\equiv 0$, the energy decays: $E_u(t)\leq CE_u(0) e^{-\beta t}$.}
\end{center}

\medskip

To achieve it, we use Carleman estimate to bound from below the flux term:
\[\int_0^T \int_{\mathbb{T}} a(x)\left(|u(\tau,x)|^2+| u_x(\tau,x)|^2+|u(\tau,x)|^{p+1}\right)dxd\tau\ge cE_u(0).\]
Substituting into the energy identity leads to global stabilization. Furthermore, we extend global stabilization to any higher Sobolev norm $H^s\,(s\ge 1)$, which is new to the literature.

Some prior works along this line can be found in  Laurent \cite{Laurent-ECOCV}, Le Balc'h and Martin \cite{LM-23}, and Rosier and Zhang \cite{RZ-09}, among others. See also Dehman, G\'erard and Lebeau \cite{DGL-06} and Laurent \cite{Laurent-10} for another type of localized damping $a(x)(1-\Delta)^{-1}a(x)\partial_tu$. These results are demonstrated by invoking controllability, observability and unique continuation from control theory.
In particular, the authors in \cite{LM-23} prove the global stabilization in $H^1$ for cubic NLS, and claim that the same method can be adapted for general odd $p\ge 3$.

\medskip

\noindent\textbf{Coupling condition via control.} We verify the coupling condition by establishing a stabilization property for a controlled system associated with (\ref{Random-problem}), reading
\begin{equation}\label{Linear-problem-4}
iu_t+u_{xx}+ia(x)u=|u|^{p-1}u+h(t,x)+\zeta(t,x).
\end{equation}
Here, the force $h$ is fixed, and $\zeta$ stands for the control having the structure similar to that of random force $\eta_n$. In particular, the control $\zeta$ acts essentially on the low frequency of the system and on the open set $\mathcal{I}_2$ in space. 

Let $\tilde u(t)$ stand for an uncontrolled solution (satisfying \eqref{Linear-problem-4} with $\zeta(t,x)\equiv0$), which has extra regularity $H^{1+\sigma}$.
After changing to an equivalent norm $\|\cdot \|_{_{\tilde{H}^1}}$ on $H^1$, Theorem \ref{Theorem-control} amounts to the following control property: 

\medskip

\begin{center}
{\it Given any $u_0\in H^1$, if $\|u_0-\tilde u_0\|_{_{H^1}}$ is small enough, then there is a control $\zeta$ such that 
\begin{equation}\label{stabilization-form}
\|u(T)-\tilde u(T)\|_{_{\tilde{H}^1}}\leq q\|u_0-\tilde u_0\|_{_{\tilde{H}^1}}\quad \text{for some }q\in (0,1).
\end{equation}
}
\end{center}

\medskip

\noindent This is referred to as {\it the stabilization along trajectory} (see Definition \ref{Definition-control}), which has also been invoked for studying, e.g.~the Navier--Stokes equations \cite{BRS-11} and wave equations \cite{ADS-16}. We point out that in some references, this type of property is called ``squeezing'' \cite{Shi-15} or ``$\alpha$-controllability'' \cite{TWY-20}.

When the control is effective on each frequency, the exact controllability is available, namely one can find a control $\zeta$ so that
$$
u(T)=\tilde u(T),
$$ 
even if $\tilde u(t)$ is merely of $H^1$; this gives rise to (\ref{stabilization-form}). The controllability problems for Schr\"{o}dinger equations have attracted considerable attention; see, e.g. \cite{BBZ-13,BZ-19,Laurent-10,DGL-06,RZ-09,WWZ-19,CXZ-23,BL-10}.

In comparison, the main feature of the present problem is the high-frequency degeneracy of control $\zeta$. To overcome it, 
the property (\ref{stabilization-form}) will be achieved in the spirit of ``frequency analysis". Roughly speaking, the controllability is valid for the low-frequency system, where the Hilbert uniqueness method and some results from microlocal analysis are employed.

More importantly, the dissipation property in high frequency is produced, for which the $H^{1+\sigma}$-regularity of $\tilde u(t)$ comes into play. To this end, we observe that nonlinear smoothing contributes to treating the potential terms. We also realize that the extra regularity of $\tilde{u}(t)$ is necessary.
See Section~\ref{Section-scheme} for further discussions.

\begin{remark}
The frequency-analysis strategy has been also used in {\rm\cite{Xiang-23}} for 2D Navier--Stokes system \eqref{NS-problem} and in {\rm\cite{LWX-24}} for nonlinear wave equation \eqref{Wave-problem}. There are two new elements in our treatment for Schr\"odinger equations:

\begin{enumerate}
\item[$(1)$] The issue of high-frequency dissipation is much more difficult, mainly because of the lack of derivative gain in the Duhamel formula. The nonlinear smoothing effect serves as a new ingredient for this problem, which is applied to the potential terms
\[\tfrac{p+1}{2}|\tilde{u}|^{p-1} v+\tfrac{p-1}{2}|\tilde{u}|^{p-3} \tilde{u}^2 \bar{v}.\]

\item[$(2)$] The property of type \eqref{stabilization-form} holds for any given $T>0$, which differs from the case of wave equations {\rm\cite{LWX-24}}.
Our new observation is that, for the locally damped linear Schr\"odinger equation $iu_t+u_{xx}+ia(x)u=0$, we can construct an equivalent norm $\|\cdot \|_{\tilde{H}^s}$ on $H^s\,(s\geq 0)$, so that $\|u(T)\|_{_{\tilde{H}^s}}\leq q_0\|u(0)\|_{_{\tilde{H}^s}}$ with $q_0\in(0,1)$ depending only on $T$. For the explicit definition of $\tilde{H}^s$, see Remark~{\rm\ref{Remark-norm}} and the proof of Lemma~{\rm\ref{Lemma-Hslineardecay}}.
\end{enumerate}
\end{remark}

\subsubsection{Further comments on exponential asymptotic compactness}\label{Section-133}

We conclude this subsection with some remarks on exponential asymptotic compactness from several directions.

\begin{enumerate}[leftmargin=2em]
\item[$\bullet$] (Generality in dispersive equations) A merit of asymptotic compactness is that the behavior of solutions on $\mathcal Y$ (the $H^{1+\sigma}$-bounded attracting set) dominates 
the long-time dynamics of the whole system, even though the solutions do not necessarily enter into $\mathcal Y$. This
accords with the dynamics of dispersive equations, since the smoothing effect lacks and thus the semiflow is non-compact. In comparison, for parabolic PDEs, the asymptotic compactness is often superfluous, as the smoothing effect allows solutions to enter compact sets.

\medskip

\item[$\bullet$] (Optimality in control theory) The reduction to compact space $\mathcal Y$ provides extra regularity required in the stabilization along trajectory. Actually, the $H^{1+\sigma}$-regularity of reference trajectory $\tilde u(t)$ turns out to be both sufficient and necessary for high-frequency dissipation. We deliver a counterexample in Remark~\ref{counterexample} illustrating the sharpness of extra regularity.

\medskip

\item[$\bullet$] 
(Motivation to $H^s$-dynamics) 
We propose nonlinear smoothing as a new tool for $H^s$-dynamics.
Indeed, the idea of exponential asymptotic compactness allows us to promote global dissipation from $H^1$ to higher Sobolev spaces $H^s\, (s>1)$, and specifically implies the existence of corresponding absorbing and attracting sets. 

\medskip

\item[$\bullet$] (Application to probabilistic problems) According to the above observations, asymptotic compactness applies to the mixing problem for random dispersive PDEs. The previous work \cite{LWX-24} and this paper illustrate its applicability to nonlinear wave and Schr\"odinger equations. 
\end{enumerate}

\subsection{Organization of the paper}

In Section~\ref{Section-AC}, we exploit Carleman estimate and nonlinear smoothing to demonstrate the exponential asymptotic compactness and global stabilization at the scale of $H^1$. These results are extended in Section~\ref{Section-stability} to higher Sobolev spaces, with the help of nonlinear smoothing. Next,
we investigate the stabilization along trajectory in Section~\ref{Section-control}, where a number of techniques from control theory come into play, including observability, Hilbert uniqueness method, propagation results from microlocal analysis and the idea of frequency analysis. 

Finally, putting the above results together, we accomplish the proof of the \hyperlink{thm1}{\color{black}Main Theorem} in Section~\ref{Section-proofmain}. This is based on an abstract criterion introduced in the earlier work \cite{LWX-24}. We also state a general method for verifying the coupling condition via control. 

\medskip

The Appendix collects an introduction to Bourgain spaces, as well as some auxiliary results and proofs that are needed in our PDE
and control analyses of the main text.

\subsection{A guide to notations}

We gather here some repeatedly used notations in this paper.

\noindent\textit{$\bullet$ Fourier transform.} For complex-valued function $u\colon \mathbb{T}\to \mathbb{C}$, the Fourier coefficients are
\[\mathcal{F}u(k)=\widehat{u}(k):=\frac{1}{\sqrt{2\pi}} \int_{\mathbb{T}} u(x) e^{-ikx} dx=(u,e_k).\]
Here $(\cdot,\cdot)$ stands for complex $L^2$-inner product. For $u(t,x)$ defined on $\mathbb{R}\times \mathbb{T}$, the space-time Fourier transform is $\mathcal{F}u(\tau,k)$ or $\widehat{u}(\tau,k)$. The inverse Fourier transform is
\[u(t,x)=\frac{1}{2\pi}\sum_{k\in\Z}\int_\R \widehat u(\tau,k) e^{i(kx+t\tau)}d\tau.\]

\medskip

\noindent\textit{$\bullet$ Function spaces.} The Sobolev space $H^s(\mathbb{T})$ is equipped with norm $\|u\|_{_{H^s}}^2=\sum_{k\in\Z}\langle k \rangle^{2s}|\widehat{u}(k)|^2$, where $\langle x\rangle:=\sqrt{1+|x|^2}$.
The orthogonal projection to finite-dimensional subspace
\[H_m:={\rm span}\{e_k;|k|\le m\}\]
is denoted with $P_m$. And set $Q_m:=I-P_m$. Let $S_a(t)$ (and $S(t)$) for $t\in \mathbb{R}$ be the $C_0$-group of operators on $H^s$ generated by $i\partial_x^2-a(x)$ (and $i\partial_x^2$).

For $T>0$, the space-time cylinder is $Q_T:=[0,T]\times \mathbb{T}$. The symbol $L_t^p H_x^s$ is shorthand for $L^p(0,T;H^s(\mathbb{T}))$, when there is no danger of confusion. We also use the space $L_b^2(\mathbb{R}^+;H^s)$ of translation-bounded functions $f\colon \mathbb{R}^+\rightarrow H^s$ such that 
$$
\|f\|^2_{_{L_b^2(\mathbb{R}^+;H^s)}}:=\sup_{t\in\R^+}\int_t^{t+1}\|f(\tau)\|^2_{_{H^s}}d\tau<+\infty
$$
(see, e.g. \cite[Chapter V]{CV-02}). Note that $\int_t^{t+T} \|f(\tau)\|_{_{H^s}}^2 d\tau\le \lceil T\rceil \|f\|_{_{L_b^2(\mathbb{R}^+;H^s)}}^2$ for any $t\in \mathbb{R}^+$.

The Bourgain space $X^{s,b}$ consists of tempered distributions $u$ on $\R\times\T$ for which
\[\|u\|^2_{_{X^{s,b}}}:=\sum_{k\in\Z}\int_\R \langle k\rangle^{2s}\langle \tau+k^2\rangle^{2b}|\widehat u(\tau,k)|^2d\tau<\infty.\]
For the basic properties, see Appendix~\ref{Appendix-Bourgainspace}.
For $T>0$, define $X^{s,b}_T$ to be the restriction space to the time interval $[0,T]$ with norm
$$
\|u\|_{_{X^{s,b}_T}}=\inf\{
\|\tilde u\|_{_{X^{s,b}}};\tilde u=u\ {\rm on\ }[0,T]\times\T
\}<\infty.
$$
For a bounded interval $I$, the  associated restriction space $X^{s,b}_I$ can be defined similarly.

\medskip

\noindent\textit{$\bullet$ Functional analysis.} Let $X$ be a Banach space. Then $B_X(R)$ and $\overline{B}_X(R)$ denote the open and closed balls of radius $R$ centered at the origin, respectively. The pairing between $X$ and $X^*$ is written as $\langle\cdot,\cdot\rangle_{X,X^*}$. The distance from $x\in X$ to a subset $A\subset X$ is $\dist_X(x,A)$. We write $\mathcal{B}(X)$ for the Borel $\sigma$-algebra. The space of bounded continuous functions is $C_b(X)$, equipped with supreme norm $\|\cdot \|_\infty$. And the bounded Lipschitz functions constitute $L_b(X)$, with norm $\|f\|_{_{L_b(X)}}:=\|f\|_{_{\infty}}+\Lip(f)$, where $\Lip(f):=\sup_{x\not =y} |f(x)-f(y)|/\|x-y\|$.

If $X,Y$ are (complex) Banach spaces, the space of bounded linear operators from $X$ to $Y$ is denoted by $\mathcal{L}(X,Y)$ (or $\mathcal{L}(X)$ if $Y=X$).
We define $\mathcal{L}_{\R}(X,Y)$ for real-linear bounded operators. The symbols $X \hookrightarrow Y$ and $X\Subset Y$ refer to continuous and compact embeddings, respectively.

\medskip

\noindent\textit{$\bullet$ Random variables.} Let $X$ be a Polish space (i.e.~separable metric space). The law of $X$-valued random variable $\eta$ is $\mathscr{D}(\eta)$, which belongs to the space of probability measure $\mathcal{P}(X)$. The weak convergence in $\mathcal{P}(X)$ is compatible with the dual Lipschitz distance:
\[\|\mu-\nu\|_L^*:=\sup_{\|f\|_{_{L_b(X)}}\le 1} |\langle f,\mu\rangle-\langle f,\nu\rangle|,\quad \mu,\nu\in \mathcal{P}(X).\]
A coupling between $\mu$ and $\nu$ is a pair of $X$-valued random variables with marginal distributions equal to $\mu$ and $\nu$, respectively. The set of all couplings is denoted by $\mathscr{C}(\mu,\nu)$.

\medskip

\noindent\textit{$\bullet$ Constants.} Various constant $C$ may change from line to line. The dependence on parameters are represented by $C(\cdot)$, which always means a non-decreasing function of such parameter. 

The parameters $b,b'$, which appear in Bourgain spaces $X^{s,b}$ and $X^{s,-b'}$, satisfy $0<b'<1/2<b<1$ and $b+b'\leq 1$. The only additional assumption on $b,b'$ occurs in Lemma~\ref{Lemma-smoothing}. Hence it is reasonable to consider them as given, once and for all.

\section{Exponential asymptotic compactness in $H^1$}\label{Section-AC}

In this section, we investigate exponential asymptotic compactness for a deterministic version of NLS equation (\ref{Random-problem}), at the scale of $H^1$. A by-product is the global stabilization in $H^1$. As described in Section~\ref{Section-strategy}, these results indicate the exponential asymptotic compactness and irreducibility of (\ref{Random-problem}) in our abstract criterion; see Section~\ref{Section-verification} for details. 

The equation considered here reads
\begin{equation}\label{Nonlinear-problem}
\left\{\begin{array}{ll}
iu_t+u_{xx}+ia(x)u=|u|^{p-1}u+f(t,x),\\
u(0,x)=u_0(x),
\end{array}\right.
\end{equation}
where $u_0\in H^s$ with $s\geq 1$, and $f\colon [0,T]\rightarrow H^s$ (or $f\colon \R^+\rightarrow H^s$) is a deterministic force. Unless otherwise stated, we consider mild solutions, namely $u\in C([0,T];H^s)$ satisfying the Duhamel formula. As $H^s$ is a Banach algebra, standard argument implies that \eqref{Nonlinear-problem} admits a unique solution $u\in C([0,T];H^s)$. We define the functional $E\colon H^1\rightarrow\R^+$ to be the $H^1$-energy:
\begin{equation}\label{energy-function}
E(u):=\frac{1}{2}\int_{\T}|u|^2+\frac{1}{2}\int_{\T}| u_x|^2+\frac{1}{p+1}\int_{\T}|u|^{p+1}.
\end{equation}
If $u(t)$ is a solution, we denote $E_u(t)=E(u(t))$.

The main result of this section is stated as follows.

\begin{theorem}[Exponential asymptotic compactness]\label{Theorem-AC}
Let $R_0>0$ and $\sigma\in(0,1/4]$ be arbitrarily given. Then there exists a bounded subset $\mathscr B_{1,1+\sigma}$ of $H^{1+\sigma}$ and constants $C,\kappa>0$ such that 
\begin{equation*}
{\rm dist}_{H^1}(u(t),\mathscr B_{1,1+\sigma})\leq C\left(
1+E(u_0)
\right)e^{-\kappa t},\ \forall t\geq 0
\end{equation*}
for any $u_0\in H^1$ and $f\in \overline{B}_{L^2_b(\R^+;H^{1+\sigma})}(R_0)$, where $u(t)$ stands for the solution of {\rm(\ref{Nonlinear-problem})}. 
\end{theorem}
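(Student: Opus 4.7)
The plan is to combine a dissipative $H^1$-estimate (producing an absorbing ball) with a linear--nonlinear splitting $u=v+w$, where $v=S_a(t)u_0$ is the linear damped evolution of the data and $w$ is the Duhamel remainder driven by the nonlinearity and the external force. The linear component $v$ should decay exponentially in $H^1$, while nonlinear smoothing will trap $w$ uniformly in a fixed $H^{1+\sigma}$-ball. The attracting set will then be taken to be $\mathscr B_{1,1+\sigma}:=\overline{B}_{H^{1+\sigma}}(R_2)$ for a suitable radius $R_2=R_2(R_0)$. The first step is to prove a dissipative bound on $E_u$. Differentiating $E_u$ along \eqref{Nonlinear-problem} produces a damping expression localized to $\supp a$, which by itself does not control the full energy; I would close this gap by a Carleman-type observability inequality of the form
\[\int_0^T\!\!\int_\T a(x)\bigl(|u|^2+|u_x|^2+|u|^{p+1}\bigr)\,dx\,dt\;\geq\; c\,E_u(T)-C(R_0),\]
proved in the same spirit as the linear observability underlying Proposition~\ref{Corollary-stability}, with the nonlinearity absorbed as a bounded potential via the $H^1$ a priori bound. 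This yields a contraction $E_u(T)\leq \lambda E_u(0)+C(R_0)$ with $\lambda\in(0,1)$, and iteration gives $E_u(t)\leq C(1+E(u_0))e^{-\kappa_0 t}+C(R_0)$, in particular an absorbing ball $B_{H^1}(R_1)$ with $R_1=R_1(R_0)$.

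Writing next $u=v+w$ with $v$ solving the homogeneous linear damped equation with $v(0)=u_0$ and $w$ solving the inhomogeneous linear damped equation with source $-i|u|^{p-1}u-if$ and $w(0)=0$, the linear stabilization in $H^1$ (a by-product of the proof of Proposition~\ref{Corollary-stability} applied with $|u|^{p-1}u\equiv 0$) yields $\|v(t)\|_{H^1}\leq C(1+E(u_0))e^{-\beta t}$. To control $w$ in $H^{1+\sigma}$ I use the semigroup identity
\[w(t)=S_a(T)\,w(t-T)+\int_{t-T}^{t} S_a(t-s)\bigl(-i|u(s)|^{p-1}u(s)-if(s)\bigr)\,ds\]
on each sliding window of length $T$. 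The forcing contributes at most $C\|f\|_{L^2_b(\R^+;H^{1+\sigma})}\lesssim R_0$ in $H^{1+\sigma}$. For the nonlinear contribution, once Step~1 has confined $u$ to $B_{H^1}(R_1)$, standard Bourgain-space well-posedness provides a uniform bound $\|u\|_{X^{1,b}_{[t-T,t]}}\leq K(R_1,R_0)$, and Lemma~\ref{Lemma-smoothing} (nonlinear smoothing) upgrades the Duhamel output on $[t-T,t]$ to $H^{1+\sigma}$ with norm bounded by $C(K)$. Combined with the exponential decay $\|S_a(NT)\|_{H^{1+\sigma}\to H^{1+\sigma}}\leq q<1$ for $N$ sufficiently large, a linear recursion of the shape $\|w((n+1)NT)\|_{H^{1+\sigma}}\leq q\,\|w(nNT)\|_{H^{1+\sigma}}+C$ yields $\|w(t)\|_{H^{1+\sigma}}\leq R_2$ uniformly in $t$. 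Consequently
\[\dist_{H^1}\bigl(u(t),\mathscr B_{1,1+\sigma}\bigr)\leq \|u(t)-w(t)\|_{H^1}=\|v(t)\|_{H^1}\leq C(1+E(u_0))e^{-\kappa t}.\]

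The main obstacle is the nonlinear smoothing step, where one must establish the $p$-linear arithmetic bound
\[\sup_{k\in\Z}\;\sum_{\substack{k=k_1-k_2+\cdots+k_p\\ \text{no single resonance}}}\frac{\langle k\rangle^{2(1+\sigma)}}{\langle k^2-k_1^2+k_2^2-\cdots-k_p^2\rangle^{1-\varepsilon}\prod_{l=1}^{p}\langle k_l\rangle^{2}}<\infty\]
after a careful arithmetic excision of single resonances, so that the dispersive denominator can dominate the extra factor $\langle k\rangle^{2\sigma}$; this is precisely where the $\sigma$-gain used to define the attractor enters. A secondary difficulty is transferring these estimates, naturally stated for the free semigroup $S(t)$, to the damped semigroup $S_a(t)$ via a perturbative argument treating $ia(\cdot)$ as a bounded real-linear multiplication; and keeping all constants uniform in $u_0\in H^1$ and $f\in \overline{B}_{L^2_b(\R^+;H^{1+\sigma})}(R_0)$, which is achieved by relying only on the absorbing radius $R_1=R_1(R_0)$ after the initial transient produced by Step~1.
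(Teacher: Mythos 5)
Your overall architecture (absorbing ball in $H^1$ via Carleman/flux estimates, then a linear--nonlinear splitting with the Duhamel remainder trapped in $H^{1+\sigma}$ by nonlinear smoothing) is the same as the paper's, and Step 1 is essentially correct. But there is a genuine gap in Step 2: you split $u=v+w$ with $w$ the Duhamel integral of the \emph{full} nonlinearity $-i|u|^{p-1}u-if$, and claim Lemma~\ref{Lemma-smoothing} upgrades this to $H^{1+\sigma}$. That lemma only smooths the non-resonant part $\mathcal T_N$. The single-resonant part, which for $u_1=\cdots=u_p=u$ equals $\mathcal T_R(u)=\frac{p+1}{4\pi}\|u\|_{L^{p-1}}^{p-1}u$, has exactly the spatial regularity of $u$ itself — namely $H^1$ and no more — and the Schr\"odinger Duhamel integral gains no derivatives. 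So $w$ contains a piece $\int_0^t S_a(t-s)\,\mathcal T_R(u)(s)\,ds$ that is in general only $H^1$, and the claim $\|w(t)\|_{H^{1+\sigma}}\le R_2$ fails. You do mention "excision of single resonances" when discussing the arithmetic multiplier bound, but that excision happens inside the proof of the smoothing lemma; it does not tell you what to do with the excised resonant term at the level of the PDE decomposition, and that term cannot simply be discarded.

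The paper's fix is the norm-preserving gauge transformation $U(t,x)=e^{i\theta(t)}u(t,x)$ with $\theta(t)=\frac{p+1}{4\pi}\int_0^t\|u(s)\|_{L^{p-1}}^{p-1}ds$ (equation \eqref{w-variable}): since $\mathcal T_R(u)$ is a real scalar multiple of $u$, it is absorbed into the phase, and $U$ solves \eqref{Problem-w} whose nonlinearity is purely $\mathcal T_N(U)$. One then runs exactly your argument on $U$, obtaining $\|U(t)-S_a(t)U_0\|_{H^{1+\sigma}}\le C_3$, and since $|e^{i\theta(t)}|=1$ the distance estimate transfers back to $u$ via $\dist_{H^1}(u(t),\mathscr B_{1,1+\sigma})\le\|e^{-i\theta(t)}S_a(t)u_0\|_{H^1}=\|S_a(t)u_0\|_{H^1}$. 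Two smaller points: the semigroup estimates are available directly for $S_a(t)$ (Proposition~\ref{Proposition-S(t)estimate}, from \cite{RZ-09}), so no perturbative transfer from $S(t)$ is needed; and your final inequality $\dist_{H^1}(u(t),\mathscr B_{1,1+\sigma})\le\|v(t)\|_{H^1}$ requires $w(t)\in\mathscr B_{1,1+\sigma}$ for \emph{all} $t\ge 0$, including the transient $[0,T_0]$ before absorption where your uniform $X^{1,b}$ bound is not yet available — the paper handles this window separately (estimates \eqref{H1attractor-1}--\eqref{H1attractor-2}), paying the factor $e^{\kappa T_0}\lesssim(1+E(u_0))^{C_1\kappa}$ and shrinking $\kappa$.
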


In the sequel we also deduce the global stabilization for unforced equation (i.e.~\eqref{Nonlinear-problem} with $f(t,x)\equiv 0$), in which case the attracting set $\mathscr B_{1,1+\sigma}$ in Theorem \ref{Theorem-AC} reduces to singleton $\{0\}$.

\begin{proposition}[Global stabilization]\label{Corollary-stability}
There exist constants $C,\beta>0$ such that 
\begin{equation}\label{H^1stability}
E_u(t)\leq CE_u(0)e^{-\beta t},\ \forall t\geq 0
\end{equation}
for any $u_0\in H^1$, where $u(t)$ stands for the solution of \eqref{Nonlinear-problem} with $f(t,x)\equiv 0$.
\end{proposition}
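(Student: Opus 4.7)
The plan is the classical observability-plus-energy-identity scheme for damped nonlinear dispersive equations. Multiplying the unforced equation \eqref{Nonlinear-problem} (with $f\equiv 0$) by $\bar u$ and separately by $-\bar u_{xx}+|u|^{p-1}\bar u$, taking real parts and integrating by parts, yields the energy identity
\[
\frac{d}{dt}E_u(t) \;=\; -\int_{\mathbb T} a(x)\bigl(|u|^2+|u_x|^2+|u|^{p+1}\bigr)\,dx \;+\; \tfrac{1}{2}\int_{\mathbb T} a''(x)|u|^2\,dx,
\]
in which the final term is a lower-order remainder bounded by $\|a''\|_\infty\|u(t)\|_{L^2}^2$. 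Integrating on $[0,T_0]$ reduces the exponential decay to a one-step contraction estimate $E_u(T_0)\leq q E_u(0)$ with $q\in(0,1)$, up to absorbing this remainder.

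The heart of the argument is a nonlinear observability inequality
\begin{equation*}
c\,E_u(0)\;\leq\;\int_0^{T_0}\!\int_{\mathbb T} a(x)\bigl(|u|^2+|u_x|^2+|u|^{p+1}\bigr)\,dx\,dt,
\end{equation*}
valid for every solution of the unforced equation. I would prove it in two stages. First, treating $V:=|u|^{p-1}$ as a real potential, uniformly bounded on bounded orbits via the embedding $H^1(\mathbb T)\hookrightarrow L^\infty(\mathbb T)$, I establish a linear observability for $iv_t+v_{xx}+ia(x)v+Vv=0$ by a Carleman estimate in the spirit of \cite{Laurent-ECOCV,LM-23,RZ-09}, with a weight growing on $\mathcal I_1$. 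Second, I pass to the nonlinear setting by compactness--uniqueness: a negating sequence $u_n$ with $E_{u_n}(0)=1$ and vanishing damping integral subconverges strongly in $C([0,T_0];H^1)$ (using Theorem~\ref{Theorem-AC} together with Bourgain-space bounds on the nonlinearity) to a limit $u_\infty$ solving the unforced NLS with $u_\infty\equiv 0$ on $\mathcal I_1\times[0,T_0]$; unique continuation for Schrödinger equations with bounded potential then forces $u_\infty\equiv 0$, contradicting the normalization after passage to the limit.

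Combining the observability with the integrated energy identity, and absorbing the $\int_0^{T_0}\!\int\frac{1}{2}a''|u|^2$ remainder via the $L^2$-observability $c'\|u_0\|_{L^2}^2\leq\int_0^{T_0}\!\int a|u|^2\,dxdt$ (a by-product of the preceding argument applied only to the $L^2$-piece), yields $E_u(T_0)\leq q E_u(0)$ with $q\in(0,1)$ independent of $u_0$. Iterating on successive intervals $[nT_0,(n+1)T_0]$ delivers $E_u(nT_0)\leq q^n E_u(0)$, and the local-in-time bound $E_u(t)\leq C E_u(nT_0)$ on $[nT_0,(n+1)T_0]$ (from the energy identity on a short interval) then gives \eqref{H^1stability} with $\beta=-T_0^{-1}\log q$. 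The main obstacle is the nonlinear observability itself: the localization of $a$ precludes any naive multiplier method, the Carleman weight has to be engineered so that its conjugated principal symbol is pseudoconvex on $\mathbb T\setminus\mathcal I_1$, and the compactness--uniqueness step demands strong convergence of $|u_n|^{p-1}u_n$, which is precisely the point where the extra $H^{1+\sigma}$-regularity afforded by Theorem~\ref{Theorem-AC} becomes decisive.
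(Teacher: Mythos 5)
Your overall architecture --- energy identity, a flux/observability inequality proved by Carleman methods, a one-step contraction, then iteration --- is exactly the paper's (Lemma~\ref{Lemma-fluxestimate} and Lemma~\ref{Lemma-discretemono}, with Proposition~\ref{Corollary-stability} falling out as a corollary). The decisive difference is how you propose to obtain the nonlinear observability inequality, and your route has two genuine gaps. First, the compactness--uniqueness step cannot deliver $c\,E_u(0)\le\int_0^{T_0}\!\int_{\mathbb T}a\,(|u|^2+|u_x|^2+|u|^{p+1})$ with a constant $c$ uniform over all of $H^1$. The inequality is not scale-invariant (because of the $|u|^{p+1}$ term), so you cannot normalize $E_{u_n}(0)=1$; a genuine negating sequence only satisfies $\int\!\int a(\cdots)<\tfrac1n E_{u_n}(0)$ with $E_{u_n}(0)$ possibly unbounded, in which case the potential $V_n=|u_n|^{p-1}$ is unbounded and all compactness is lost. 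Restricting to $E_u(0)\le R$ salvages the contradiction argument but makes $c=c(R)$, and after iteration you obtain at best a decay estimate whose prefactor depends on $E_u(0)$ --- weaker than the stated proposition, which asserts absolute constants $C,\beta$. The paper sidesteps this entirely: Proposition~\ref{Proposition-Carleman} is applied directly to the nonlinear equation, and the structural hypothesis $\Psi(r)=rg(r)-G(r)\ge0$ places the $|u|^{p+1}$ flux on the favorable side of the Carleman inequality with constants independent of the solution's size, so no linearization and no compactness--uniqueness are needed.

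Second, your claimed strong subconvergence in $C([0,T_0];H^1)$ ``using Theorem~\ref{Theorem-AC}'' does not follow. Exponential asymptotic compactness controls $\dist_{H^1}(u(t),\mathscr B_{1,1+\sigma})$ only for large $t$; on a fixed initial window $[0,T_0]$ it gives nothing, and a bounded family of Schr\"odinger solutions there has no smoothing to exploit. The correct tool would be propagation of compactness in Bourgain spaces (the paper's Lemma~\ref{Lemma-propagation}(2)), which is precisely how the paper handles the analogous step in the \emph{linear} observability Lemma~\ref{Lemma-fullobs}. A smaller but real issue: absorbing $\tfrac12\int\!\int a''|u|^2$ via an $L^2$-observability alone fails unless $\|a''\|_\infty$ is small relative to the observability constant; the paper instead needs the pointwise inequality $|a'(x)|^2\le\varepsilon_0^2+2\varepsilon_0 L a(x)$ and the modified energy $\tilde E_u=E_u+\tfrac L2\|u\|_{L^2}^2$ to close the estimate.
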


\begin{remark}
In Section {\rm\ref{Section-stability}}, the conclusions of Theorem~{\rm\ref{Theorem-AC}} and Proposition~{\rm\ref{Corollary-stability}} will be extended to $H^s$ with $s\ge 1$ and any $\sigma>0$. 
Let us present brief statements on this aspect beforehand.
\begin{enumerate}
\item[$(1)$] Roughly speaking, asymptotic compactness in $H^s$ means that given any $s\geq 1$ and $\sigma>0$, there exists an $H^{s+\sigma}$-bounded set attracting exponentially the solutions of \eqref{Nonlinear-problem} in $H^s$. 

\item[$(2)$] When $f(t,x)\equiv 0$, the $H^s$-norm of solutions will be proved to decay exponentially.
\end{enumerate}
\end{remark}

At some stage of the proof, we need to exploit Bourgain spaces $X^{s,b}$. Unlike the space-time Sobolev spaces $L_t^p H_x^s$, the Bourgain spaces $X^{s,b}$ reflect the connection between the temporal and spatial frequencies of solution by means of the linear dispersive relation. In particular, Bourgain spaces allow one to exploit smoothing properties of nonlinearity $|u|^{p-1}u$, which are implicit under the setting of usual Sobolev spaces, and play a crucial role to our analysis of dynamics and control property.
Some basic properties of Bourgain spaces, as well as the global well-posedness of NLS equation \eqref{Nonlinear-problem} in $X^{s,b}$ can be found in Appendix~\ref{Appendix-GWP}. Specifically, according to Proposition~\ref{Proposition-nonlinear}, the mild solution $u$ of NLS equation \eqref{Nonlinear-problem} belongs to $X_T^{s,b}$ for any $b\in (1/2,1)$.

\medskip

An overview of the proof of Theorem \ref{Theorem-AC} is presented in Section~\ref{Section-Scheme-AC}, and the details are included in the later Sections \ref{Section-Carleman}--\ref{Section-AC-2}. The proof of Proposition~\ref{Corollary-stability} lies at the end of Section~\ref{Section-dissipation}.

\subsection{Scheme of proof}\label{Section-Scheme-AC}

As in the general theory of global attractor, the proof of Theorem~\ref{Theorem-AC} can be summarized as two parts: 

\medskip

\noindent $\bullet$ {\bf Global dissipation}. We  construct an $H^1$-bounded set $\mathscr{B}_1$ which is absorbing for (\ref{Nonlinear-problem}):
$$
u(t)\in \mathscr{B}_1,\ \forall t\geq T_0(\|u_0\|_{_{H^1}}).
$$
The core issue to be addressed is the localized structure of $a(x)$; standard energy estimate via integration by parts is not sufficient.

\medskip

\noindent $\bullet$ {\bf Nonlinear smoothing}. By exploiting the nonlinear smoothing effect typical in dispersive equations, we  demonstrate that the nonlinear part in the Duhamel formula
$$
\int_0^tS_a(t-\tau)(|u|^{p-1}u)d\tau,
$$
after removing the single-resonant terms, gains extra regularity and is bound uniformly in $H^{1+\sigma}$ for $u_0\in\mathscr{B}_1$. The desired conclusion is then obtained.

\medskip

\noindent {\it Part I: Global dissipation}. 
A classical strategy of finding absorbing set is energy dissipation, which is standard when $a(x)\equiv a>0$. However, the case where $a(x)$ is localized in $\mathcal I_1$ is much more complicated, as the flux term in energy identity \eqref{energy-2}, which reads
\[\int_{\T}a(x)\left(|u|^2+| u_x|^2+|u|^{p+1}\right),\]
is no longer bounded from below by $E_u(t)$. 
To address the above issue, we propose a two-step procedure inspired by the controllability and stabilization theory of NLS (see, e.g.~\cite{Laurent-10,LM-23}).

\medskip

{\it Step 1: Carleman estimate}.
We first in Section \ref{Section-Carleman} propose a Carleman-type estimate for the general nonlinear Schr\"odinger equation 
\[iu_t+u_{xx}=g(|u|^2)u+h(t,x),\]
where $g$ stands for the nonlinearity and $h$ represents the source or lower-order terms.
The general estimate is valid not only for the nonlinear NLS equation (\ref{Nonlinear-problem}) considered here, but also for the linear Schr\"{o}dinger equation with complex-valued potentials. Its application in the latter case will participate the analysis of control problems (Section \ref{Section-control}).

In particular, applying the general Carleman estimate to (\ref{Nonlinear-problem}) gives rise to 
\begin{equation}\label{Carleman-form}
\begin{aligned}
&\int_{0}^T\int_\T\left(\theta_1|u|^2+\theta_2| u_x|^2+\theta_3|u|^{p+1}\right)\\
&\lesssim \int_{0}^T\int_{\mathcal I_1}\left(\theta_1|u|^2+\theta_2| u_x|^2+\theta_3|u|^{p+1}\right)+(\text{lower-order terms}),
\end{aligned}
\end{equation}
where the weight functions $\theta_i=\theta_i(t,x)>0$ will be appropriately chosen. The estimate (\ref{Carleman-form}) indicates that the localized dissipation originated from the subdomain $\mathcal I_1$ can spread to the entire system, which is fundamental to establishing the desired global dissipation.

\medskip

{\it Step 2: dissipative estimate}. With the Carleman estimate above in hand, we are able to propose a dissipative estimate for $E_u(t)$. More precisely, we derive from (\ref{Carleman-form}) the flux estimate
\begin{equation}\label{flux-form}
E_u(t)\lesssim \int_0^T \int_{\mathbb{T}} a(x)\left(|u|^2+| u_x|^2+|u|^{p+1}\right)+(\text{lower-order terms}),\ \forall t\in[0,T].
\end{equation}
If it is the unforced case (i.e. $f(t,x)\equiv 0$), the lower-order terms vanish, and the corresponding estimate has been used in \cite{LM-23} for deriving the global stabilization of cubic NLS. The same argument based on \eqref{flux-form} would also yield Proposition~\ref{Corollary-stability}.

In the forced case, we apply (\ref{flux-form}) to deduce a dissipative estimate:
\begin{equation*}
\tilde E_u(T)\leq q\tilde E_u(0)+C(T,\|f\|_{_{L^2_b(\mathbb{R}^+;H^1)}})\quad \text{for some }q\in(0,1),
\end{equation*}
where $\tilde E_u(t)$ represents a modified $H^1$-energy function being equivalent to $E_u(t)$. This estimate enables us to construct the desired global absorbing set for (\ref{Nonlinear-problem}). See Section \ref{Section-dissipation} for the details.

\medskip

\noindent {\it Part II: Nonlinear smoothing}. We exploit the phenomenon of nonlinear smoothing for $|u|^{p-1}u$, at the scale of Bourgain space $X^{s,b}$. Define the $p$-multiplication operator by setting
\[\mathcal T(u_1,\cdots,u_p)=\prod_{l\, {\rm odd}}u_l \prod_{l\, {\rm even}}\bar u_l.\]
Inspired by \cite{ET-13,McConnell-22} we propose that for $s\geq 1$, $\sigma\in (0,1/4]$, $b>1/2$ and $b'\in [\sigma,1/2)$,
\begin{equation}\label{smoothing-form}
\|\mathcal T(u_1,\cdots,u_p)-\mathcal T_R(u_1,\cdots,u_p)\|_{_{X^{s+\sigma,-b'}}}\lesssim \prod_{l=1}^p\|u_l\|_{_{X^{s,b}}}
\end{equation}
Here, the term $\mathcal T_R$ represents the frequencies of ``single resonance'', defined rigorously later and having the {\it same spatial regularity} as $u_l$. See Section \ref{Section-AC-1} for more details. In addition, we observe that such an estimate will be also useful in studying the $H^s$-dynamics (Section \ref{Section-stability}) and the  stabilization along trajectory for  associated controlled system (Section \ref{Section-control}).

The estimate (\ref{smoothing-form}) (in the case of $s=1$ and $u_l=u$) applied to equation (\ref{Nonlinear-problem}) yields
$$
\||u|^{p-1}u-\tfrac{p+1}{4\pi}\|u\|_{_{L^{p-1}(\mathbb{T})}}^{p-1}u\|_{_{X^{1+\sigma,-b'}}}\lesssim \|u\|_{_{X^{1,b}}}^p.
$$
The term $\frac{p+1}{4\pi}\|u\|_{_{L^{p-1}(\mathbb{T})}}^{p-1}u$ can be dealt with by exploiting the norm-preserving transformation
\begin{equation}\label{w-variable}
U(t,x)=e^{i\theta(t)}u(t,x)\quad \text{with }\theta(t)=\frac{p+1}{4\pi}\int_0^t\|u(s)\|^{p-1}_{_{L^{p-1}(\T)}}ds\in\R.
\end{equation}
As a result, one can construct a bounded subset $\mathscr B_{1,1+\sigma}$ of $H^{1+\sigma}$, such that
$$
u(t)-e^{-i\theta(t)}S_a(t)u_0\in \mathscr B_{1,1+\sigma},\ \forall t\geq 0
$$
whenever $u_0\in\mathscr{B}_1$. This together with the absorbing property of $\mathscr{B}_1$ (Part I) and decay of $S_a(t)$ concludes the desired result of asymptotic compactness. See Section \ref{Section-AC-2} for more details.

\medskip

\noindent{\it Convention: In the remainder of this section, unless otherwise stated, the generic constant $C$ used in the proofs depends only on the time period $T$ and the size $R_i$ of force. For the sake of simplicity we shall omit such dependence if there is no danger of confusion.}

\subsection{General Carleman estimate}\label{Section-Carleman}

In this subsection, we state a technical result which is crucial to both global dissipation and control theory. We temporarily consider the general nonlinear Schr\"odinger equations of the form
\begin{equation}\label{Problem-Carleman}
iu_t+u_{xx}=g(|u|^2)u+h(t,x).
\end{equation}
Our setting for the functions $g$ and $h$ will cover the following two cases:
\begin{itemize}
\item[$(1)$] $g(r)=r^{(p-1)/2}$ and $h(t,x)=-ia(x)u+f(t,x)$, which is the case of \eqref{Nonlinear-problem}. The corresponding conclusion is fundamental to proving the flux estimate in Section {\rm\ref{Section-dissipation}}.

\item[$(2)$] $g(r)\equiv 0$ and $h(t,x)=V_1(t,x)u+V_2(t,x) \bar{u}$, where $V_i$ stand for complex-valued potentials. 
The corresponding conclusion will indicate the unique continuation property for the linear Schr\"odinger equation, which is useful in  establishing observability inequalities for the associated controlled system; see Section {\rm\ref{Section-LF}} and Appendix {\rm\ref{Appendix-control}}.
\end{itemize}

For an arbitrarily given open subset $\mathcal I$ of $\T$,
we pick up a function $\phi\in C^\infty(\T;\R)$ which is linear outside of $\mathcal{I}$. More precisely, there exists a constant $c>0$ with
\begin{equation}\label{weight-function-0}
\phi'(x)=c,\ \phi''(x)=0,
\ \forall x\in \T\setminus\overline{\mathcal I}.
\end{equation}
For $T>0$ and $\lambda\geq 1$ we also introduce the functions $\alpha,\beta\colon Q_T\rightarrow \R^+$ by
\begin{equation*}
\alpha(t,x)=\frac{e^{4\lambda \|\phi\|_{_{\infty}}}-e^{\lambda(\phi(x)+2\|\phi\|_{_{\infty}})}}{t(T-t)},
\quad
\beta(t,x)=\frac{e^{\lambda(\phi(x)+2\|\phi\|_{_{\infty}})}}{t(T-t)}.
\end{equation*}

\begin{proposition}[Carleman estimate]\label{Proposition-Carleman}
Let $h\in L^2(0,T;H^1)$. Assume that the nonlinear function $g\colon \mathbb{R}^+\to \mathbb{R}$ is smooth, and its primitive function $G(r):=\int_0^rg(\tau)d\tau$ satisfies 
\[\Psi(r):=rg(r)-G(r)\ge 0,\ \forall r\ge 0.\]
Then there exists $C>0$ such that for every $T>0$, there are constants $s_0,\lambda_0\geq 1$ satisfying 
\begin{equation*}
\begin{aligned}
&s^3\lambda^4\int_{Q_T}\beta^3 e^{-2s\alpha}|u|^2+s\lambda^2 \int_{Q_T}\beta e^{-2s\alpha}| u_x|^2+s^2\lambda^2\int_{Q_T}\beta^2e^{-2s\alpha}\Psi(|u|^2)\\
&\le C\left[s^3\lambda^4\int_{q_T}\beta^3e^{-2s\alpha}|u|^2+s\lambda^2\int_{q_T}\beta e^{-2s\alpha}| u_x|^2+s^2\lambda^2\int_{q_T}\beta^2e^{-2s\alpha}\Psi(|u|^2)+\int_{Q_T}e^{-2s\alpha}|h|^2\right]
\end{aligned}
\end{equation*}
for $s\geq s_0$ and $\lambda\geq \lambda_0$, where $q_T:=[0,T]\times\mathcal I$ and $u$ stands for a smooth solution of {\rm(\ref{Problem-Carleman})}.
\end{proposition}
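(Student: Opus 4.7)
The plan is to follow the classical Carleman scheme adapted to Schr\"odinger operators: conjugate by the exponential weight, split the conjugated operator into self- and skew-adjoint parts with respect to complex $L^2(Q_T)$, and obtain the desired lower bound from a commutator identity. The three positive terms on the left of the claimed inequality, carrying powers $s^3\lambda^4$, $s\lambda^2$ and $s^2\lambda^2$, will arise respectively from the zeroth-order, gradient, and nonlinear pieces of the commutator. All error terms can be arranged to be either of lower order in $\lambda$ (hence absorbable for $\lambda$, $s$ large), or supported in $q_T$ (hence moved to the right-hand side), or handled by Cauchy--Schwarz. The hypothesis $\Psi \geq 0$ enters only once, precisely to give the right sign to the nonlinear commutator contribution.

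\textbf{Conjugation.} I would set $v := e^{-s\alpha} u$ and rewrite the equation as $\tilde P_1 v + P_2 v = e^{-s\alpha} h$, where
\begin{align*}
\tilde P_1 v = iv_t + v_{xx} + s^2 \alpha_x^2 v - g(|u|^2)v, \qquad P_2 v = 2s\alpha_x v_x + s\alpha_{xx} v + is\alpha_t v.
\end{align*}
Under complex $L^2(Q_T)$-adjunction, $\tilde P_1$ is self-adjoint and $P_2$ is skew-adjoint, so expanding $\|\tilde P_1 v + P_2 v\|_{L^2}^2$ gives
\begin{align*}
\|\tilde P_1 v\|_{L^2}^2 + \|P_2 v\|_{L^2}^2 + \bigl(v,\,[\tilde P_1,P_2] v\bigr)_{L^2} = \|e^{-s\alpha} h\|_{L^2}^2.
\end{align*}
Since $\alpha(t,\cdot) \to +\infty$ as $t\to 0^+$ or $t\to T^-$, the function $v$ vanishes together with all its derivatives at $t = 0, T$, which justifies every subsequent space-time integration by parts and kills boundary contributions in $t$.

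\textbf{Commutator computation.} The heart of the argument is a pointwise lower bound on $(v,\,[\tilde P_1,P_2]v)$. Using $\alpha_x = -\lambda\phi'\beta$ and $\alpha_{xx} = -\lambda\phi''\beta - \lambda^2(\phi')^2 \beta$, term-by-term evaluation yields three dominant positive contributions. The commutator $[\partial_x^2 + s^2\alpha_x^2,\, 2s\alpha_x\partial_x]$ produces, after integrations by parts, an integrand of size $4s^3\lambda^4 (\phi')^4 \beta^3 |v|^2 + 4s\lambda^2 (\phi')^2 \beta |v_x|^2$ (the $s\lambda^2\beta|v_x|^2$ piece comes from the $-4s\int \alpha_{xx}|v_x|^2$ term). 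The commutator $[-g(|u|^2),\, 2s\alpha_x\partial_x] = 2s\alpha_x g_x$, together with the identity $\partial_x\Psi(|u|^2) = |u|^2 g'(|u|^2)\partial_x |u|^2$ and one integration by parts against $|v|^2 = e^{-2s\alpha}|u|^2$, becomes
\begin{align*}
4 s^2 \lambda^2 \int_{Q_T} (\phi')^2 \beta^2 e^{-2s\alpha} \Psi(|u|^2) + \mathrm{LOT},
\end{align*}
nonnegative by hypothesis. Every remaining piece of $[\tilde P_1, P_2]$ carries either strictly smaller powers of $s\lambda$, or a factor $\phi^{(k)}$ with $k\geq 2$ (hence is supported in $\mathcal I$), or an oscillatory imaginary factor like $\alpha_{xt}$ produced by $[i\partial_t,\,2s\alpha_x\partial_x]$ and $[i\partial_t,\,is\alpha_t]$, which is handled by Cauchy--Schwarz.

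\textbf{Absorption and return to $u$.} Choosing $\lambda \geq \lambda_0$ large enough to dominate $\lambda^3$ errors by $\lambda^4$, then $s \geq s_0$ large, absorbs all lower-order pieces into the three positive terms; the pieces supported in $\mathcal I$ are placed on the right-hand side. Discarding the nonnegative $\|\tilde P_1 v\|^2 + \|P_2 v\|^2$ leaves the desired Carleman inequality in the $v$-variable. Substituting $|u|^2 = e^{2s\alpha}|v|^2$ and $e^{-2s\alpha}|u_x|^2 \leq 2|v_x|^2 + 2s^2\lambda^2(\phi')^2\beta^2|v|^2$ converts it into the stated estimate in $u$, the extra $s^2\lambda^2\beta^2|v|^2$ from $|u_x|^2$ being absorbed by the dominant $s^3\lambda^4\beta^3|v|^2$. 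The principal obstacle is the bookkeeping in the commutator step: one must track roughly a dozen terms in $[\tilde P_1, P_2]$, identify for each the integration by parts that converts it into either a sign-definite or an $\mathcal I$-localized integrand, and verify that the Schr\"odinger-specific cross terms produced by $i\partial_t$ and $is\alpha_t$ (which have no parabolic analogue) can be re-absorbed. The sign hypothesis $\Psi \geq 0$ is used precisely at one place, to close the nonlinear commutator.
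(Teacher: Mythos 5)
Your proposal is correct and follows essentially the same route as the paper: the same conjugation $v=e^{-s\alpha}u$, the same splitting of the conjugated operator (your self-adjoint/skew-adjoint pair is exactly the paper's $P_2,P_1$, and your commutator identity $(v,[\tilde P_1,P_2]v)$ is just the paper's cross term $2\operatorname{Re}\int P_1U\overline{P_2U}\le\|H\|^2$ written intrinsically), the same integration by parts turning the nonlinear commutator into $\int(4s^2\alpha_x^2-2s\alpha_{xx})e^{-2s\alpha}\Psi(|u|^2)$, and the same absorption of the $\alpha_{tx}$ cross term and of the $\mathcal I$-localized pieces. The only differences are presentational (keeping $\|\tilde P_1v\|^2+\|P_2v\|^2$ until the end rather than discarding it at the outset, and making the final $v\to u$ conversion of the gradient term explicit, which costs only a constant factor).
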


We point out that when the equation \eqref{Problem-Carleman} is well-posed, then this proposition actually holds for any mild solution $u\in C([0,T];H^1)$, according to standard approximations. 

The proof of Proposition \ref{Proposition-Carleman} follows by adapting the arguments from \cite{Laurent-10,LM-23} (see also, e.g.~\cite{IK-04}, for related work on Carleman inequalities for Schr\"{o}dinger equations). Indeed, \cite{Laurent-10} addresses the linear Schr\"{o}dinger equations with potentials ($g\equiv 0$ and $h=V_1u+V_2 \bar{u}$), while \cite{LM-23} considers cubic NLS ($g(r)=r$ and $h=-iau+f$). Since the method is independent from our main purpose, we leave the details to Appendix~\ref{appendix_Carleman}.

\subsection{Global dissipation}\label{Section-dissipation}

This subsection establishes $H^1$-absorbing set for NLS equation \eqref{Nonlinear-problem}.

\begin{proposition}[$H^1$-absorbing set]\label{Theorem-absorbing}
Let $R_1>0$ be arbitrarily given. Then there exists a bounded subset $\mathscr B_1$ of $H^1$ and a constant $C_1>0$ such that
\begin{equation}\label{inclusion-absorbing}
u(t)\in\mathscr B_1,\ \forall t\geq C_1\log (1+E_u(0))
\end{equation}
for any $u_0\in H^1$ and $f\in \overline{B}_{L^2_b(\R^+;H^1)}(R_1)$, where $u(t)$ stands for the solution of {\rm(\ref{Nonlinear-problem})}.
\end{proposition}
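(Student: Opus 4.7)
The plan is to combine the general Carleman estimate of Proposition~\ref{Proposition-Carleman} with the standard $H^1$-energy identity to produce a contraction of the form $E_u(T)\le q\,E_u(0)+C(R_1)$ with $q\in(0,1)$, and then to iterate over successive time windows $[nT,(n+1)T]$. This follows the two-step procedure sketched in Section~\ref{Section-Scheme-AC}.

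First I would apply Proposition~\ref{Proposition-Carleman} to (\ref{Nonlinear-problem}) with $\mathcal I=\mathcal I_1$, $g(r)=r^{(p-1)/2}$ (so that $\Psi(r)=\tfrac{p-1}{p+1}r^{(p+1)/2}$ is a positive multiple of $|u|^{p+1}$), and source $h=-ia(x)u+f$. Since $|h|^2\le 2\|a\|_{\infty}^2|u|^2+2|f|^2$, the contribution $\int_{Q_T}e^{-2s\alpha}a^2|u|^2$ is absorbed into the left-hand side by taking $s,\lambda$ large, while the localized right-hand integrals over $q_T=[0,T]\times\mathcal I_1$ are dominated by the full damping flux thanks to $a(x)\ge a_0$ on $\mathcal I_1$. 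Restricting to the middle slice $[T/4,3T/4]$, where $\beta$ and $e^{-2s\alpha}$ are bounded above and below by constants depending only on $T$ and $\phi$, this should yield the observability-type inequality
\begin{equation*}
\int_{T/4}^{3T/4}E_u(t)\,dt\le C_{\rm obs}\Bigl[\int_0^T\!\!\int_\T a(x)\bigl(|u|^2+|u_x|^2+|u|^{p+1}\bigr)\,dx\,dt+\int_0^T\|f\|_{_{L^2}}^2\,dt\Bigr].
\end{equation*}

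Next I would establish, via the standard $L^2$- and $H^1$-multiplier computations, an energy identity of the form
\begin{equation*}
E_u(t)+\int_s^t\!\!\int_\T a(x)\bigl(|u|^2+|u_x|^2+|u|^{p+1}\bigr)\,dx\,d\tau=E_u(s)+\mathcal N_f(u;s,t),\quad [s,t]\subset[0,T],
\end{equation*}
and show, after substituting the equation to eliminate $u_t$ in the forcing term, that $|\mathcal N_f|\le \varepsilon\sup_{[0,T]}E_u+C(\varepsilon,T,R_1)$. The mean-value theorem applied to the observability inequality above furnishes $t^*\in[T/4,3T/4]$ with $E_u(t^*)\le \tfrac{2C_{\rm obs}}{T}\bigl[D(T)+C(R_1)\bigr]$, where $D(T)$ denotes the damping flux over $[0,T]$. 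Using the energy identity on $[0,T]$ to bound $D(T)\le E_u(0)-E_u(T)+|\mathcal N_f|$ and on $[t^*,T]$ to bound $E_u(T)\le E_u(t^*)+|\mathcal N_f|$, then combining and absorbing the $\varepsilon$-terms, one arrives at $E_u(T)\le q\,E_u(0)+C(T,R_1)$ for some $q=q(T)\in(0,1)$. Iterating this contraction on consecutive intervals yields $E_u(nT)\le q^nE_u(0)+C/(1-q)$, and a one-step use of the energy identity in each window keeps $E_u(t)$ comparable to $E_u(\lfloor t/T\rfloor T)$ up to an additive constant. Choosing $C_1$ of order $T/|\log q|$ then gives $E_u(t)\le 1+2C/(1-q)$ for all $t\ge C_1\log(1+E_u(0))$, and the absorbing set may be taken as $\mathscr B_1=\{v\in H^1:E(v)\le 1+2C/(1-q)\}$.

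The main obstacle I anticipate is controlling the forcing term $\mathrm{Re}\int f\,\bar u_t$ arising from the $H^1$-multiplier: no time regularity of $f$ is available, so an integration by parts in time is unavailable. The resolution is to substitute the equation to replace $u_t$ by $iu_{xx}-au-i|u|^{p-1}u-if$, converting this contribution into $L^2$-pairings of $f$ with $u_{xx}$, $au$ and $|u|^{p-1}u$; these are then handled by Cauchy--Schwarz using $f\in L_b^2(\R^+;H^1)$ together with the functional $E_u$ itself, followed by an $\varepsilon$-Young argument to absorb the resulting $E_u$-factors. A subsidiary difficulty is that the Carleman constants depend on $T$, so the contraction constant $q<1$ must be preserved uniformly across the iteration; since $T$ is fixed throughout Proposition~\ref{Theorem-absorbing}, this is automatic.
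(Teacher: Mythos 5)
Your overall architecture matches the paper's: Carleman estimate $\Rightarrow$ flux/observability inequality on $[T/4,3T/4]$ $\Rightarrow$ a one-step dissipative contraction $\Rightarrow$ iteration plus an a priori bound within each window. The treatment of $\mathrm{Re}\int f\bar u_t$ by substituting the equation for $u_t$ is also exactly what the paper does in Lemma~\ref{Lemma-fluxestimate}. However, there is a genuine gap at the center of your argument: the ``energy identity'' you posit,
\begin{equation*}
E_u(t)+\int_s^t\!\!\int_\T a(x)\bigl(|u|^2+|u_x|^2+|u|^{p+1}\bigr)=E_u(s)+\mathcal N_f,
\end{equation*}
is not correct for localized damping. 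Eliminating the cross term $\re\int ia(x)u\bar u_t$ requires also multiplying the equation by $a(x)\bar u$, and integrating $\re\int a(x)u_{xx}\bar u$ by parts produces the extra term $\tfrac12\int_\T a''(x)|u|^2$ (see \eqref{energy-2}). This term vanishes only when $a$ is constant; here it has no sign and is bounded only by $\|a''\|_\infty\int_{Q_T}|u|^2$, which is of the same order as $\int_0^T E_u$ with a constant that is not small, so it can be neither dropped nor absorbed by an $\varepsilon$-Young argument. It is precisely the manifestation of the localized damping that the proposition is designed to overcome.

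The paper's resolution is the part your proposal is missing. One integrates by parts to write $\tfrac12\int a''|u|^2=-\tfrac12\int a'\partial_x|u|^2$, invokes the Glaeser-type inequality $|a'(x)|^2\le \varepsilon_0^2+2\varepsilon_0 L a(x)$ (valid for smooth $a\ge 0$, cf.~\eqref{estimate-17}) with $\varepsilon_0=C_2^{-1}T^{-1}/2$ calibrated against the constant $C_2$ of the flux estimate, so that the resulting $\varepsilon_0\int_0^T E_u$ is eaten by half of the good term coming from Lemma~\ref{Lemma-fluxestimate}; the leftover $L\int_{Q_T}a|u|^2$ is then converted via the $L^2$-identity \eqref{energy-1} into $\tfrac{L}{2}(\|u(0)\|_{L^2}^2-\|u(T)\|_{L^2}^2)$, which forces one to run the contraction for the modified energy $\tilde E_u=E_u+\tfrac{L}{2}\|u\|_{L^2}^2$ rather than for $E_u$ itself. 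Without this step your claimed contraction $E_u(T)\le qE_u(0)+C(T,R_1)$ does not follow from the estimates you have assembled.
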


We recall two identities on the $L^2$- and $H^1$-energies of (\ref{Nonlinear-problem}), respectively. 
Firstly, multiplying \eqref{Nonlinear-problem} by $u$ and taking the imaginary part, we get
\begin{equation}\label{energy-1}
\frac{1}{2}\frac{d}{dt} \int_{\T}|u|^2=-\int_{\T}a(x)|u|^2+\int_{\T}{\rm Im}(f\bar u).
\end{equation}
Secondly, multiplying \eqref{Nonlinear-problem} by $\bar{u}_t$ and taking the real part; then in order to remove the term $\re \, (\int_{\mathbb{T}} ia(x)u\bar{u}_t)$, multiplying \eqref{Nonlinear-problem} by $a(x)\bar{u}$ and taking the real part, we get
\begin{equation}\label{energy-2}
\begin{aligned}
\frac{d}{dt}E_u(t)&=-\int_{\T}a(x)\left(|u|^2+| u_x|^2+|u|^{p+1}\right)+\frac{1}{2}\int_{\T}a''(x)|u|^2\\
&\quad +\int_{\T}\left(
{\rm Im}(f\bar u)-a(x){\rm Re}(f\bar u)-{\rm Re}(f\bar u_t)
\right).
\end{aligned}
\end{equation}

As previously mentioned, we need the following technical lemma.

\begin{lemma}[Flux estimate]\label{Lemma-fluxestimate}
Given $T>0$, there exists a constant $C_2>0$ such that
\begin{equation*}
\begin{aligned}
E_u(t)\leq  &\, C_2\left[\int_{Q_T}a(x)\left(|u|^2+| u_x|^2+|u|^{p+1}\right)+\int_{Q_T}|f|^2+\int_0^T\|f(t)\|_{_{H^1(\mathbb{T})}}\left(
E_u^{1/2}+E_u^{p/(p+1)}
\right)\right]
\end{aligned}
\end{equation*}
for any $t\in [0,T]$, $u_0\in H^1$ and $f\in L^2(0,T;H^1)$, where $u(t)$ stands for the solution of {\rm(\ref{Nonlinear-problem})}.
\end{lemma}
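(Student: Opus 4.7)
The plan is to combine the general Carleman estimate from Proposition~\ref{Proposition-Carleman} (to transport localized dissipation to the whole torus) with the energy identity \eqref{energy-2} (to transport control in time from an intermediate point to an arbitrary $t$).

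\textbf{Step 1: Carleman-based integrated estimate.} Apply Proposition~\ref{Proposition-Carleman} with $\mathcal{I}=\mathcal{I}_1$, $g(r)=r^{(p-1)/2}$ (so $\Psi(r)=\frac{p-1}{p+1}r^{(p+1)/2}$) and $h(t,x)=-ia(x)u+f$. Fix $s=s_0$, $\lambda=\lambda_0$ once and for all. The weights $\beta^k e^{-2s\alpha}$ are uniformly bounded above on $Q_T$ (the exponential at $t=0,T$ kills the polynomial blow-up of $\beta$) and uniformly bounded below by a positive constant on $[T/4,3T/4]\times\T$. Since $a(x)\ge a_0$ on $\mathcal I_1$ and $|h|^2\le 2(\|a\|_\infty a(x)|u|^2+|f|^2)$, the inequality of Proposition~\ref{Proposition-Carleman} yields
\begin{equation*}
\int_{T/4}^{3T/4}E_u(\tau)\,d\tau\le C\Bigl[\int_{Q_T}a(x)\bigl(|u|^2+|u_x|^2+|u|^{p+1}\bigr)+\int_{Q_T}|f|^2\Bigr].
\end{equation*}
By the mean value theorem, pick $t_*\in[T/4,3T/4]$ with $E_u(t_*)$ bounded by $(2/T)$ times the right-hand side above.

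\textbf{Step 2: Transport in time via \eqref{energy-2}.} Integrate \eqref{energy-2} between $t_*$ and any $t\in[0,T]$. The flux term $-\int a(|u|^2+|u_x|^2+|u|^{p+1})$ has a definite sign, so dropping it when convenient we obtain
\begin{equation*}
E_u(t)\le E_u(t_*)+\int_{Q_T}a\bigl(|u|^2+|u_x|^2+|u|^{p+1}\bigr)+\tfrac{\|a''\|_\infty}{2}\int_{Q_T}|u|^2+\int_0^T\Bigl|\int_\T\bigl(\mathrm{Im}(f\bar u)-a\,\mathrm{Re}(f\bar u)-\mathrm{Re}(f\bar u_t)\bigr)\Bigr|\,d\tau.
\end{equation*}
The $|u|^2$-integral is harmless: the $L^2$-identity \eqref{energy-1} gives $\frac{d}{dt}\|u\|_{L^2}\le \|f\|_{L^2}$, hence $\|u(\tau)\|_{L^2}^2\lesssim E_u(t_*)+\|f\|_{L^2(Q_T)}^2$, both already appearing on the right-hand side of the lemma. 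The two immediate source terms $\int\mathrm{Im}(f\bar u)$ and $\int a\,\mathrm{Re}(f\bar u)$ are bounded by $\|f\|_{L^2}\|u\|_{L^2}\lesssim \|f\|_{H^1}E_u^{1/2}$.

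\textbf{Step 3: The delicate term $\int\mathrm{Re}(f\bar u_t)$.} This is where we use that $f\in L^2(0,T;H^1)$ and that $u$ solves \eqref{Nonlinear-problem}. Writing $\bar u_t=-i\bar u_{xx}-a\bar u+i|u|^{p-1}\bar u+i\bar f$ and taking the real part gives, after integration by parts in $x$,
\begin{equation*}
\int_\T\mathrm{Re}(f\bar u_t)=-\int_\T\mathrm{Im}(f_x\bar u_x)-\int_\T a\,\mathrm{Re}(f\bar u)-\int_\T\mathrm{Im}\bigl(f|u|^{p-1}\bar u\bigr).
\end{equation*}
The first two are controlled by $\|f\|_{H^1}E_u^{1/2}$, while the last is estimated via H\"older and the one-dimensional embedding $H^1\hookrightarrow L^{p+1}$:
\begin{equation*}
\Bigl|\int_\T f|u|^{p-1}\bar u\Bigr|\le \|f\|_{L^{p+1}}\|u\|_{L^{p+1}}^p\lesssim \|f\|_{H^1}E_u^{p/(p+1)}.
\end{equation*}
Integrating in $\tau\in[0,T]$ reproduces precisely the $\int_0^T\|f\|_{H^1}(E_u^{1/2}+E_u^{p/(p+1)})$ appearing in the statement. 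Summing Steps 1--3 completes the proof.

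\textbf{Main obstacle.} The hardest piece is the time source $\int\mathrm{Re}(f\bar u_t)$, which is why the hypothesis $f\in L^2(0,T;H^1)$ (rather than only $L^2(Q_T)$) is essential: one must substitute the equation to trade $u_t$ for spatial derivatives and the nonlinearity, and the emerging term $\int f|u|^{p-1}\bar u$ forces the appearance of the nonstandard exponent $p/(p+1)$. Everything else is a careful but routine combination of Proposition~\ref{Proposition-Carleman}, the weight asymptotics, and the energy identities.
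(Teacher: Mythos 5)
Your proposal is correct and takes essentially the same route as the paper's proof: the Carleman estimate of Proposition~\ref{Proposition-Carleman} (with $\mathcal I=\mathcal I_1$, using the lower bound $a\ge a_0$ on $\mathcal I_1$ and the weight asymptotics) controls the energy on the interior time interval $[T/4,3T/4]$, the energy identity \eqref{energy-2} transports this to arbitrary $t\in[0,T]$, and the term $\int\re(f\bar u_t)$ is handled exactly as in the paper by substituting the equation, integrating by parts to produce $\|f\|_{H^1}E_u^{1/2}$, and using H\"older/Sobolev for the nonlinear piece to produce $\|f\|_{H^1}E_u^{p/(p+1)}$. The only cosmetic differences are that you select a single time $t_*$ by the mean value theorem where the paper averages over $t'\in[T/4,3T/4]$, and that you control $\int_{Q_T}|u|^2$ directly from the $L^2$ identity and $E_u(t_*)$ rather than via the paper's intermediate estimate \eqref{L2-estimate}.
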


\begin{proof}[{\bf Proof of Lemma \ref{Lemma-fluxestimate}}]
Due to well-posedness in $H^s\,(s\geq 1)$, it suffices to consider smooth solutions. Indeed, one can use smooth initial data and forces to approximate $H^1$-solution. 

Substituting $u_t$ into \eqref{energy-2} via \eqref{Nonlinear-problem}, we observe that
\begin{equation*}
\begin{aligned}
\left|\frac{d}{dt} E_u(t)\right|&\le C\left[\int_{\mathbb{T}}a(x)(|u|^2+| u_x|^2+|u|^{p+1})+\int_{\mathbb{T}}|u|^2 \right.\\
&\qquad\quad+\|f(t)\|_{_{L^2}}
E_u^{1/2}+\|f(t)\|_{_{L^\infty}}E_u^{p/(p+1)}\\
&\qquad\quad\left.+|\int_\T \re [f\left(-i\bar u_{xx}-a\bar u+i|u|^{p-1}\bar u+i\bar f\right)]|\right].\\
\end{aligned}
\end{equation*}
To treat the last integral, note that
\[|\int_{\mathbb{T}} \re (-if\bar{u}_{xx})|\le \int_{\mathbb{T}} |f_x \bar{u}_x|\le \|f(t)\|_{_{H^1}} \|u(t)\|_{_{H^1}} \le \sqrt{2}\|f(t)\|_{_{H^1}} E_u^{1/2}(t).\]
Owing to $\re (f\cdot i\bar{f})=0$ and $H^1(\mathbb{T})\hookrightarrow L^\infty(\mathbb{T})$, we obtain
\begin{equation*}
\begin{aligned}
E_u(t)&\leq  E_u(t')+C\left[\int_{Q_T}a(x)(|u|^2+| u_x|^2+|u|^{p+1})+\int_{Q_T}|u|^2\right.\\ 
&\qquad\qquad\qquad\quad\left.+\int_0^T\|f\|_{_{H^1}}\left(
E_u^{1/2}+E_u^{p/(p+1)}
\right)\right]
\end{aligned}
\end{equation*}
for any $t,t'\in [0,T]$. Integrating with respect to $t'\in[T/4,3T/4]$, it then follows that
\begin{equation}\label{estimate-5}
\begin{aligned}
E_u(t)&\leq  C\left[\int_{T/4}^{3T/4}E_u+\int_{Q_T}a(x)(|u|^2+| u_x|^2+|u|^{p+1})\right.\\
&\,\quad\quad\left.+\int_{Q_T}|u|^2 +\int_0^T\|f\|_{_{H^1}}\left(
E_u^{1/2}+E_u^{p/(p+1)}
\right)\right].
\end{aligned}
\end{equation}
Obviously, it suffices to deal with the first and third integrals on the RHS of (\ref{estimate-5}).

Applying Proposition \ref{Proposition-Carleman} (with $\mathcal I=\mathcal I_1$) to NLS equation (\ref{Nonlinear-problem}), one can 
deduce that
\begin{equation*}
\begin{aligned}
&s^3\lambda^4\int_{Q_T}\beta^3e^{-2s\alpha}|u|^2+s\lambda^2\int_{Q_T}\beta e^{-2s\alpha}| u_x|^2+s^2\lambda^2\int_{Q_T}\beta^2e^{-2s\alpha}|u|^{p+1}\\
&\le C\left[ s^3\lambda^4\int_{q_T}\beta^3e^{-2s\alpha}|u|^2+s\lambda^2\int_{q_T}\beta e^{-2s\alpha}| u_x|^2+s^2\lambda^2\int_{q_T}\beta^2e^{-2s\alpha}|u|^{p+1}\right.\\
&\quad \quad\quad\left.+\int_{Q_T}e^{-2s\alpha}|-ia(x)u+f|^2\right],
\end{aligned}
\end{equation*}
for $s,\lambda$ large enough. Thanks to setting $(\mathbf{S1})$, as $a(x)\ge a_0>0$ on $\mathcal{I}_1$, we have
\begin{equation}\label{Obs-1}
\begin{aligned}
\int_{T/4}^{3T/4}E_u&\leq C \left[\int_{Q_T}a(x)\left(|u|^2+| u_x|^2+|u|^{p+1}\right)+\int_{Q_T}|f|^2\right].
\end{aligned}
\end{equation}

At the same time, one can recall (\ref{energy-1}) to find that
\begin{align*}
\|u(t)\|^2_{_{L^2}}&\leq \|u(t')\|^2_{_{L^2}}+2\int_{Q_T}a(x)|u|^2+2\int_{Q_T}|f||u|\\
&\leq \|u(t')\|^2_{_{L^2}}+2\int_{Q_T}a(x)|u|^2+2\int_0^T \|f(t)\|_{_{L^2}} \|u(t)\|_{_{L^2}}
\end{align*}
for any $t,t'\in[0,T]$. Integrating with respect to $t'\in[T/4,3T/4]$ and noticing (\ref{Obs-1}), we find
\begin{equation}\label{L2-estimate}
\|u(t)\|^2_{_{L^2}}\leq C\left[\int_{Q_T}a(x)\left(|u|^2+| u_x|^2+|u|^{p+1}\right)+\int_{Q_T}|f|^2+\int_0^T \|f\|_{_{H^1}} E_u^{1/2}\right].
\end{equation}

Finally, substituting (\ref{Obs-1}) and (\ref{L2-estimate}) into (\ref{estimate-5}), we conclude the proof.
\end{proof}

With the help of the flux estimate, we proceed to verify a dissipative estimate. To this end, for $T>0$ arbitrarily given, we introduce a modified energy functional
\begin{equation}\label{modifiedenergy}
\tilde{E}_u(t):=E_u(t)+\frac{L}{2}\|u(t)\|_{L^2}^2.
\end{equation}
Here, $L=L(T)$ is a constant specified as follows:
\begin{itemize}
\item Let $\varepsilon_0:=C_2^{-1}T^{-1}/2>0$, where $C_2=C_2(T)$ appeared in Lemma~\ref{Lemma-fluxestimate}. One can derive by contradiction that, there exists a constant $L>0$ such that (cf.~\cite[Lemma 4.1]{LM-23})
\begin{equation}\label{estimate-17}
|a'(x)|^2\leq \varepsilon_0^2+2\varepsilon_0 La(x),\ \forall x\in\T.
\end{equation}
\end{itemize}
In particular, the modified energy is equivalent to $E_u(t)$, as obviously
\begin{equation}\label{energy-equivalent}
E_u(t)\le \tilde{E}_u(t)\le (1+L)E_u(t).
\end{equation}

\begin{lemma}[$H^1$-dissipation]\label{Lemma-discretemono}
Let $T,R_1>0$ be arbitrarily given. Then there exist constants $q\in (0,1)$ (depending only on $T$) and $C>0$, such that
\[\tilde{E}_u(T)\le q\tilde{E}_u(0)+C\|f\|_{_{L^2_b(\mathbb{R}^+;H^1)}}\]
for any $u_0\in H^1$ and $f\in\overline{B}_{L^2_b(\R^+;H^1)}(R_1)$, where $u(t)$ stands for the solution of \eqref{Nonlinear-problem}.
\end{lemma}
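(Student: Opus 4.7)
\emph{Strategy and modified energy identity.} My plan is to derive a dissipation inequality for $\tilde E_u$ on $[0,T]$ and combine it with the flux estimate of Lemma~\ref{Lemma-fluxestimate} to extract a strict contraction. The auxiliary term $\tfrac{L}{2}\|u\|_{L^2}^2$ in \eqref{modifiedenergy} is tailored so that, after the integration by parts $\tfrac{1}{2}\int_{\T}a''|u|^2=-\int_{\T}a'\,\re(u\bar u_x)$ followed by Young's inequality and \eqref{estimate-17}, the unavoidable contribution $L\int_{\T}a|u|^2$ cancels exactly the damping term produced by \eqref{energy-1}. As in Lemma~\ref{Lemma-fluxestimate}, it is enough to argue on smooth solutions and pass to the $H^1$-limit by density. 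Multiplying \eqref{energy-1} by $L$, adding to \eqref{energy-2}, and disposing of $\re(f\bar u_t)$ by substituting $u_t$ from \eqref{Nonlinear-problem} before one last integration by parts (so that the two $\int_{\T}a\,\re(f\bar u)$ terms also cancel), the remaining source is pointwise bounded by $C\|f(t)\|_{H^1}\bigl(E_u^{1/2}+E_u^{p/(p+1)}\bigr)$, which after integrating in time yields
\[\tilde E_u(T)\le \tilde E_u(0)-\int_{Q_T}a\bigl(|u|^2+|u_x|^2+|u|^{p+1}\bigr)+\varepsilon_0\int_0^T\! E_u\,dt+C\int_0^T\!\|f\|_{H^1}\bigl(E_u^{1/2}+E_u^{p/(p+1)}\bigr)dt.\]

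\emph{Absorption via the flux estimate.} Set $M:=\sup_{t\in[0,T]}E_u(t)$. Applying Lemma~\ref{Lemma-fluxestimate} pointwise and absorbing its nonlinear $\|f\|_{H^1}(E^{1/2}+E^{p/(p+1)})$ contributions into an arbitrarily small fraction of $M$ via Young's inequality yields, for every $\delta>0$,
\[M\le (C_2+\delta)\int_{Q_T}a\bigl(|u|^2+|u_x|^2+|u|^{p+1}\bigr)+C(T,\delta,R_1)\|f\|_{L^2_b(\R^+;H^1)},\]
where the $f$-constant absorbs higher powers $\|f\|^k$ ($k\ge 1$) using the a priori bound $\|f\|_{L^2_b(\R^+;H^1)}\le R_1$. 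The specific value $\varepsilon_0=1/(2TC_2)$ then produces $\varepsilon_0\int_0^T E_u\,dt\le \varepsilon_0 TM\le \tfrac{C_2+\delta}{2C_2}\int_{Q_T}a(\cdot)+C(T,R_1)\|f\|_{L^2_b(\R^+;H^1)}$, and choosing $\delta$ so small that $(C_2+\delta)/(2C_2)<3/4$ upgrades the previous display to
\[\tilde E_u(T)\le \tilde E_u(0)-\tfrac{1}{4}\int_{Q_T}a\bigl(|u|^2+|u_x|^2+|u|^{p+1}\bigr)+C(T,R_1)\|f\|_{L^2_b(\R^+;H^1)}.\]

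\emph{Closing the contraction.} Using \eqref{energy-equivalent} together with the flux estimate once more, $\tilde E_u(T)\le(1+L)M\le(1+L)(C_2+\delta)\int_{Q_T}a(\cdot)+C(T,R_1)\|f\|_{L^2_b(\R^+;H^1)}$, whence
\[\int_{Q_T}a\bigl(|u|^2+|u_x|^2+|u|^{p+1}\bigr)\ge \kappa\,\tilde E_u(T)-C(T,R_1)\|f\|_{L^2_b(\R^+;H^1)},\qquad \kappa:=[(1+L)(C_2+\delta)]^{-1}.\]
Substituting into the previous display produces $(1+\kappa/4)\tilde E_u(T)\le \tilde E_u(0)+C(T,R_1)\|f\|_{L^2_b(\R^+;H^1)}$, which is the claim with $q:=(1+\kappa/4)^{-1}\in(0,1)$. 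Since $\kappa$, $L$ and $C_2$ all depend only on $T$, so does $q$.

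\emph{Main obstacle.} The tightness of the middle step is the crux of the argument. The choice $\varepsilon_0=1/(2TC_2)$ is exactly borderline: a crude absorption yielding only $M\le 2C_2\int a(\cdot)$ would already force $\varepsilon_0 TM\le \int a(\cdot)$ and wipe out the damping gain altogether. One is therefore forced to handle the superlinear terms $\|f\|_{H^1}(E^{1/2}+E^{p/(p+1)})$ of Lemma~\ref{Lemma-fluxestimate} by Young's inequality with an arbitrarily small $\epsilon M$-term, so that the effective constant in front of $\int a(\cdot)$ can be pushed to $C_2+\delta$ for any prescribed $\delta>0$. Preserving the purely linear dependence $C\|f\|_{L^2_b(\R^+;H^1)}$ (rather than $\|f\|^{p+1}$) on the right-hand side of the conclusion is precisely why the a priori bound $\|f\|_{L^2_b(\R^+;H^1)}\le R_1$ has to enter into the constant $C$.
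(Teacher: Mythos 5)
Your proof is correct and follows essentially the same route as the paper's: the same modified energy $\tilde E_u$ with the same constant $L$ from \eqref{estimate-17}, the same flux estimate (Lemma~\ref{Lemma-fluxestimate}) with the same critical choice $\varepsilon_0=\tfrac{1}{2}C_2^{-1}T^{-1}$, and the same Young-inequality absorption of the superlinear terms $\|f\|_{H^1}\bigl(E_u^{1/2}+E_u^{p/(p+1)}\bigr)$ using $\|f\|_{L^2_b(\R^+;H^1)}\le R_1$. The only (cosmetic) difference is the endgame: the paper applies the flux estimate at $t=0$ to subtract a fixed fraction of $\tilde E_u(0)$ from the right-hand side, whereas you lower-bound $\int_{Q_T}a\bigl(|u|^2+|u_x|^2+|u|^{p+1}\bigr)$ by $\kappa\,\tilde E_u(T)$ and move it to the left; both manipulations of the same inequalities yield a contraction factor $q$ depending only on $T$.
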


\begin{proof}[{\bf Proof of Lemma \ref{Lemma-discretemono}}]
The proof will be divided into two steps.

\textit{Step 1: apriori estimate for $\tilde{E}_u(t)$.} Taking the energy identities \eqref{energy-1} and \eqref{energy-2} into account, and substituting $u_t$ via \eqref{Nonlinear-problem}, and noticing $H^1(\mathbb{T})\hookrightarrow L^\infty(\mathbb{T})$,
we find that for $f\in\overline{B}_{L^2_b(\R^+;H^1)}(R_1)$,
\begin{equation*}
\begin{aligned}
\frac{d}{dt}\tilde{E}_u(t)&=L\cdot \text{(RHS of \eqref{energy-1})}+\text{(RHS of \eqref{energy-2})}\\
&\leq C\left[\|f(t)\|_{_{L^2}}E_u^{1/2}(t)+E_u(t)+|\int_\T \re [f\left(-i\bar u_{xx}-a\bar u+i|u|^{p-1}\bar u+i\bar f\right)]|\right]\\
&\le C\left[E_u(t)+C\|f(t)\|_{_{H^1}}(E_u^{1/2}(t)+E_u^{p/(p+1)}(t))
\right]\\
&\le C(\|f\|_{_{H^1}}+1)(E_u(t)+1)\le C(\|f\|_{_{H^1}}+1)(\tilde{E}_u(t)+1).
\end{aligned}
\end{equation*}
Thanks to the Gronwall inequality, we find a constant $K=K(T,R_1)>0$ satisfying
\begin{equation}\label{E_uapriori}
\tilde{E}_u(t)\le K(\tilde{E}_u(0)+1),\ \forall t\in [0,T].
\end{equation}

\textit{Step 2: apply flux estimate.} Using (\ref{energy-2}) again we derive that
\begin{equation}\label{estimate-15}
\begin{aligned}
E_{u}(T)-E_{u}(0)
\leq &\,-\int_{Q_T}a(x)(|u|^2+| u_x|^2+|u|^{p+1})+\frac{1}{2}\int_{Q_T}a''(x)|u|^2\\
&\, +C \int_0^T \|f\|_{_{H^1}}\left(E_{u}^{1/2}+E_{u}^{p/(p+1)}\right).
\end{aligned}
\end{equation}
Then, we apply Lemma \ref{Lemma-fluxestimate} in two ways: first by integrate over $t\in [0,T]$, and second by taking $t=0$. Summing up two estimates yields that
\begin{equation}\label{estimate-14}
\begin{aligned}
-\int_{Q_T}a(x)(|u|^2+| u_x|^2+|u|^{p+1})\le &-\frac{C_2^{-1}T^{-1}}{2}\int_0^TE_{u}-\frac{C_2^{-1}}{2} E_u(0)\\
&+C\int_{Q_T} |f|^2+C\int_0^T \|f\|_{_{H^1}}\left(E_{u}^{1/2}+E_{u}^{p/(p+1)}\right).
\end{aligned}
\end{equation}
We also deduce by (\ref{estimate-17}) that
\begin{align*}
    &\frac{1}{2}\int_{Q_T}a''(x)|u|^2=-\frac{1}{2}\int_{Q_T} a'(x)\partial_x|u|^2\leq \int_{Q_T}|a'(x)||u_x||u|\\
    &\leq \frac{\varepsilon_0}{2} \int_{Q_T}| u_x|^2+\frac{1}{2\varepsilon_0}\int_{Q_T}| a'(x)|^2|u|^2\leq \frac{\varepsilon_0}{2}\int_{Q_T} |u_x|^2 +\frac{\varepsilon_0
}{2}\int_{Q_T}|u|^2  +L\int_{Q_T}a(x)|u|^2.
\end{align*}
Substituting the last expression by \eqref{energy-1} yields
\begin{equation}\label{estimate-16}
\frac{1}{2}\int_{Q_T}a''(x)|u|^2\leq \varepsilon_0 
\int_0^TE_{u}+
\frac{L}{2}\|u(0)\|^2_{_{L^2}}-\frac{L}{2}\|u(T)\|^2_{_{L^2}}+C\int_0^T \|f\|_{_{H^1}}E_{u}^{1/2}.
\end{equation}

Putting (\ref{estimate-15})-(\ref{estimate-16}) together, and invoking \eqref{energy-equivalent} and \eqref{E_uapriori}, we obtain  (recall $\varepsilon_0=C_2^{-1}T^{-1}/2$)
\begin{align*}
    \tilde{E}_{u}(T)-\tilde{E}_{u}(0)&\le -\frac{C_2^{-1}}{2} E_{u}(0)+C\int_{Q_T} |f|^2+C \int_0^T \|f\|_{_{H^1}}\left(E_{u}^{1/2}+E_{u}^{p/(p+1)}\right)\\
    &\le -\frac{C_2^{-1}}{2(1+L)} \tilde{E}_{u}(0)+C\|f\|_{_{L^2_b(\mathbb{R}^+;H^1)}}^2+C\|f\|_{_{L^2_b(\mathbb{R}^+;H^1)}}(\tilde{E}^{1/2}_{u}(0)+\tilde{E}^{p/(p+1)}_{u}(0)+1).
\end{align*}
Thanks to the Young inequality and $\|f\|_{_{L^2_b(\mathbb{R}^+;H^1)}}\le R_1$, we have
\begin{align*}
C\|f\|_{_{L^2_b(\mathbb{R}^+;H^1)}}\left(\tilde{E}^{1/2}_{u}(0)+\tilde{E}^{p/(p+1)}_{u}(0)\right)&\le \frac{C_2^{-1}}{4(1+L)} \tilde{E}_u(0)+C(\|f\|_{_{L^2_b(\mathbb{R}^+;H^1)}}^2+\|f\|_{_{L^2_b(\mathbb{R}^+;H^1)}}^{p+1})\\
&\le \frac{C_2^{-1}}{4(1+L)} \tilde{E}_u(0)+C\|f\|_{_{L^2_b(\mathbb{R}^+;H^1)}}.
\end{align*}
We conclude that
\[\tilde{E}_u(T)\le \left(1-\frac{C_2^{-1}}{4(1+L)}\right) \tilde{E}_u(0)+C\|f\|_{_{L^2_b(\mathbb{R}^+;H^1)}}.\]
Thus the conclusion of this Lemma follows with $q=1-\frac{C_2^{-1}}{4(1+L)}\in (0,1)$.
\end{proof}

Now both Proposition~\ref{Corollary-stability} and Proposition~\ref{Theorem-absorbing} are direct consequences of Lemma~\ref{Lemma-discretemono}.

\begin{proof}[{\bf Proof of Proposition~\ref{Corollary-stability}}]
    By setting $T=1$ and $f(t,x)\equiv 0$ in Lemma~\ref{Lemma-discretemono}, we get $\tilde{E}_u (n+1)\le q\tilde{E}_u(n)$ for some $q\in (0,1)$. In view of the apriori estimate
    \[\tilde{E}_u(n+t)\le K\tilde{E}_u(n),\ \forall t\in [0,1],\]
    whose proof is similar to that of \eqref{E_uapriori}, the conclusion easily follows with the help of \eqref{energy-equivalent}.
\end{proof}

\begin{proof}[{\bf Proof of Proposition \ref{Theorem-absorbing}}]
Let $q\in(0,1)$ and $C>0$ be established by Lemma \ref{Lemma-discretemono} with $T=1$ and the arbitrarily given $R_1$. Iterating for $n$ times, we find
\begin{equation}\label{uniform-bound}
\tilde{E}_u(n)\le q^n\tilde{E}_u(0)+\sum_{k=0}^{n-1} q^k C R_1=q^n\tilde{E}_u(0)+\frac{CR_1}{1-q}.
\end{equation}
In particular, if $n\ge |\log q|^{-1} \log (1+\tilde{E}_u(0))-1$, then $\tilde{E}_u(n)\le q^{-1}+CR_1/(1-q)$.

Thanks to the apriori estimate \eqref{E_uapriori}, there exists a constant $K=K(R_1)>0$ such that
\begin{equation*}
\tilde{E}_u(n+t)\le K (\tilde{E}_u(n)+1),\ \forall t\in [0,1].
\end{equation*}
Therefore, we conclude that with the bounded set $\mathscr{B}_1\subset H^1$ defined by
\[\mathscr{B}_1=\{v\in H^1; \tilde{E}(v)\le K(q^{-1}+CR_1/(1-q)+1)\},\]
we have $u(t)\in \mathscr{B}_1$ for $t\ge |\log q|^{-1}\log (1+\tilde{E}_u(0))$. And the proof is complete owing to \eqref{energy-equivalent}.
\end{proof}

\begin{remark}\label{boundednessH^1}
From the proof one can in fact derive the uniform-in-time boundedness of solution map $(u_0,f)\mapsto u(t)${\rm:} given any $R>0$, there exists a constant $C=C(R)>0$ such that 
\begin{equation*}
\|u(t)\|_{_{H^1}}\le C,\ \forall t\ge 0
\end{equation*}
for any $u_0\in \overline{B}_{_{H^1}}(R)$ and $f\in \overline{B}_{_{L^2_b(\mathbb{R}^+;H^1)}}(R)$.
\end{remark}

\subsection{Nonlinear smoothing}\label{Section-AC-1}

This subsection includes a result on nonlinear smoothing. To this end, we first introduce the resonant decomposition of the multiplication operator $\mathcal T$ defined by
\begin{equation}\label{multilinear-operator}
\mathcal T(u_1,\cdots,u_p)=\prod_{l\, {\rm odd}}u_l \prod_{l\, {\rm even}}\bar u_l.
\end{equation}
The Fourier modes (in space) of $\mathcal{T}$ can be represented as
\begin{equation}\label{Fourier-transform-2}
\mathcal{F}\mathcal T(u_1,\cdots,u_p)(k)=c_p\sum_{k=k_1-k_2+\cdots+k_p}\prod_{l\, {\rm odd}}\widehat u_l(k_l) \prod_{l\, {\rm even}} \overline{\widehat u_l(k_l)},
\end{equation}
where $c_p=(2\pi)^{-(p-1)/2}$.
A configuration of frequencies $(k_1,\cdots,k_p)$ with $k_l\in\Z$ and $k=k_1-k_2+\cdots +k_p$ is called resonant, if there is an odd $m\in\{1,\cdots,p\}$ such that 
$k=k_m$. Define an auxiliary $p$-linear form $\mathcal T_R$, in which all single-resonances (i.e.~$k=k_m$ for exactly one odd $m$) appear exactly once, by setting
\begin{equation}\label{resonantfrequency}
\mathcal{F}\mathcal T_R(u_1,\cdots,u_p)(k)=c_p\sum_{\substack{m=1\\
		{\rm odd}}}^{p}\sum_{\substack{ k=k_1-k_2+\cdots+k_p\\
		k=k_m}
}\prod_{l\, {\rm odd}}\widehat u_l(k_l) \prod_{l\, {\rm even}} \overline{\widehat u_l(k_l)}.
\end{equation}
Then, the difference 
\begin{equation}\label{smoothing-frequency}
\mathcal{F}\mathcal T_N(u_1,\cdots,u_p)(k):=\mathcal{F}\mathcal T(u_1,\cdots,u_p)(k)-\mathcal{F}\mathcal T_R(u_1,\cdots,u_p)(k)
\end{equation}
involves only those $(k_1,\cdots,k_p)$ in which there is no single resonance.

It turns out that the smoothing effect arises in the operator $\mathcal T_{N}$.

\begin{lemma}[Nonlinear smoothing]\label{Lemma-smoothing}
Let $T>0,s\geq 1,b>1/2$ and $\sigma\in(0,1/4]$ be arbitrarily given. Then for every $b'\in [\sigma,1/2)$, there exists a constant $C>0$ such that 
\begin{equation*}
\|\mathcal T_N(u_1,\cdots,u_p)\|_{_{X_T^{s+\sigma,-b'}}}\leq  C\prod_{l=1}^{p}\|u_l\|_{_{X_T^{s,b}}}
\end{equation*}
for any $u_1,\cdots,u_p\in X_T^{s,b}$.
\end{lemma}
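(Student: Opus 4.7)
The strategy I would follow is the standard $X^{s,b}$ multilinear-estimate template: dualize, pass to Fourier variables via Plancherel, integrate out the modulation variables using a calculus lemma, and reduce to a purely combinatorial divisor sum in the spatial frequencies. First, using the extension property for restricted Bourgain spaces recorded in Appendix~\ref{Appendix-Bourgainspace}, I would replace $X_T^{s,b}$ by its whole-line counterpart $X^{s,b}(\mathbb{R}\times\mathbb{T})$. By duality in $X^{s+\sigma,-b'}$, it then suffices to bound
\[
\left|\iint_{\mathbb{R}\times\mathbb{T}} \mathcal T_N(u_1,\ldots,u_p)\,\bar v\,dx\,dt\right|
\lesssim \|v\|_{_{X^{-s-\sigma,b'}}}\prod_{l=1}^p\|u_l\|_{_{X^{s,b}}}
\]
for every test function $v$. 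Setting $f_l(\tau,k):=\langle k\rangle^s\langle \tau+k^2\rangle^b|\widehat u_l(\tau,k)|$ and $g(\tau,k):=\langle k\rangle^{-s-\sigma}\langle \tau+k^2\rangle^{b'}|\widehat v(\tau,k)|$, Plancherel rewrites the pairing as a multilinear $L^2$-integral over the hypersurface $k=k_1-k_2+\cdots+k_p$, $\tau=\tau_1-\tau_2+\cdots+\tau_p$, with single-resonant configurations removed.

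Next, I would apply Cauchy--Schwarz in the $(\tau_1,\ldots,\tau_p)$ variables and invoke the standard elementary calculus lemma for integrals of products of $\langle\cdot\rangle^{-2\alpha}$ factors. Since $b>1/2$ and $b+b'\leq 1$, iterated integration in the $\tau_l$'s yields
\[
\int\frac{d\tau_1\cdots d\tau_p}{\langle\tau+k^2\rangle^{2b'}\prod_l\langle\tau_l+k_l^2\rangle^{2b}}\bigg|_{\tau=\sum\pm\tau_l}
\lesssim \frac{1}{\langle\Omega\rangle^{1-\varepsilon}},
\qquad
\Omega:=k^2-k_1^2+k_2^2-\cdots-k_p^2,
\]
for any small $\varepsilon>0$, with the hypothesis $b'\geq\sigma$ ensuring the remaining divisor sum still converges. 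The lemma is thereby reduced to the purely combinatorial statement
\[
M:=\sup_{k\in\mathbb{Z}}\sum_{\substack{k=k_1-k_2+\cdots+k_p\\ \text{no single resonance}}}\frac{\langle k\rangle^{2(s+\sigma)}}{\langle\Omega\rangle^{1-\varepsilon}\prod_l\langle k_l\rangle^{2s}}<\infty,
\]
which is precisely the crux quantity already highlighted in Section~\ref{Section-strategy-AC}.

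The heart of the argument, and the step I expect to be the main obstacle, is verifying $M<\infty$. For $p=3$ the explicit factorization $\Omega=2(k-k_1)(k-k_3)$ makes this tractable: the no-single-resonance conditions $k_1\neq k$, $k_3\neq k$ force $|\Omega|\geq 2$, and the product structure reduces the double sum to a classical divisor-type estimate. For general odd $p\geq 5$, $\Omega$ no longer factors so cleanly and a case analysis is needed. My plan is a dyadic decomposition $|k_l|\sim N_l$, sorted $N_{(1)}\geq N_{(2)}\geq\cdots$: in the low regime $N_{(1)}\lesssim\langle k\rangle$, the prefactor $\langle k\rangle^{2(s+\sigma)}/\prod_l\langle k_l\rangle^{2s}$ is controlled by $\langle k\rangle^{2\sigma}$ and summability follows since $s\geq 1$; in the high-high regime $N_{(1)}\sim N_{(2)}\gg\langle k\rangle$, I would freeze $p-2$ of the frequencies and use the linear constraint $k=\sum\pm k_l$ together with a fixed value of $\Omega$ to count the remaining two-variable configurations by a divisor function of $|\Omega|$, whose $O(|\Omega|^\varepsilon)$ loss is comfortably absorbed by $\langle\Omega\rangle^{-(1-\varepsilon)}$ given the margin $\sigma\leq 1/4$.

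Everything outside this combinatorial step is routine Bourgain-space bookkeeping. A key simplification relative to the sharp estimates in \cite{McConnell-22} is that I never need a normal-form reduction or multilinear Strichartz input: the degree of smoothing $\sigma\leq 1/4$ is modest, and the latitude in the parameters $b,b',\varepsilon$ leaves enough slack that logarithmic or $\varepsilon$-losses in the divisor sum are harmless.
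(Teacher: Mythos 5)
Your skeleton (extend off $[0,T]$, dualize in $X^{s+\sigma,-b'}$, Plancherel, Cauchy--Schwarz in the modulation variables, integrate out the $\tau_l$'s by the standard calculus lemma, reduce to a supremum over $k$ of a frequency sum) is exactly the paper's Step 1. But there are two problems in what follows, one minor and one fatal.

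The minor one: with $b>1/2$ and $b'<1/2$, integrating out the $\tau_l$'s yields a modulation weight $\langle\Phi\rangle^{-2b'}$ with $\Phi=k^2-k_1^2+\cdots-k_p^2$, not $\langle\Phi\rangle^{-(1-\varepsilon)}$; since $b'$ may be as small as $\sigma$, your claimed gain is not available. This is fixable, and the hypothesis $b'\ge\sigma$ exists precisely so that $\langle\Phi\rangle^{-2b'}\lesssim\langle k\rangle^{-2\sigma}$ suffices where $|\Phi|\gtrsim|k|$.

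The fatal one is your case analysis. Your ``low regime'' $N_{(1)}\lesssim\langle k\rangle$ forces $N_{(1)}\sim\langle k\rangle$ by the linear constraint, and there the prefactor reduces to $\langle k\rangle^{2\sigma}\prod_{l\ne(1)}\langle k_l\rangle^{-2s}$; summing the remaining frequencies gives a bounded constant times $\langle k\rangle^{2\sigma}$, whose supremum over $k$ is infinite. ``Summability since $s\ge1$'' does not address this: the whole difficulty of the lemma is the extra $\sigma$ in the numerator, and nothing in your low regime cancels it. The no-single-resonance hypothesis must enter here, and your proposal never uses it beyond $p=3$. The paper's Step 2 supplies the missing combinatorial input: every configuration on $\Gamma_k^{\mathbb{Z}}$ is either singly resonant (hence killed by the definition of $\mathcal T_N$), or satisfies $|\Phi|\ge k_1^*/p$ (so $\langle\Phi\rangle^{-2b'}\lesssim\langle k\rangle^{-2b'}\le\langle k\rangle^{-2\sigma}$ cancels the loss), or satisfies $(k_2^*)^2\ge k_1^*/p$ (so summing $\langle k_2^*\rangle^{-2s}$ over $|k_2^*|\gtrsim\sqrt{|k|}$ gains $\langle k\rangle^{(1-2s)/2}\le\langle k\rangle^{-1/2}$, which beats $\langle k\rangle^{2\sigma}$ exactly because $\sigma\le 1/4$ --- this, not the high-high regime, is where that hypothesis is spent). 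Your dichotomy low/high-high is orthogonal to this trichotomy: the dangerous configurations live in your low regime with $k_2^*$ anywhere between $\sqrt{k_1^*}$ and $k_1^*$, and you have no mechanism for them. The divisor factorization $\Phi=2(k-k_1)(k-k_3)$ you invoke is a legitimate route for $p=3$ but does not generalize to $p\ge5$, which is the case the lemma must cover.
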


\begin{proof}[{\bf Proof of Lemma \ref{Lemma-smoothing}}]
The case of $p=3$ has been addressed in \cite{ET-13}, where $u_1,u_2,u_3$ are taken to be identical. Except for 
trivial modifications, the proof given there also works for distinct $u_l$. 
In the situation of odd $p\ge 5$, \cite{McConnell-22} has proved a smoothing result with $T$ sufficiently small, $u_2=\cdots=u_p$, and a wider range of $s$, using normal form reduction and multilinear Strichartz estimate. The proof presented here is more elementary, and is decomposed into four steps. 

We only prove the corresponding estimate for $u_1,\cdots,u_p\in X^{s,b}$, without restricted to time interval $[0,T]$. The restricted version follows easily by considering extensions of $u_l$.

\medskip

\textit{Step 1: reduction to frequency inequality.} This is a standard application of duality (cf.~\cite{McConnell-22}).
For $k\in\Z$ and $\tau\in\R$ we define the $(p-1)$-dimensional hyperplanes $\Gamma^{\mathbb{Z}}_k\subset \mathbb{Z}^p$ and $\Gamma^{\mathbb{R}}_\tau\subset\R^p$ by
\begin{align*}
&\Gamma^{\mathbb{Z}}_k=\{(k_1,\dots ,k_p)\in \mathbb{Z}^p;k=k_1-k_2+\cdots+k_p\},\\
&\Gamma^{\mathbb{R}}_\tau=\{(\tau_1,\dots ,\tau_p)\in \mathbb{R}^p;\tau=\tau_1-\tau_2+\cdots+\tau_p\}.
\end{align*}
Due to \eqref{Fourier-transform-2}-\eqref{smoothing-frequency}, we write the space-time Fourier transform of $\mathcal{T}_N(u_1,\cdots ,u_p)$ as
\begin{equation*}
\begin{aligned}
&\mathcal{F} \mathcal T_N(u_1,\cdots,u_p)(\tau,k)=c_p^2 \sum_{\Gamma^{\mathbb{Z}}_k}\int_{\Gamma^{\mathbb{R}}_\tau} m(k_1,\cdots ,k_p) \prod_{l\, {\rm odd}}\widehat u_l(\tau_l,k_l) \prod_{l\, {\rm even}} \overline{\widehat u_l(\tau_l,k_l)}d\tau_1\cdots d\tau_p,
\end{aligned}
\end{equation*}
where $m(k_1,\dots ,k_p)=1-(\text{number of odd }l\text{ such that }k=k_l)\in \mathbb{Z}$. 

Since the dual space of $X^{s+\sigma,-b'}$ is $X^{-s-\sigma,b'}$ (see Lemma~\ref{Lemma-Bourgainspace}), we have
\begin{equation}\label{smoothing-estimate7}
    \|\mathcal{T}_N (u_1,\cdots,u_p)\|_{_{X^{s+\sigma,-b'}}}=\sup_{\|v\|_{_{X^{-s-\sigma,b'}}}=1} \left|\int_{\mathbb{R}} \int_{\mathbb{T}} \mathcal{T}_N (u_1,\dots ,u_p)(t,x) \overline{v(t,x)} dxdt\right|.
\end{equation}
Reformulate the right-hand side via Plancherel theorem:
\begin{equation}\label{smoothing-estimate8}
\begin{aligned}
&\int_{\mathbb{R}} \int_{\mathbb{T}} \mathcal{T}_N (u_1,\dots ,u_p)(t,x) \overline{v(t,x)} dxdt\\
&= C\sum_{k\in \mathbb{Z}} \int_{\mathbb{R}} 
\left[\sum_{\Gamma^{\mathbb{Z}}_k} \int_{\Gamma^{\mathbb{R}}_\tau} m(k_1,\cdots ,k_p)\prod_{l\, {\rm odd}}\widehat u_l(\tau_l,k_l) \prod_{l\, {\rm even}} \overline{\widehat u_l}(\tau_l,k_l)d\tau_1\cdots d\tau_p\right] \overline{\widehat{v}}(\tau,k) d\tau.
\end{aligned}
\end{equation}
Let us define auxiliary functions
\begin{align*}
    &\widehat U_l(\tau_l,k_l):=\langle k_l\rangle^{s} \langle \tau_l+k_l^2\rangle^{b} \widehat u_l(\tau_l,k_l),\\
    &\widehat{V} (\tau,k):=\langle k\rangle^{-(s+\sigma)} \langle \tau+k^2\rangle^{b'}\widehat v(\tau,k),
\end{align*}
and
\[M(\tau,k)=\sum_{\Gamma^{\mathbb{Z}}_k}\int_{\Gamma^{\mathbb{R}}_\tau} |m(k_1,\cdots ,k_p)|^2\frac{\langle k\rangle^{2(s+\sigma)}\langle \tau+k^2\rangle^{-2b'}}{\prod_{l=1}^p\langle k_l\rangle^{2s}\langle \tau_l+k_l^2\rangle^{2b}}d\tau_1\cdots d\tau_p.\]
Then $\|u_l\|_{_{X^{s,b}}}=\|U_l\|_{_{L^2_{t,x}}}$ and $\|v\|_{_{X^{-s-\sigma,b'}}}=\|V\|_{L^2_{t,x}}$. Applying Cauchy--Schwarz inequality,
\begin{align*}
&\left|\sum_{k\in \mathbb{Z}} \int_{\mathbb{R}} 
\left[\sum_{\Gamma^{\mathbb{Z}}_k} \int_{\Gamma^{\mathbb{R}}_\tau} m(k_1,\cdots ,k_p)\prod_{l\, {\rm odd}}\widehat u_l(\tau_l,k_l) \prod_{l\, {\rm even}} \overline{\widehat u_l}(\tau_l,k_l)d\tau_1\cdots d\tau_p\right] \overline{\widehat{v}}(\tau,k) d\tau\right|\\
&\le \sum_{k\in \mathbb{Z}} \int_{\mathbb{R}} M(\tau,k)^{1/2} \left(\sum_{\Gamma^{\mathbb{Z}}_k} \int_{\Gamma^{\mathbb{R}}_\tau} \prod_{l=1}^p |\widehat{U}_l(\tau_l,k_l)|^2 d\tau_1\cdots d\tau_p\right)^{1/2} |\widehat{V}(\tau,k)| d\tau\\
&\le \sup_{\tau,k} M(\tau,k)^{1/2} \left(\sum_{k\in \mathbb{Z}} \int_{\mathbb{R}} \sum_{\Gamma^{\mathbb{Z}}_k} \int_{\Gamma^{\mathbb{R}}_\tau} \prod_{l=1}^p |\widehat{U}_l(\tau_l,k_l)|^2 d\tau_1\cdots d\tau_p d\tau\right)^{1/2}\left(\sum_{k\in \mathbb{Z}} \int_{\mathbb{R}}  |\widehat{V}(\tau,k)|^2 d\tau\right)^{1/2}\\
&=\sup_{\tau,k} M(\tau,k)^{1/2} \left(\sum_{k_1,\dots ,k_p\in \mathbb{Z}} \int_{\tau_1,\dots ,\tau_p\in \mathbb{R}} \prod_{l=1}^p |\widehat{U}_l(\tau_l,k_l)|^2 d\tau_1\cdots d\tau_p \right)^{1/2}\|V\|_{L^2_{t,x}}\\
&=\sup_{\tau,k} M(\tau,k)^{1/2}\|V\|_{L^2_{t,x}}\prod_{l=1}^p \|U_l\|_{L^2_{t,x}} =\sup_{\tau,k} M(\tau,k)^{1/2} \|v\|_{X^{-s-\sigma,b'}} \prod_{l=1}^p \|u_l\|_{_{X^{s,b}}}.
\end{align*}
Plugging this into \eqref{smoothing-estimate7} and \eqref{smoothing-estimate8}, we thus obtain
\[\|\mathcal{T}_N (u_1,\cdots,u_p)\|_{_{X^{s+\sigma,-b'}}}\le C \sup_{\tau,k} M(\tau,k)^{1/2} \prod_{l=1}^p \|u_l\|_{X^{s,b}}.\]
Therefore it suffices to show $\sup_{\tau,k} M(\tau,k)<\infty$.

To this end, we invoke the following estimate:
\begin{equation}\label{smoothing-estimate2}
\int_\mathbb{R} \frac{d\tau}{\langle \tau-s\rangle^{2b} \langle \tau-t\rangle^{2b}}\le \frac{C}{\langle s-t\rangle^{2b}},\ \forall s,t\in \mathbb{R}.
\end{equation}
where the constant $C$ is determined by $b>1/2$; see, e.g.~\cite[Lemma 1]{ET-13}. Owing to this, we can first sort out $\tau_1,\cdots, \tau_p$ in $M(\tau,k)$, and then apply $\langle x\rangle \langle y\rangle\gtrsim \langle x\pm y\rangle$ to get
\begin{equation*}
\begin{aligned}
M(\tau,k)\le C\sum_{\Gamma^{\mathbb{Z}}_k} \frac{|m(k_1,\cdots,k_p)|^2\langle k\rangle^{2(s+\sigma)}\langle \tau+k^2\rangle^{-2b'}}{\langle \tau-\sum_{l=1}^p (-1)^{l}k_l^2\rangle^{2b} \prod_{l=1}^p\langle k_l\rangle^{2s}}\le C\sum_{\Gamma^{\mathbb{Z}}_k} \frac{|m(k_1,\cdots,k_p)|^2\langle k\rangle^{2(s+\sigma)}}{\langle \Phi(k,k_1,\dots ,k_p)\rangle^{2b'} \prod_{l=1}^p\langle k_l\rangle^{2s}},
\end{aligned}
\end{equation*}
where $\Phi(k,k_1,\cdots,k_p):=k^2-k_1^2+k_2^2-\cdots-k_p^2$. So, it remains to address:
\begin{equation}\label{smoothing-estimate1}
\sup_{k} \left[\sum_{k=k_1-k_2+\cdots+k_p} \frac{|m(k_1,\cdots,k_p)|^2\langle k\rangle^{2(s+\sigma)}}{\langle \Phi(k,k_1,\dots ,k_p)\rangle^{2b'} \prod_{l=1}^p\langle k_l\rangle^{2s}}\right]<\infty.
\end{equation}

\medskip
\textit{Step 2: frequency decomposition.} Let $k_l^*\ (l=1,\cdots,p)$ denote the $l$-th largest number among $|k_1|,\cdots,|k_p|$. Then if $k=k_1-k_2+\cdots+k_p$, at least one of the following properties holds:
\begin{enumerate}
\item[$(A)$] There exists exactly one odd $l$ such that $k=k_l$;
	
\item[$(B)$] $|\Phi(k,k_1,\cdots,k_p)|\ge k_1^*/p$;
	
\item[$(C)$] $(k_2^*)^2\ge k_1^*/p$
\end{enumerate}
(cf. \cite[Lemma 2]{McConnell-22}). In fact, if $(A)$ fails and resonance occurs, then there are at least two odd $l_1$ and $l_2$ with $k=k_{l_1}=k_{l_2}$. Now the algebraic relation implies $k_2^*\ge k_1^*/p$ (leading to $(C)$), since otherwise $|k|\ge k_1^*-(p-1)k_2^*> k_1^*/p$ tells us $k_2^*< k_1^*/p<|k|$, a contradictory. And if there is no resonance and $(C)$ fails, let us prove $(B)$. Assume $k_1^*=|k_j|$. Then either $j$ is even, and thus
\[|\Phi|\ge k_j^2-\sum_{l\, {\rm odd}}k_l^2
\ge (k_1^*)^2-\tfrac{p+1}{2} (k_2^*)^2\ge k_1^*-\tfrac{p-1}{p}k_1^*= k_1^*/p;\]
or $j$ is odd, and thus (note that $|k-k_j|\ge 1$ as $k\not =k_j$)
\begin{equation*}
\begin{aligned}
|\Phi|&\ge |k-k_j||k+k_j|-(p-1)(k_2^*)^2\ge |k+k_j|-(p-1)(k_2^*)^2\\
&\ge 2|k_j|-(p-1)k_2^*-(p-1)(k_2^*)^2\ge k_1^*/p.
\end{aligned}
\end{equation*}

Define $\mathcal A,\mathcal B,\mathcal C$ to be the sets of $(k_1,\cdots,k_p)\in \Gamma^{\mathbb{Z}}_k$ satisfying properties $(A),(B),(C)$, respectively. Note that $m(k_1,\cdots ,k_p)=0$ on $\mathcal A$, and $|m|\le \frac{p-1}{2}$ on $\mathcal{B}$ and $\mathcal{C}$. Therefore, it suffices prove the upper bound \eqref{smoothing-estimate1} with $m(k_1,\cdots ,k_p)$ replaced by $1$ after restricted to $\mathcal B$ and $\mathcal C$.

\medskip

\textit{Step 3: estimate on $\mathcal B$.} Since $|k|\le pk_1^*$, on $\mathcal B$ we have $|\Phi|\ge |k|/p^2$. We also need the following estimate, which is similar to \eqref{smoothing-estimate2} and can be found in \cite[Lemma 2.1]{ET-13}:
\begin{equation}\label{smoothing-estimate3}
    \sum_{k\in \mathbb{Z}} \frac{1}{\langle k-m\rangle^{2s} \langle k-n\rangle^{2s}}\le \frac{C}{\langle m-n\rangle^{2s}},\ \forall m,n\in \mathbb{Z}.
\end{equation}
Let $b'\in [\sigma,1/2)$ be fixed. Then exploiting \eqref{smoothing-estimate3}, we obtain
\begin{equation}\label{smoothing-estimate4}
\begin{aligned}
&\sup_{k}\left[\sum_{\substack{(k_1,\cdots,k_p)\in \mathcal B}}\frac{\langle k\rangle^{2(s+\sigma)}}{\langle\Phi(k,k_1,\cdots,k_p)\rangle^{2b'}\prod_{l=1}^p\langle k_l\rangle^{2s}} \right]\\
&\le C \sup_{k}\left[\sum_{\substack{k=k_1-k_2+\cdots+k_p}}\frac{\langle k\rangle^{2(s+\sigma-b')}}{\prod_{l=1}^p\langle k_l\rangle^{2s}} \right]\leq C \sup_{k}\langle k\rangle^{2(\sigma-b')}<\infty.
\end{aligned}
\end{equation}

\textit{Step 4: estimate on $\mathcal C$.} We only consider the case $|k_1|=k_1^*$ and $|k_2|=k_2^*$, since other cases can be treated in the same manner, up to the change of plus and minus signs. In the present situation we have $|k_1|\ge |k|/p$ and $|k_2|^2\ge |k_1|/p\ge |k|/p^2$. Note that for any $a\ge 0$,
\begin{equation}\label{smoothing-estimate6}
\sum_{n\in \mathbb{Z}} \frac{1}{\langle n\rangle^{2s}}<\infty\quad \text{and} \sum_{n\in \mathbb{Z},\ |n|\ge a} \frac{1}{\langle n\rangle^{2s}}\le C \langle a\rangle^{1-2s}.
\end{equation}
We use \eqref{smoothing-estimate3} to sum over $k_4,\cdots ,k_p$, and then use \eqref{smoothing-estimate6} to find
\begin{align}
&\sup_{k}\left[\sum_{\substack{(k_1,\cdots,k_p)\in \mathcal C\\
|k_1|=k_1^*,\ |k_2|=k_2^*}}\frac{\langle k\rangle^{2(s+\sigma)}}{\langle\Phi(k,k_1,\cdots,k_p)\rangle^{2b'}\prod_{l=1}^p\langle k_l\rangle^{2s}} \right]\le \sup_{k}\left[\sum_{\substack{(k_1,\cdots,k_p)\in \mathcal C\\
|k_1|=k_1^*,\ |k_2|=k_2^*}}\frac{\langle k\rangle^{2(s+\sigma)}}{\prod_{l=1}^p\langle k_l\rangle^{2s}} \right]\notag\\
&\le C \sup_{k}\left[\sum_{\substack{|k_1|\ge |k|/p,\ |k_2|\ge \sqrt{|k|}/p}}\frac{\langle k\rangle^{2(s+\sigma)}}{\langle k_1\rangle^{2s}\langle k_2\rangle^{2s}\langle k-k_1+k_2\rangle^{2s}} \right]\label{smoothing-estimate5}\\
&\le C\sup_k \left[\sum_{|k_2|\ge \sqrt{|k|}/p} \frac{\langle k\rangle^{2\sigma}}{\langle k_2\rangle^{2s}} \sum_{k_1} \frac{1}{\langle k-k_1+k_2\rangle^{2s}}\right]=C\sup_k \left[\sum_{|k_2|\ge \sqrt{|k|}/p} \frac{\langle k\rangle^{2\sigma}}{\langle k_2\rangle^{2s}} \sum_{n\in \mathbb{Z}} \frac{1}{\langle n\rangle^{2s}}\right]\notag\\
&\le C\sup_k \sum_{|k_2|\ge \sqrt{|k|}/p} \frac{\langle k\rangle^{2\sigma}}{\langle k_2\rangle^{2s}}\le C\sup_k \langle k\rangle^{2\sigma}\langle \sqrt{|k|}/p\rangle^{1-2s}\le C\sup_k \langle k\rangle^{2\sigma+\frac{1-2s}{2}}<\infty,\notag
\end{align}
provided $\sigma\le 1/4$ so that $2\sigma+\frac{1-2s}{2}\le 0$.

\medskip

Finally, combining (\ref{smoothing-estimate4}) and \eqref{smoothing-estimate5}, we conclude \eqref{smoothing-estimate1}. Now the proof is complete.
\end{proof}

\subsection{Completing the proof of Theorem~\ref{Theorem-AC}}\label{Section-AC-2}

We return to equation (\ref{Nonlinear-problem}). In the special case of $u_1=\cdots=u_p=u$, one can check that $\mathcal{T}$ and $\mathcal{T}_R$ defined in \eqref{multilinear-operator} and \eqref{resonantfrequency} reduce to
\begin{equation*}
\mathcal T(u):=\mathcal T(u,\cdots ,u)=|u|^{p-1}u\quad \text{and}\quad \mathcal T_R(u):=\mathcal T_R(u,\cdots,u)=\frac{p+1}{4\pi}\|u\|_{_{L_x^{p-1}}}^{p-1}u;
\end{equation*}
cf.~\cite{ET-13,McConnell-22}. Also, recall that the $C_0$-group $S_a(t)$ possesses a uniform exponential stability. That is, there exists a constant $\beta>0$ such that for every $s\geq 0$,
one can find $C>0$ satisfying
\begin{equation}\label{Decay-semigroup}
\|S_a(t)u_0\|_{_{H^s}}\leq Ce^{-\beta t}\|u_0\|_{_{H^s}},\ \forall t\geq 0;
\end{equation}
see \cite[Proposition 4.1]{RZ-09}.

In order to drop the terms in $\mathcal{T}_R$, we make use of the norm-preserving transformation $u\mapsto U$ given by (\ref{w-variable}) and derive the equation for the new variable $U$ (recall $\mathcal{T}_N:=\mathcal{T}-\mathcal{T}_R$):
\begin{equation}\label{Problem-w}
\left\{\begin{array}{ll}
iU_t+ U_{xx} +ia(x)U =
\mathcal{T}_N(U)+F(t,x),\\
U(0,x)=U_0(x)(=u_0(x)),
\end{array}\right.
\end{equation}
where $F(t,x)=e^{i\theta(t)}f(t,x)$. In view of the Duhamel formula,
\begin{equation}\label{Duhamel-formula-3}
U(t)=S_a(t)U_0-i\int_0^tS_a(t-\tau)\left(\mathcal{T}_N(U)+F
\right)d\tau.
\end{equation}

The desired compact attracting set is constructed via the following proposition.

\begin{proposition}\label{Proposition-AC-w}
Let $R>0$ and $\sigma\in (0,1/4]$ be arbitrarily given. Then there exists a constant $C_3>0$ such that 
$$
\|U(t)-S_a(t)U_0\|_{_{H^{1+\sigma}}}\leq C_3,\ \forall t\geq 0
$$
for any $U_0\in \overline{B}_{H^1}(R)$ and $F\in \overline{B}_{L^2_b(\R^+;H^{1+\sigma})}(R)$, where $U(t)$ stands for the solution of {\rm(\ref{Problem-w})}.
\end{proposition}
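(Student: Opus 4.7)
The plan is to combine the uniform $H^1$-bound for $U(t)$ with the nonlinear smoothing of Lemma~\ref{Lemma-smoothing} and the exponential decay \eqref{Decay-semigroup} of $S_a(t)$.

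First, since the transformation $u\mapsto U$ preserves all Sobolev norms pointwise in $t$ and $F(t,x)=e^{i\theta(t)}f(t,x)$ satisfies $\|F\|_{_{L^2_b(\R^+;H^{1+\sigma})}}=\|f\|_{_{L^2_b(\R^+;H^{1+\sigma})}}\leq R$, Proposition~\ref{Theorem-absorbing} together with Remark~\ref{boundednessH^1} transfer to \eqref{Problem-w}, yielding a constant $M=M(R)>0$ with
\[\sup_{t\geq 0}\|U(t)\|_{_{H^1}}\leq M.\]
By the $X^{1,b}$-well-posedness summarized in Appendix~\ref{Appendix-GWP} (Proposition~\ref{Proposition-nonlinear}), applied on each unit-length interval $I_n:=[n,n+1]$ with initial value $U(n)\in \overline{B}_{H^1}(M)$ and forcing $F\in \overline{B}_{L^2(I_n;H^{1+\sigma})}(C(R))$, one obtains a uniform bound
\[\|U\|_{_{X^{1,b}_{I_n}}}\leq C(R),\qquad \forall n\in\N.\]

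Second, applying the nonlinear smoothing Lemma~\ref{Lemma-smoothing} (with $s=1$, $u_1=\cdots =u_p=U$) on each $I_n$ yields
\[\|\mathcal T_N(U)\|_{_{X^{1+\sigma,-b'}_{I_n}}}\leq C\|U\|_{_{X^{1,b}_{I_n}}}^p\leq C(R),\qquad \forall n\in\N.\]
Together with $\|F\|_{_{L^2(I_n;H^{1+\sigma})}}\leq R$, and since $L^2(I_n;H^{1+\sigma})\hookrightarrow X^{1+\sigma,-b'}_{I_n}$, we get a uniform-in-$n$ bound
\[\|\mathcal T_N(U)+F\|_{_{X^{1+\sigma,-b'}_{I_n}}}\leq C(R).\]

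Third, decompose the Duhamel formula \eqref{Duhamel-formula-3} into unit intervals. Write $t_k=\min\{k+1,t\}$ for $k=0,1,\cdots ,\lfloor t\rfloor$, and set
\[W_k:=-i\int_k^{t_k}S_a(t_k-\tau)\bigl(\mathcal T_N(U)+F\bigr)(\tau)\,d\tau.\]
Then
\[U(t)-S_a(t)U_0=\sum_{k=0}^{\lfloor t\rfloor}S_a(t-t_k)W_k.\]
The standard inhomogeneous linear estimate in Bourgain spaces (Appendix~\ref{Appendix-GWP}) gives $\|W_k\|_{_{H^{1+\sigma}}}\leq C\|\mathcal T_N(U)+F\|_{_{X^{1+\sigma,-b'}_{I_k}}}\leq C(R)$. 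By \eqref{Decay-semigroup} in $H^{1+\sigma}$,
\[\|S_a(t-t_k)W_k\|_{_{H^{1+\sigma}}}\leq Ce^{-\beta(t-t_k)}\|W_k\|_{_{H^{1+\sigma}}}\leq C(R)e^{-\beta(t-t_k)},\]
and summing the geometric series over $k$ produces a constant $C_3=C_3(R)$ independent of $t$.

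The main obstacle is ensuring that the inhomogeneous linear estimate (controlling $\|\int_0^T S_a(T-\tau)g(\tau)\,d\tau\|_{_{H^{1+\sigma}}}$ by $\|g\|_{_{X^{1+\sigma,-b'}_T}}$) holds for the damped group $S_a$, not just the free group $S$. This is handled by treating the damping $-a(x)U$ as an additional forcing term in the $S$-Duhamel formula and observing that the multiplication by the smooth, bounded function $a(x)$ is continuous on $X^{1+\sigma,-b'}$, so the resulting fixed-point argument (already used in Appendix~\ref{Appendix-GWP}) absorbs it on a short interval; iterating yields the desired bound on each $I_n$.
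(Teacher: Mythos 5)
Your argument is correct and follows essentially the same route as the paper's proof: uniform $H^1$-boundedness of $U$, a uniform $X^{1,b}$-bound on bounded time intervals, nonlinear smoothing of $\mathcal T_N(U)$ via Lemma~\ref{Lemma-smoothing}, and then a telescoping decomposition of the Duhamel integral whose pieces are summed using the exponential decay \eqref{Decay-semigroup} of $S_a(t)$ (the paper telescopes $S_a((J-j)T)U(jT)-S_a((J-j+1)T)U((j-1)T)$, which is identical in substance to your $\sum_k S_a(t-t_k)W_k$). Your closing worry about the inhomogeneous estimate for the damped group is already settled by Proposition~\ref{Proposition-S(t)estimate}(2), which is stated directly for $S_a$ (citing \cite{RZ-09}); if you do want to derive it perturbatively, note that by Lemma~\ref{Lemma-multiplication} multiplication by $a(x)$ is \emph{not} bounded on $X^{s,-b'}$ itself, and one should instead estimate $\|aw\|_{_{X^{s,-b'}}}\le \|aw\|_{_{L^2_tH^s_x}}\le C\|w\|_{_{X^{s,b}}}$ before absorbing on a short interval.
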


\begin{proof}[{\bf Proof of Proposition \ref{Proposition-AC-w}}]
In view of the arguments of Proposition~\ref{Theorem-absorbing} (especially Remark~\ref{boundednessH^1}),
\begin{equation*}
\|U(t)\|_{_{H^1}}=\|u(t)\|_{_{H^1}}\leq C,\ \forall t\geq 0
\end{equation*}
for any $U_0\in \overline{B}_{H^1}(R)$ and $F\in \overline{B}_{L^2_b(\R^+;H^{1+\sigma})}(R)$. Let us arbitrarily fix $b\in (1/2,1-\sigma)$ and $T>0$. Then similar to Proposition~\ref{Proposition-nonlinear}, one can deduce that 
\begin{equation}\label{X-boundedness}
\|U\|_{_{X^{1,b}_T}}\leq C(\|U_0\|_{_{H^1}}+\|F\|_{_{L^2(0,T;H^1(\mathbb{T}))}})\le C.
\end{equation}

Take $b'=1-b$, which suits into Lemma \ref{Lemma-smoothing}; and recall from Lemma~\ref{Lemma-Bourgainspace} the embedding $X_T^{1+\sigma,b}\hookrightarrow L_t^\infty H_x^{1+\sigma}$. Thanks to (\ref{Decay-semigroup})-(\ref{X-boundedness}) and Proposition \ref{Proposition-S(t)estimate}(2), for $t\in [0,T]$ we have
\begin{equation}\label{estimate-2}
\begin{aligned}
\|U(t)-S_a(t)U_0\|_{_{H^{1+\sigma}}}
&\leq C\|\int_0^tS_a(t-\tau)\mathcal{T}_N(U) d\tau\|_{_{X^{1+\sigma,b}_T}}+\|\int_0^tS_a(t-\tau)Fd\tau\|_{_{H^{1+\sigma}}}\\
&\leq C\|\mathcal{T}_N(U)\|_{_{X^{1+\sigma,-b'}_T}}+C\|F\|_{_{L^2(0,T;H^{1+\sigma})}}\\
&\leq C\|U\|_{_{X^{1,b}_T}}^p+C\|F\|_{_{L^2_b(\R^+;H^{1+\sigma})}}\le C.
\end{aligned}
\end{equation}
This means that the desired conclusion is obtained for $t\in[0,T]$.

To proceed further, one can repeat the argument for (\ref{estimate-2}), with $[0,T]$ replaced by $[(j-1)T,jT]\ (j\in\N^+)$, which yields that
\begin{equation}\label{estimate-6}
\|U(t)-S_a(t-(j-1)T)U((j-1)T)\|_{_{H^{1+\sigma}}}\le C,\ \forall t\in[(j-1)T,jT].
\end{equation}
We emphasize here that the constant $C$ in (\ref{estimate-6}) does not depend on $j$, as the boundedness of $f$ in $L^2((j-1)T,T;H^{1+\sigma})$ and $U$ in $X^{1,b}_{[(j-1)T,jT]}$ are uniform with respect to $j$. Accordingly, making use of (\ref{Decay-semigroup}) again we compute for every $J\in\N^+$ that
\begin{align}
&\|U(JT)-S_a(JT)U_0\|_{_{H^{1+\sigma}}}=\|\sum_{j=1}^J\left[
S_a((J-j)T)U(jT)-S_a((J-j+1)T)U((j-1)T)
\right]\|_{_{H^{1+\sigma}}}\notag\\
&\le C\sum_{j=1}^J e^{-\beta(J-j)T} \|U(jT)-S_a(T)U((j-1)T)\|_{_{H^{1+\sigma}}}\le C\sum_{j=0}^\infty e^{-\beta jT}<\infty.\label{estimate-7}
\end{align}
Now, for $t>T$, we write $t=JT+t'$ with $t'\in[0,T)$, and deduce that
\begin{align*}
&\|U(t)-S_a(t)U_0\|_{_{H^{1+\sigma}}}\\
&\leq \|U(t)-S_a(t-JT)U(JT)\|_{_{H^{1+\sigma}}} +\|S_a(t-JT)(U(JT)-S_a(JT)U_0)\|_{_{H^{1+\sigma}}}.
\end{align*}
This implies the conclusion of this proposition, as the first term can be bounded by (\ref{estimate-6}) and the second by (\ref{estimate-7}). The proof is then complete.
\end{proof}

With Propositions~\ref{Theorem-absorbing} and \ref{Proposition-AC-w} in hand, we are in a position to demonstrate Theorem~\ref{Theorem-AC}.

\begin{proof}[{\bf Proof of Theorem \ref{Theorem-AC}}]
We first take inclusion (\ref{inclusion-absorbing}) into account. Thus, any global solution $u(t)$ of (\ref{Nonlinear-problem}), with $u_0\in H^1$ and $f\in \overline{B}_{L^\infty(\R^+;H^{1+\sigma})}(R_0)$, enters into the absorbing set $\mathscr{B}_1$ at time $$T_0=C_1\log (1+E_u(0)).$$ Obviously, the translated function $u(t+T_0)$ remains a solution of (\ref{Nonlinear-problem}) with $(u_0,f)$ replaced by $(u(T_0),f(\cdot+T_0))$. Then it follows from Proposition \ref{Proposition-AC-w}  that 
$$
\|u(t+T_0)-e^{-i\theta_{T_0}(t)}S_a(t)u(T_0)\|_{_{H^{1+\sigma}}}\le C_3,\ \forall t\geq 0
$$
with $\theta_{T_0}(t)=\frac{p+1}{4\pi}\int_0^t\|u(s+T_0)\|^{p-1}_{_{L^{p-1}(\T)}}ds$.
We construct a bounded subset $\mathscr{B}_{1,1+\sigma}$ of $H^{1+\sigma}$ by
\[\mathscr{B}_{1,1+\sigma}:=\{v\in H^{1+\sigma};\|v\|_{_{H^{1+\sigma}}}\le C_3\}.\]
Owing to (\ref{Decay-semigroup}) and $u(T_0)\in \mathscr{B}_1$, we obtain
$$
{\rm dist}_{H^1}(u(t),\mathscr B_{1,1+\sigma})\le \|S_a(t-T_0)u(T_0)\|_{_{H^1}}\le Ce^{-\beta (t-T_0)}\|u(T_0)\|_{_{H^1}}\le Ce^{-\beta (t-T_0)}
$$
for any $t\geq T_0$. Moreover, if we choose $\kappa=\beta$ and shrink $\kappa$ so that $C_1\kappa \le 1$, then
\[{\rm dist}_{H^1}(u(t),\mathscr B_{1,1+\sigma})\le C e^{-\kappa t}(1+E_u(0))^{C_1\kappa} \le Ce^{-\kappa t}(1+E_u(0)),\ \forall t\ge t_0.\]

To continue, we invoke the apriori estimates (\ref{E_uapriori}) (with $T=1$) and (\ref{uniform-bound}), in order to infer  
\begin{equation}\label{H1attractor-1}
E_u (t)\leq C(1+E_u(0)),\ \forall t\geq 0,
\end{equation}
where the constant $C=C(R)>0$.
Then, thanks to $\|u(t)\|_{_{H^1}}\le E_u^{1/2}(t)$, for $t\in [0,T_0]$ we have
\begin{equation}\label{H1attractor-2}
{\rm dist}_{H^1}(u(t),\mathscr B_{1,1+\sigma})
\leq (C+\|u(t)\|_{_{H^1}})e^{\kappa T_0} e^{-\kappa t}\leq C(1+E_u(0))^{1/2+C_1\kappa }e^{-\kappa t}.
\end{equation}
Finally, we shrink $\kappa$ once again so that $C_1\kappa \leq 1/2$. The proof is then complete.
\end{proof}

\section{Global dynamics in higher Sobolev space}\label{Section-stability}

This quick section is devoted to extending exponential asymptotic compactness and global stabilization
in Section~\ref{Section-AC} to higher Sobolev norm $H^s\,(s\ge 1)$. Recall that in Theorem~\ref{Theorem-AC}, we assumed the initial data $u_0\in H^1$, and found a bounded attracting set in $H^{1+\sigma}$ with $\sigma\in (0,1/4]$. In Theorem~\ref{Theorem-AC-Hs} below, we establish a similar result for $u_0\in H^s$ and arbitrary $\sigma>0$.

To this end, we first derive the existence of $H^s$-absorbing set, generalizing Proposition~\ref{Theorem-absorbing}. 

\begin{proposition}[$H^s$-absorbing set]\label{Prop-Hsabsorbing}
Let $s\ge 1$ and $R_2>0$ be arbitrarily given. Then there exists a bounded subset $\mathscr{B}_s$ of $H^s$ and a constant $C>0$ such that
\[u(t)\in \mathscr{B}_s,\ \forall t\ge C(1+E_u(0))^{(p-1)\lceil 4(s-1)\rceil /2}\log (1+\|u_0\|_{_{H^s}})\]
for any $u_0\in H^s$ and $f\in \overline{B}_{_{L^2_b(\mathbb{R}^+;H^s)}}(R_2)$, where $u(t)$ stands for the solution of \eqref{Nonlinear-problem}.
\end{proposition}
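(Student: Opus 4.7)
The plan is to argue by induction, promoting the absorbing property from level $H^{s_n}$ to $H^{s_{n+1}}:=H^{s_n+\sigma}$ along a finite chain $1=s_0<s_1<\cdots<s_N=s$ with constant step size $\sigma:=(s-1)/N\leq 1/4$, where $N:=\lceil 4(s-1)\rceil$. The inductive hypothesis reads: there is a bounded $\mathscr{B}_{s_n}\subset H^{s_n}$ and a constant $C>0$ such that $u(t)\in\mathscr{B}_{s_n}$ whenever $t\geq T_{s_n}(u_0):=C(1+E_u(0))^{(p-1)n/2}\log(1+\|u_0\|_{H^{s_n}})$. The base case $n=0$ is Proposition~\ref{Theorem-absorbing} (after absorbing $\log(1+E_u(0))$ into $C\log(1+\|u_0\|_{H^1})$ by Sobolev embedding), and the terminal case $n=N$ recovers the statement since $(p-1)N/2=(p-1)\lceil 4(s-1)\rceil/2$ and $s_N=s$.

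The inductive step from $s_n$ to $s_{n+1}$ proceeds in two stages. In stage~1 I would bound $\|u(T_{s_n}(u_0))\|_{H^{s_{n+1}}}$ via Gronwall. Owing to the universal $H^1$-bound \eqref{H1attractor-1} and the Sobolev embedding $H^1\hookrightarrow L^\infty$, one has $\|u(t)\|_{L^\infty}^{p-1}\leq C(1+E_u(0))^{(p-1)/2}$ for all $t\geq 0$. The Kato--Ponce estimate (Lemma~\ref{Lemma-KatoPonce}) then controls $\||u|^{p-1}u\|_{H^{s_{n+1}}}$ linearly in $\|u\|_{H^{s_{n+1}}}$ with rate $C(1+E_u(0))^{(p-1)/2}$, so Gronwall applied to the Duhamel formula in $H^{s_{n+1}}$ over $[0,T_{s_n}(u_0)]$ yields
\[\|u(T_{s_n}(u_0))\|_{H^{s_{n+1}}}\leq (1+\|u_0\|_{H^{s_{n+1}}})^{C(1+E_u(0))^{(p-1)(n+1)/2}},\]
after absorbing the forcing contribution via $\|f\|_{L^2_b H^{s_{n+1}}}\leq R_2$.

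In stage~2 I would apply the scheme of Proposition~\ref{Proposition-AC-w} at level $s_{n+1}$ to the shifted trajectory $u(\cdot+T_{s_n}(u_0))$. For $t\geq T_{s_n}(u_0)$ the induction hypothesis gives $u(t)\in\mathscr{B}_{s_n}$, so the local-in-time $X^{s_n,b}$-norm of the transformed variable $U$ in \eqref{Problem-w} is uniformly bounded on every unit interval by well-posedness. Nonlinear smoothing (Lemma~\ref{Lemma-smoothing} with $s=s_n$) then yields $\|\mathcal{T}_N(U)\|_{X^{s_{n+1},-b'}}\leq C$ uniformly in the interval, and the geometric-series argument of \eqref{estimate-7} together with the exponential decay \eqref{Decay-semigroup} of $S_a$ on $H^{s_{n+1}}$ produces
\[\|u(T_{s_n}(u_0)+JT)\|_{H^{s_{n+1}}}\leq Ce^{-\beta JT}\|u(T_{s_n}(u_0))\|_{H^{s_{n+1}}}+C.\]
An additional time of order $\log\|u(T_{s_n}(u_0))\|_{H^{s_{n+1}}}\sim (1+E_u(0))^{(p-1)(n+1)/2}\log(1+\|u_0\|_{H^{s_{n+1}}})$ absorbs the solution into a bounded subset of $H^{s_{n+1}}$, defining $\mathscr{B}_{s_{n+1}}$ and closing the induction.

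I expect the main obstacle to be the bookkeeping of the polynomial exponent in $E_u(0)$: each Gronwall step in stage~1 multiplies the previous absorbing time by $C(1+E_u(0))^{(p-1)/2}$ inside the exponential, raising the exponent by exactly $(p-1)/2$. This mechanism works only because $\|u\|_{L^\infty}$ is controlled by the universal $H^1$-bound rather than the (typically much larger) intermediate $H^{s_n}$-norm; otherwise the exponent would explode. A second delicate point is that the constants in Lemma~\ref{Lemma-smoothing} and in the $X^{s_n,b}$ estimates must remain uniform across all intervals of length $T$ after entry into $\mathscr{B}_{s_n}$, which is precisely what the preceding absorbing property ensures.
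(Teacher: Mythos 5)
Your proposal is correct and follows essentially the same route as the paper's proof: an induction over regularity levels in steps of at most $1/4$, combining the gauge transformation and nonlinear smoothing to gain $\sigma$ derivatives, the exponential decay of $S_a(t)$ on $H^{s_{n+1}}$ to obtain the dissipative recursion, and the Kato--Ponce/Gronwall estimate with the universal $H^1$ bound to control $\|u(T_{s_n})\|_{H^{s_{n+1}}}$ and track the exponent growth of $(p-1)/2$ per step. The only cosmetic difference is that the paper packages the higher-norm dissipation as a one-step contraction $\|U(T_1+t)\|_{H^{s+\sigma}}\le \tfrac12\|U(t)\|_{H^{s+\sigma}}+C$ by choosing $T_1$ with $\|S_a(T_1)\|_{\mathcal L(H^{s+\sigma})}\le 1/2$, whereas you invoke the telescoping argument of \eqref{estimate-7} directly; these are equivalent.
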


The obstacle here is the absence of $H^s$-conservation law for the (undamped and unforced) NLS, except for $E_u(t)$ when $s=1$. Hence it is unclear whether an $H^s$-dissipation similar to Lemma~\ref{Lemma-discretemono} is valid. Instead, we perform an induction argument and promote the global absorbing set from $H^1$ to $H^{s}$ by exploiting nonlinear smoothing (Lemma \ref{Lemma-smoothing}). As in the proof of Theorem~\ref{Theorem-AC}, we divide the solution into three parts according to Duhamel formula: linear evolution (with phase shift), nonlinearity and source term. The linear evolution decays exponentially and the source term is bounded. In addition, thanks to the nonlinear smoothing, the nonlinear term has extra regularity of order $1/4$. Thus its $H^{s+1/4}$-norm can be bounded by the $H^{s}$-norm of $u$, which eventually depends on the $H^{s}$-absorbing set. This observation allows us to perform an induction from $H^{s}$ to $H^{s+1/4}$.

\begin{proof}[{\bf Proof of Proposition~\ref{Prop-Hsabsorbing}}]
When $s=1$, this follows from Proposition~\ref{Theorem-absorbing}. It remains to treat the inductive step: if a global $H^s$-absorbing set $\mathscr{B}_s$ for (\ref{Nonlinear-problem}) exists with $4s\in \mathbb{N}$, then for any $\sigma\in (0,1/4]$, we can find an $H^{s+\sigma}$-absorbing set $\mathscr{B}_{s+\sigma}$. 
Similarly to the proof of Proposition~\ref{Proposition-AC-w}, we exploit the norm-preserving transformation $u\mapsto U$ as in \eqref{w-variable}. Then $U$ satisfies the equation \eqref{Problem-w}. Thanks to \eqref{Decay-semigroup}, we can choose $T_1>0$ (depending on $s+\sigma$) such that 
\begin{equation*}
\|S_a(T_1)\|_{_{\mathcal L(H^{s+\sigma})}}\le 1/2.
\end{equation*}
By inductive hypothesis, there exists a constant 
\begin{equation}\label{absorbingHs-1}
T_2=C(1+E_u(0))^{2(p-1)(s-1)}\log (1+\|u_0\|_{_{H^s}}),
\end{equation}
such that $\|U(t)\|_{_{H^s}}\le C$ for any $t\ge T_2$, where the constant $C>0$ depends on $R_2$. Thus, due to Proposition~\ref{Proposition-nonlinear}, we can find a constant $C$ depending on $s,s+\sigma$ and $R_2$, such that
\[\|U\|_{_{X^{s,b}_{[t,T_1+t]}}}\le C,\ \forall t\ge T_2.\]
Then, using the same argument as in \eqref{estimate-2}, we have
\[\|U(T_1+t)\|_{_{H^{s+\sigma}}}\le \frac{1}{2}\|U(t)\|_{_{H^{s+\sigma}}}+C\|U\|_{_{X^{s,b}_{[t,T_1+t]}}}^p+CR_2\le \frac{1}{2} \|U(t)\|_{_{H^{s+\sigma}}}+C\]
for any $t\ge T_2$. This dissipative estimate implies the existence of bounded absorbing set $\mathscr{B}_{s+\sigma}$ in $H^{s+\sigma}$ (cf.~the proof of Proposition~\ref{Theorem-absorbing}), and $u(t)$ enters $\mathscr{B}_{s+\sigma}$ for $t\ge T_3$, where
\begin{equation}\label{absorbingHs-2}
T_3=T_2+C\log (1+\|U(T_2)\|_{_{H^{s+\sigma}}}).
\end{equation}

Finally, in order to estimate $\|U(T_2)\|_{_{H^{s+\sigma}}}=\|u(T_2)\|_{_{H^{s+\sigma}}}$, we need the following lemma.

\begin{lemma}\label{Lemma-KatoPonce}
Let $s\ge 0$ be arbitrarily given. Then there exists a constant $C>0$, such that
\begin{equation*}
\|fg\|_{_{H^s}}\le C(\|f\|_{_{H^1}}\|g\|_{_{H^s}}+\|f\|_{_{H^s}}\|g\|_{_{H^1}}),\ \forall f,g\in H^s(\mathbb{T})\cap H^1(\mathbb{T}).
\end{equation*}
\end{lemma}

\begin{proof}[{\bf Proof of Lemma~\ref{Lemma-KatoPonce}}]
It turns out that Lemma~\ref{Lemma-KatoPonce} is a consequence of Kato--Ponce inequality \cite{KP-88}, which states a stronger inequality (note that $H^1\hookrightarrow L^\infty$ since we are working on $\mathbb{T}$):
\[\|fg\|_{_{H^s}}\le C(\|f\|_{_{L^\infty}}\|g\|_{_{H^s}}+\|f\|_{_{H^s}}\|g\|_{_{L^\infty}}).\]
We provide a short proof for the reader's convenience. First note that 
\[\langle k\rangle^s\le C(\langle l\rangle^s +\langle k-l \rangle^s)\]
for any $k,l\in \mathbb{Z}$.
Then thanks to the Young inequality for convolutions, it follows that
\begin{equation}\label{KP-1}
\begin{aligned}
\|fg\|_{_{H^s}}&=\|\langle k \rangle (\widehat{f}*\widehat{g})(k)\|_{_{l^2}}\le C\left(\|\sum_{l\in \mathbb{Z}} \langle l\rangle^s \widehat{f}(l)\widehat{g}(k-l)\|_{_{l^2}}+\|\sum_{l\in \mathbb{Z}} \widehat{f}(l)\langle k-l\rangle^s \widehat{g}(k-l)\|_{_{l^2}}\right)\\
&\le C\left(\|\langle l\rangle^s \widehat{f}(l)\|_{_{l^2}}\|\widehat{g}\|_{_{l^1}}+\|\widehat{f}(l)\|_{_{l^1}}\|\langle l\rangle^s \widehat{g}(l)\|_{_{l^2}}\right)=C\left(\|f\|_{_{H^s}}\|\widehat{g}\|_{_{l^1}}+\|\widehat{f}\|_{_{l^1}}\|g\|_{_{H^s}}\right).
\end{aligned}
\end{equation}
Then the lemma follows since for any $h\in H^1$,
\[\|\widehat{h}\|_{_{l^1}}\le \|\langle k\rangle^{-1}\|_{_{l^2}}\|\langle k\rangle \widehat{h}(k)\|_{_{l^2}}=C\|h\|_{_{H^1}}.\qedhere\]
\end{proof}

Now let us come back to the proof of Proposition~\ref{Prop-Hsabsorbing}. We iterate Lemma~\ref{Lemma-KatoPonce} to find
\begin{equation}\label{Kato-Ponce}
\||u|^{p-1}u\|_{_{H^{s+\sigma}}}\le C\|u\|_{_{H^1}}^{p-1}\|u\|_{_{H^{s+\sigma}}}.
\end{equation}
Moreover, combining \eqref{E_uapriori} (with $T=1$) and \eqref{H1attractor-1}, it follows that
$$
\|u(t)\|_{_{H^1}}^2\leq 2E_u(t)\le C(1+E_u(0)),\ \forall t\geq 0,
$$
where the constant $C>0$ does not depend on $t$.
This together with (\ref{Kato-Ponce}) implies that
\begin{align*}
\|u(t)\|_{_{H^{s+\sigma}}}&\le \|S_a(t)u_0\|_{_{H^{s+\sigma}}}+\|\int_0^t S_a(t-\tau)(|u|^{p-1}u+f)d\tau\|_{_{H^{s+\sigma}}}\\
&\le Ce^{-\beta t}\|u_0\|_{_{H^{s+\sigma}}}+C\int_0^t e^{-\beta (t-\tau)} \left[(1+E_u(0))^{(p-1)/2} \|u(\tau)\|_{_{H^{s+\sigma}}}+\|f(\tau)\|_{_{H^{s+\sigma}}}\right] d\tau.
\end{align*}
Note that for any $t\ge 0$,
\[\int_0^t e^{-\beta (t-\tau)}\|f(\tau)\|_{_{H^{s+\sigma}}} d\tau\le \sum_{n=0}^{\lfloor t \rfloor} \int_n^{\min\{n+1,t\}} e^{-\beta \tau} \|f(t-\tau)\|_{_{H^s}} d\tau\le\sum_{n=0}^\infty e^{-\beta n} R_2\le CR_2.\]
Using the Gronwall inequality for $t\mapsto e^{\beta t} \|u(t)\|_{_{H^{s+\sigma}}}$, the H\"older inequality and \eqref{absorbingHs-1}, we have
\begin{equation}\label{Hsatrractor-1}
\begin{aligned}
&\|u(T_2)\|_{_{H^{s+\sigma}}}\le C\left[\|u_0\|_{_{H^{s+\sigma}}}+\int_0^{T_2}e^{-\beta(T_2-\tau)}\|f(\tau)\|_{_{H^{s+\sigma}}}d\tau\right]e^{C(1+E_u(0))^{(p-1)/2} T_2}\\
&\le C(1+\|u_0\|_{_{H^{s+\sigma}}})e^{C(1+E_u(0))^{(p-1)/2} T_2}\le C(1+\|u_0\|_{_{H^{s+\sigma}}})^{C(1+E_u(0))^{(p-1)(4s-3)/2}}.
\end{aligned}
\end{equation}

Substituting this into \eqref{absorbingHs-2}, and since $4s-3=\lceil 4(s+\sigma-1)\rceil$, we obtain
\[T_3\le T_2+C(1+E_u(0))^{(p-1)\lceil 4(s+\sigma-1)\rceil/2}\log (1+\|u_0\|_{_{H^{s+\sigma}}}).\]
Now that $T_2$ is also bounded by the last term, the proof is complete.
\end{proof}

\begin{remark}\label{boundednessH^s}
Analogously to Remark~{\rm\ref{boundednessH^1}}, we can derive that given any $R>0$, there exists a constant $C=C(R)>0$ such that 
\begin{equation}\label{H^s-boundedness}
\|u(t)\|_{_{H^s}}\le C,\ \forall t\ge 0
\end{equation}
for any $u_0\in \overline{B}_{_{H^s}}(R)$ and $f\in \overline{B}_{_{L^2_b(\mathbb{R}^+;H^s)}}(R)$.
\end{remark}

Now we state the main result of this section, which is an $H^s$-extension of Theorem~\ref{Theorem-AC}.

\begin{theorem}\label{Theorem-AC-Hs}
Let $s\ge 1$ and $R_2,\sigma>0$ be arbitrarily given. Then there exists a bounded subset $\mathscr{B}_{s,s+\sigma}$ of $H^{s+\sigma}$ and a constant $\kappa_{s,s+\sigma}>0$, such that
\begin{equation}\label{AC-Hs}
\dist_{_{H^s}}(u(t),\mathscr{B}_{s,s+\sigma})\le C(1+\|u_0\|_{_{H^s}})^{C(1+E(u_0))^{(p-1)\lceil 4s-3\rceil/2}}e^{-\kappa_{s,s+\sigma} t},\ \forall t\geq 0
\end{equation}
for any $u_0\in H^s$ and $f\in \overline{B}_{_{L^2_b(\mathbb{R}^+;H^{s+\sigma})}}(R_2)$, where $u(t)$ stands for the solution of \eqref{Nonlinear-problem}. 

Moreover, if $s=1$, then the estimate \eqref{AC-Hs} can be improved as {\rm(}cf.~Theorem~{\rm \ref{Theorem-AC})}
\[\dist_{H^1}(u(t),\mathscr{B}_{1,1+\sigma})\le C(1+E(u_0))e^{-\kappa_{1,1+\sigma} t},\ \forall t\geq 0.\]
\end{theorem}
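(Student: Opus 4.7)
The plan is to mirror the two-step strategy of Theorem~\ref{Theorem-AC}: first reduce to the $H^s$-absorbing set, then upgrade the regularity of the attracting set via nonlinear smoothing, and finally patch with an a priori bound on $[0,T_0]$. The first step is already available: Proposition~\ref{Prop-Hsabsorbing} provides a bounded subset $\mathscr{B}_s\subset H^s$ that absorbs $u(t)$ at time
\[
T_0=C(1+E(u_0))^{(p-1)\lceil 4(s-1)\rceil/2}\log(1+\|u_0\|_{_{H^s}}).
\]

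For the regularity upgrade with $\sigma\in(0,1/4]$, the plan is to adapt the proof of Proposition~\ref{Proposition-AC-w} with $H^1$ replaced by $H^s$. Applying the norm-preserving transformation $u\mapsto U$ from \eqref{w-variable}, one decomposes
\[
U(t+T_0)=S_a(t)U(T_0)+v(t),\quad v(t):=-i\!\int_{T_0}^{T_0+t}\!S_a(T_0+t-\tau)\bigl(\mathcal{T}_N(U)+F\bigr)\,d\tau.
\]
Since $u$ is uniformly $H^s$-bounded after $T_0$ (Proposition~\ref{Prop-Hsabsorbing} and Remark~\ref{boundednessH^s}), Proposition~\ref{Proposition-nonlinear} yields $\|U\|_{_{X^{s,b}_{[T_0,T_0+T]}}}\le C$; combining Lemma~\ref{Lemma-smoothing} with Proposition~\ref{Proposition-S(t)estimate}(2) then gives $\|v(t)\|_{_{H^{s+\sigma}}}\le C$ uniformly in $t\ge 0$. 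The exponential decay $\|S_a(t)U(T_0)\|_{_{H^s}}\le Ce^{-\beta t}$ from \eqref{Decay-semigroup} yields, with $\mathscr{B}_{s,s+\sigma}:=\overline B_{H^{s+\sigma}}(C)$, that $\dist_{H^s}(u(t),\mathscr{B}_{s,s+\sigma})\le Ce^{-\beta(t-T_0)}$ for $t\ge T_0$.

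For arbitrary $\sigma>0$ one bootstraps in chunks of $1/4$. Substituting $U=S_a(\cdot)U(T_0)+v$ into the Duhamel formula and expanding $\mathcal{T}_N$ as a symmetric multilinear form, the pure term $\mathcal{T}_N(v,\ldots,v)$ acquires another $1/4$ of regularity through Lemma~\ref{Lemma-smoothing} applied with $s$ replaced by $s+1/4$, whereas every mixed term is still estimated at exponent $s$ and contains at least one factor $S_a(\cdot)U(T_0)$ whose $X^{s,b}$-norm on time-shifted windows decays like $e^{-\beta(t-T_0)}$. Iterating $\lceil 4\sigma\rceil$ times yields $\dist_{H^s}(u(t),\mathscr{B}_{s,s+\sigma})\le Ce^{-\kappa(t-T_0)}$ on $t\ge T_0$, with $\mathscr{B}_{s,s+\sigma}$ a closed ball in $H^{s+\sigma}$. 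On $[0,T_0]$ the a priori bound $\|u(t)\|_{_{H^s}}\le C(1+\|u_0\|_{_{H^s}})^{C(1+E(u_0))^{(p-1)\lceil 4s-3\rceil/2}}$ coming from the Gronwall argument leading to \eqref{Hsatrractor-1} controls $\dist_{H^s}(u(t),\mathscr{B}_{s,s+\sigma})$; absorbing the factor $e^{\kappa T_0}$ by shrinking $\kappa$, as at the end of the proof of Theorem~\ref{Theorem-AC}, matches the two regimes and yields \eqref{AC-Hs}. The improved estimate for $s=1$ follows directly from Theorem~\ref{Theorem-AC}.

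The main obstacle lies in the bootstrap step: at each stage one needs the Bourgain-space bound $\|v\|_{_{X^{s+k/4,b}}}\le C$ (produced by the previous stage through the Duhamel equation satisfied by $v$), together with the uniform exponential smallness of $\|S_a(\cdot)U(T_0)\|_{_{X^{s,b}}}$ on shifted time windows. Making this simultaneous bookkeeping rigorous over $\lceil 4\sigma\rceil$ iterations, while propagating the rate $\kappa$ from the linear decay \eqref{Decay-semigroup}, is the most delicate part of the argument.
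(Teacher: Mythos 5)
Your proposal is correct for the base case $\sigma\in(0,1/4]$ and for the endgame (patching $[0,T_0]$ via the a priori bound and shrinking $\kappa$, and the $s=1$ improvement), and there it coincides with the paper's proof. Where you genuinely diverge is the extension to arbitrary $\sigma>0$. You propose a hard-analysis bootstrap: re-expand $\mathcal{T}_N$ multilinearly around the decomposition $U=S_a(\cdot)U(T_0)+v$, gain $1/4$ of a derivative per stage from the pure term via Lemma~\ref{Lemma-smoothing}, and dump every mixed term (carrying a decaying factor $S_a(\cdot)U(T_0)$, or a previously identified exponentially small remainder) into the attracted part. The paper instead uses a soft ``transitivity of exponential attraction'' induction (after Zelik): assuming an $H^{s+\sigma}$-bounded set attracts exponentially in $H^s$, it applies the already-proved base case at level $s+\sigma$ to solutions launched from that set, splits $t=(1-\varepsilon)t+\varepsilon t$, and uses only the Gronwall-type Lipschitz bound $\|u(t)-u^\phi(t)\|_{H^s}\le Ce^{Ct_2}\|u(t_1)-\phi\|_{H^s}$ to glue the two attractions, with $\varepsilon$ small enough that the net exponent stays negative. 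This completely avoids the multilinear bookkeeping you flag as delicate: no Bourgain-norm control of the intermediate pieces $v_k,w_k$ on shifted windows, no tracking of degrading decay rates across $\lceil 4\sigma\rceil$ iterations. Your route should be workable — each ingredient (window-wise $X^{s+k/4,b}$ bounds for the Duhamel pieces via Proposition~\ref{Proposition-S(t)estimate}, exponential smallness of mixed terms from one decaying $X^{s,b}$ factor) is individually available — but it trades a two-line dynamical argument for a genuinely intricate induction, and you would need to verify carefully that the exponentially small remainders themselves admit decaying Bourgain norms on each window before they can be reinserted into $\mathcal{T}_N$ at the next stage. If you want to keep your write-up short and robust, replace the bootstrap by the transitivity argument; otherwise the bookkeeping you identify is exactly the part that must be written out in full.
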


\begin{proof}[{\bf Proof of Theorem~\ref{Theorem-AC-Hs}}]
When $\sigma\in (0,1/4]$, the reasoning in the proof of Theorem~\ref{Theorem-AC} ($s=1$) can be easily adapted to 
the case of $s>1$. This is mainly because the result of nonlinear smoothing (Lemma~\ref{Lemma-smoothing}) is valid for the general scale $s$, and the $H^s$-absorbing set has been established in Proposition~\ref{Prop-Hsabsorbing}. We omit the details of this adaptation.

In the sequel, we induct on $\sigma$, employing the idea of ``transitivity of attraction" (cf. the paper \cite{Zelik-04} and also \cite{LWX-24}). More precisely, assuming that (\ref{Nonlinear-problem}) admits an $H^{s+\sigma}$-attracting set $\mathscr{B}_{s,s+\sigma}$, we proceed to construct an $H^{s+\sigma'}$-attracting set with an arbitrarily given $\sigma'\in (\sigma,\sigma+1/4]$.

Let $\mathscr{B}_s$ be the $H^s$-absorbing set in Proposition~\ref{Prop-Hsabsorbing}. By inductive hypothesis, if $u_0\in \mathscr{B}_s$, then
\begin{equation}\label{absorbingestimate2}
\dist_{H^s}(u(t),\mathscr{B}_{s,s+\sigma})\le Ce^{-\kappa_{s,s+\sigma}t},\ \forall t\geq 0,
\end{equation}
where the constant $C$ is uniform due to the boundedness of $\mathscr{B}_s$ in $H^s$. Next, we invoke again the inductive hypothesis with $(s,s+\sigma)$ replaced by $(s+\sigma,s+\sigma')$. Thus there exists a bounded subset $\mathscr{B}_{s+\sigma,s+\sigma'}$ of $H^{s+\sigma'}$ such that if $u_0\in \mathscr{B}_{s,s+\sigma}$, then
\begin{equation*}
\dist_{H^{s+\sigma}}(u(t),\mathscr{B}_{s+\sigma,s+\sigma'})\le Ce^{-\kappa_{s+\sigma,s+\sigma'}t},\ \forall t\geq 0.
\end{equation*}

In what follows we assume $u_0\in\mathscr{B}_s$ and set $t=t_1+t_2$, where $t_1,t_2$ will be determined below. Let us first apply (\ref{absorbingestimate2}) to deduce that there exists $\phi\in \mathscr{B}_{s,s+\sigma}$ such that
\begin{equation}\label{absorbingestimate4}
\|u(t_1)-\phi\|_{_{H^s}}\leq Ce^{-\kappa_{s,s+\sigma}t_1}.
\end{equation}
Meanwhile, the attraction (\ref{absorbingestimate2}) implies that there exists $\psi\in \mathscr{B}_{s+\sigma,s+\sigma'}$ such that
\begin{equation}\label{absorbingestimate5}
\|u^\phi(t)-\psi\|_{_{H^{s+\sigma}}}\leq Ce^{-\kappa_{s+\sigma,s+\sigma'}t_2},
\end{equation}
where $u^\phi(t)$ stands for the solution of (\ref{Nonlinear-problem}) with the initial condition replaced by $u^\phi(t_1)=\phi$. Furthermore, thanks to Remark~\ref{boundednessH^s}
and Gronwall inequality, it is easy to see that
\begin{equation}\label{absorbingestimate6}
\|u(t)-u^\phi(t)\|_{_{H^s}}\le Ce^{Ct_2}\|u(t_1)-\phi\|_{_{H^s}}\le Ce^{Ct_2-\kappa_{s,s+\sigma}t_1},
\end{equation}
where we have tacitly used (\ref{absorbingestimate4}), and that $u(t_1)$ and $\phi$ belongs to a bounded subset of $H^s$. Combining (\ref{absorbingestimate5}) and (\ref{absorbingestimate6}), one concludes that 
\[\|u(t)-\psi\|_{_{H^s}}\leq \|u(t)-u^\phi(t)\|_{_{H^s}}+\|u^\phi(t)-\psi\|_{_{H^{s}}}\leq Ce^{Ct_2-\kappa_{s,s+\sigma}t_1}+Ce^{-\kappa_{s+\sigma,s+\sigma'}t_2}.\]
Accordingly, taking $t_1=(1-\varepsilon)t$ and $t_2=\varepsilon t$ with $\varepsilon>0$ sufficiently small, it follows that there exists a constant $\kappa_{s,s+\sigma'}>0$ such that provided $u_0\in\mathscr{B}_s$, we have
\begin{equation}\label{absorbingestimate7}
\dist_{_{H^{s}}}(u(t),\mathscr{B}_{s+\sigma,s+\sigma'})\leq Ce^{C\varepsilon t-(1-\varepsilon)\kappa_{s,s+\sigma}t}+Ce^{-\varepsilon\kappa_{s+\sigma,s+\sigma'} t}\leq Ce^{-\kappa_{s,s+\sigma'}t}.
\end{equation}

The remainder of the proof is easy, since $\mathscr{B}_s$ is an absorbing set for any $u_0\in H^s$, and the elapsed time $T_3$ in which $u(t)$ enters $\mathscr{B}_s$ has the form specified in Proposition~\ref{Prop-Hsabsorbing}, i.e.
\[T_3=C(1+E_u(0))^{(p-1)\lceil 4(s-1)\rceil /2}\log (1+\|u_0\|_{_{H^s}}).\]
For $t\in [0,T_3]$, we can reproduce \eqref{Hsatrractor-1} (with $(s+\sigma,T_1)$ replaced by $(s,T_3)$) to find
\begin{equation}\label{Hsattractor-2}
\|u(t)\|_{_{H^s}}\le C(1+\|u_0\|_{_{H^s}})e^{C(1+E_u(0))^{(p-1)/2}T_3}\le C(1+\|u_0\|_{_{H^s}})^{C(1+E_u(0))^{(p-1)\lceil 4s-3\rceil/2}}.
\end{equation}
As a consequence, 
\begin{equation}\label{Hsattractor-3}
\begin{aligned}
\dist_{H^s} (u(t),\mathscr{B}_{s+\sigma,s+\sigma'})&\le (C+\|u(t)\|_{_{H^s}}) e^{\kappa_{s,s+\sigma'} T_3} e^{-\kappa_{s,s+\sigma'} t}\\
&\le C(1+\|u_0\|_{_{H^s}})^{C(1+E_u(0))^{(p-1)\lceil 4s-3\rceil/2}}e^{-\kappa_{s,s+\sigma'}t}
\end{aligned}
\end{equation}
for any $t\in [0,T_3]$. This together with (\ref{absorbingestimate7}) implies the desired inequality (\ref{AC-Hs}).

Moreover, if $s=1$, then the last two estimates \eqref{Hsattractor-2} and \eqref{Hsattractor-3} can be improved by \eqref{H1attractor-1} and \eqref{H1attractor-2}, respectively. Now the proof is complete.
\end{proof}

Before ending this section, we also mention the global $H^s$-stabilization, as an easy corollary of $H^1$-stabilization (Proposition~\ref{Corollary-stability}) and Kato--Ponce inequality (Lemma~\ref{Lemma-KatoPonce}).

\begin{proposition}[Global stabilization in $H^s$]\label{Theorem-H^sstability}
There exists a constant $\beta>0$ such that for every $s\ge 1$, there is a constant $C>0$ satisfying that
\begin{equation*}
\|u(t)\|_{_{H^s}}\leq C\|u_0\|_{_{H^s}}e^{CE(u_0)^{(p-1)/2}}e^{-\beta t},\ \forall t\geq 0
\end{equation*}
for any $u_0\in H^s$, where $u(t)$ stands for the solution of \eqref{Nonlinear-problem} with $f(t,x)\equiv 0$.
\end{proposition}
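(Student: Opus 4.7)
The plan is to close a Gronwall-type estimate for $\|u(t)\|_{_{H^s}}$ by combining the Duhamel representation with three ingredients already in hand: the exponential decay of the semigroup $S_a(t)$ on $H^s$ from \eqref{Decay-semigroup}, the iterated Kato--Ponce bound \eqref{Kato-Ponce} (a direct consequence of Lemma~\ref{Lemma-KatoPonce}), and the $H^1$-stabilization from Proposition~\ref{Corollary-stability}. The crux is that the $H^1$ decay provides a time-integrable weight $e^{-\beta(p-1)\tau/2}$ in the nonlinear term, so that a linear Gronwall argument closes with an exponential-in-$E(u_0)$ prefactor and the advertised rate $\beta$.

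First I would record, from $\|u(t)\|_{_{H^1}}^2\leq 2E_u(t)$ and Proposition~\ref{Corollary-stability}, the pointwise bound
\[
\|u(\tau)\|_{_{H^1}}^{p-1}\leq CE(u_0)^{(p-1)/2}e^{-\beta(p-1)\tau/2},
\]
where $\beta>0$ is taken, without loss of generality, as the minimum of the decay rates appearing in Proposition~\ref{Corollary-stability} and \eqref{Decay-semigroup}; note that the latter furnishes a common rate for every $s\geq 0$, which is the reason $\beta$ is independent of $s$ in the final statement. Applying $\|\cdot\|_{_{H^s}}$ to the Duhamel formula $u(t)=S_a(t)u_0-i\int_0^tS_a(t-\tau)|u|^{p-1}u(\tau)d\tau$, and using \eqref{Decay-semigroup} together with $\||u|^{p-1}u\|_{_{H^s}}\leq C\|u\|_{_{H^1}}^{p-1}\|u\|_{_{H^s}}$, I obtain
\[
\|u(t)\|_{_{H^s}}\leq Ce^{-\beta t}\|u_0\|_{_{H^s}}+CE(u_0)^{(p-1)/2}\int_0^t e^{-\beta(t-\tau)}e^{-\beta(p-1)\tau/2}\|u(\tau)\|_{_{H^s}}d\tau.
\]

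Setting $\Psi(t):=e^{\beta t}\|u(t)\|_{_{H^s}}$ and multiplying the above inequality by $e^{\beta t}$ gives
\[
\Psi(t)\leq C\|u_0\|_{_{H^s}}+CE(u_0)^{(p-1)/2}\int_0^t e^{-\beta(p-1)\tau/2}\Psi(\tau)d\tau.
\]
Since $p\geq 3$, the kernel $e^{-\beta(p-1)\tau/2}$ is integrable on $\mathbb{R}^+$ with $\int_0^\infty e^{-\beta(p-1)\tau/2}d\tau=\frac{2}{\beta(p-1)}$. The standard Gronwall inequality then yields
\[
\Psi(t)\leq C\|u_0\|_{_{H^s}}\exp\!\left(\tfrac{2C}{\beta(p-1)}E(u_0)^{(p-1)/2}\right),
\]
and unraveling the definition of $\Psi$ reproduces exactly the claimed estimate. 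There is no real obstacle here: the only mild care is in verifying that the rate $\beta$ can indeed be chosen independently of $s$, which is guaranteed by \eqref{Decay-semigroup}, and that $p\geq 3$ makes the Gronwall kernel integrable uniformly in the starting time.
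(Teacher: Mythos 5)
Your proposal is correct and follows essentially the same route as the paper: Duhamel's formula combined with the uniform-in-$s$ semigroup decay \eqref{Decay-semigroup}, the iterated Kato--Ponce bound \eqref{Kato-Ponce}, the $H^1$ decay from Proposition~\ref{Corollary-stability}, and a Gronwall argument for $t\mapsto e^{\beta t}\|u(t)\|_{_{H^s}}$. The only cosmetic difference is that the paper crudely bounds the kernel $e^{\beta\tau(3-p)/2}\le 1$ before applying Gronwall, whereas you keep the integrable weight $e^{-\beta(p-1)\tau/2}$ explicitly; both close identically since $p\ge 3$.
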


\begin{proof}[{\bf Proof of Proposition~\ref{Theorem-H^sstability}}]
We can find a constant $\beta>0$ such that \eqref{H^1stability} and \eqref{Decay-semigroup} hold simultaneously. Then, thanks to \eqref{Kato-Ponce} and 
$$
\|u(t)\|_{_{H^1}}^2\le CE_u(t)\le CE_u(0)e^{-\beta t},\ \forall t\geq 0,
$$ 
we have
\begin{align*}
\|u(t)\|_{_{H^s}}&\le Ce^{-\beta t} \|u(0)\|_{_{H^s}}+C\int_0^t e^{-\beta (t-\tau)}\cdot e^{-\beta \tau\cdot \frac{p-1}{2}} E_u^{(p-1)/2}(0) \|u(\tau)\|_{_{H^s}} d\tau\\
&\le Ce^{-\beta t} \|u(0)\|_{_{H^s}}+CE^{(p-1)/2}_u(0) e^{-\beta t}\int_0^t \|u(\tau)\|_{_{H^s}} d\tau.
\end{align*}
Using the Gronwall inequality for $t\mapsto e^{\beta t}\|u(t)\|_{_{H^s}}$, the conclusion follows immediately.
\end{proof}

\section{Control property for coupling conditions}\label{Section-control}

In this section, we shall investigate a controlled system associated with (\ref{Random-problem}), reading
\begin{equation}\label{Control-Problem-0}
\left\{\begin{array}{ll}
iu_t+u_{xx}+ia(x)u=|u|^{p-1}u+h(t,x)+\chi(x)\mathcal P_N\xi(t,x),\\
u(0,x)=u_0(x),
\end{array}\right.
\end{equation}
at the scale of $H^s$ with $s\geq 1$.
Here, $h\in L^2(0,T;H^{s+\sigma})$ denotes a given external force\footnote{The external force $h$ represents a realization of random noise $\eta$ in \eqref{Random-problem}, and our setting on the noise structure ensures its $L_t^2 H_x^{s+\sigma}$-regularity. As a result, the uncontrolled trajectory $\tilde{u}$ solving \eqref{Uncontrolled-Problem} belongs to $X_T^{s+\sigma,b}$ provided $\tilde{u}_0\in H^{s+\sigma}$. At the same time, our control result (Theorem~\ref{Theorem-control}) and its proof essentially relies on $\|\tilde{u}\|_{_{X_T^{s+\sigma,b}}}$, instead of $\|\tilde{u}_0\|_{_{H^{s+\sigma}}}$ and $\|h\|_{_{L^2(0,T;H^{s+\sigma})}}$. For this reason, the assumption on $h$ is hidden from its statement.}, $\xi$ stands for a control to be chosen, and $\mathcal P_N\ (N\in\N^+)$ is the orthogonal projection from $L^2(0,T;H^s(\mathbb{T}))$ onto 
$$
\{\alpha_j^{\scriptscriptstyle T}(t)e_k(x);j,|k|\leq N\}
$$
with $N$ to be determined. When $\xi(t,x)\equiv 0$, the system is said to be uncontrolled, reading
\begin{equation}\label{Uncontrolled-Problem}
\left\{\begin{array}{ll}
i\tilde u_t+\tilde u_{xx}+ia(x)\tilde u=|\tilde u|^{p-1}\tilde u+h(t,x),\\
\tilde u(0,x)=\tilde u_0(x).
\end{array}\right.
\end{equation}

As described in Section \ref{Section-strategy}, the coupling condition is closely related to stabilization along uncontrolled solutions; see Section \ref{Section-verification} for details. To this end, we first define the terminology of ``stabilization along trajectory". A subtlety is that the stabilization may take place with respect to an equivalent norm $\|\cdot\|_{_{\tilde{H}^s}}$, rather than the original Sobolev norm $\|\cdot\|_{_{H^s}}$.

\begin{definition}[Stabilization along trajectory]\label{Definition-control}
Let $T>0$, $s\geq 1$, $N\in \mathbb{N}^+$, $d>0$ and $q'\in (0,1)$ be arbitrarily given.
The controlled system \eqref{Control-Problem-0} is said to be $(d,q')$-stabilized to $\tilde u\in C(0,T;H^s)$ with respect to $\|\cdot\|_{_{\tilde{H}^s}}$ {\rm(}equivalent to the standard $H^s$-norm{\rm)}, if for every $u_0\in H^s$, when $
\|u_0-\tilde u_0\|_{_{H^s}}\leq d$ there exists a control $\xi\in L^2(0,T;H^{s})$ such that
\begin{equation}\label{Squeezing-0}
\|u(T)-\tilde u(T)\|_{_{\tilde{H}^s}}\leq q' \|u_0-\tilde u_0\|_{_{\tilde{H}^s}},
\end{equation}
where $u\in X^{s,b}_T$ stands for the solution of {\rm(\ref{Control-Problem-0})}.
\end{definition}

The main result of this section is stated as follows, which means the controlled system {\rm(\ref{Control-Problem-0}) can be stabilized to uncontrolled solutions with a bit higher regularity in space.

\begin{theorem}\label{Theorem-control}
Let $T, R,\sigma>0$ and $s\ge 1$ be arbitrarily given. Then there exist constants $d>0$, $N\in\N^+$, $q'\in (0,1)$, an equivalent norm $\|\cdot\|_{_{\tilde{H}^s}}$ on $H^s$, and a map 
$
\Phi\colon \overline{B}_{X_T^{s+\sigma,b}}(R)\rightarrow \mathcal{L}_{\R}(H^s;L^2(0,T;H^{s}))
$
such that the following assertions hold.
\begin{enumerate}
\item[$(1)$] Let $\tilde u\in \overline{B}_{X_T^{s+\sigma,b}}(R)$ be a solution of uncontrolled system \eqref{Uncontrolled-Problem}. Then the controlled system \eqref{Control-Problem-0} is $(d,q')$-stabilized to $\tilde u$ with respect to $\|\cdot\|_{_{\tilde{H}^s}}$, in the sense of Definition {\rm\ref{Definition-control}}.

\item[$(2)$] The control verifying {\rm(\ref{Squeezing-0})} can be represented as
$$
\xi=\Phi(\tilde u)(u_0-\tilde u_0).
$$
Moreover, $\Phi$ is Lipschitz and continuously differentiable.
\end{enumerate}
\end{theorem}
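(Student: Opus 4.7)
Writing $v = u - \tilde u$, the difference solves
\begin{equation*}
iv_t + v_{xx} + ia(x)v = V_1(\tilde u)\,v + V_2(\tilde u)\,\bar v + R(\tilde u, v) + \chi(x)\mathcal P_N \xi, \quad v(0) = u_0-\tilde u_0,
\end{equation*}
with $V_1 = \tfrac{p+1}{2}|\tilde u|^{p-1}$, $V_2 = \tfrac{p-1}{2}|\tilde u|^{p-3}\tilde u^2$, and $R(\tilde u,v)$ a remainder starting at order $|v|^2$. The presence of $\bar v$ is what forces $\Phi(\tilde u)$ to be real-linear rather than complex-linear. The plan is to first solve the linear stabilization problem (drop $R$), thereby constructing $\Phi(\tilde u)$ and establishing \eqref{Squeezing-0} on the linear level, and then recover the nonlinear statement by a contraction-mapping argument on a small ball of $X^{s,b}_T$, exploiting the standard Bourgain-space bound $\|R(\tilde u, v)\|_{X^{s,-b'}_T} \lesssim \|v\|_{X^{s,b}_T}^2$ for $\|u_0-\tilde u_0\|_{H^s}\le d$ with $d$ small.

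\textbf{Equivalent norm on $H^s$.} Before splitting frequencies I would construct $\|\cdot\|_{\tilde H^s}$, equivalent to $\|\cdot\|_{H^s}$, with the property $\|S_a(T)\phi\|_{\tilde H^s} \le q_0\|\phi\|_{\tilde H^s}$ for some $q_0\in(0,1)$ depending only on $T$. Following the hint in Remark~\ref{Remark-norm}, the natural candidate is a time-integrated functional of $S_a(\cdot)\phi$; observability for the damped linear Schr\"odinger equation (derived from Proposition~\ref{Proposition-Carleman} with $g\equiv 0$) produces $q_0$. This device is what removes any smallness restriction on $T$, in contrast with the wave-equation setting.

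\textbf{Linear stabilization by frequency splitting.} For the linear problem, decompose $v = P_N v + Q_N v$. On $Q_N v$ I Duhamel-iterate against $S_a$, using its $\tilde H^s$-contraction together with the nonlinear smoothing estimate (Lemma~\ref{Lemma-smoothing}, applied to the $p$-linear form $\mathcal T$ with $p-1$ arguments equal to $\tilde u$ and one equal to $v$ or $\bar v$):
\begin{equation*}
\|V_1(\tilde u)\,v + V_2(\tilde u)\,\bar v - (\text{resonant part})\|_{X^{s+\sigma,-b'}_T} \lesssim \|\tilde u\|_{X^{s+\sigma,b}_T}^{p-1}\|v\|_{X^{s,b}_T},
\end{equation*}
the resonant piece being handled by a norm-preserving phase rotation as in Section~\ref{Section-AC-2}. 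The extra $\sigma$ of regularity translates to an $N^{-\sigma}$-gain on the $Q_N$-output, so that for $N$ large the high-frequency contribution is majorised by $q_1\in(q_0,1)$ times $\|v_0\|_{\tilde H^s}$. On $P_N v$ I invoke exact null-controllability at time $T$ for the linear Schr\"odinger equation with potentials $V_1,V_2$, controlled on $\mathcal I_2\subset\{\chi\ge\chi_0\}$, via the Hilbert uniqueness method; the required adjoint observability is obtained from Proposition~\ref{Proposition-Carleman} ($g\equiv 0$) combined with a compactness–uniqueness argument in the spirit of \cite{Laurent-10,BBZ-13,DGL-06}. The HUM control is real-linear in $v_0$, depends smoothly on $\tilde u$ through the invertible Gramian, and has cost uniform on $\overline{B}_{X^{s+\sigma,b}_T}(R)$. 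Summing both contributions yields $\Phi(\tilde u)$ and the contraction \eqref{Squeezing-0} for the linear equation with some $q'\in(q_1,1)$.

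\textbf{Nonlinear correction and main obstacle.} For the full equation I run a contraction mapping on a small ball of $X^{s,b}_T$: given a candidate $v$, freeze $R(\tilde u,v)$, apply the linear stabilizer, and verify self-mapping/contraction using $\|R(\tilde u,v)\|_{X^{s,-b'}_T} \lesssim \|v\|_{X^{s,b}_T}^2$ once $d$ is small. Real-linearity of $\Phi(\tilde u)$ persists in the limit because $R$ is quadratic, and $C^1$-smoothness of $\Phi$ on $\overline{B}_{X^{s+\sigma,b}_T}(R)$ follows from smooth dependence of the HUM Gramian on the potentials $V_1(\tilde u),V_2(\tilde u)$ together with the implicit function theorem applied to the fixed-point equation. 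The decisive difficulty is the high-frequency step: since Schr\"odinger has no parabolic smoothing, the potentials $V_1 v + V_2\bar v$ would a priori scatter high-frequency energy back at the same regularity as $v$, and no contraction could be obtained; the nonlinear smoothing estimate, fed with the $H^{s+\sigma}$ regularity of the reference trajectory $\tilde u$ furnished by exponential asymptotic compactness, provides exactly the missing $\sigma$-gain, and this explains why the extra regularity of $\tilde u$ is both essential and sharp, in line with Section~\ref{Section-133}.
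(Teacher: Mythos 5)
Your outline reproduces most of the paper's architecture (the equivalent norm $\|\cdot\|_{_{\tilde H^s}}$ built from damped observability, the $P_m/Q_m$ splitting, nonlinear smoothing plus the resonant decomposition for the high frequencies, HUM for the low frequencies, and a final perturbation to pass from the linearized to the nonlinear equation), but it has a genuine gap exactly at the step you dismiss with ``summing both contributions yields $\Phi(\tilde u)$ and the contraction.'' The two pieces are not independent: the control $\chi(x)\mathcal P_N\xi$ is not frequency-localized, so the low-frequency HUM control enters the Duhamel formula for $Q_m v(T)$ as a source term. The high-frequency gain $m^{-\sigma}$ in Proposition~\ref{Prop-HF} is only available for sources bounded in $X_T^{s+\sigma,-b'}$ (or $L^2_tH^{s+\sigma}_x$), whereas the HUM control built from a state $v_0\in H^s$ via the $\sigma'=0$ observability has only $H^s$ spatial regularity; its $H^{s+\sigma}$-norm grows like $N^{\sigma}$, and since the truncated observability forces $N=N(m)$ to grow with $m$, the product $(N/m)^{\sigma}$ cannot be made small. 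This is precisely the ``unknown'' entry in the paper's table (an $H^s$-control ruins the HF dissipation), and your proposal never resolves it. The paper's resolution is the auxiliary trajectory $z$ solving \eqref{Linear-problem-1}: one steers $P_m v(T)$ to $P_m z(T)$ rather than to $0$, so the state actually annihilated by HUM is $-w_1(0)$, which lies in $H^{s+\sigma}$ because $w_1$ solves a backward equation whose sources gain regularity from nonlinear smoothing; the low-regularity observability (Lemmas~\ref{Lemma-fullobs} and \ref{Lemma-obs} with $\sigma'=\sigma$) then yields a control bounded in $L^2_tH^{s+\sigma}_x$ by $C\|v_0\|_{_{H^s}}$ uniformly in $m,N$, and the missing low-frequency decay is recovered from $\|z(T)\|_{_{\tilde H^s}}\le q_0\|v_0\|_{_{\tilde H^s}}$. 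Without this device (or an equivalent one) your linear stabilization step does not close.

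Two smaller points. First, only the term $\tfrac{p+1}{4\pi}\|\tilde u\|_{L^{p-1}}^{p-1}v$ of the resonant part can be removed by a phase rotation; the other resonant term $\tfrac{p^2-1}{4\pi}\tilde u\,\re(|\tilde u|^{p-3}\tilde u,v)$ is handled by the $H^{s+\sigma}$ regularity of $\tilde u$ itself, not by a gauge transform. Second, your nonlinear step as written (update the control inside a fixed-point iteration) would destroy the real-linearity of $\xi$ in $u_0-\tilde u_0$; the correct argument, as in Appendix~\ref{Appendix-lineartest}, fixes the control produced by the linearized problem once and for all and shows that the quadratic remainder $F(r,\tilde u)$ only perturbs the final state by $O(\|r_0\|_{_{H^s}}^2)$, absorbed into $(q'-q_1)\|r_0\|_{_{\tilde H^s}}$ for $\|r_0\|_{_{H^s}}\le d$ small.
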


\begin{remark}\label{Remark-norm}
The new norm $\|\cdot \|_{_{\tilde{H}^s}}$ will be defined in Lemma~{\rm\ref{Lemma-Hslineardecay}}, possessing the property that for any $T>0$, there exists a constant $q_0\in (0,1)$ so that
\[\|S_a(T)\|_{_{\mathcal L(\tilde{H}^s)}}\leq q_0.\]
One should distinguish this from \eqref{Decay-semigroup}, which merely infers that $S_a(T)$ is a contraction with respect to the standard $H^s$-norm for $T$ sufficiently large.\footnote{The existence of such equivalent norm $\|\cdot\|_{_{\tilde{H}^s}}$ follows from an abstract construction; see \cite[Chapter 1, Theorem 5.2]{Paz-83}. Indeed, due to \eqref{Decay-semigroup}, we can define this equivalent norm on $H^s$ by 
\[\interleave u\interleave :=\sup_{t\ge 0} e^{\beta t}\|S_a(t) u_0\|_{_{H^s}}.\]
Alternatively, in Lemma~\ref{Lemma-Hslineardecay} we provide a more explicit construction.}

In fact, the construction of $\|\cdot\|_{_{\tilde{H}^s}}$ is as follows. For $s=2n$ with $n\in \mathbb{N}$, we define
\[\|f\|_{_{\tilde{H}^{2n}}}:=\sum_{k=0}^n A^{n-k} \|(i\partial_x^2-a(x))^k f\|_{_{L^2}},\]
where $A>0$ is sufficiently large, depending only on $n$.
While for $s\in (2n,2n+2)$, we define $\tilde{H}^s$ as the complex interpolation space $(\tilde{H}^{2n},\tilde{H}^{2n+2})_{\theta}$, where $\theta\in (0,1)$ so that $s=(1-\theta)(2n)+\theta(2n+2)$. See, e.g. {\rm\cite{BL-76}} for the definition of complex interpolation method.
\end{remark}

By standard arguments, it suffices to
consider linearization along uncontrolled solution $\tilde u(t)$: 
\begin{equation}\label{Control-Problem-1}
\left\{\begin{array}{ll}
iv_t+v_{xx}+ia(x)v=\frac{p+1}{2}|\tilde u|^{p-1}v+\frac{p-1}{2}|\tilde u|^{p-3}\tilde u^2\bar v+\chi(x)\mathcal P_N\xi(t,x),\\
v(0,x)=v_0(x).
\end{array}
\right.
\end{equation}
In view of Proposition \ref{Proposition-linearproblem}(1), system (\ref{Control-Problem-1}) admits a unique solution in $ X^{s,b}_T$ 
for every $\tilde u\in X_T^{s,b}$, $v_0\in H^{s}$ and $\xi\in L^2_tH_x^s$. The solution is denoted by $\mathcal V_{\tilde u}(v_0,\chi\mathcal P_N\xi)$.

\begin{proposition}\label{Proposition-control-1}
Let $T,R,\sigma>0$ and $s\ge 1$ be arbitrarily given.
Then there exist constants $N\in\N^+$, $q_1\in (0,1)$, an equivalent norm $\|\cdot\|_{_{\tilde{H}^s}}$ on $H^s$, and a map
$
\Phi\colon \overline{B}_{X_T^{s+\sigma,b}}(R)\rightarrow \mathcal{L}_{\R}(H^s;L^2(0,T;H^{s}))
$
such that the following assertions hold.
\begin{enumerate}
\item[$(1)$]  For every $\tilde u\in \overline{B}_{X_T^{s+\sigma,b}}(R)$ and $v_0\in H^s$, there is a control $\xi\in L^2(0,T;H^{s})$ satisfying
\begin{equation}\label{Squeezing-1}
\|v(T)\|_{_{\tilde{H}^s}}\leq q_1 \|v_0\|_{_{\tilde{H}^s}},
\end{equation}
where $v=\mathcal V_{\tilde u}(v_0,\chi\mathcal P_N\xi)$.

\item[$(2)$]  The control $\xi$ verifying {\rm(\ref{Squeezing-1})} can be represented as 
$$
\xi=\Phi(\tilde u)v_0.
$$
Moreover, $\Phi$ is Lipschitz and continuously differentiable.
\end{enumerate}
\end{proposition}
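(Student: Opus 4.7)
The plan is to run a frequency-analysis scheme for \eqref{Control-Problem-1}, combining exact controllability on low modes via a Hilbert Uniqueness Method (HUM) construction with the dissipative decay of $S_a(t)$ on high modes, while nonlinear smoothing turns the potentials into a compact perturbation at the $H^{s+\sigma}$ scale. Throughout, let $\|\cdot\|_{_{\tilde H^s}}$ be the equivalent norm from Lemma~\ref{Lemma-Hslineardecay}, so $\|S_a(T)\|_{_{\mathcal L(\tilde H^s)}}\le q_0<1$ for some $q_0$ depending only on $T$, and write $V_1:=\tfrac{p+1}{2}|\tilde u|^{p-1}$, $V_2:=\tfrac{p-1}{2}|\tilde u|^{p-3}\tilde u^2$. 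The Duhamel representation reads
\begin{equation*}
v(T)=S_a(T)v_0-i\int_0^T S_a(T-\tau)(V_1 v+V_2\bar v)\,d\tau - i\int_0^T S_a(T-\tau)\chi\mathcal P_N\xi\,d\tau,
\end{equation*}
and after isolating single-resonant contributions via a phase rotation analogous to \eqref{w-variable}, Lemma~\ref{Lemma-smoothing} applied to $V_1 v+V_2\bar v$ yields a $\sigma$-gain: the potential contribution lies in $H^{s+\sigma}$ with norm $\le C(R)\|v\|_{_{X^{s,b}_T}}$, thanks to $\tilde u\in\overline{B}_{X^{s+\sigma,b}_T}(R)$.

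For the low-frequency controllability, fix $M=M(T,R)$ (to be pinned down in the closing step) and consider the linear operator $\mathcal K_{\tilde u}\colon \mathcal P_N L^2(0,T;H^s)\to P_M H^s$, $\xi\mapsto P_M v(T)\big|_{v_0=0}$. We seek $\xi=\Phi(\tilde u)v_0$ enforcing $P_M v(T)=0$, equivalently $\mathcal K_{\tilde u}\xi=-P_M S_a(T)v_0+iP_M\int_0^T S_a(T-\tau)(V_1 v+V_2\bar v)d\tau$. By HUM duality, solvability is equivalent to an observability inequality
\begin{equation*}
\|P_M\phi_T\|_{_{\tilde H^s}}^2\le C\int_0^T \bigl\|\mathcal P_N(\chi\,\phi(\tau))\bigr\|_{_{H^s}}^2\,d\tau
\end{equation*}
for the backward adjoint system $i\phi_\tau+\phi_{xx}-ia(x)\phi=V_1\phi+\overline{V_2}\bar\phi$ with terminal data $P_M\phi_T$. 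A standard compactness–uniqueness argument reduces this to unique continuation for a linear Schrödinger equation with bounded complex potentials vanishing on $[0,T]\times\mathcal I_2$, supplied by the linear case ($g\equiv 0$) of Proposition~\ref{Proposition-Carleman}. For $N\ge N_0(M,T,R)$, the time-frequency truncation $\mathcal P_N$ retains enough observation, producing a bounded real-linear HUM inverse $\Phi(\tilde u)\colon H^s\to L^2(0,T;H^s)$. Its Lipschitz and $C^1$ dependence on $\tilde u$ follows from smooth parameter dependence of the HUM Gramian, which remains uniformly invertible on $\overline{B}_{X^{s+\sigma,b}_T}(R)$.

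To close the estimate, with $P_M v(T)=0$ we have $v(T)=Q_M v(T)$, and
\begin{equation*}
\|v(T)\|_{_{\tilde H^s}}\le \|Q_M S_a(T)v_0\|_{_{\tilde H^s}}+\Bigl\|Q_M\!\int_0^T\! S_a(T-\tau)(V_1 v+V_2\bar v)d\tau\Bigr\|_{_{\tilde H^s}}+\Bigl\|Q_M\!\int_0^T\! S_a(T-\tau)\chi\mathcal P_N\xi\, d\tau\Bigr\|_{_{\tilde H^s}}.
\end{equation*}
The first term is bounded by $q_0\|v_0\|_{_{\tilde H^s}}$ via Lemma~\ref{Lemma-Hslineardecay}. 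The second, being $H^{s+\sigma}$-bounded by the first paragraph, contributes $\le CM^{-\sigma}(R)\|v_0\|_{_{\tilde H^s}}$ after $Q_M$ and after controlling $\|v\|_{_{X^{s,b}_T}}$ by Proposition~\ref{Proposition-linearproblem}(1) together with $\|\xi\|_{_{L^2_tH^s_x}}\le \|\Phi(\tilde u)\|\,\|v_0\|_{_{H^s}}$. The third term admits an analogous $M^{-\sigma}$-gain because $\chi\mathcal P_N\xi$ lies in a fixed finite-dimensional smooth subspace and thus in every $H^{s'}$ uniformly. Fixing $M$ large enough absorbs both perturbative terms into $\tfrac{1-q_0}{2}\|v_0\|_{_{\tilde H^s}}$, delivering \eqref{Squeezing-1} with $q_1=\tfrac{1+q_0}{2}\in(0,1)$.

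The decisive difficulty is the observability inequality of the second paragraph: Proposition~\ref{Proposition-Carleman} supplies the spatial unique continuation, but the temporal truncation by $\mathcal P_N$ forces $N$ to be adapted to $M$ and $T$ through how well $\{\alpha^T_j\}$ resolves solutions of the perturbed adjoint dynamics, which demands a perturbation analysis beyond the free Schrödinger case. The extra regularity $\sigma$ of $\tilde u$ is both used and essential in the closing step to produce the $M^{-\sigma}$ gain — without it, the high-mode tail of the potential cannot be driven below the dissipative gap $1-q_0$, as illustrated by the counterexample in Remark~\ref{counterexample}.
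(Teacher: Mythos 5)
Your overall architecture matches the paper's: low-frequency exact controllability by HUM plus compactness--uniqueness and Carleman-based unique continuation, high-frequency contraction of $S_a(T)$ in the equivalent norm of Lemma~\ref{Lemma-Hslineardecay}, and the resonant decomposition/nonlinear smoothing to push the potential terms into $H^{s+\sigma}$. The gap is in your closing step, precisely at the point the paper flags as the main difficulty of Part~III of Section~\ref{Section-scheme}. Your HUM inverse produces $\xi=\Phi(\tilde u)v_0\in L^2(0,T;H^s)$ only, and you absorb its high-frequency contribution by arguing that $\chi\mathcal P_N\xi$ ``lies in a fixed finite-dimensional smooth subspace and thus in every $H^{s'}$ uniformly.'' That uniformity is illusory: the embedding of ${\rm span}\{\alpha_j^Te_k; j,|k|\le N\}$ (with the $L^2_tH^s_x$ norm) into $L^2_tH^{s+\sigma}_x$ costs a factor of order $\langle N\rangle^{\sigma}$, and $N$ must be taken large depending on $M$ for the truncated observability to hold (the threshold $N_0(M)$ comes from a compactness argument with no quantitative rate). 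So the third term in your final estimate is bounded by $C\langle N(M)\rangle^{\sigma}M^{-\sigma}\|v_0\|$, which cannot be made small by choosing $M$ large: the absorption is circular. The paper breaks this circle by \emph{not} applying HUM to $v_0$ directly: it introduces the auxiliary linear solution $z$ of \eqref{Linear-problem-1} and a backward correction $w_1$ (equation \eqref{Equation-w1}) whose source terms are exactly the smooth pieces $\tilde u\,{\rm Re}(|\tilde u|^{p-3}\tilde u,z)+\mathcal R(\tilde u,z)\in X_T^{s+\sigma,-b'}$, so that $w_1(0)\in H^{s+\sigma}$ with $\|w_1(0)\|_{_{H^{s+\sigma}}}\le C\|v_0\|_{_{H^s}}$; the HUM control is then applied to the datum $-w_1(0)$ using the low-regularity observability of Lemma~\ref{Lemma-fullobs} with $\sigma'=\sigma$, yielding $\xi\in L^2_tH^{s+\sigma}_x$ with a bound independent of $m,N$ (Proposition~\ref{Proposition-LFC}(2)). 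With that, the $m^{-\sigma}$ gain of Proposition~\ref{Prop-HF} closes the estimate. You need this (or an equivalent device) for your scheme to work.

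A secondary flaw: you target $P_Mv(T)=0$ and then bound $\|Q_MS_a(T)v_0\|_{_{\tilde H^s}}\le q_0\|v_0\|_{_{\tilde H^s}}$, but $Q_M$ does not commute with $L_a=i\partial_x^2-a(x)$ and hence is not a contraction for $\|\cdot\|_{_{\tilde H^s}}$; the constant you pick up from passing through the norm equivalence can exceed $1$ and destroy the margin $1-q_0$. The paper sidesteps this by steering the low frequencies to $P_mz(T)$ rather than to $0$, so that $v(T)=z(T)+Q_mw(T)$ and the leading term $\|z(T)\|_{_{\tilde H^s}}=\|S_a(T)v_0\|_{_{\tilde H^s}}\le q_0\|v_0\|_{_{\tilde H^s}}$ carries no frequency projection.
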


\begin{remark}
Actually, the controls $\xi$ in Theorem {\rm\ref{Theorem-control}} and Proposition {\rm\ref{Proposition-control-1}} are the same, and possess the spatial regularity $H^{s+\sigma}$ higher than the phase space $H^s$. This relies on the $H^{s+\sigma}$-regularity of $\tilde u(t)$ and
plays an essential role in proving the stabilization along trajectory. 

Furthermore, it is worth emphasizing that the extra regularity of $\tilde u(t)$ is a ``sharp'' sufficient condition for the control property; see Section {\rm\ref{Section-scheme}} for more information. This reflects in nature the optimality of exponential asymptotic compactness in the problem of exponential mixing {\rm(}see also {\rm\cite{LWX-24}} for the case of wave equations{\rm)}. 
\end{remark}

The proof of Theorem \ref{Theorem-control} via Proposition \ref{Proposition-control-1} is based on a standard argument of perturbation, with $q'$ chosen to satisfy $q'\in (q_1,1)$; see Appendix \ref{Appendix-lineartest} for the details. Meanwhile, the analysis on Proposition \ref{Proposition-control-1} constitutes the bulk of this section. An overview of the proof will be provided in Section~\ref{Section-scheme}, and the later Sections~\ref{Section-LF}--\ref{Section-proof} include the details.

\subsection{Scheme of proof}\label{Section-scheme}

The underlying idea for Proposition \ref{Proposition-control-1} follows the strategy of frequency analysis. More precisely, one can split the system into {\it low-frequency} (LF)
system coupled with {\it high-frequency} (HF) system. The former is finite-dimensional and obtained by truncating the equation in low Fourier modes, while the latter is an infinite-dimensional system consisting of high Fourier modes.
Recall the subspace of low frequencies is denoted with $H_m={\rm span}\{e_k;|k|\le m\}$, the orthogonal projection to $H_m$ is $P_m$, and $Q_m=I-P_m$.

Our strategy of proof contains three parts:

\medskip

\noindent $\bullet$ {\bf LF controllability}. We initially find an $H^s$-control $\xi$ steering LF of system (\ref{Control-Problem-1}) to zero, i.e. 
\begin{equation}\label{LFC}
P_mv(T)=0.
\end{equation}
In comparison, the controllability in the full frequency (i.e.~$v(T)=0$) is unavailable, as the control in (\ref{Control-Problem-1}) is finite-dimensional. It is worth emphasizing that the control could unavoidably cause mystery in the HF dynamics. So, more efforts are needed.

\medskip

\noindent $\bullet$ {\bf HF dissipation in uncontrolled case}. Our discussion on the HF system starts with the uncontrolled case. We construct an equivalent norm $\|\cdot\|_{_{\tilde{H}^s}}$ on $H^s$ so that $S_a(T)$ is a contraction. Then provided $\tilde{u}(t)$ possesses a higher regularity $H^{s+\sigma}$, it will be derived that
\begin{equation}\label{highfreqdecay}
\|Q_m(v(T)-z(T))\|_{_{\tilde{H}^s}}\leq \varepsilon \|v_0\|_{_{\tilde{H}^s}},
\end{equation}
with $v=\mathcal V_{\tilde u}(v_0,0)$ and any $\varepsilon\in (0,1)$. Here, $m$ is sufficiently large, and $z$ is an auxiliary function defined in \eqref{Linear-problem-1}, satisfying that
\begin{equation}\label{z-squeezing}
\|z(T)\|_{_{\tilde H^s}}\le q_0\|v_0\|_{_{\tilde{H}^s}}.
\end{equation}

\medskip

\noindent $\bullet$ {\bf Influence of control on HF}. It remains to ensure the validity of (\ref{highfreqdecay}) for controlled solution of (\ref{Control-Problem-1}). For this purpose, we 
invoke again the extra regularity imposed on $\tilde u(t)$, and construct a control $\xi$ with higher regularity $H^{s+\sigma}$,
which indicates that $\xi$ does not act essentially on the HF. On the other hand, instead of null controllability (\ref{LFC}) the LF system verifies
\begin{equation}\label{LF-squeezing}
P_m v(T)=P_m z(T).
\end{equation}
Gathering \eqref{highfreqdecay}-\eqref{LF-squeezing}, we can conclude the proof of Proposition~\ref{Proposition-control-1}; the information on the structure of $\xi$ is included in its construction.

\begin{remark}
The aforementioned effects of control to system \eqref{Control-Problem-1} can be roughly summarized as:

\begin{table}[H]
\centering
\begin{tabular}{c|ccc}
&No control&$H^s$-control&$H^{s+\sigma}$-control \\
\hline 
LF&unknown&\hspace{2mm}controllable to $0$\hspace{2mm}&controllable to $z(T)$\\
HF&stabilizable to $z(T)$&unknown&stabilizable to $z(T)$
\end{tabular}
\end{table}
\noindent For more explanations, see Part I-III of the scheme below. 
\end{remark}

\medskip

\noindent{\it Part I: LF controllability.} 
Invoking the Hilbert uniqueness method (HUM), the problem of LF controllability (\ref{LFC}) is translated to whether the following truncated observability
\begin{equation}\label{obs-type}
\int_0^T\|\mathcal P_N(\chi\varphi)\|^2_{_{H^{-s-\sigma'}}}\gtrsim
\|\varphi_T\|^2_{_{ H^{-s-\sigma'}}}\quad {\rm with\ }\sigma'\in[0,\sigma]
\end{equation}
holds for an adjoint system of (\ref{Control-Problem-1}), reading
\begin{equation}\label{Adjoint-problem-1}
\left\{\begin{array}{ll}
i\varphi_t+\varphi_{xx}-ia(x)\varphi=\frac{p+1}{2}|\tilde u|^{p-1}\varphi-\frac{p-1}{2}|\tilde u|^{p-3}\tilde u^2\bar \varphi,\\
\varphi(T,x)=\varphi_T(x).
\end{array}
\right.
\end{equation}
The inequality (\ref{obs-type}) with $\sigma'=0$ indicates that one can arrive at (\ref{LFC}) via an $H^s$-control for every $v_0\in H^s$. However, 
the regularity of control $\xi$, constructed in the present part, could not exceed the regularity of state $v_0$. This makes it difficult for us to obtain useful information on the HF dynamics. To this end, we exploit the observability with $\sigma'>0$, which is referred to as ``low-regularity'' observability, and enables us to obtain an $H^{s+\sigma'}$-control verifying (\ref{LFC}) when the state $v_0$ is of $H^{s+\sigma'}$. This  controllability in a bit more regular space $H^{s+\sigma'}$ will be used in establishing the HF dissipation.

In order to derive (\ref{obs-type}), we adapt the argument of \cite[Proposition 7.1]{Laurent-10} (which applies directly to the case of $s=1$ and $\sigma'=0$), and obtain a ``full'' version of observability:
\begin{equation}\label{fullobs-type}
\int_0^T\|\chi\varphi\|^2_{_{H^{-s-\sigma'}}}\gtrsim
\|\varphi_T\|^2_{_{ H^{-s-\sigma'}}}.
\end{equation}
In this step, the  $H^{s+\sigma}$-regularity of $\tilde u(t)$ and several microlocal-analysis characterizations on linear Schr\"{o}dinger equation are needed. Finally, the deduction of (\ref{obs-type}) via (\ref{fullobs-type}) follows by approximation of type $\mathcal P_N(\chi\varphi)\rightarrow \chi\varphi$. See Section \ref{Section-LF} for more details.

\medskip

\noindent{\it Part II: HF dissipation in uncontrolled case.} The main technical challenge of (\ref{highfreqdecay}) lies in the fact that the Duhamel evolution for the Schr\"{o}dinger equation has no smoothing effect. 

Because of our emphasis on underlying ideas, let us restrict ourselves for the moment on the case of $p=3$. As the control $\mathcal P_N\xi$ does not act essentially on the HF (see Part III later), the essential problem arises in  uncontrolled case:
\begin{equation*}
\left\{\begin{array}{ll}
iv_t+v_{xx}+ia(x)v=2|\tilde u|^{2}v+\tilde u^2\bar v,\\
v(0,x)=v_0(x).
\end{array}
\right.
\end{equation*}
The stability of this linearized equation is hard to characterize, due to the presence of potential terms $2|\tilde{u}|^2 v+\tilde{u}^2 \bar{v}$. Specifically, an $H^s$-potential $\tilde{u}(t)$ could not guarantee \eqref{highfreqdecay}; see Remark~\ref{counterexample}. To our surprise, we observe that the resonant decomposition in the analysis of asymptotic compactness (Section \ref{Section-AC-1}) is applicable to this issue. This allows us to derive that if $\tilde{u}(t)$ is of $H^{s+\sigma}$, the HF dissipation \eqref{highfreqdecay} is valid.

More precisely,
the potential terms can be decomposed as 
$$
2|\tilde u|^2v+\tilde u^2\bar v=\frac{1}{\pi}\|\tilde u\|^2_{_{L^2}}v+\frac{2}{\pi}\tilde u{\rm Re}\,(\tilde u,v)+\mathcal R(\tilde u,v),
$$
where 
$$
\mathcal R(\tilde u,v)=2\mathcal T_{N}(\tilde u,\tilde u,v)+\mathcal T_{N}(\tilde u,v,\tilde u);
$$ 
recall $\mathcal T_N$ is defined by (\ref{smoothing-frequency}). If $\tilde u(t)$ is of $H^{s+\sigma}$, 
it possesses the smallness  in the HF, which together with the nonlinear smoothing of $\mathcal R(\tilde u,v)$ (see Lemma \ref{Lemma-smoothing}) gives rise to the smallness of 
$$
Q_m\left[\frac{2}{\pi}\tilde u{\rm Re}\,(\tilde u,v)+\mathcal R(\tilde u,v)\right]\quad {\rm in\ }H^s.
$$
Observe in addition that the term $\frac{1}{\pi}\|\tilde u\|^2_{_{L^2}}v$ can be removed by a norm-preserving transformation as in (\ref{w-variable}). This, to be combined with the spectral gap of damped Schr\"odinger operator $i\partial^2_x-a(x)$, yields a ``HF spectral gap'' of the perturbed operator 
$$v\to iv_{xx}-a(x)v-i(2|\tilde u|^2v+\tilde u^2\bar v).$$
See Section \ref{Section-HF} for more details.  

\medskip

\noindent{\it Part III: Influence of control on HF.} Heuristically, the proof of Proposition \ref{Proposition-control-1} would conclude by combining LF controllability and HF dissipation established in the last two parts. However, the control must be taken into concern for the HF dissipation. In order to ensure that the contribution of control is harmless, we need $\xi$ to possess higher regularity $ H^{s+\sigma}$, in spite of the $H^s$-regularity of  state $v_0$. This seems incompatible with the LF controllability at first sight, since the control $\xi$ verifying (\ref{LFC}) has the same regularity as $v_0$.

To make up for this discrepancy, we reproduce a trick used in \cite{LWX-24}. Let $z(t)$ be the solution of an auxiliary linear equation
\begin{equation}\label{Linear-problem-1}
\left\{\begin{array}{ll}
iz_t+ z_{xx}+ia(x)z=\frac{p+1}{4\pi}\|\tilde u\|^{p-1}_{_{L^{p-1}(\T)}}z,\\
z(0,x)=v_0(x).
\end{array}\right.
\end{equation}
Then we deduce a variant of LF controllability (\ref{LFC}): there exists $\xi\in L_t^2 H^{s+\sigma}_x$ such that (\ref{LF-squeezing}) holds.
Owing to the extra regularity of $\xi$, the HF dissipation (\ref{highfreqdecay}) remains valid. 

Finally, in view of \eqref{highfreqdecay} and \eqref{LF-squeezing}, it remains to address (\ref{z-squeezing}). 
This follows from the contraction of $S_a(T)$ in the norm $\|\cdot\|_{_{\tilde{H}^s}}$ (see Remark \ref{Remark-norm}), combined with $\|z(T)\|_{_{\tilde{H}^s}}=\|S_a(T) v_0\|_{_{\tilde{H}^s}}$. In comparison, though $S_a(t)$ decays exponentially in the standard norm $\|\cdot\|_{_{H^s}}$, it is unknown whether the operator norm at any $T$ is strictly less than $1$. 

\medskip

\noindent{\it Convention: In the remainder of this section, unless otherwise stated, the generic constant $C$ used in the proofs would not depend on the parameters $m,N$. Meanwhile, its dependence on other parameters will not be mentioned explicitly if there is no danger of confusion.}

\subsection{Low-frequency controllability}\label{Section-LF}

In this section, we prove the LF null controllability (\ref{LFC}) for system (\ref{Control-Problem-1}); see Proposition \ref{Proposition-LFC} later. In view of Proposition \ref{Proposition-linearproblem}(2), the adjoint system (\ref{Adjoint-problem-1}) admits a unique solution in $X^{-s,b}_T$, denoted by $\mathcal U_{\tilde u}(\varphi_T)$, for any $\tilde u\in X_T^{s,b}$ and $\varphi_T\in H^{-s}$. 

We begin with the following result.

\begin{lemma}[Full observability]\label{Lemma-fullobs}
Let $T,R,\sigma>0$, $s\ge 1$ and $\sigma'\in [0,\sigma]$ be arbitrarily given. Then there exists a constant $C_4>0$ such that
\begin{equation}\label{Full-observability}
\int_0^T\|\chi\varphi\|^2_{_{H^{-s-\sigma'}}}\geq C_4\|\varphi_T\|^2_{_{ H^{-s-\sigma'}}}
\end{equation}
for any $\varphi_T\in H^{-s-\sigma'}$, where 
$\varphi=\mathcal U_{\tilde u}(\varphi_T)$ with $\tilde u\in \overline{B}_{X_T^{s+\sigma,b}}(R)$.
\end{lemma}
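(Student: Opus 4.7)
The plan is to follow the classical compactness-uniqueness scheme for observability of dispersive equations, adapted to the Schrödinger setting and to a negative Sobolev scale following Laurent's \cite{Laurent-10} treatment of the cubic case. Arguing by contradiction, suppose the estimate fails; then there exists a normalized sequence $\varphi_T^n \in H^{-s-\sigma'}$ with $\|\varphi_T^n\|_{H^{-s-\sigma'}}=1$ such that $\int_0^T\|\chi\varphi^n\|^2_{H^{-s-\sigma'}}dt\to 0$, where $\varphi^n=\mathcal U_{\tilde u}(\varphi_T^n)$. The scheme reduces the proof to two ingredients: (i) a weak observability inequality with compact remainder, which together with the smallness of the observation forces $\{\varphi_T^n\}$ to be precompact in $H^{-s-\sigma'}$; (ii) a unique continuation property guaranteeing that any non-trivial weak limit $\varphi$ cannot vanish on $[0,T]\times\mathcal I_2$. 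Combining (i) and (ii) produces the desired contradiction.

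For (i), I would start from the well-known Ingham--Haraux $L^2$-observability of the free Schrödinger group $S(t)$ on $\mathbb{T}$: for any $T>0$ and any non-empty open subset of $\mathbb{T}$, there is $c>0$ with $\int_0^T\|\chi S(t)u_0\|_{L^2}^2 dt\ge c\|u_0\|_{L^2}^2$. Transferring this to $H^{-s-\sigma'}$ by duality and interpolation yields the same bound for $S(t)$ on the negative scale, modulo a compact remainder $C\|u_0\|_{H^{-s-\sigma'-\delta}}$. The perturbation from $S(t)$ to the adjoint semigroup $S_a^*(t)$ is controlled by $a\in C^\infty$ and iterated Duhamel. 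Finally, the full adjoint system \eqref{Adjoint-problem-1} differs from $S_a^*$ by the potential terms with $V_1=\tfrac{p+1}{2}|\tilde u|^{p-1}$ and $V_2=\tfrac{p-1}{2}|\tilde u|^{p-3}\tilde u^2$. Because $\tilde u\in X^{s+\sigma,b}_T\hookrightarrow C(0,T;H^{s+\sigma})$ by Lemma~\ref{Lemma-Bourgainspace} and $s+\sigma>1/2$, the Kato--Ponce type Lemma~\ref{Lemma-KatoPonce} (upgraded by duality to negative indices) bounds the product $V_j\varphi$ in $H^{-s-\sigma'}$ whenever $\sigma'\le\sigma$; this is precisely where the extra regularity of $\tilde u$ becomes indispensable. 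A final perturbative Duhamel expansion then produces the desired weak observability with compact remainder.

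For (ii), suppose that $\chi\varphi\equiv 0$ on $Q_T$ with $\varphi$ solving \eqref{Adjoint-problem-1}. By $(\mathbf{S1})$, $\chi\ge\chi_0>0$ on $\mathcal I_2$, so $\varphi$ vanishes on $[0,T]\times\mathcal I_2$. After regularizing to reduce to a smooth solution, I apply Proposition~\ref{Proposition-Carleman} with $g\equiv 0$, $\mathcal I=\mathcal I_2$ and the source $h=-ia(x)\varphi+V_1\varphi-V_2\bar\varphi$. The localized integral on the right-hand side vanishes, while the remaining source contribution $\int_{Q_T}e^{-2s\alpha}|h|^2\le C\int_{Q_T}e^{-2s\alpha}|\varphi|^2$ can be absorbed into the left-hand side for $s,\lambda$ large, since the left-hand side controls $s^3\lambda^4\int_{Q_T}\beta^3 e^{-2s\alpha}|\varphi|^2$ with $\beta$ bounded below on compact subintervals of $(0,T)$. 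This yields $\varphi\equiv 0$ on $Q_T$ by a standard propagation in time, closing the argument.

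The main obstacle will be the compactness step (i) at the critical endpoint $\sigma'=\sigma$, where the product estimate $\|V_j\varphi\|_{H^{-s-\sigma}}\lesssim \|V_j\|_{H^{s+\sigma}}\|\varphi\|_{H^{-s-\sigma}}$ saturates the multiplier bound and leaves little room for commutator losses. I anticipate that a naive Duhamel iteration produces a loss of $\sigma$ derivatives per step that must be recovered using the Bourgain-space machinery; in particular, the nonlinear smoothing of Lemma~\ref{Lemma-smoothing}, applied to the resonant decomposition of $V_j\varphi$, provides the $\sigma$-derivative gain needed to close the fixed-point argument uniformly in $\tilde u\in\overline B_{X^{s+\sigma,b}_T}(R)$. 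A secondary technical point is upgrading the microlocal propagation of regularity from the free Schrödinger group to the perturbed adjoint at negative scale, which I plan to handle by adapting the commutator arguments of \cite[Proposition 7.1]{Laurent-10} to the general odd $p\ge 3$ using the multilinear estimate in Lemma~\ref{Lemma-smoothing}.
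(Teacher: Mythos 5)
Your overall skeleton (compactness--uniqueness by contradiction, plus a Carleman-based unique continuation as in Lemma~\ref{Lemma-UCP}) matches the paper's, and your step (ii) is essentially the paper's Appendix~\ref{Appendix-control} argument. However, your step (i) takes a genuinely different and, as written, incomplete route. The paper does \emph{not} prove a quantitative ``weak observability with compact remainder'' for the perturbed adjoint flow. Instead, after extracting weak limits it applies the microlocal \emph{propagation of compactness} (Lemma~\ref{Lemma-propagation}(2)) to the free operator $i\partial_t+\partial_x^2$, treating the entire damping-plus-potential right-hand side as a source $f^k$ that converges strongly in the much weaker space $X_T^{-1+b,-b}$ by compact embedding. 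This soft argument completely sidesteps the issue you correctly identify at the endpoint $\sigma'=\sigma$: there the multiplication $H^{s+\sigma}\times H^{-s-\sigma}\to H^{-s-\sigma}$ is merely bounded, not compact, so the Duhamel contribution of $V_j\varphi$ cannot be absorbed into a compact remainder, and since the Schr\"odinger group has no smoothing there is no derivative gain per Duhamel iteration. Your proposed rescue via Lemma~\ref{Lemma-smoothing} does not close the gap either: that lemma is stated for all $p$ factors in $X_T^{s,b}$ with $s\ge 1$, whereas here one factor ($\varphi$) lives at the \emph{negative} regularity $-s-\sigma'$; a dual/negative-index version of the smoothing estimate would be needed and is neither available in the paper nor supplied by you. (The only negative-index multilinear estimate provided, Proposition~\ref{Proposition-multilinear}(2), gains no derivatives.)

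Two further concrete issues. First, the constant $C_4$ must be uniform over $\tilde u\in\overline B_{X_T^{s+\sigma,b}}(R)$, so the contradiction sequence must let $\tilde u^n$ vary over the ball and one must extract a limit $\tilde u$ (using the compact embedding $X_T^{s+\sigma,b}\Subset L^\infty(Q_T)$) before identifying the limit equation; you fix $\tilde u$, which only yields a constant depending on $\tilde u$. Second, in step (ii) the weak limit $\varphi$ is a priori only in $X_T^{-s-\sigma',b}$, while Proposition~\ref{Proposition-Carleman} (and hence Lemma~\ref{Lemma-UCP}) requires an $H^1$ solution; the paper bootstraps $\varphi$ up to $C([0,T];H^1)$ by iterating the propagation of regularity (Lemma~\ref{Lemma-propagation}(1)) combined with well-posedness of the adjoint problem at successively higher regularity. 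You defer this to a ``secondary technical point,'' but without it the Carleman estimate cannot legitimately be applied to $\varphi$, so it is in fact an essential step, not an afterthought.
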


When $s=1,\,p=3$ and $\sigma=\sigma'=0$, the observability of type (\ref{Full-observability}) has been obtained in \cite[Proposition 7.1]{Laurent-10}, where the author considered a more general setting of compact Riemannian manifold. The underlying idea used there is also valid for the problem that we are considering, except for some technical adaptations. See Appendix \ref{Appendix-control} for a proof of Lemma \ref{Lemma-fullobs}.

\medskip

The observability (\ref{Full-observability}) leads to null controllability $v(T)=0$ for system (\ref{Control-Problem-1}) with $\mathcal P_N$ replaced by the identity. This can be justified by the Dolecki--Russell duality between controllability and observability (or the HUM); see the monograph \cite{Coron-07}.

Analogously, the LF controllability is related to a truncated version of observability. 

\begin{lemma}[Truncated observability]\label{Lemma-obs}
Let $T,R,\sigma>0$, $s\ge 1$ and $\sigma'\in [0,\sigma]$ be arbitrarily given, and the constant $C_4$ established in Lemma  {\rm\ref{Lemma-fullobs}}. Then 
for every $m\in\N^+$,
there exists a constant $N\in\N^+$ such that
\begin{equation}\label{observability-verification}
\int_0^T\|\mathcal P_N(\chi\varphi)\|^2_{_{H^{-s-\sigma'}}}\geq \frac{C_4}{2}\|\varphi_T\|^2_{_{ H^{-s-\sigma'}}}
\end{equation}
for any $\varphi_T\in H_m$, where 
$\varphi=\mathcal U_{\tilde u}(\varphi_T)$ with $\tilde u\in \overline{B}_{X_T^{s+\sigma,b}}(R)$.
\end{lemma}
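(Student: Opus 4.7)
The plan is to deduce Lemma~\ref{Lemma-obs} from the full observability Lemma~\ref{Lemma-fullobs} via a compactness-and-approximation argument that exploits the finite-dimensionality of $H_m$. Writing $\chi\varphi = \mathcal P_N(\chi\varphi) + (I-\mathcal P_N)(\chi\varphi)$ and using the reverse triangle inequality together with \eqref{Full-observability}, the claim will follow once we produce, for each $\varepsilon\in(0,(1-1/\sqrt 2)\sqrt{C_4}]$, an $N=N(m,T,R,s,\sigma,\sigma')\in\mathbb N^+$ such that
\begin{equation*}
\|(I-\mathcal P_N)(\chi\,\mathcal U_{\tilde u}(\varphi_T))\|_{L^2(0,T;H^{-s-\sigma'})}\le \varepsilon\|\varphi_T\|_{H^{-s-\sigma'}}
\end{equation*}
uniformly for all $\varphi_T\in H_m$ and $\tilde u\in\overline{B}_{X^{s+\sigma,b}_T}(R)$; squaring then delivers the advertised constant $C_4/2$.

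Two facts will drive the approximation. First, $\mathcal P_N\to I$ strongly in $L^2(0,T;H^{-s-\sigma'})$ with $\|\mathcal P_N\|\le 1$: expanding a generic $u$ in the orthonormal basis $\{\alpha^T_j(t)e_k(x)\}$ yields
\begin{equation*}
\|(I-\mathcal P_N)u\|^2_{L^2(0,T;H^{-s-\sigma'})}=\sum_{\max(j,|k|)>N}\langle k\rangle^{-2(s+\sigma')}|\hat u_{j,k}|^2,
\end{equation*}
which vanishes as $N\to\infty$ by dominated convergence. Second, the linear solution operator $\Lambda_{\tilde u}\colon\varphi_T\mapsto \chi\,\mathcal U_{\tilde u}(\varphi_T)$ is bounded from $H^{-s-\sigma'}$ into $L^2(0,T;H^{-s-\sigma'})$, with operator norm controlled uniformly for $\tilde u\in\overline{B}_{X^{s+\sigma,b}_T}(R)$ by Proposition~\ref{Proposition-linearproblem}(2), the Bourgain embedding $X^{-s,b}_T\hookrightarrow L^2_tH^{-s}_x$, and smoothness of $\chi$. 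Since $H_m$ is finite-dimensional, its closed unit ball is compact in $H^{-s-\sigma'}$, so for each fixed $\tilde u$ the image $\Lambda_{\tilde u}(\overline{B}_{H^{-s-\sigma'}}(1)\cap H_m)$ is a compact subset of $L^2(0,T;H^{-s-\sigma'})$; strong convergence of contractions is uniform on compacta, giving the bound for each $\tilde u$ individually.

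The main obstacle I anticipate is upgrading this to uniformity in $\tilde u$. The cleanest route is an Aubin--Lions argument showing that the joint image $K:=\{\chi\,\mathcal U_{\tilde u}(\varphi_T):\tilde u\in\overline{B}_{X^{s+\sigma,b}_T}(R),\ \varphi_T\in H_m\cap \overline{B}_{H^{-s-\sigma'}}(1)\}$ is precompact in $L^2(0,T;H^{-s-\sigma'})$: the corresponding $\varphi$'s are uniformly bounded in $L^\infty_tL^2_x$ (all norms on $H_m$ are equivalent, and a Gronwall estimate for \eqref{Adjoint-problem-1} uses only the uniform $L^\infty_tL^\infty_x$-bound on $\tilde u$ provided by $X^{s+\sigma,b}_T\hookrightarrow L^\infty_tH^{s+\sigma}_x\hookrightarrow L^\infty_tL^\infty_x$), while reading \eqref{Adjoint-problem-1} off directly gives $\partial_t\varphi$ uniformly bounded in $L^2_tH^{-2}_x$. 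Once $K$ is precompact, applying $\mathcal P_N\to I$ uniformly on $K$ closes the loop. A soft alternative: from any hypothetical violating sequence $(\tilde u_n,\varphi_{T,n},N_n\to\infty)$, extract $\varphi_{T,n}\to\varphi_T^\star$ by finite-dimensionality of $H_m$, and use continuous dependence on $\tilde u$ from standard well-posedness Lipschitz bounds to reduce to the already-established pointwise convergence at $\varphi_T^\star$, yielding a contradiction.
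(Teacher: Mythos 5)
Your main route is correct but genuinely different from the paper's. Both proofs reduce the claim to showing that $(I-\mathcal P_N)(\chi\varphi)$ is small in $L^2_tH^{-s-\sigma'}_x$ relative to $\|\varphi_T\|_{H^{-s-\sigma'}}$, uniformly over $\varphi_T\in H_m$ and $\tilde u\in\overline B_{X^{s+\sigma,b}_T}(R)$; the difference lies in how uniformity in $\tilde u$ is obtained. The paper's argument is quantitative: using the equivalence of all norms on the finite-dimensional space $H_m$ and the well-posedness of the adjoint problem in the \emph{positive}-regularity space $X^{1+\varepsilon,b}_T$ (available because $\tilde u\in X^{s+\sigma,b}_T$ with $1+\varepsilon\le s+\sigma$), it first proves the bound $\|\chi\varphi\|_{X^{1,\varepsilon}_T}^2\le C(m)\int_0^T\|\chi\varphi\|^2_{H^{-s-\sigma'}}$ with $C(m)$ uniform in $\tilde u$, and then converts the compact embedding $X^{1,\varepsilon}_T\Subset L^2_tH^{-s-\sigma'}_x$ into an operator-norm decay $\|(I-\mathcal P_N)\|_{\mathcal L(X^{1,\varepsilon}_T,\,L^2_tH^{-s-\sigma'}_x)}\le\alpha_N^{-1/2}\to 0$, which is automatically uniform over the whole ball of $\tilde u$'s. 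Your Aubin--Lions argument replaces this with precompactness of the joint image $K$ in $L^2_tH^{-s-\sigma'}_x$, obtained from the uniform $L^\infty_tL^2_x$ bound (the Gronwall estimate at the $L^2$ level is legitimate since $\varphi_T\in H_m$ is smooth and the solution persists in higher regularity) together with a uniform bound on $\partial_t\varphi$; this is a softer compactness argument but it does close. Two small repairs are needed: the third space in your Aubin--Lions triple should be $H^{-\max(2,\,s+\sigma')}$ rather than $H^{-2}$ when $s+\sigma'>2$ (otherwise the continuous embedding $H^{-s-\sigma'}\hookrightarrow H^{-2}$ fails), and the reduction from all of $H_m$ to its unit ball rests on the positive homogeneity of the real-linear map $\varphi_T\mapsto\mathcal U_{\tilde u}(\varphi_T)$, which is worth stating explicitly.

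Your ``soft alternative'' at the end, however, does not close: after extracting $\varphi_{T,n}\to\varphi_T^\star$ in $H_m$, the Lipschitz dependence of $\mathcal U_{\tilde u}$ on $\tilde u$ only helps if the $\tilde u_n$ themselves converge in $X^{s+\sigma,b}_T$, and the ball $\overline B_{X^{s+\sigma,b}_T}(R)$ is not compact in that topology. The ``already-established pointwise convergence at $\varphi_T^\star$'' was proved for each fixed $\tilde u$, so it cannot be invoked along a sequence of varying, non-convergent $\tilde u_n$. Discard that alternative and keep the Aubin--Lions route (or adopt the paper's quantitative argument) as the actual proof.
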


\begin{proof}[{\bf Proof of Lemma \ref{Lemma-obs}}]
To begin with, let us arbitrarily fix $0<\varepsilon\le \min \{\sigma,b\}$. We claim that there exists a constant $C(m)>0$ (omitting the dependence on $T,R$) such that for any $\varphi_T\in H_m$,
\begin{equation}\label{bound-2}
\|\chi\varphi\|^2_{_{X^{1,\varepsilon}_T}}\leq C(m)\int_0^T\|\chi\varphi\|^2_{_{H^{-s-\sigma'}}},
\end{equation}
where 
$\varphi=\mathcal U_{\tilde u}(\varphi_T)$. Indeed, noticing that $H^{1+\varepsilon}$- and $H^{-s-\sigma'}$-norms are equivalent in $H_m$, and $\tilde{u}\in X_T^{s+\sigma,b}$ ensures that $\varphi$ is a solution in $X_T^{1+\varepsilon,b}$; in view of Lemma~\ref{Lemma-multiplication}, Proposition \ref{Proposition-linearproblem}(1) and Lemma \ref{Lemma-fullobs}, we thus obtain
$$
\|\chi\varphi\|^2_{_{X^{1,\varepsilon}_T}}\le C\|\varphi\|_{_{X_T^{1+\varepsilon
,\varepsilon}}}^2\le C\|\varphi_T\|_{_{H^{1+\varepsilon}}}^2\le C(m)\|\varphi_T\|_{_{H^{-s-\sigma'}}}^2\le C(m)\int_0^T\|\chi\varphi\|^2_{_{H^{-s-\sigma'}}}.
$$

Owing to compact embedding $X^{1,\varepsilon}_T\Subset X_T^{-s-\sigma',0}=L^2_tH^{-s-\sigma'}_x$, there exists $\alpha_N\rightarrow \infty$ such that
$$
\int_0^T\|(1-\mathcal P_N)\phi\|^2_{_{H^{-s-\sigma'}}}\leq \alpha_N^{-1}\|\phi\|^2_{_{X^{1,\varepsilon}_T}}
$$
for any $\phi\in X^{1,\varepsilon}_T$.
This, combined with (\ref{bound-2}), implies that
\begin{align*}
\int_0^T\|\chi\varphi\|^2_{_{H^{-s-\sigma'}}}&\leq \int_0^T\|\mathcal P_N(\chi\varphi)\|^2_{_{H^{-s-\sigma'}}}+\int_0^T\|(1-\mathcal P_N)(\chi\varphi)\|^2_{_{H^{-s-\sigma'}}}\\
&\leq \int_0^T\|\mathcal P_N(\chi\varphi)\|^2_{_{H^{-s-\sigma'}}}+\alpha_N^{-1}\|\chi\varphi\|^2_{_{X^{1,b}_T}}\\
&\leq \int_0^T\|\mathcal P_N(\chi\varphi)\|^2_{_{H^{-s-\sigma'}}}+C(m)\alpha_N^{-1}\int_0^T\|\chi\varphi\|^2_{_{H^{-s-\sigma'}}}.
\end{align*}
Letting $N$ be sufficiently large (depending on $m$) so that $C(m)\alpha_N^{-1}\leq 1/2$, one gets easily that
$$
\int_0^T\|\chi\varphi\|^2_{_{H^{-s-\sigma'}}}\leq 2\int_0^T\|\mathcal P_N(\chi\varphi)\|^2_{_{H^{-s-\sigma'}}}.
$$
Finally, taking (\ref{Full-observability}) into account, inequality (\ref{observability-verification}) follows immediately. 
\end{proof}

It is time to establish the LF controllability (\ref{LFC}) for system (\ref{Control-Problem-1}). To this end, we introduce a real-valued functional $J$ on $H_m$ by
$$
J(\varphi_T)=\frac{1}{2}\int_0^T\|\mathcal P_N(\chi\varphi)\|^2_{_{H^{-s-\sigma'}}}+{\rm Re}\,(v_0,\varphi(0)),\quad \varphi_T\in H_m,
$$
where $v_0\in H^{s+\sigma'}$ (representing initial value for \eqref{Control-Problem-1}) and $\tilde{u}\in X_T^{s+\sigma,b}$ are given, and $\varphi=\mathcal U_{\tilde u}(\varphi_T)$.

\begin{proposition}[Low-frequency controllability]\label{Proposition-LFC}
Let $T,R,\sigma>0$, $s\ge 1$, $\sigma'\in [0,\sigma]$ and $m\in\N^+$ be arbitrarily given. Then there exists a constant $N\in\N^+$ such that the following assertions hold.
\begin{enumerate}
\item[$(1)$] For every $\tilde u\in \overline{B}_{X_T^{s+\sigma,b}}(R)$ and $v_0\in H^{s+\sigma'}$, the functional $J\colon H_m\to \mathbb{R}$ admits a unique global minimizer $\breve\varphi_{T}\in H_m$.
	
\item[$(2)$] There exists a constant $C>0$, independent of $m,N$, such that 
$$
P_m\breve v(T)=0\quad \text{and}\quad \int_0^T\|\breve\xi\|^2_{_{H^{s+\sigma'}}}\leq C\|v_0\|^2_{_{H^{s+\sigma'}}},
$$
where $\breve v:=\mathcal V_{\tilde u}(v_0,\chi\mathcal P_N\breve\xi)$ and 
$
\breve\xi:=i(1-\partial_{x}^2)^{-s-\sigma'}[\mathcal P_N(\chi\breve\varphi)] 
$ with $\breve\varphi=\mathcal U_{\tilde u}(\breve\varphi_T).$

\item[$(3)$] For every $\tilde u\in \overline{B}_{X_T^{s+\sigma,b}}(R)$, the HUM-type control map, defined by $$\Lambda(\tilde u)\colon H^{s+\sigma'}\rightarrow H^{-s-\sigma'},\quad\Lambda(\tilde u)(v_0)=\breve\varphi_T,$$ is $\R$-linear. Moreover, the map 
$
\overline{B}_{X_T^{s+\sigma,b}}(R)\ni \tilde u\mapsto \Lambda(\tilde u)\in\mathcal{L}_{\R}(H^{s+\sigma'},H^{-s-\sigma'})
$
is Lipschitz and continuously differentiable.
\end{enumerate}
\end{proposition}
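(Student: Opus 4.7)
My plan is to apply the Hilbert uniqueness method in its variational form, with the truncated observability of Lemma \ref{Lemma-obs} providing the sole coercivity input. Fix $N=N(m)$ from Lemma \ref{Lemma-obs}, so that
\[
\int_0^T\|\mathcal P_N(\chi\varphi)\|^2_{H^{-s-\sigma'}}\,dt\ge \tfrac{C_4}{2}\|\varphi_T\|^2_{H^{-s-\sigma'}}
\]
holds on $H_m$ uniformly in $\tilde u\in \overline B_{X^{s+\sigma,b}_T}(R)$, where $\varphi=\mathcal U_{\tilde u}(\varphi_T)$. For part (1), view $J$ as a real-valued functional on the real vector space underlying $H_m$. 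Its quadratic part is a continuous nonnegative real quadratic form bounded below by $(C_4/4)\|\varphi_T\|^2_{H^{-s-\sigma'}}$, while the linear part $\psi_T\mapsto\mathrm{Re}(v_0,\psi(0))_{L^2}$ is continuous on the finite-dimensional space $H_m$. Hence $J$ is continuous, strictly convex and coercive, yielding a unique global minimizer $\breve\varphi_T$.

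For part (2), the Euler--Lagrange condition $DJ(\breve\varphi_T)[\psi_T]=0$, combined with the identification $((1-\partial_x^2)^{-s-\sigma'}\cdot,\cdot)_{L^2}=(\cdot,\cdot)_{H^{-s-\sigma'}}$ and the definition of $\breve\xi$, rewrites as
\[
\mathrm{Im}\int_0^T(\chi\mathcal P_N\breve\xi,\psi)_{L^2}\,dt+\mathrm{Re}(v_0,\psi(0))_{L^2}=0,\quad \forall\psi_T\in H_m.
\]
Independently, pair the controlled equation \eqref{Control-Problem-1} for $\breve v$ with $\bar\psi$ and the adjoint equation \eqref{Adjoint-problem-1} for $\psi=\mathcal U_{\tilde u}(\psi_T)$: the dispersive terms cancel via integration by parts, the opposite signs of $ia(x)$ in primal and dual kill the damping contribution, the real-coefficient term $\tfrac{p+1}{2}|\tilde u|^{p-1}$ cancels, and the $\tfrac{p-1}{2}|\tilde u|^{p-3}\tilde u^2$-terms combine into a purely imaginary quantity whose real part vanishes, leaving
\[
\mathrm{Re}(\breve v(T),\psi_T)_{L^2}-\mathrm{Re}(v_0,\psi(0))_{L^2}=\mathrm{Im}\int_0^T(\chi\mathcal P_N\breve\xi,\psi)_{L^2}\,dt.
\]
Adding the two identities yields $\mathrm{Re}(\breve v(T),\psi_T)_{L^2}=0$ for all $\psi_T\in H_m$; since $H_m$ is a complex subspace, replacing $\psi_T$ by $i\psi_T$ forces $(\breve v(T),\psi_T)_{L^2}=0$, i.e.~$P_m\breve v(T)=0$. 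For the norm bound, $J(\breve\varphi_T)\le J(0)=0$ combined with Cauchy--Schwarz and the continuous dependence $\|\varphi(0)\|_{H^{-s-\sigma'}}\le C\|\varphi_T\|_{H^{-s-\sigma'}}$ gives
\[
\tfrac{C_4}{4}\|\breve\varphi_T\|^2_{H^{-s-\sigma'}}\le \tfrac{1}{2}\int_0^T\|\mathcal P_N(\chi\breve\varphi)\|^2_{H^{-s-\sigma'}}\,dt\le C\|v_0\|_{H^{s+\sigma'}}\|\breve\varphi_T\|_{H^{-s-\sigma'}},
\]
whence $\|\breve\varphi_T\|_{H^{-s-\sigma'}}\le C\|v_0\|_{H^{s+\sigma'}}$, and then $\|\breve\xi\|^2_{L^2_tH^{s+\sigma'}_x}=\int_0^T\|\mathcal P_N(\chi\breve\varphi)\|^2_{H^{-s-\sigma'}}\,dt\le C\|v_0\|^2_{H^{s+\sigma'}}$.

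For part (3), encode the Euler--Lagrange condition as an abstract operator equation $Q(\tilde u)\breve\varphi_T=-\ell(\tilde u)v_0$ on $H_m$, where $Q(\tilde u)$ is the symmetric positive-definite real operator associated with the quadratic part of $J$ and $\ell(\tilde u)v_0\in H_m$ is the Riesz representation of the $\mathbb R$-linear form $\psi_T\mapsto\mathrm{Re}(v_0,\psi(0))_{L^2}$. The adjoint solution map $\tilde u\mapsto\mathcal U_{\tilde u}$ is $C^1$ with Lipschitz derivative on $\overline B_{X^{s+\sigma,b}_T}(R)$ by the linear theory of Proposition \ref{Proposition-linearproblem}(2) together with the smoothness of the potentials $|\tilde u|^{p-1}$ and $|\tilde u|^{p-3}\tilde u^2$ in $\tilde u$, so the same holds for $Q(\tilde u)$ and $\ell(\tilde u)$. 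The uniform lower bound $\|Q(\tilde u)^{-1}\|\le 4/C_4$ from Lemma \ref{Lemma-obs} keeps $Q^{-1}$ smooth on the ball, and $v_0\mapsto\ell(\tilde u)v_0$ is $\mathbb R$-linear, so $\Lambda(\tilde u)=-Q(\tilde u)^{-1}\ell(\tilde u)$ is $\mathbb R$-linear in $v_0$ and Lipschitz and $C^1$ in $\tilde u$, as required. The main anticipated obstacle is the duality identity in part (2): one must verify carefully that, despite the non-self-adjoint perturbations in \eqref{Control-Problem-1}, all bulk contributions to $\tfrac{d}{dt}\mathrm{Re}(\breve v,\psi)_{L^2}$ cancel except for the control source, which is precisely what dictates the specific signs in the adjoint system \eqref{Adjoint-problem-1}.
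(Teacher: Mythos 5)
Your proposal is correct and follows essentially the same route as the paper: the Hilbert uniqueness method with the truncated observability of Lemma~\ref{Lemma-obs} as the sole coercivity input, the Euler--Lagrange condition combined with the duality identity to obtain $P_m\breve v(T)=0$ and the bound on $\breve\xi$, and smooth dependence of the adjoint flow for part (3). Your variants (strict convexity in place of the paper's parallelogram-law uniqueness argument, $J(\breve\varphi_T)\le J(0)=0$ in place of testing the Euler--Lagrange identity with $\breve\varphi_T$ itself, and packaging part (3) as $\Lambda(\tilde u)=-Q(\tilde u)^{-1}\ell(\tilde u)$ with a uniformly invertible Gram operator) are cosmetic reformulations of the same argument and all check out.
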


As stated in Part I of Section \ref{Section-scheme}, 
the second conclusion with $\sigma'=0$ indicates the LF controllability for system (\ref{Control-Problem-1}) in $H^s$. The conclusions with $\sigma'>0$ will be useful in establishing the HF dissipation; see Section \ref{Section-proof} later.

\begin{proof}[{\bf Proof of Proposition \ref{Proposition-LFC}}]
Let $N\in\N^+$ be established in Lemma \ref{Lemma-obs}, and fix $\tilde u\in \overline{B}_{X_T^{s+\sigma,b}}(R)$ and $v_0\in H^{s+\sigma'}$. Thanks to the observability inequality \eqref{observability-verification}, it can be verified that
\[ J(\varphi_T)\rightarrow\infty\quad \text{as }\|\varphi_T\|_{_{H^{-s-\sigma'}}}\to \infty.\]
Moreover, $J$ is clearly continuous and convex. As a result, the minimizer of $J$ exists.

We proceed to verify the uniqueness of minimizer. Assume that $\breve\varphi_T,\tilde\varphi_T\in H_m$ are  minimizers of $J$, and denote $\breve\varphi=\mathcal U_{\tilde u}(\breve\varphi_T)$ and $\tilde \varphi=\mathcal U_{\tilde u}(\tilde\varphi_T)$. Note that
\begin{equation}\label{parallelogram-1}
\begin{aligned}
&\int_0^T \|\mathcal P_N[\chi(\frac{\breve\varphi-\tilde\varphi}{2})]\|_{_{H^{-s-\sigma'}}}^2+\int_0^T \|\mathcal P_N[\chi(\frac{\breve\varphi+\tilde\varphi}{2})]\|_{_{H^{-s-\sigma'}}}^2\\
&=\frac{1}{2}\int_0^T\|\mathcal P_N(\chi\breve\varphi)\|^2_{_{H^{-s-\sigma'}}}+\frac{1}{2}\int_0^T\|\mathcal P_N(\chi\tilde\varphi)\|^2_{_{H^{-s-\sigma'}}},
\end{aligned}
\end{equation}
by means of the parallelogram law. Adding ${\rm Re}\,(v_0,\breve\varphi(0)+\tilde\varphi(0))$ to (\ref{parallelogram-1}) leads to
\begin{equation}\label{parallelogram-2}
\int_0^T \|\mathcal P_N[\chi(\frac{\breve\varphi-\tilde\varphi}{2})]\|_{_{H^{-s-\sigma'}}}^2+2J(\frac{\breve\varphi_T+\tilde\varphi_T}{2})=2J(\breve\varphi_T).
\end{equation}
One can thus apply Lemma \ref{Lemma-obs} and the minimally of $\breve{\varphi}_T$ to compute that
$$
\text{(LHS of \eqref{parallelogram-2})}\geq \frac{C_4}{2}\|\frac{\breve\varphi_T-\tilde\varphi_T}{2}\|^2_{_{H^{-s-\sigma'}}}+2J(\breve\varphi_T),
$$
which implies $\breve\varphi_T=\tilde\varphi_T$ and the desired uniqueness. Hence conclusion (1) follows.

To prove conclusion (2), multiplying \eqref{Control-Problem-1} by $\bar{\varphi}$ and taking the imaginary part, one can derive the following  dual identity
\begin{equation}\label{duality-identity-1}
{\rm Re}\,\left[ (\breve v(T),\varphi_T)- (v_0,\varphi(0))\right]={\rm Re}\int_0^T(\breve\xi,i\mathcal P_N(\chi\varphi))
\end{equation}
for any $\varphi_T\in H_m$, where 
$\varphi=\mathcal U_{\tilde u}(\varphi_T)$ and $\breve v,\breve\xi$ are defined as in the statement of the proposition. On the other hand, due to the minimality of $\breve{\varphi}_T$, we have
\begin{equation}\label{derivative-0}
0=\lim_{\varepsilon\to 0} \frac{J(\breve{\varphi}_T+\varepsilon \varphi_T)-J(\breve{\varphi}_T)}{\varepsilon}={\rm Re}\int_0^T(\breve\xi,i\mathcal P_N(\chi\varphi))+{\rm Re}\,(v_0,\varphi(0))
\end{equation}
(in view of the construction of $\breve\xi$). Substituting (\ref{derivative-0}) into (\ref{duality-identity-1}), it follows that
$$
{\rm Re}\,(\breve v(T),\varphi_T)=0.
$$
Noticing in addition that ${\rm Im}\,(\breve v(T),\varphi_T)={\rm Re}\,(\breve v(T),i\varphi_T)$, one can thus conclude that 
$$
(\breve v(T),\varphi_T)=0
$$
for any $\varphi_T\in H_m$, and hence $P_m\breve v(T)=0$.

At the same time, taking $\varphi_T=\breve\varphi_T$ in (\ref{derivative-0}) enables us to find that 
\begin{equation}\label{derivative-1}
\int_0^T\|\breve\xi\|^2_{_{H^{s+\sigma'}}}=-{\rm Re}\,(v_0,\breve\varphi(0)).
\end{equation}
The combination of Proposition \ref{Proposition-linearproblem}(2) and Lemma \ref{Lemma-obs} leads to
$$
\text{(RHS of (\ref{derivative-1}))}\leq C \|v_0\|_{_{H^{s+\sigma'}}}\|\breve\varphi_T\|_{_{H^{-s-\sigma'}}}\leq C \|v_0\|_{_{H^{s+\sigma'}}}^2+\frac{1}{2}\int_0^T\|\breve\xi\|^2_{_{H^{s+\sigma'}}},
$$
where the constant $C$ does not depend on $m,N$. This completes the proof of conclusion (2).

It remains to prove conclusion (3). Let $v_0,v_0'\in H^{s+\sigma'}$ and $\alpha,\beta\in\R$. We then denote
$$
\breve\varphi_T=\Lambda(\tilde u)(v_0),\quad \breve\varphi_T'=\Lambda(\tilde u)(v_0'),\quad \breve\varphi_T''=\Lambda(\tilde u)(\alpha v_0+\beta v_0')
$$
and define $\breve\varphi,\breve\varphi',\breve\varphi''$ to be $\mathcal U_{\tilde u}(\varphi_T)$ with $\varphi_T=\breve\varphi_T,\breve\varphi_T',\breve\varphi_T''$, respectively. 
Invoking (\ref{derivative-0}) again,
\begin{align}
&{\rm Re}\int_0^T(\mathcal P_N(\chi\breve\varphi),\mathcal P_N(\chi\varphi))_{_{H^{-s-\sigma'}}}+{\rm Re}\,(v_0,\varphi(0))=0,\label{formula-1}\\
&{\rm Re}\int_0^T(\mathcal P_N(\chi\breve\varphi'),\mathcal P_N(\chi\varphi))_{_{H^{-s-\sigma'}}}+{\rm Re}\,(v_0',\varphi(0))=0,\label{formula-2}\\
&{\rm Re}\int_0^T(\mathcal P_N(\chi\breve\varphi''),\mathcal P_N(\chi\varphi))_{_{H^{-s-\sigma'}}}+{\rm Re}\,(\alpha v_0+\beta v_0',\varphi(0))=0\label{formula-4}
\end{align}
for any $\varphi_T\in H_m$, where 
$\varphi=\mathcal U_{\tilde u}(\varphi_T)$.
Then, computing $$\alpha\cdot(\ref{formula-1})+\beta\cdot(\ref{formula-2})-(\ref{formula-4})$$ and taking $\varphi_T=\alpha\breve\varphi_T+\beta\breve\varphi_T'-\breve\varphi_T''$, we conclude that
$$
\int_0^T\|\mathcal P_N[\chi(\alpha\breve\varphi+\beta\breve\varphi'-\breve\varphi'')]\|^2_{_{H^{-s-\sigma'}}}=0.
$$
This together with Lemma \ref{Lemma-obs} implies $\breve\varphi_T''=\alpha\breve\varphi_T+\beta\breve\varphi_T'$. Hence $\Lambda(\tilde u)\in \mathcal{L}_{\R}(H^{s+\sigma'},H^{-s-\sigma'})$.

Below is to investigate the map $\tilde u\mapsto\Lambda(\tilde u)$. From Proposition \ref{Proposition-linearproblem}(2) it follows that
\begin{equation}\label{Lipschitz-1}
\|\mathcal U_{\tilde u_1}(\varphi_T)-\mathcal U_{\tilde u_2}(\varphi_T)\|_{_{X_T^{-s-\sigma',b}}}\leq C \|\tilde u_1-\tilde u_2\|_{_{X_T^{s+\sigma,b}}}\|\varphi_T\|_{_{H^{-s-\sigma'}}}
\end{equation}
for any $\tilde u_1,\tilde u_2\in \overline{B}_{X_T^{s+\sigma,b}}(R)$ and $\varphi_T\in H^{-s-\sigma'}$. On the other hand, denoting 
$$
\breve\varphi_T^l=\Lambda(\tilde u_l)(v_0),\quad l=1,2
$$
with a given $v_0\in H^{s+\sigma'}$,
identity (\ref{derivative-0}) implies also that
\begin{align*}
& {\rm Re}\int_0^T(\mathcal P_N[\chi\mathcal U_{\tilde u_1}(\breve\varphi_T^1)],\mathcal P_N[\chi\mathcal U_{\tilde u_1}(\varphi_T)])_{_{H^{-s-\sigma'}}}\\
&-{\rm Re}\int_0^T(\mathcal P_N[\chi\mathcal U_{\tilde u_2}(\breve\varphi_T^2)],\mathcal P_N[\chi\mathcal U_{\tilde u_2}(\varphi_T)])_{_{H^{-s-\sigma'}}}\\
&+{\rm Re}\left[
(v_0,\mathcal U_{\tilde u_1}(\varphi_T)(0)-\mathcal U_{\tilde u_2}(\varphi_T)(0))
\right]=0
\end{align*}
for any $\varphi_T\in H_m$. Taking $\varphi_T=\breve\varphi_T^1-\breve\varphi_T^2$, we thus derive that
\begin{align*}
& \int_0^T
\|\mathcal P_N[\chi\mathcal U_{\tilde u_1}(\breve\varphi_T^1-\breve\varphi_T^2)]\|^2_{_{H^{-s-\sigma'}}}
\\
&=-{\rm Re}\int_0^T(\mathcal P_N[\chi(\mathcal U_{\tilde u_1}-\mathcal U_{\tilde u_2})(\breve\varphi_T^2)],\mathcal P_N[\chi\mathcal U_{\tilde u_1}(\breve\varphi_T^1-\breve\varphi_T^2)])_{_{H^{-s-\sigma'}}}\\
&\quad -{\rm Re}\int_0^T(\mathcal P_N[\chi\mathcal U_{\tilde u_2}(\breve\varphi_T^2)],\mathcal P_N[\chi(\mathcal U_{\tilde u_1}-\mathcal U_{\tilde u_2})(\breve\varphi_T^1-\breve\varphi_T^2)])_{_{H^{-s-\sigma'}}}\\
&\quad-{\rm Re}\left[
(v_0,\mathcal U_{\tilde u_1}(\breve\varphi_T^1-\breve\varphi_T^2)(0)-\mathcal U_{\tilde u_2}(\breve\varphi_T^1-\breve\varphi_T^2)(0))
\right].
\end{align*}
The integral in the LHS can be dealt with by Lemma \ref{Lemma-obs}, i.e.
$$
\int_0^T
\|\mathcal P_N[\chi\mathcal U_{\tilde u_1}(\breve\varphi_T^1-\breve\varphi_T^2)]\|^2_{_{H^{-s-\sigma'}}}
\geq \frac{C_4}{2} \|\breve\varphi_T^1-\breve\varphi_T^2\|^2_{_{H^{-s-\sigma'}}}.
$$
At the same time, those terms in the RHS are bounded above by 
\begin{equation*}
\begin{aligned}
&C\left(\|\tilde u_1-\tilde u_2\|_{_{X_T^{s+\sigma,b}}}\|\breve\varphi_T^2\|_{_{H^{-s-\sigma'}}}\|\breve\varphi_T^1-\breve\varphi_T^2\|_{_{H^{-s-\sigma'}}}+\|\tilde u_1-\tilde u_2\|_{_{X_T^{s+\sigma,b}}}\|v_0\|_{_{H^{s+\sigma'}}}\|\breve\varphi_T^1-\breve\varphi_T^2\|_{_{H^{-s-\sigma'}}}\right)\\
&\leq \frac{C_4}{4}\|\breve\varphi_T^1-\breve\varphi_T^2\|_{_{H^{-s-\sigma'}}}^2+C\|\tilde u_1-\tilde u_2\|_{_{X_T^{s+\sigma,b}}}^2\|v_0\|_{_{H^{s+\sigma'}}}^2,
\end{aligned}
\end{equation*}
in view of (\ref{Lipschitz-1}),
where the constant $C$ does not depend on $v_0,\tilde u_1,\tilde u_2$. Therefore,
$$
\|\breve\varphi_T^1-\breve\varphi_T^2\|_{_{H^{-s-\sigma'}}}\le C\|\tilde u_1-\tilde u_2\|_{_{X_T^{s+\sigma,b}}}\|v_0\|_{_{H^{s+\sigma'}}},
$$
which means the Lipschitz continuity of $\tilde u\mapsto\Lambda(\tilde u)$. Finally, the continuous differentiability of the map can be derived by combining identity (\ref{derivative-0}) with Proposition \ref{Proposition-linearproblem}(2) and the implicit function theorem (cf. \cite[Proposition 5.5]{Shi-15}). The proof is then complete.
\end{proof}

\subsection{High-frequency dissipation}\label{Section-HF}

We now consider the issue of HF dissipation (\ref{highfreqdecay}). Let us first introduce an equivalent norm $\tilde{H}^s$, which guarantees that $S_a(T)$ is a contraction.

\begin{lemma}\label{Lemma-Hslineardecay}
Let $s\ge 0$ be arbitrarily given. Then the Sobolev space $H^s$ admits an equivalent norm $\|\cdot\|_{_{\tilde{H^s}}}$ such that for every $T>0$, there exists a constant $q_0\in(0,1)$ satisfying
\begin{equation*}
\|S_a(T)\|_{_{\mathcal L(\tilde{H}^s)}}\leq q_0.
\end{equation*}
\end{lemma}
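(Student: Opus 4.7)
My plan is to follow the Pazy--style abstract construction explicitly suggested in Remark~\ref{Remark-norm}. Fix any $\gamma\in(0,\beta)$, where $\beta>0$ is the uniform decay rate provided by \eqref{Decay-semigroup}, and define
\[\|u\|_{_{\tilde{H}^s}}:=\sup_{t\ge 0}e^{\gamma t}\|S_a(t)u\|_{_{H^s}},\quad u\in H^s.\]
This is plainly a seminorm on $H^s$; once I establish equivalence with $\|\cdot\|_{_{H^s}}$, positivity follows automatically, so it is a norm.

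The first step is to check equivalence. Evaluating the supremum at $t=0$ immediately yields $\|u\|_{_{H^s}}\le \|u\|_{_{\tilde{H}^s}}$. For the reverse inequality, \eqref{Decay-semigroup} gives $e^{\gamma t}\|S_a(t)u\|_{_{H^s}}\le Ce^{(\gamma-\beta)t}\|u\|_{_{H^s}}\le C\|u\|_{_{H^s}}$ for every $t\ge 0$, so $\|u\|_{_{\tilde{H}^s}}\le C\|u\|_{_{H^s}}$ because $\gamma<\beta$. In particular, the supremum defining $\|u\|_{_{\tilde{H}^s}}$ is always finite.

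The contraction then reduces to a shift of the indexing variable using the semigroup property $S_a(t)S_a(T)=S_a(t+T)$:
\[\|S_a(T)u\|_{_{\tilde{H}^s}}=\sup_{t\ge 0}e^{\gamma t}\|S_a(t+T)u\|_{_{H^s}}=e^{-\gamma T}\sup_{\tau\ge T}e^{\gamma \tau}\|S_a(\tau)u\|_{_{H^s}}\le e^{-\gamma T}\|u\|_{_{\tilde{H}^s}},\]
so I may take $q_0:=e^{-\gamma T}\in(0,1)$, with explicit dependence only on $T$ and $\gamma$ (hence ultimately only on $T$).

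I do not anticipate a real obstacle: the entire substance of the result has been absorbed into the already-proved exponential decay \eqref{Decay-semigroup}, and the rest is an abstract semigroup manipulation. If instead one wanted the more explicit construction $\|f\|_{_{\tilde{H}^{2n}}}=\sum_{k=0}^n A^{n-k}\|L^k f\|_{_{L^2}}$ with $L=i\partial_x^2-a(x)$ sketched in Remark~\ref{Remark-norm}, equivalence would follow from elliptic regularity (since $L^k$ recovers $\partial_x^{2k}$ modulo lower-order terms dominated by taking $A$ large, and $L$ commutes with $S_a$), followed by complex interpolation to cover non-even $s$. The genuinely nontrivial point in that alternative would be strict contraction at the base case $n=0$ for arbitrary small $T>0$, which amounts to an $L^2$-observability inequality $\int_0^T\!\int a|u|^2\,dx\,dt\gtrsim\|u(0)\|_{_{L^2}}^2$ for the damped linear Schr\"odinger equation; the supremum norm above bypasses this step entirely, which is why I would adopt it.
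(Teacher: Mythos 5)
Your proof is correct, and it takes a genuinely different route from the one the paper actually writes out (though it is precisely the abstract Pazy-style construction that the footnote of Remark~\ref{Remark-norm} mentions as an alternative). You build the norm as $\sup_{t\ge 0}e^{\gamma t}\|S_a(t)u\|_{_{H^s}}$ and reduce everything to the already-cited uniform decay \eqref{Decay-semigroup}; the equivalence, the finiteness of the supremum, and the contraction factor $q_0=e^{-\gamma T}$ all check out, and the resulting norm has exactly the two properties (equivalence with $H^s$ and strict contraction of $S_a(T)$ for every $T>0$) that the rest of the paper uses. The paper instead proves the base case $s=0$ from scratch via an $L^2$-observability inequality for the damped linear flow (Lemma~\ref{Lemma-L^2observable}, itself proved by propagation of compactness and unique continuation), keeps the standard $L^2$-norm there, defines $\|f\|_{_{\tilde H^{2n}}}=\sum_{k=0}^n A^{n-k}\|L_a^kf\|_{_{L^2}}$ using $[L_a,S_a(T)]=0$, and fills in non-even $s$ by complex interpolation. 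Your argument is shorter and offloads the analytic content onto \eqref{Decay-semigroup}, which is imported from \cite{RZ-09}; the paper's argument is longer but self-contained modulo the microlocal propagation lemmas, yields an explicit formula for the norm, and produces the observability inequality as a statement of independent interest. Your closing remark correctly identifies that the only nontrivial step in the explicit route is the $L^2$ base case for arbitrarily small $T$.
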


\begin{proof}[{\bf Proof of Lemma \ref{Lemma-Hslineardecay}}]
We first consider $s=0$. Denote  $L_a=i\partial_x^2-a(x)$. We invoke another observability inequality (or flux estimate) at the scale of $L^2$:
    
\begin{lemma}\label{Lemma-L^2observable}
Let $T>0$ be arbitrarily given. Then there exists a constant $c>0$ such that 
\[\int_{Q_T} a(x)|u|^2 \ge c\|u_0\|_{_{L^2}}^2\]
for any $u_0\in L^2(\T)$,
where $u(t)$ stands for the solution of damped linear Schr\"odinger equation $u_t=L_a u$ with initial condition $u(0,x)=u_0(x)$. 
\end{lemma}
Similar $L^2$-observability inequalities are well-known for the free linear Schr\"odinger equation (see, e.g. \cite{DGL-06,RZ-09}), and also established for the cubic NLS in \cite[Proposition 6.1]{Laurent-ECOCV}. Some technical adaptions are needed for our purpose. For the sake of completeness, we provide a short proof in Appendix~\ref{Appendix-L^2observable}, which is a simplified version of the proof of Lemma~\ref{Lemma-fullobs}.
    
Let us assume the validity of Lemma~\ref{Lemma-L^2observable}, and choose $c\in(0,1/2)$ without loss of generality.
Due to the energy identity \eqref{energy-1}, we have
\[\frac{1}{2}\|u(T)\|_{L^2}^2-\frac{1}{2}\|u(0)\|_{L^2}^2=-\int_{Q_T} a(x)|u|^2 dxdt\le -c \|u_0\|_{L^2}^2.\]
Thus, letting $q_0=\sqrt{1-2c}$,
we find $\|S_a(T)\|_{_{\mathcal L(L^2)}}\le q_0$. In particular, we can take $\|\cdot \|_{_{\tilde{L}^2}}=\|\cdot\|_{_{L^2}}$.

Next, for every $n\in \mathbb{N}^+$, we define an equivalent norm on $H^{2n}$ by
\[\|f\|_{_{\tilde{H}^{2n}}}:=\sum_{k=0}^n A^{n-k} \|L^k_a f\|_{_{L^2}}.\]
Here $A>1$ is a large constant depending on $n$. Indeed, for $1\le k\le n$, it is easy to see that
    \[\|\partial_x^{2n} f\|_{_{L^2}}-C\|f\|_{_{H^{2n-2}}}\le \|L_a^k f\|_{_{L^2}}\le \|\partial_x^{2n} f\|_{_{L^2}}+C\|f\|_{_{H^{2n-2}}}.\]
    Therefore $\|\cdot \|_{_{\tilde{H}^{2n}}}$ is equivalent to $\|\cdot\|_{_{H^{2n}}}$, provided $A$ is large enough. Note that as $L_a$ is the infinitesimal generator of $S_a(t)$, we have $[L_a,S_a(T)]=0$. Thus for any $u_0\in H^{2n}$, it follows that
    \[\|S_a(T) u_0\|_{_{\tilde{H}^{2n}}}=\sum_{k=0}^n A^{n-k} \|S_a(T)L_a^k u_0\|_{_{L^2}}\le q_0 \sum_{k=0}^n A^{n-k} \|L_a^k u_0\|_{_{L^2}}=q_0\|u_0\|_{_{\tilde{H}^{2n}}}.\]

Finally, to deal with $s\in (2n,2n+2)$, we will invoke the interpolation theory of linear operators. For the definition and basic properties of complex interpolation method, see, e.g.~\cite[Section 4.1]{BL-76}. Let $\tilde{H}^s$ be the complex interpolation space $(\tilde{H}^{2n},\tilde{H}^{2n+2})_{\theta}$, where $\theta\in (0,1)$ satisfies
\[s=(2n)(1-\theta)+(2n+2)\theta.\]
Then in view of \cite[Theorem 4.1.2]{BL-76}, we have
\[\|S_a(T)\|_{_{\mathcal L(\tilde{H}^s)}}\le \|S_a(T)\|_{_{\mathcal L(\tilde{H}^{2n})}}^{1-\theta}\|S_a(T)\|_{_{\mathcal L(\tilde{H}^{2n+2})}}^\theta\le q_0.\]
According to the definition of complex interpolation, it readily follows that equivalent normed spaces gives rise to equivalent interpolations. Note that $\tilde{H}^{2n}$ and $\tilde{H}^{2n+2}$ are equivalent to $H^{2n}$ and $H^{2n+2}$, respectively. This, together with 
\[(H^{2n},H^{2n+2})_{\theta}=H^{(2n)(1-\theta)+(2n+2)\theta}=H^s\]
(see, e.g. \cite[Theorem 6.4.5(7)]{BL-76}), indicates that  $\|\cdot\|_{_{\tilde{H}^s}}$ is an equivalent norm of $\|\cdot\|_{_{H^s}}$.
\end{proof}

In the sequel, we deal with the potential terms $\frac{p+1}{2}|\tilde u|^{p-1}v+\frac{p-1}{2}|\tilde u|^{p-3}\tilde u^2\bar v$. 
Our analysis for this purpose is inspired by 
the resonant decomposition that has been invoked in Section \ref{Section-AC-1}. More precisely, the term $|\tilde u|^{p-1}v$ can be rewritten as
\begin{align*}
|\tilde u|^{p-1}v&=\mathcal T(v,\tilde u,\cdots, \tilde u)=\frac{1}{2\pi}v\|\tilde u\|^{p-1}_{_{L^{p-1}(\T)}}+\frac{p-1}{4\pi}\tilde u\int_\T v\bar{\tilde u}|\tilde u|^{p-3}+\mathcal{T}_N(v,\tilde u,\cdots, \tilde u).
\end{align*}
Similarly, 
\begin{align*}
|\tilde u|^{p-3}\tilde u^2\bar v&=\mathcal T(\tilde u,v,\tilde u,\cdots, \tilde u)=\frac{p+1}{4\pi}\tilde u\int_\T|\tilde u|^{p-3}\tilde u\bar v+\mathcal{T}_N (\tilde u,v,\tilde u,\cdots, \tilde u).
\end{align*}
Accordingly, it follows that (recall $(\cdot,\cdot)$ refers to the complex $L^2$-inner product)
\begin{equation}\label{potential-decomposition}
\frac{p+1}{2}|\tilde u|^{p-1}v+\frac{p-1}{2}|\tilde u|^{p-3}\tilde u^2\bar v=\frac{p+1}{4\pi}v\|\tilde u\|^{p-1}_{_{L^{p-1}(\T)}}+\frac{p^2-1}{4\pi}\tilde u\re \, (|\tilde u|^{p-3}\tilde u,v)+\mathcal {R}(\tilde u,v),
\end{equation}
where
\[\mathcal{R}(\tilde u,v)=\frac{p+1}{2}\mathcal T_N(v,\tilde u,\cdots, \tilde u)+\frac{p-1}{2}\mathcal T_N(\tilde u,v,\tilde u,\cdots, \tilde u).\]
A straightforward but crucial observation is that the regularity of $\tilde{u} \re\, (|\tilde{u}|^{p-3}\tilde{u},v)$ can be directly improved via the potential $\tilde u$, while $\mathcal R(\tilde u,v)$ enjoys the nonlinear smoothing effect. In addition, one can remove $\frac{p+1}{4\pi}v\|\tilde u\|^{p-1}_{_{L^{p-1}(\T)}}$ by means of a norm-preserving transformation.

For $\tilde u\in X_T^{s+\sigma,b}$ and $v_0\in H^s$, as mentioned in Section~\ref{Section-scheme}, we define $z(t)$ to be the solution of an auxiliary linear problem \eqref{Linear-problem-1}. Using the norm-preserving transform
\begin{equation}\label{Z-equation}
Z(t,x):=e^{i\tilde{\theta} (t)} z(t,x)\quad \text{with }\tilde\theta(t)=\frac{p+1}{4\pi}\int_0^t \|\tilde{u}(s)\|_{_{L^{p-1}(\mathbb{T})}}^{p-1} ds\in \mathbb{R},
\end{equation}
we have $iZ_t+Z_{xx}+ia(x)Z=0$. It follows immediately from Lemma~\ref{Lemma-Hslineardecay} that
\begin{equation}\label{decay-z}
\|z(T)\|_{_{\tilde{H}^s}}=\|Z(T)\|_{_{\tilde{H}^s}}=\|S_a(T)v_0\|_{_{\tilde{H}^s}}\le q_0 \|v_0\|_{_{\tilde{H}^s}}.
\end{equation}

Based on the above analysis, we derive the following result. 

\begin{proposition}[High-frequency dissipation]\label{Prop-HF}
Let $T,R,\sigma>0$ and $s\ge 1$ be arbitrarily given. Then there exists a constant $C>0$ such that 
\begin{equation}\label{HF}
\|Q_m (v(T)-z(T))\|_{_{H^s}}\le 
Cm^{-\sigma} \left(\|v_0\|_{_{H^s}}+\|f\|_{X_T^{s+\sigma,-b'}}\right)
\end{equation}
for any $m\in\N^+$, $v_0\in H^s$ and $f\in X_T^{s+\sigma,-b'}$,
where $v=\mathcal V_{\tilde u}(v_0,f)$ with $\tilde{u}\in \overline{B}_{X_T^{s+\sigma,b}}(R)$, and $z(t)$ stands for the solution of \eqref{Linear-problem-1}.
\end{proposition}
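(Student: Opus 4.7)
The plan is to subtract the equations for $v$ and $z$, invoke the resonant decomposition \eqref{potential-decomposition}, and remove the scalar-potential contribution by a gauge transform in the spirit of \eqref{Z-equation}. Concretely, writing $w := v-z$, the potential terms on the right of \eqref{Control-Problem-1} reorganize so that
\[
iw_t+w_{xx}+ia(x)w = \tfrac{p+1}{4\pi}\|\tilde u\|^{p-1}_{_{L^{p-1}}}w+\tfrac{p^2-1}{4\pi}\tilde u\,\re(|\tilde u|^{p-3}\tilde u,v)+\mathcal R(\tilde u,v)+f,
\]
with $w(0)=0$. Setting $W(t,x):=e^{i\tilde\theta(t)}w(t,x)$ with $\tilde\theta$ as in \eqref{Z-equation} kills the first term (and $\|W(t)\|_{_{H^s}}=\|w(t)\|_{_{H^s}}$ since the phase is spatially constant), leaving
\[
iW_t+W_{xx}+ia(x)W = e^{i\tilde\theta(t)}\mathcal G,\qquad W(0)=0,
\]
where $\mathcal G := \tfrac{p^2-1}{4\pi}\tilde u\,\re(|\tilde u|^{p-3}\tilde u,v)+\mathcal R(\tilde u,v)+f$. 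Since $\|Q_m\phi\|_{_{H^s}}\le m^{-\sigma}\|\phi\|_{_{H^{s+\sigma}}}$, it is enough to prove $\|W(T)\|_{_{H^{s+\sigma}}}\lesssim \|v_0\|_{_{H^s}}+\|f\|_{_{X^{s+\sigma,-b'}_T}}$.

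To bound $W$ I will use the Bourgain-space Duhamel estimate for $S_a$ (Proposition~\ref{Proposition-S(t)estimate}) together with the embedding $X^{s+\sigma,b}_T\hookrightarrow L^\infty_tH^{s+\sigma}_x$, which reduces the task to controlling $\mathcal G$ in $X^{s+\sigma,-b'}_T$. Three contributions appear. First, the smoothing remainder $\mathcal R(\tilde u,v)$ is precisely a sum of $\mathcal T_N$-type multilinear forms, so Lemma~\ref{Lemma-smoothing} gives
\[
\|\mathcal R(\tilde u,v)\|_{_{X^{s+\sigma,-b'}_T}}\lesssim \|\tilde u\|_{_{X^{s,b}_T}}^{p-1}\|v\|_{_{X^{s,b}_T}}\lesssim R^{p-1}\|v\|_{_{X^{s,b}_T}}.
\]
Second, the rank-one term is handled by observing that $c(t):=\re(|\tilde u|^{p-3}\tilde u,v)(t)$ is \emph{scalar in $x$}, so $\|\tilde u(t)c(t)\|_{_{H^{s+\sigma}}}=|c(t)|\,\|\tilde u(t)\|_{_{H^{s+\sigma}}}$; combining $|c(t)|\lesssim \|\tilde u(t)\|_{_{L^\infty}}^{p-2}\|v(t)\|_{_{L^2}}$, the continuous embedding $H^s(\T)\hookrightarrow L^\infty$, and the trivial inclusion $L^2_tH^{s+\sigma}_x\hookrightarrow X^{s+\sigma,-b'}_T$ (valid because $b'>0$), I obtain $\|\tilde u\,\re(|\tilde u|^{p-3}\tilde u,v)\|_{_{X^{s+\sigma,-b'}_T}}\lesssim R^{p-1}\|v\|_{_{X^{s,b}_T}}$. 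Third, the forcing term is just $\|f\|_{_{X^{s+\sigma,-b'}_T}}$. Finally, the linear well-posedness for \eqref{Control-Problem-1} (Proposition~\ref{Proposition-linearproblem}(1)) yields $\|v\|_{_{X^{s,b}_T}}\lesssim \|v_0\|_{_{H^s}}+\|f\|_{_{X^{s,-b'}_T}}\le \|v_0\|_{_{H^s}}+\|f\|_{_{X^{s+\sigma,-b'}_T}}$, closing the estimate.

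The main obstacle is the rank-one term $\tilde u\,\re(|\tilde u|^{p-3}\tilde u,v)$: because $v$ has only $H^s$-regularity, a naive multiplication by $|\tilde u|^{p-3}\tilde u$ cannot in general live in $H^{s+\sigma}$. The way out is that the product appears inside a spatial $L^2$ pairing and therefore collapses to a scalar, so all spatial derivatives can be transferred onto $\tilde u$, which carries the extra regularity by hypothesis. This is precisely the structural point singled out in Section~\ref{Section-scheme}: the $H^{s+\sigma}$-regularity of the reference trajectory $\tilde u$ is both used and necessary for the high-frequency dissipation to close, and explains why exponential asymptotic compactness is the right companion to the control property.
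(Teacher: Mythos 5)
Your proposal is correct and follows essentially the same route as the paper's proof: the same gauge transform to remove the scalar potential, the same three-way splitting of the source with Lemma~\ref{Lemma-smoothing} for $\mathcal R(\tilde u,v)$ and the scalar-pairing observation for the rank-one term, and the same $\|Q_m\|_{\mathcal L(H^{s+\sigma};H^s)}\le m^{-\sigma}$ step at the end. The only cosmetic difference is that you gauge the difference $w=v-z$ directly rather than gauging $v$, $z$, $\tilde u$ separately as the paper does; just note that passing the factor $e^{i\tilde\theta(t)}$ through the $X^{s+\sigma,-b'}_T$ norm requires Lemma~\ref{Lemma-multiplication} (temporal multiplier in $H^1_t$), which the paper invokes explicitly.
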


In particular, the conclusion of the above proposition with $f(t,x)\equiv  0$, gives rise to the HF dissipation \eqref{highfreqdecay} for uncontrolled system, as stated in Part II of Section \ref{Section-scheme}.

\begin{proof}[{\bf Proof of Proposition \ref{Prop-HF}}]
Consider the transformation
\begin{equation*}
V(t,x)=e^{i\tilde\theta(t)} v(t,x)\quad \text{with }\tilde\theta(t)=\frac{p+1}{4\pi}\int_0^t \|\tilde{u}(s)\|_{_{L^{p-1}(\mathbb{T})}}^{p-1} ds\in \mathbb{R}.
\end{equation*}
Then the new variable $V(t)$ is the solution of
\begin{equation}\label{Vequation}
\left\{\begin{array}{ll}
iV_t+V_{xx}+ia(x)V=\tfrac{p^2-1}{4\pi} \tilde{U} \re\, (|\tilde{U}|^{p-3}\tilde{U},V)+\mathcal{R}(\tilde{U},V)+F(t,x),\\
V(0,x)=v_0(x),
\end{array}
\right.
\end{equation}
where $\tilde{U}(t,x)=e^{i\tilde\theta(t)} \tilde u(t,x)$ and $F(t,x)=e^{i\tilde\theta (t)} f(t,x)$. Thus $W:=V-Z$ satisfies the equation
\[iW_t+W_{xx}+ia(x)W=\tfrac{p^2-1}{4\pi} \tilde{U} \re\, (|\tilde{U}|^{p-3}\tilde{U},V)+\mathcal{R}(\tilde{U},V)+F(t,x)\]
with $W(0)=0$, where $Z(t)$ is defined via \eqref{Z-equation}. Thanks to Lemma~\ref{Lemma-multiplication}, we have
\begin{align}
&\|\tilde{U}\|_{_{X_T^{s+\sigma,b}}}\le C\|e^{i\tilde\theta(t)}\|_{_{H^1_t}}\|\tilde u\|_{_{X^{s+\sigma,b}_T}}\le C,\notag\\
&\|F\|_{_{X^{s+\sigma,-b'}_T}}\le C\|e^{i\tilde\theta(t)}\|_{_{H^1_t}}\|f\|_{_{X^{s+\sigma,-b'}_T}}\le C\|f\|_{_{X^{s+\sigma,-b'}_T}}.\label{Force-bound}
\end{align}
We proceed to estimate the other two terms on the RHS of \eqref{Vequation}. By Proposition \ref{Proposition-linearproblem}(1),
\[\|V\|_{_{X^{s,b}_T}}\le C\|e^{i\tilde{\theta}(t)}\|_{_{H_t^1}}\|v\|_{_{X^{s,b}_T}}\le C\left(\|v_0\|_{_{H^s}}+\|f\|_{X_T^{s,-b'}}\right).\]
Note that $X_T^{s+\sigma,b}\hookrightarrow X_T^{s+\sigma,0} (=L_t^2 H_x^{s+\sigma})\hookrightarrow X_T^{s+\sigma,-b'}$ and $(\cdot,\cdot)$ is a spatial integral. Thus
\begin{equation}\label{hifreqdecay1}
\begin{aligned}
    &\|\tilde{U} \re\, (|\tilde{U}|^{p-3}\tilde{U},V)\|_{_{X_T^{s+\sigma,-b'}}}\le \|\tilde{U} \re\, (|\tilde{U}|^{p-3}\tilde{U},V)\|_{_{L_t^2 H_x^{s+\sigma}}}\\&\le C\|\tilde{U}\|_{_{L_t^2 H_x^{s+\sigma}}}\|\tilde{U}\|_{_{L^\infty(Q_T)}}^{p-2}\|V\|_{_{L^\infty(Q_T)}}\le C\|\tilde{U}\|_{_{X_T^{s+\sigma,b}}}^{p-1} \|V\|_{_{X^{s,b}_T}}\le C\left(\|v_0\|_{_{H^s}}+\|f\|_{X_T^{s,-b'}}\right).
\end{aligned}
\end{equation}
Thanks to Lemma~\ref{Lemma-smoothing}, we have
\begin{equation}\label{hifreqdecay2}
\|\mathcal{R}(\tilde{U},V)\|_{_{X^{s+\sigma,-b'}_T}}\le C\|\tilde{U}\|_{_{X^{s,b}_T}}^{p-1} \|V\|_{_{X^{s,b}_T}}\le C\left(\|v_0\|_{_{H^s}}+\|f\|_{X_T^{s,-b'}}\right).
\end{equation}

Now, putting (\ref{Force-bound})-(\ref{hifreqdecay2}) all together, invoking Proposition \ref{Proposition-multilinear}(4), we have
\begin{align*}
&\|Q_m (v(T)-z(T))\|_{_{H^s}}=\|Q_m W(T)\|_{_{H^s}}\\
&\le m^{-\sigma}\|\int_0^T S_a(T-t)(\tfrac{p^2-1}{4\pi}\tilde{U} \re\, (|\tilde{U}|^{p-3}\tilde{U},V)+\mathcal{R}(\tilde{U},V)+F) dt\|_{_{H^{s+\sigma}}}\\
&\le Cm^{-\sigma}\left(\|\tilde{U} \re\, (|\tilde{U}|^{p-3}\tilde{U},V)\|_{_{X_T^{s+\sigma,-b'}}}+\|\mathcal{R}(\tilde{U},V)\|_{_{X^{s+\sigma,-b'}_T}}+\|F\|_{_{X^{s+\sigma,-b'}_T}}\right) \\
&\le Cm^{-\sigma} \left(\|v_0\|_{_{H^s}}+\|f\|_{_{X_T^{s+\sigma,-b'}}}\right),
\end{align*}
in view of the fact $\|Q_m\|_{_{\mathcal L(H^{s+\sigma};H^s)}}\le m^{-\sigma}$. In conclusion, \eqref{HF} is obtained.
\end{proof}

\begin{remark}[Counterexample of $H^s$-potential]\label{counterexample}

As illustrated below the statement of Proposition~{\rm\ref{Prop-HF}}, this result implies the HF dissipation \eqref{highfreqdecay} when $f(t,x)\equiv 0$. Let us point out that, if we merely assume $\|\tilde{u}\|_{_{X^{s,b}_T}}\le R$, then one cannot find $m\in \mathbb{N}^+$ to justify \eqref{highfreqdecay}. Indeed, from \eqref{HF} it is easy to see that \eqref{highfreqdecay} is equivalent to the HF decay of the following linear equation
\[iv_t+v_{xx}+ia(x)v=\tfrac{p^2-1}{4\pi}\tilde u\re \, (|\tilde u|^{p-3}\tilde u,v),\]
as the remaining parts of potential terms \eqref{potential-decomposition} either have extra regularity and are hence negligible in HF, or can be eliminated by the norm-preserving transformation $v\mapsto V$. An intuition is that if the decay of frequencies for $\tilde u(t)$ were not to exceed the solution $v(t)$, there is a danger that the RHS destroys the dissipation effect brought by the damping $ia(x)v$. 

For the sake of simplicity and concreteness, let us focus on the case where $p=3, s=1$ and $a(x)\equiv a>0$ is a constant. Define the potential 
$$
\tilde u(t)=\sqrt{\frac{\pi a}{2}}\frac{(1+i)v(t)}{\|v(t)\|_{_{L^2}}}.
$$
Then a direct computation yields
\[\frac{2}{\pi} \tilde{u}\re\, (\tilde{u},v)=iav.\]
In other words, we have $iv_t+v_{xx}=0$. As an aftermath, the magnitude of each Fourier mode of $v(t)$ is conservative, and hence the HF dissipation \eqref{highfreqdecay} becomes hopeless. Since $\|v(t)\|_{_{L^2}}$ is constant, $\tilde{u}$ has the same regularity as $v$, i.e.~$\|\tilde{u}\|_{_{X_T^{1,b}}}=C\|v\|_{_{X_T^{1,b}}}\le C$.
\end{remark}

\subsection{Completing the proof of Proposition \ref{Proposition-control-1}}\label{Section-proof}

As stated in Part III of Section \ref{Section-scheme}, we in what follows invoke the conclusions of  Proposition~\ref{Proposition-LFC}, and construct a control $\xi$ having the extra regularity $ H^{s+\sigma'}$ and steering the LF system to $P_mz(T)$. The regularity gained for $\xi$ ensures that the presence of $\chi \mathcal{P}_N \xi$ does not ruin the HF dissipation as in Proposition~\ref{Prop-HF}.

\begin{lemma}\label{Lemma-regularcontrol}
Let $T,R,s,m,N$ be the same as in Proposition~{\rm\ref{Proposition-LFC}}, and $\sigma'=\sigma\in (0,1/4]$. Then there exists a constant $C>0$, independent of $m,N$, such that for every $\tilde u\in \overline{B}_{X_T^{s+\sigma,b}}(R)$ and $v_0\in H^s$, there is a control $\xi\in L^2(0,T;H^{s+\sigma})$ satisfying 
\begin{equation}\label{LFC-1}
P_mv(T)=P_mz(T)\quad \text{and}\quad \int_0^T\|\xi\|_{_{H^{s+\sigma}}}^2\leq C\|v_0\|^2_{_{H^s}},
\end{equation}
where $v=\mathcal V_{\tilde u}(v_0,\chi\mathcal P_N\xi)$ and $z(t)$ stands for the solution of {\rm(\ref{Linear-problem-1})}. Moreover, the control $\xi$ has the structure described in Proposition {\rm\ref{Proposition-control-1}(2)}.
\end{lemma}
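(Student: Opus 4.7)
The plan is to recast the target identity $P_m v(T) = P_m z(T)$ as a null low-frequency control problem for the difference $w := v - z$, and then run a modified HUM argument that mirrors the proof of Proposition~\ref{Proposition-LFC} but with the initial-data pairing replaced by a source-term pairing. First I would substitute $v = w + z$ into \eqref{Control-Problem-1}, subtract \eqref{Linear-problem-1}, and apply the decomposition \eqref{potential-decomposition} twice (once to expand the potential on $v$, once to re-assemble the part involving $w$). The scalar potential $\tfrac{p+1}{4\pi}\|\tilde u\|_{L^{p-1}}^{p-1}$ cancels in the subtraction, and what remains is exactly
\begin{equation*}
w = \mathcal V_{\tilde u}\bigl(0,\, F_z + \chi\mathcal P_N\xi\bigr), \qquad F_z := \tfrac{p^2-1}{4\pi}\tilde u\,\re(|\tilde u|^{p-3}\tilde u, z) + \mathcal R(\tilde u, z),
\end{equation*}
so \eqref{LFC-1} becomes the single condition $P_m w(T) = 0$.

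The crucial input is that $F_z$ has $\sigma$ extra spatial derivatives thanks to the hypothesis $\tilde u \in X^{s+\sigma,b}_T$. For the first summand I would use Lemma~\ref{Lemma-multiplication} together with the fact that $\re(|\tilde u|^{p-3}\tilde u, z)$ is a scalar, giving $\|\tilde u\,\re(|\tilde u|^{p-3}\tilde u, z)\|_{L^2_t H^{s+\sigma}_x} \lesssim \|\tilde u\|_{X^{s+\sigma,b}_T}^{p-1}\|z\|_{X^{s,b}_T}$; for the second, the nonlinear smoothing estimate of Lemma~\ref{Lemma-smoothing} (valid since $\sigma \le 1/4$) yields $\|\mathcal R(\tilde u, z)\|_{X^{s+\sigma,-b'}_T} \lesssim \|\tilde u\|_{X^{s,b}_T}^{p-1}\|z\|_{X^{s,b}_T}$. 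Combined with the linear bound $\|z\|_{X^{s,b}_T} \lesssim \|v_0\|_{H^s}$, this produces $\|F_z\|_{X^{s+\sigma,-b'}_T} \le C\|v_0\|_{H^s}$, which is the one new estimate that ultimately forces the control to live in $L^2_t H^{s+\sigma}_x$ rather than merely $L^2_t H^s_x$.

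Next I would minimize, on the finite-dimensional space $H_m$, the modified HUM functional
\begin{equation*}
\tilde J(\varphi_T) = \tfrac{1}{2}\int_0^T \|\mathcal P_N(\chi\varphi)\|_{H^{-s-\sigma}}^2\,dt + \re\int_0^T (F_z,\, i\varphi)\,dt,\qquad \varphi = \mathcal U_{\tilde u}(\varphi_T),
\end{equation*}
in which the source pairing plays exactly the role that $\re(v_0,\varphi(0))$ played in the $J$ of Proposition~\ref{Proposition-LFC}. Coercivity on $H_m$ follows from Lemma~\ref{Lemma-obs} with $\sigma' = \sigma$ (controlling $\|\varphi_T\|_{H^{-s-\sigma}}^2$ by the first term) together with the energy bound $\|\varphi\|_{X^{-s-\sigma,b'}_T} \le C\|\varphi_T\|_{H^{-s-\sigma}}$ from Proposition~\ref{Proposition-linearproblem}(2), which dominates the second term by $C\|F_z\|_{X^{s+\sigma,-b'}_T}\|\varphi_T\|_{H^{-s-\sigma}}$. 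The unique minimizer $\breve\varphi_T$ therefore exists, and the control is defined by $\xi := i(1-\partial_x^2)^{-(s+\sigma)}\mathcal P_N(\chi\breve\varphi) \in L^2(0,T;H^{s+\sigma})$, which automatically satisfies $\int_0^T \|\xi\|_{H^{s+\sigma}}^2\,dt = \int_0^T \|\mathcal P_N(\chi\breve\varphi)\|_{H^{-s-\sigma}}^2\,dt$. The first-order condition for $\tilde J$, combined with the source-version of the duality identity \eqref{duality-identity-1} (where $F_z + \chi\mathcal P_N\xi$ replaces the pure control $\chi\mathcal P_N\breve\xi$), yields $\re(w(T),\varphi_T) = 0$ for every $\varphi_T \in H_m$; repeating with $i\varphi_T$ forces $(w(T),\varphi_T) = 0$, hence $P_m w(T) = 0$. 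Testing the optimality condition at $\varphi_T = \breve\varphi_T$ and applying Lemma~\ref{Lemma-obs} a second time produces the standard square-root cancellation, giving $\int_0^T \|\xi\|_{H^{s+\sigma}}^2\,dt \le C\|F_z\|_{X^{s+\sigma,-b'}_T}^2 \le C\|v_0\|_{H^s}^2$.

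The linearity of $\xi$ in $v_0$ is automatic (since $F_z$ is linear in $z$, hence in $v_0$, and the maps $F_z \mapsto \breve\varphi_T \mapsto \xi$ are linear), and the Lipschitz and continuously differentiable dependence on $\tilde u$ follow from the implicit function theorem applied to the first-order condition, exactly as in Proposition~\ref{Proposition-LFC}(3), once one verifies the Lipschitz continuity of $\tilde u \mapsto F_z$ in $X^{s+\sigma,-b'}_T$ via the multilinear estimate and Lemma~\ref{Lemma-smoothing}. The main obstacle I anticipate is bookkeeping: correctly tracking signs and factors of $i$ in the inhomogeneous duality identity so that the Euler--Lagrange equation for $\tilde J$ genuinely encodes $P_m w(T) = 0$ (rather than some rotated complex variant). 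Apart from this verification, every other ingredient is already in place, and the whole proof is a direct replay of Proposition~\ref{Proposition-LFC} with Lemma~\ref{Lemma-smoothing} and the $H^{s+\sigma}$-regularity of $\tilde u$ as the sole new elements.
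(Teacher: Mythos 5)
Your argument is correct, and it reaches the conclusion by a genuinely different (though closely related) route from the paper's. Both proofs start identically: pass to $w=v-z$, use \eqref{potential-decomposition} to cancel the scalar potential, and isolate the source $F_z=\tfrac{p^2-1}{4\pi}\tilde u\,\re(|\tilde u|^{p-3}\tilde u,z)+\mathcal R(\tilde u,z)$, whose bound $\|F_z\|_{X_T^{s+\sigma,-b'}}\le C\|v_0\|_{H^s}$ (your two estimates are exactly \eqref{bound-4}--\eqref{bound-5}) is the decisive use of the $H^{s+\sigma}$-regularity of $\tilde u$ in either version. The divergence is in how the regular source is converted into a regular control. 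The paper introduces the auxiliary backward solution $w_1$ of \eqref{Equation-w1} with $w_1(T)=0$, so that $w_2=w-w_1=\mathcal V_{\tilde u}(-w_1(0),\chi\mathcal P_N\xi)$ is a \emph{homogeneous} control problem with initial datum $-w_1(0)\in H^{s+\sigma}$, to which Proposition~\ref{Proposition-LFC} with $\sigma'=\sigma$ applies verbatim (including its conclusion (3) on the Lipschitz and $C^1$ structure of the control map). You instead absorb the source directly into a modified HUM functional $\tilde J$ with the pairing $\re\int_0^T(F_z,i\varphi)$ replacing $\re(v_0,\varphi(0))$; your sign bookkeeping is consistent with \eqref{duality-identity-1}, coercivity follows from Lemma~\ref{Lemma-obs} and Proposition~\ref{Proposition-linearproblem}(2) as you say, and the optimality condition does yield $P_mw(T)=0$ and the quadratic bound on $\xi$. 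What the paper's route buys is economy: all the minimizer existence/uniqueness, duality, and regularity-of-the-control-map arguments are inherited from Proposition~\ref{Proposition-LFC} without repetition. What your route buys is the elimination of the auxiliary problem \eqref{Equation-w1}, at the cost of re-running the HUM analysis (existence and uniqueness of the minimizer of $\tilde J$, and the Lipschitz/$C^1$ dependence of $\tilde u\mapsto\xi$, which now factors through $\tilde u\mapsto F_z$) for the inhomogeneous functional. One cosmetic point: your bound on $\tilde u\,\re(|\tilde u|^{p-3}\tilde u,z)$ in $L_t^2H_x^{s+\sigma}$ follows from the algebra property of $H^{s+\sigma}$ and the embedding $X_T^{s,b}\hookrightarrow L^\infty(Q_T)$ (as in the proof of Proposition~\ref{Prop-HF}), not from Lemma~\ref{Lemma-multiplication}, which concerns multiplication by fixed smooth functions; the estimate itself is right.
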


\begin{proof}[{\bf Proof of Lemma \ref{Lemma-regularcontrol}}]
Let $\tilde u\in X_T^{s+\sigma,b}$ and $v_0\in H^s$ be arbitrarily given. It is easy to see that
\begin{equation}\label{bound-6}
\|z\|_{_{X^{s,b}_T}}\leq C\|v_0\|_{_{H^s}}. 
\end{equation}
We introduce the difference $w(t):=v(t)-z(t)$, and infer by (\ref{potential-decomposition}) that
\begin{equation}\label{Equation-w}
\left\{\begin{array}{ll}
iw_t+ w_{xx} +ia(x)w=\frac{p+1}{2}|\tilde u|^{p-1}w+\frac{p-1}{2}|\tilde u|^{p-3}\tilde u^2\bar w+\chi(x)\mathcal P_N\xi(t,x)\\
\quad\quad\quad\quad\quad\quad\quad\quad\quad\quad+\tfrac{p^2-1}{4\pi}\tilde{u}\re\, (|\tilde{u}|^{p-3}\tilde{u},z)+\mathcal R(\tilde u,z),\\
w(0,x)=0,
\end{array}\right.
\end{equation}
where $v=\mathcal V_{\tilde u}(v_0,\chi\mathcal P_N\xi)$ with an undetermined control $\xi\in L^2_tH^{s+\sigma}_x$. Using the same reasoning for \eqref{hifreqdecay1} and \eqref{hifreqdecay2}, we obtain
\begin{align}
&\|\tilde{u}\re\, (|\tilde{u}|^{p-3}\tilde{u},z)\|_{_{X_T^{s+\sigma,-b'}}}\le C\|\tilde u\|_{_{X_T^{s+\sigma,b}}}^{p-1}\|z\|_{_{X_T^{s,b}}}\le C\|v_0\|_{_{H^s}},\label{bound-4}\\
&\|\mathcal R(\tilde u,z)\|_{_{X_T^{s+\sigma,-b'}}}\le C\|\tilde u\|_{_{X_T^{s,b}}}^{p-1}\|z\|_{_{X_T^{s,b}}}\le C\|v_0\|_{_{H^s}}.\label{bound-5}
\end{align}
The combination of (\ref{bound-4}), (\ref{bound-5}) and Proposition \ref{Proposition-linearproblem}(1) enables us to observe that the solution $w_1(t)$ of the backward equation
\begin{equation}\label{Equation-w1}
\left\{\begin{array}{ll}
iw_{1t}+w_{1xx} +ia(x)w_1=\frac{p+1}{2}|\tilde u|^{p-1}w_1+\frac{p-1}{2}|\tilde u|^{p-3}\tilde u^2\bar w_1\\
\quad\quad\quad\quad\quad\quad\quad\quad\quad\quad\quad+\tfrac{p^2-1}{4\pi}\tilde{u}\re\, (|\tilde{u}|^{p-3}\tilde{u},z)+\mathcal R(\tilde u,z),\\
w_1(T,x)=0
\end{array}\right.
\end{equation}
belongs to $X_T^{s+\sigma,b}$, and can be estimated via
\begin{equation}\label{bound-7}
\|w_1\|_{_{X_T^{s+\sigma,b}}}\le C\left(\|\tilde{u}\re\, (|\tilde{u}|^{p-3}\tilde{u},z)\|_{_{X_T^{s+\sigma,-b'}}}+\|\mathcal R(\tilde u,z)\|_{_{X_T^{s+\sigma,-b'}}}\right)\le C \|v_0\|_{_{H^s}}.
\end{equation}

Let $w_2(t)=w(t)-w_1(t)$.
In view of (\ref{Equation-w}) and (\ref{Equation-w1}), we deduce 
$w_2=\mathcal V_{\tilde u}(-w_1(0),\chi\mathcal P_N\xi)$.
This together with (\ref{bound-7}) and Proposition \ref{Proposition-LFC}(2)(3) implies that there exists a control $\xi\in L^2_tH^{s+\sigma}_x$, having the form
\begin{equation}\label{structure-control}
\xi=i(1-\partial_{x}^2)^{-s-\sigma}[\mathcal P_N(\chi\varphi)]\quad {\rm with\ }\varphi=\mathcal U_{\tilde u}(\Lambda(\tilde u)(-w_1(0))),
\end{equation}
such that
\begin{equation}\label{Null-controllability-1}
P_mw_2(T)=0\quad \text{and}\quad  \int_0^T \|\xi\|_{_{H^{s+\sigma}}}^2\le C\|w_1(0)\|^2_{_{H^{s+\sigma}}}\le C\|v_0\|_{_{H^s}}^2.
\end{equation} 
The identity in (\ref{Null-controllability-1}), combined with (\ref{Equation-w1}), gives rise to
\begin{equation}\label{Null-controllability-2}
P_mw(T)=0.
\end{equation}

In conclusion, the properties in  (\ref{LFC-1}) 
follow from (\ref{Null-controllability-1}) and (\ref{Null-controllability-2}), while (\ref{structure-control}) together with Proposition \ref{Proposition-linearproblem}(2) implies
the desire structure of the control. The proof is then complete.
\end{proof}

We are now in a position to conclude this section.

\begin{proof}[\bf Proof of Proposition~\ref{Proposition-control-1}]
We first point out that it suffices to consider $\sigma\in (0,1/4]$. In fact, we can always choose $0<\sigma''\le \max\{\sigma,1/4\}$, and the assumptions of Proposition~\ref{Proposition-control-1} are satisfied with $\sigma$ replaced by $\sigma''$, due to the continuous embedding $H^{s+\sigma}\hookrightarrow H^{s+\sigma''}$.

Fix a constant $q_1\in (q_0,1)$ with $q_0\in (0,1)$ specified by Lemma \ref{Lemma-Hslineardecay}. Let us continue to use the setting in the proof of Proposition~\ref{Prop-HF} and Lemma~\ref{Lemma-regularcontrol}, with the control $\xi\in L_t^2 H_x^{s+\sigma}$ satisfying \eqref{LFC-1} specified. Represent $w$ as
$$
w=\mathcal V_{\tilde u}(0,f),
$$
where $f=f(t,x)$ represents the sum of the last three terms in the RHS of \eqref{Equation-w}. From \eqref{LFC-1} and (\ref{bound-6}) it follows that 
\begin{equation*}
\begin{aligned}
\|f(t,x)\|_{_{X^{s+\sigma,-b'}_T}}&\leq C \left(\|\chi \mathcal{P}_N \xi\|_{L_t^2 H_x^{s+\sigma}}+\|\tilde{u}\re\, (|\tilde{u}|^{p-1}\tilde{u},z)\|_{_{X^{s+\sigma,-b'}_T}}+\|\mathcal R(\tilde u,z)\|_{_{X^{s+\sigma,-b'}_T}}\right)\\
&\leq C\left(\|\xi\|_{L_t^2 H_x^{s+\sigma}}+\|z\|_{_{X^{s,b}_T}}\right)\le 
C\|v_0\|_{_{H^s}}.
\end{aligned}
\end{equation*}
Thanks to Proposition~\ref{Prop-HF}, we have
\[\|Q_m w(T)\|_{_{H^s}}\le Cm^{-\sigma}\|f\|_{_{X^{s+\sigma,-b'}_T}}\leq Cm^{-\sigma}\|v_0\|_{_{H^s}}\]
for any $m\in \mathbb{N}^+$.
By \eqref{LFC-1} and $w=v-z$, we have $P_m w(T)=0$. Moreover, recall that the $\tilde{H}^s$-norm is equivalent to the $H^s$-norm. Thus we can choose $m$ sufficiently large, so that
\[\|w(T)\|_{_{\tilde{H}^s}}=\|Q_m w(T)\|_{_{\tilde{H}^s}}\le (q_1-q_0)\|v_0\|_{_{\tilde{H}^s}}.\]

Finally, 
taking \eqref{decay-z} into account, we conclude that
\[\|v(T)\|_{_{\tilde{H}^s}}\le \|w(T)\|_{_{\tilde{H}^s}}+\|z(T)\|_{_{\tilde{H}^s}}\le q_1\|v_0\|_{_{\tilde{H}^s}}.\]
Now the proof is complete, as the control $\xi$ is the same as in Lemma~\ref{Lemma-regularcontrol}, which has the structure described in Proposition {\rm\ref{Proposition-control-1}(2)}.
\end{proof}

\section{Exponential mixing}\label{Section-proofmain}

With the preparations from previous sections, we are now able to prove the \hyperlink{thm1}{\color{black}Main Theorem} for randomly forced NLS equation (\ref{Random-problem}). 

In Section \ref{Section-framework}, we recall general criterion for exponential mixing, which is established in the previous work \cite{LWX-24}. 
This criterion consists of three hypotheses: exponential asymptotic compactness ($\mathbf{EAC}$), irreducibility ($\mathbf{I}$) and coupling condition ($\mathbf{C}$).

The next thing to be done is to verify the hypotheses just mentioned. In particular, the verification of ($\mathbf{C}$) contributes to the main content. It involves the stabilization along trajectory for  associated controlled system, mingled with several results on optimal couplings and probability measures. In Section \ref{Section-coupling} we extract an abstract criterion of ($\mathbf{C}$) from technical reasoning which varies from example to example. In this criterion, the conditions are directly related to  stabilization along trajectory (Section~\ref{Section-control}), while the materials from other fields are included in the proof. 

Finally, the verification of all hypotheses will be finished in Section \ref{Section-verification}. The proof of ($\mathbf{EAC}$) and ($\mathbf{I}$) therein are easy applications of the exponential asymptotic compactness and global stabilization for the deterministic problem (\ref{Nonlinear-problem}) established in Sections \ref{Section-AC} and \ref{Section-stability}.

\subsection{Probabilistic framework based on  asymptotic compactness}\label{Section-framework}

In this subsection,
we are positioned in a setting of random dynamical system.
Let $(\mathcal X,\|\cdot\|)$ and $(\mathcal{Z},\|\cdot\|_{\mathcal{Z}})$ be separable real\footnote{When applying the general criterion to NLS, the complex function spaces (such as $H^s$) will be deliberately regarded as real Hilbert spaces. We will clarify this issue at that time.} Banach spaces. Assume that $$S\colon\mathcal{X} \times\mathcal{Z} \rightarrow \mathcal{X}$$ is a locally Lipschitz map, and $\{\xi_n;n\in\N\}$ stands for a sequence of $\mathcal{Z}$-valued i.i.d.~random variables. The common law of $\xi_n$ is $\ell$, whose support is denoted by $\mathcal{E}$. The Markov process considered here is given by 
\begin{equation}\label{RDS}
x_{n+1}=S(x_{n},\xi_{n}),\quad x_0=x\in\mathcal{X}.
\end{equation}
In order to indicate the initial condition and the random inputs, we also write 
\begin{equation*}
x_n=S_{n}(x;\xi_0,\cdots,\xi_{n-1})=S_{n}(x;\boldsymbol{\xi}),\quad n\in\N^+
\end{equation*}
with $\boldsymbol{\xi}:=(\xi_n;n\in\N)$.
Moreover, given a sequence $\boldsymbol{\zeta}=(\zeta_n;n\in\N)\in \mathcal{Z}^{\N}$, we denote by $$S_{n}(x ; \zeta_0, \cdots, \zeta_{n-1})=S_{n}(x;\boldsymbol{\zeta})$$ the corresponding deterministic process defined by (\ref{RDS}) replacing $\xi_n$ with $\zeta_n$.

With the above setting, system (\ref{RDS}) defines a Feller family of discrete-time Markov processes in $\mathcal{X}$; see, e.g. \cite[Section 1.3]{KS-12}. We denote the corresponding Markov family by $\Pb_x\ (x\in\mathcal{X})$, the expected values by $\E_x$, and the Markov transition functions by $P_n(x,\cdot)$, i.e. 
\begin{equation}\label{Markov-transition}
P_n(x,A)=\Pb_x(x_n\in A),\quad A\in\mathcal{B}(\mathcal{X}),\ n\in \N.
\end{equation}
We define the Markov semigroup $P_n\colon C_b(\mathcal{X})\rightarrow C_b(\mathcal{X})$ and its dual $P^*_n\colon\mathcal{P}(\mathcal{X})\rightarrow \mathcal{P}(\mathcal{X})$ by setting
\begin{equation}\label{Markov-semigroup}
P_nf(x)= \int_{\mathcal{X}}f(y)P_n(x,dy)\quad \text{and}\quad P_n^*\mu(A)=\int_{\mathcal{X}}P_n(x,A)\mu(dx),
\end{equation}
for $f\in C_b(\mathcal{X})$, $\mu\in\mathcal{P}(\mathcal{X})$, $x\in\mathcal{X}$ and $A\in\mathcal{B}(\mathcal{X})$. Recall that a probability measure $\mu\in \mathcal{P}(\mathcal{X})$ is called \textit{invariant} for $P_n^*$  if 
$$
P_n^*\mu=\mu,\ \forall n\in\N.
$$ 
Meanwhile, we say a subset $\mathcal Y\subset\mathcal X$ is invariant, if 
$$
S(\mathcal Y\times\mathcal E) \subset \mathcal Y.
$$

Recall a coupling between $\mu,\nu\in \mathcal{P}(\mathcal{X})$ refers to a pair of $\mathcal{X}$-valued random variables with marginal distributions equal to $\mu$ and $\nu$, respectively. The set of all couplings between $\mu$ and $\nu$ is denoted by $\mathscr{C}(\mu,\nu)$. 

Below is a list of hypotheses regarding the abstract criterion for exponential mixing. 

\begin{itemize}
\item [($\mathbf{EAC}$)]   ({\bf Exponential asymptotic compactness}) There exists a compact invariant subset $\mathcal{Y}$ of $\mathcal{X}$, a constant $\kappa>0$, and a measurable function $V\colon\mathcal{X}\rightarrow\R^+$, which sends bounded sets into bounded sets, such that
\begin{equation*}
\text{dist}_{\mathcal X}(S_n(x;\boldsymbol{\zeta}),\mathcal{Y})\leq V(x)e^{-\kappa n}
\end{equation*}
for any $x\in\mathcal{X},\boldsymbol{\zeta}\in \mathcal{E}^{\N}$ and $n\in\N$.

\item [($\mathbf{I}$)]   ({\bf  Irreducibility on $\mathcal{Y}$}) There exists a point $z\in\mathcal{Y}$ such that for every $\varepsilon>0$, there is an integer $m\in\N^+$ satisfying
\begin{equation*}
\inf_{y\in\mathcal{Y}}P_m(y,B_{\mathcal{X}}(z,\varepsilon))>0.
\end{equation*}
		
\item [($\mathbf{C}$)]   ({\bf Coupling condition on $\mathcal Y$}) There exist constants $q\in[0,1)$ and $C>0$, such that for every $y_1,y_2\in\mathcal{Y}$, one can find $(\mathcal{R}(y_1,y_2),\mathcal{R}'(y_1,y_2))\in\mathscr{C}(P_1(y_1,\cdot),P_1(y_2,\cdot))$ on a common probability space $(\Omega,\mathcal{F},\Pb)$, satisfying
\begin{equation*}
\Pb(\|\mathcal{R}(y_1,y_2)-\mathcal{R}'(y_1,y_2)\|>q\|y_1-y_2\|)\leq C\|y_1-y_2\|.
\end{equation*}
Moreover, the maps $\mathcal{R},\mathcal{R}'\colon\mathcal{Y}\times \mathcal{Y}\times \Omega\rightarrow \mathcal{X}$ are measurable.
\end{itemize}

We shall use the following notion of attainable set.
	
\begin{definition}
For $D\subset \mathcal{X}$, the attainable set $\mathcal{A}_n(D)$ at time $n$ is recursively defined by
\begin{equation*}
\mathcal{A}_n(D):=\{S(x,\zeta);x\in \mathcal{A}_{n-1}(D),\zeta\in \mathcal{E}\},\quad n\in\N^+
\end{equation*}
with $\mathcal{A}_0(D)=D$,
and the attainable set $\mathcal{A}(D)$ is given by
$
\mathcal{A}(D)=\overline{\bigcup_{n\in\N}\mathcal{A}_n(D)}.
$  
\end{definition} 

With the above hypotheses, the $(\mathbf{EAC})$-based criterion of exponential mixing is collected in the following proposition; see \cite[Theorem 2.1]{LWX-24} for its proof.

\begin{proposition}\label{Proposition-EX}
Assume that the support $\mathcal{E}$ of $\ell$ is compact in $\mathcal{Z}$, and hypotheses $(\mathbf{EAC})$, $(\mathbf{I})$ and $(\mathbf{C})$ are satisfied. Then the Markov process $\{x_n;n\in \mathbb{N}\}$ has a unique invariant measure $\mu \in \mathcal{P}(\mathcal{X})$ with compact support $\supp(\mu)=\mathcal{A}(\{z\})\subset \mathcal{Y}$. Moreover, there exist constants $C,\beta >0$ such that
\[\|P_n^* \nu-\mu\|_L^* \le Ce^{-\beta n}\left(1+\int_{\mathcal{X}} V(x)\nu(dx)\right)\]
for any $\nu\in \mathcal{P}(\mathcal{X})$ such that $\int_{\mathcal{X}} V(x)\nu(dx) < \infty$ and $n\in \mathbb{N}$.
\end{proposition}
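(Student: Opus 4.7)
\textbf{Plan of proof for Proposition~\ref{Proposition-EX}.} My approach follows the coupling philosophy of Kuksin--Shirikyan, adapted to the non-compact semiflow setting via $(\mathbf{EAC})$. The scheme has three logical layers: existence of an invariant measure, exponential contraction of dual-Lipschitz distance between two processes started in $\mathcal{Y}$, and extension from $\mathcal{Y}$ to all of $\mathcal{X}$ through the attraction bound $V(x)e^{-\kappa n}$.

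\emph{Step 1 (invariant measure via asymptotic compactness).} Fix $x \in \mathcal{X}$ and consider the Cesaro averages $\nu_N := \frac{1}{N}\sum_{n=0}^{N-1} P_n^*\delta_x$. Hypothesis $(\mathbf{EAC})$ gives $\Pb_x(\dist_{\mathcal{X}}(x_n,\mathcal{Y}) \le V(x)e^{-\kappa n}) = 1$, so $\{\nu_N\}$ is tight (a uniformly bounded-in-probability distance to the compact set $\mathcal{Y}$). Any weak limit $\mu$ is invariant by Bogolyubov--Krylov, and an elementary argument using $\dist_{\mathcal{X}}(x_n,\mathcal{Y}) \to 0$ in probability together with compactness of $\mathcal{Y}$ yields $\supp(\mu)\subset \mathcal{Y}$. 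After Step 3 establishes uniqueness, the identification $\supp(\mu) = \mathcal{A}(\{z\})$ follows because $\mathcal{A}(\{z\})$ is invariant under $S(\cdot,\mathcal{E})$ and the trajectory from any $y\in\mathcal{Y}$ enters an arbitrary neighborhood of $z$ by $(\mathbf{I})$.

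\emph{Step 2 (one-step contraction near a ``recurrent'' point via $(\mathbf{I})$ and $(\mathbf{C})$).} For $y_1,y_2\in \mathcal{Y}$, hypothesis $(\mathbf{C})$ supplies a coupling $(\mathcal{R},\mathcal{R}')$ of $P_1(y_1,\cdot)\otimes P_1(y_2,\cdot)$ with
\begin{equation*}
\Pb(\|\mathcal{R}-\mathcal{R}'\| > q\|y_1-y_2\|) \le C\|y_1-y_2\|.
\end{equation*}
Iterating this coupling along an independent realization of the noise yields that, as long as the consecutive distances remain smaller than a threshold $d_0>0$, they shrink geometrically: the event $\mathcal{G}_n = \{\|x_n-\tilde x_n\| \le q^n\|y_1-y_2\|\}$ has $\Pb(\mathcal{G}_n) \ge 1 - C\sum_{k=0}^{n-1}q^k\|y_1-y_2\| \ge 1 - C'\|y_1-y_2\|$. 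Thus if $\|y_1-y_2\|$ is small enough, a single application of the composed coupling produces exponential decay of $\|x_n-\tilde x_n\|$ with overwhelming probability, and failure only costs a term $\lesssim \|y_1-y_2\|$, which is exactly what a dual-Lipschitz test function absorbs. Hypothesis $(\mathbf{I})$ then takes over for large initial distances: it provides an integer $m$ and a uniform lower bound $p_0>0$ for $P_m(y,B_{\mathcal{X}}(z,\varepsilon))$ on $\mathcal{Y}$, so two independent copies of the chain have probability at least $p_0^2$ of simultaneously entering $B_{\mathcal{X}}(z,\varepsilon)$ after $m$ steps, reducing to the small-distance regime.

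\emph{Step 3 (iteration and extension).} Combine Steps 2 by introducing stopping times $\tau_k$ marking the successive visits of the coupled pair to $B_{\mathcal{X}}(z,\varepsilon)\times B_{\mathcal{X}}(z,\varepsilon)$, each such visit happening within a bounded waiting time of expectation $\le 1/p_0^2$ on $\mathcal{Y}\times\mathcal{Y}$. On each successful excursion, Step 2 contracts the dual-Lipschitz distance by a factor strictly less than $1$; the geometric tail of the waiting time yields a uniform exponential rate $\beta>0$ with
\begin{equation*}
\|P_n^*\delta_{y_1} - P_n^*\delta_{y_2}\|_L^* \le C e^{-\beta n}, \qquad y_1,y_2 \in \mathcal{Y}.
\end{equation*}
To pass to arbitrary $\nu \in \mathcal{P}(\mathcal{X})$, split $n = n_1 + n_2$ and use $(\mathbf{EAC})$: after $n_1 \sim \theta n$ steps ($\theta\in(0,1)$ small), the measure $P_{n_1}^*\nu$ concentrates on a $V$-controlled neighborhood of $\mathcal{Y}$, so applying the $\mathcal{Y}$-contraction for the remaining $n_2$ steps and integrating the $e^{-\kappa n_1}V$ error bound yields the claimed rate with the $V$-weighted prefactor. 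Uniqueness of $\mu$ is an immediate corollary.

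\textbf{Expected main obstacle.} The delicate point is Step~2 and its iteration: one must construct a \emph{measurable} family of couplings for pairs $(y_1,y_2)$ whose mutual distance may cross the threshold $d_0$ in either direction, and simultaneously track that the ``bad'' event in $(\mathbf{C})$ accumulates only linearly in the initial distance rather than blowing up. This is exactly where the quantitative form $\Pb(\ldots)\le C\|y_1-y_2\|$ (rather than just $<1$) is used, and where measurability of the coupling maps $\mathcal{R},\mathcal{R}'$ in $(y_1,y_2,\omega)$ must be carefully exploited to glue together excursions between the $(\mathbf{I})$-regime and the $(\mathbf{C})$-regime into a single well-defined Markov coupling on an extended probability space.
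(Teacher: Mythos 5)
The paper does not prove this proposition here: it is imported verbatim from the earlier work and the text explicitly defers to \cite[Theorem 2.1]{LWX-24}. Your outline reproduces the strategy of that reference (Bogolyubov--Krylov existence via the tightness furnished by $(\mathbf{EAC})$, a recoupling scheme alternating the $(\mathbf{I})$-regime and the $(\mathbf{C})$-regime on $\mathcal{Y}$, then extension off $\mathcal{Y}$ by the attraction bound), so at the level of ideas you are on the intended track, and you correctly identify the measurable-gluing of couplings as the technical heart.

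One step as written would fail, however. In Step 3 you take $n_1\sim\theta n$ with $\theta$ small for the $(\mathbf{EAC})$ phase and reserve the long leg $n_2=(1-\theta)n$ for the $\mathcal{Y}$-contraction. Since $S$ is only locally Lipschitz, replacing $P_{n_1}^*\nu$ by a measure supported on $\mathcal{Y}$ costs an error $V(x)e^{-\kappa n_1}$ that is then propagated through $n_2$ further steps, and the dual-Lipschitz distance is not non-expanding under $P_{n_2}^*$: the error can grow like $L^{n_2}$ for some $L>1$. With your split this gives $L^{(1-\theta)n}e^{-\kappa\theta n}$, which diverges. The split must go the other way: spend the long leg $n_1=(1-\theta)n$ approaching $\mathcal{Y}$ and only a small fraction $n_2=\theta n$ on the contraction, so that $L^{\theta n}e^{-\kappa(1-\theta)n}$ and $e^{-\beta\theta n}$ both decay for $\theta$ small. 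With that correction (and the routine care you already flag about measurability of the composed coupling), the argument closes.
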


\subsection{Abstract criterion for coupling condition}\label{Section-coupling}

The demonstration of coupling condition is less direct, involving optimal coupling and control theory. This has been done for, e.g.~Navier--Stokes equations \cite{Shi-15,Shi-21} and wave equations \cite{LWX-24}. We state an abstract result which relates $(\mathbf{C})$ to control property, which is general and does not depend on specific PDE model.

We continue with the setting in Section~\ref{Section-framework}. 
In the sequel, let us introduce the following conditions with an arbitrarily given $q\in(0,1)$.

\begin{itemize}
\item [($\mathbf{C1}$)] There exist subspaces $\mathcal{Z}_1,\mathcal{Z}_2$ of $\mathcal Z$ such that $\mathcal{Z}_1$ is finite dimensional,  $\mathcal{Z}=\mathcal{Z}_1\oplus \mathcal{Z}_2$ and 
$$
\ell=\ell_1\otimes \ell_2,
$$
where $\ell_i=({\rm Proj}_{\mathcal{Z}_i})_* \ell$, the map ${\rm Proj}_{\mathcal{Z}_i}$ denotes the projection onto  $\mathcal{Z}_i$, and $_*$ refers to the pushforward of probability measures. Moreover, the probability measure $\ell_1$ has a $C^1$ density function with respect to Lebesgue measure on $\mathcal{Z}_1$.

\item [($\mathbf{C2}$)] There exists a compact invariant subset $\mathcal{Y}$ of $\mathcal{X}$,
constants $d>0$ and $q'\in (0,q)$ such that if $\vec{y}=(y_1,y_2)\in \mathcal{Y}\times \mathcal{Y}$ with $\|y_1-y_2\|\le d$, then there is a continuously differentiable map
$\Phi^{\vec{y}}\colon \mathcal{Z}\to \mathcal{Z}_1$, such that for $\ell$-a.e. $\zeta\in \mathcal{Z}$,
\begin{equation*}
\|S(y_1,\zeta)-S(y_2,\zeta+\Phi^{\vec{y}}(\zeta))\|\le q' \|y_1-y_2\|.
\end{equation*}

\item [($\mathbf{C3}$)] There exists a constant $C>0$, independent of $\vec{y}$, such that for any $\zeta\in \mathcal{Z}$,
\begin{equation*}
\|\Phi^{\vec{y}}(\zeta)\|_{_{\mathcal{Z}}} \le C\|y_1-y_2\|  \quad\text{and} \quad \Lip(\Phi^{\vec{y}})\le C\|y_1-y_2\|.
\end{equation*}
\end{itemize}

Condition $(\mathbf{C1})$ provides a functional setting, while $(\mathbf{C2}),(\mathbf{C3})$ are associated with the stabilization along trajectory for controlled system. The control involved has certain structure as in Theorem \ref{Theorem-control}(2).

In our application to random NLS (\ref{Random-problem}), the compact set $\mathcal{Y}$ in $(\mathbf{C2})$ is exactly the attractor involved in $(\mathbf{EAC})$. As there is no danger of ambiguity, we slightly abuse the notations.

\medskip

Generally speaking, the following lemma indicates that the coupling condition in the probabilistic setting can be translated to the issue of control property.

\begin{lemma}\label{couplingcriterion}
Let $0<q'<q<1$ be arbitrarily given, and assume the conditions $(\mathbf{C1})$-$(\mathbf{C3})$ are satisfied. Then the hypothesis $(\mathbf{C})$ holds. More precisely, there exists a constant $C>0$, a probability space $(\Omega,\mathcal{F},\mathbb{P})$ and measurable maps $\mathcal{R},\mathcal{R}'\colon \mathcal{Y}\times \mathcal{Y}\times \Omega\to \mathcal{X}$, such that for any $y_1,y_2\in\mathcal{Y}$, the pair $(\mathcal{R}(y_1,y_2),\mathcal{R}'(y_1,y_2))\in \mathscr{C}(P_1(y_1,\cdot),P_1(y_2,\cdot))$, and
\begin{equation}\label{couplingcondition1}
\mathbb{P}(\|\mathcal{R}(y_1,y_2)-\mathcal{R}'(y_1,y_2)\|>q\|y_1-y_2\|)\le C\|y_1-y_2\|.
\end{equation}
\end{lemma}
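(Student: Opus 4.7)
The plan is to reduce the construction of the coupling to a finite-dimensional total-variation estimate, exploiting the decomposition from $(\mathbf{C1})$ and the control shift from $(\mathbf{C2})$, and then to apply a maximal coupling. First I would dispose of the easy regime $\|y_1-y_2\|\ge d$: choose $C$ in \eqref{couplingcondition1} large enough that $C\|y_1-y_2\|\ge Cd\ge 1$, making the bound vacuous, and simply set $\mathcal{R}(y_1,y_2)=S(y_1,\xi)$, $\mathcal{R}'(y_1,y_2)=S(y_2,\xi)$ for a common $\xi\sim\ell$ defined on a fixed probability space. The substantive work concerns the range $\|y_1-y_2\|<d$.

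Next I would invoke $(\mathbf{C1})$ to write $\xi=(\xi^{(1)},\xi^{(2)})\sim\ell_1\otimes\ell_2$ with independent factors, and observe that since $\Phi^{\vec{y}}$ takes values in $\mathcal{Z}_1$, the shifted sample $\xi+\Phi^{\vec{y}}(\xi)$ differs from $\xi$ only in the first factor. Hence producing $\xi'\sim\ell$ which equals $\xi+\Phi^{\vec{y}}(\xi)$ with high probability is the same as coupling, conditionally on $\xi^{(2)}=\theta$, the measures $\ell_1$ and its pushforward $\nu_\theta^{\vec{y}}:=(T_\theta^{\vec{y}})_*\ell_1$ under the map $T_\theta^{\vec{y}}(z):=z+\Phi^{\vec{y}}(z,\theta)$ on $\mathcal{Z}_1$. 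The central step is to show that
\begin{equation*}
\sup_{\theta\in\mathcal{Z}_2}\|\nu_\theta^{\vec{y}}-\ell_1\|_{\mathrm{TV}}\le C\|y_1-y_2\|.
\end{equation*}
This is the main obstacle. By $(\mathbf{C3})$, shrinking $d$ if necessary, $T_\theta^{\vec{y}}$ is a $C^1$-diffeomorphism of the finite-dimensional space $\mathcal{Z}_1$ with $\Lip(T_\theta^{\vec{y}}-\mathrm{Id})\le C\|y_1-y_2\|$ and $|\det DT_\theta^{\vec{y}}-1|\le C\|y_1-y_2\|$, and from $(\mathbf{C1})$ the density $\rho_1$ of $\ell_1$ is $C^1$. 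A change of variables reduces the TV difference to an integral of $|\rho_1\circ(T_\theta^{\vec{y}})^{-1}/|\det DT_\theta^{\vec{y}}\circ(T_\theta^{\vec{y}})^{-1}|-\rho_1|$, which I would bound via $|\nabla\rho_1|$ and $|\det DT_\theta^{\vec{y}}-1|$; integrability hinges on controlling $\rho_1$ and its gradient globally on $\mathcal{Z}_1$, which is available from the $C^1$ hypothesis of $(\mathbf{C1})$ along the lines of the analogous computations in \cite{Shi-15,LWX-24}.

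Finally, I would assemble the pieces. Applying the standard maximal coupling lemma conditionally on $\xi^{(2)}$, I can construct $(\tilde\xi^{(1)},\xi'^{(1)})$ with conditional laws $\nu_{\xi^{(2)}}^{\vec{y}}$ and $\ell_1$ respectively, and $\mathbb{P}(\tilde\xi^{(1)}\ne\xi'^{(1)}\mid\xi^{(2)})=\|\nu_{\xi^{(2)}}^{\vec{y}}-\ell_1\|_{\mathrm{TV}}$. I then define
\begin{equation*}
\mathcal{R}(y_1,y_2):=S(y_1,\xi),\qquad \mathcal{R}'(y_1,y_2):=S\bigl(y_2,(\xi'^{(1)},\xi^{(2)})\bigr).
\end{equation*}
The marginals are $P_1(y_1,\cdot)$ and $P_1(y_2,\cdot)$ since both $\xi$ and $(\xi'^{(1)},\xi^{(2)})$ have law $\ell$, and on the good event $\{\tilde\xi^{(1)}=\xi'^{(1)}\}$ one has $(\xi'^{(1)},\xi^{(2)})=\xi+\Phi^{\vec{y}}(\xi)$, so $(\mathbf{C2})$ yields $\|\mathcal{R}-\mathcal{R}'\|\le q'\|y_1-y_2\|<q\|y_1-y_2\|$, giving \eqref{couplingcondition1}. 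For joint measurability in $(y_1,y_2,\omega)$ I would use the explicit rejection-sampling realization of the maximal coupling, whose dependence on the two densities is measurable through $\Phi^{\vec{y}}$; the Lipschitz dependence from $(\mathbf{C3})$ guarantees this readily. Combining the two regimes through a measurable partition of $\mathcal{Y}\times\mathcal{Y}$ completes the construction and yields hypothesis $(\mathbf{C})$.
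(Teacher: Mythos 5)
Your argument is correct in substance, but it takes a more explicit, self-contained route than the paper, which proves the lemma in three steps by citation: it first invokes the measurable-selection result for optimal couplings \cite[Proposition A.1]{LWX-24} with respect to the two-parameter functional $\rho_{\boldsymbol{\varepsilon}}$, then bounds $\|P_1(y_1,\cdot)-P_1(y_2,\cdot)\|_{(q\lambda,q'\lambda)}$ by $2\|\ell-(I+\Phi^{\vec y})_*\ell\|_{\rm TV}$ via \cite[Proposition 5.2]{Shi-15}, and finally bounds this total variation by $C\|y_1-y_2\|$ via \cite[Proposition 5.6]{Shi-15} using $(\mathbf{C1})$ and $(\mathbf{C3})$. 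What you do is essentially unpack both cited propositions: your conditional maximal coupling on the finite-dimensional factor $\mathcal{Z}_1$ is exactly the mechanism behind the first, and your change-of-variables estimate for $\sup_\theta\|\nu^{\vec y}_\theta-\ell_1\|_{\rm TV}$ is the content of the second. Your route buys an explicit coupling and independence from the references; the paper's buys brevity and a ready-made measurability statement. Two places in your write-up need a touch more care, though neither is a genuine gap. First, for the identification $(\xi'^{(1)},\xi^{(2)})=\xi+\Phi^{\vec y}(\xi)$ on the good event you need $\tilde\xi^{(1)}$ to \emph{equal} $T^{\vec y}_{\xi^{(2)}}(\xi^{(1)})$ pointwise, not merely to have law $\nu^{\vec y}_{\xi^{(2)}}$; so you must realize the maximal coupling with its first component prescribed as $T^{\vec y}_{\xi^{(2)}}(\xi^{(1)})$, which requires a standard gluing argument with regular conditional probabilities on the Polish space $\mathcal{Z}_1$ — this should be stated, since it is the whole point of the coupling lemma you are reproving. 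Second, the first-order term in your total-variation computation requires a weighted integrability of $\nabla\rho_1$, roughly $\int(1+\|z\|)\,|\nabla\rho_1(z)|\,dz<\infty$, because $(\mathbf{C3})$ only gives $|T^{-1}_\theta z - z|\lesssim \|y_1-y_2\|(1+\|z\|)$; a bare $C^1$ density as in $(\mathbf{C1})$ does not literally guarantee this, although it holds in the application (the factors $\rho_{j,k,l}$ are $C^1$ with compact support) and is precisely the hypothesis under which \cite[Proposition 5.6]{Shi-15} is stated.
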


\begin{proof}[{\bf Proof of Lemma \ref{couplingcriterion}}]
Let $\mathcal Y\subset\mathcal X$ and $d>0$ be established in condition $(\mathbf{C2})$.
It suffices to define the desired coupling $\mathcal{R},\mathcal{R}'$ for $\vec{y}$ belonging to the subset $\boldsymbol{A}$ of $\mathcal{Y}\times \mathcal{Y}$, where
\[\boldsymbol{A}:=\{\vec{y}\in \mathcal{Y}\times \mathcal{Y}; \|y_1-y_2\|\le d\|\}.\]
In fact, for $\vec{y}\in \mathcal{Y}\times \mathcal{Y}\setminus \boldsymbol{A}$, we can choose $\xi'$ to be an independent copy of $\xi$, and set
\[\mathcal{R}=S(y_1,\xi),\quad \mathcal{R}'=S(y_2,\xi').\]
Then \eqref{couplingcondition1} holds, up to replacing $C$ by $\max\{C,d^{-1}\}$, since the LHS does not exceed $1$, and $\|y_1-y_2\|>d$ for $\vec{y}\not \in \boldsymbol{A}$. The rest of the proof is devoted to the construction of $\mathcal{R},\mathcal{R}'$ on $\boldsymbol{A}$

We will invoke a measurability result on optimal couplings \cite[Proposition A.1]{LWX-24}. For $\boldsymbol{\varepsilon}=(\varepsilon_1,\varepsilon_2)$ with $0\leq \varepsilon_2\leq \varepsilon_1<\infty$,  define a functional $\rho_{\boldsymbol{\varepsilon}}\colon \mathcal{X}\times \mathcal{X}\rightarrow [0,1]$ by 
    \[\rho_{\boldsymbol{\varepsilon}}(x_1,x_2)=\varphi_{\boldsymbol{\varepsilon}}(\|x_1-x_2\|),\]
    where $\varphi_{\boldsymbol{\varepsilon}}\colon\R^+\rightarrow[0,1]$ is given by
    \begin{equation*}
        \varphi_{\boldsymbol{\varepsilon}}(s)=\begin{cases}
        1 &\text{ for } s>\varepsilon_1,\\
        \frac{s-\varepsilon_2}{\varepsilon_1-\varepsilon_2} &\text{ for } \varepsilon_2<s\leq \varepsilon_1,\\
        0 &\text{ for } 0\leq s\leq \varepsilon_2.
        \end{cases}
    \end{equation*}
    For $\mu,\nu \in \mathcal{P}(\mathcal{X})$, let us also set
    \begin{equation*}
        \|\mu-\nu\|_{\boldsymbol{\varepsilon}}=\inf_{(\xi,\eta)\in\mathscr{C}(\mu,\nu)} \E \rho_{\boldsymbol{\varepsilon}}(\xi,\eta).
    \end{equation*}
    
For $\vec{y}=(y_1,y_2)\in \boldsymbol{A}$, we  define a non-negative function $\lambda(\vec{y})$ as
\begin{equation*}
\lambda(\vec{y})=\|y_1-y_2\|.
\end{equation*}
Owing to \cite[Proposition A.1]{LWX-24}, there exists a probability space $(\Omega,\mathcal{F},\mathbb{P})$ and measurable maps $\mathcal{R},\mathcal{R}'\colon \boldsymbol{A}\times\Omega\rightarrow \mathcal{X}$
    such that 
    $(\mathcal{R}(\vec{y}),\mathcal{R}'(\vec{y}))\in\mathscr{C}(P_1(y_1,\cdot),P_1(y_2,\cdot))$ and 
    \begin{equation*}
        \E \rho_{(q\lambda(\vec{y}),q\lambda(\vec{y}))}(\mathcal{R}(\vec{y}),\mathcal{R}'(\vec{y}))\leq \|P_1(y_1,\cdot)-P_1(y_2,\cdot)\|_{(q\lambda(\vec{y}),q'\lambda(\vec{y}))}.
    \end{equation*}
    Accordingly, due to the definition of $\rho_{\boldsymbol{\varepsilon}}$ and $\lambda$,
\begin{equation}\label{Control-squeezing-1}
\mathbb{P}(\|\mathcal{R}(\vec{y})-\mathcal{R}'(\vec{y})\|>q\|y_1-y_2\|)\leq \|P_1(y_1,\cdot)-P_1(y_2,\cdot)\|_{(q\lambda(\vec{y}),q'\lambda(\vec{y}))}.
\end{equation} 

Thanks to \cite[Proposition 5.2]{Shi-15} (see also \cite[Lemma A.2]{LWX-24}), condition ($\mathbf{C2}$) implies that
\begin{equation}\label{Control-squeezing-3}
\|P_1(y_1,\cdot)-P_1(y_2,\cdot)\|_{(q\lambda(\vec{y}),q'\lambda(\vec{y}))}\leq 2\|\ell-(I+\Phi^{\vec{y}})_*\ell\|_{\rm TV},
\end{equation}
where $I$ is the identity map, and $\|\ell_1-\ell_2\|_{\rm TV}$ denotes the total variation distance between two probability measures $\ell_1$ and $\ell_2$. 
To estimate the RHS, we apply \cite[Proposition 5.6]{Shi-15} (see also \cite[Lemma A.1]{LWX-24}). The assumptions involved are justified by $(\mathbf{C1})$ and $(\mathbf{C3})$. As a result,
\begin{equation}\label{Control-squeezing-4}
\|\ell-(I+\Phi^{\vec{y}})_*\ell\|_{\rm TV}\leq C\|y_1-y_2\|.
\end{equation}
Putting (\ref{Control-squeezing-1})-(\ref{Control-squeezing-4}) all together, the proof is now complete.
\end{proof}

\subsection{Proof of Main Theorem}\label{Section-verification} 

Let us verify hypotheses $(\mathbf{EAC})$, $(\mathbf{I})$ and $(\mathbf{C})$ for random NLS (\ref{Random-problem}) at the scale of $H^s$ with $s\ge 1$, which would conclude the proof of the \hyperlink{thm1}{\color{black}Main Theorem}.

Under the settings $(\mathbf{S1}),(\mathbf{S2})$ (see Section~\ref{Section-setup}), we introduce the time-$T$ solution map 
$$
S\colon H^s\times L^2_t H^s_x\rightarrow H^s,\quad S(u_0,f)=u(T),
$$
where $u(t)$ stands for the solution of (\ref{Nonlinear-problem}). For the sake of clarity, we point out the complex-valued function spaces $H^s$ and $L^2_t H^s_x$ can be viewed as real Hilbert spaces (cf.~the setting of Section~\ref{Section-framework}). Indeed, if $X$ is a complex Hilbert space with inner product $(\cdot,\cdot)_X$, then $\re\,  (\cdot,\cdot)_X$ is a real-inner product on $X$, turning $X$ into a real Hilbert space.

It is easy to deduce that the map $S$ is locally Lipschitz and continuously differentiable (see Proposition \ref{Proposition-nonlinear}). We set $\mathcal X=\tilde{H}^s$, which is equivalent to $H^s$ and defined in Lemma~\ref{Lemma-Hslineardecay}, and
\[\mathcal Z=\overline{{\rm span}\{\chi \alpha_j^T(t) e_k(x);j\in \mathbb{N}^+ ,k\in \mathbb{Z}\}}^{L_t^2 H_x^{s+\sigma}}\subset L_t^2 H_x^{s+\sigma}.\]
Note that the closure is taken within $L_t^2 H_x^{s+\sigma}$, with extra spatial regularity. Then equation (\ref{Random-problem}) induces a Markov process $(u_n,\mathbb{P}_u)$ given by 
\begin{equation*}
u_{n+1}=S(u_n,\eta_n),\quad 
u_0\in H^s. 
\end{equation*}
The corresponding Markov objects $P_n(u,\cdot),P_n,P_n^*$ are defined as in (\ref{Markov-transition}) and (\ref{Markov-semigroup}). 

Taking (\ref{bounded-noise-0}) into account, 
we observe that the sample paths of $\eta_n$ are contained in a bounded subset of $L^2(0,T;H^{s+\sigma})$. That is, there exists a constant $R_0>0$, depending on $B_0$, such that
\[\eta_n\in{B}_{L^2(0,T;H^{s+\sigma})}(R_0)\quad\text{almost surely}.\]
Moreover, using a diagonal argument, it is easy to see from $(\mathbf{S2})$ that $\mathcal{E}$ is compact in $\mathcal{Z}$.

\medskip
\noindent \textbf{Verification of hypothesis $(\mathbf{EAC})$.} 
For every $\boldsymbol{\zeta}=(\zeta_n;n\in\N
)\in \mathcal{E}^{\N}$, the corresponding concatenation $f\colon\R^+\rightarrow H^{s+\sigma}$, i.e. 
$$
f(t,x)=\zeta_n(t-nT,x),\quad t\in[nT,(n+1)T),\,n\in\N,
$$ 
belongs to ${B}_{L^2_b(\R^+;H^{s+\sigma})}(\lceil 1/T\rceil R_0)$. Then, Theorem~\ref{Theorem-AC} and Theorem~\ref{Theorem-AC-Hs} yield that
\begin{equation*}
\text{dist}_{H^s} (S_n(u_0;\boldsymbol{\zeta}),\mathscr B_{s,s+\sigma})\leq V_s(u_0) e^{-\kappa Tn},
\end{equation*}
for any $u_0\in H^s$, $\boldsymbol{\zeta}\in \mathcal{E}^{\N}$ and $n\in \N$, where $\mathscr{B}_{s,s+\sigma}$ is a bounded subset of $H^{s+\sigma}$, and
\[V_s(u_0)=\begin{cases}
C(1+E(u_0))&\text{\rm for } s=1,\\
C(1+\|u_0\|_{_{H^s}})^{C(1+E(u_0))^{(p-1)\lceil 4s-3\rceil/2}}&\text{\rm for } s>1.
\end{cases}\]
Due to the uniform boundedness (\ref{H^s-boundedness}), we find that $\mathcal{Y}:=\mathcal{A}(\mathscr{B}_{s,s+\sigma})$ is a bounded subset of $H^{s+\sigma}$, which is in turn compact in $\mathcal{X}$. As $\mathcal{Y}$ is clearly invariant due to the definition of attainable set, this completes the verification of  $(\mathbf{EAC})$.

\medskip
\noindent\textbf{Verification of hypothesis $(\mathbf{I})$.} 
From Proposition~\ref{Corollary-stability} and Proposition~\ref{Prop-Hsabsorbing} it follows that that for every $\varepsilon>0$, there exists an integer $m\in\N^+$ such that
\begin{equation*}
\|S_m(u_0;\boldsymbol{0})\|_{_{H^s}}<\frac{\varepsilon}{2}
\end{equation*} 
for any $u_0\in\mathcal Y$, where $\boldsymbol{0}$ stands for a sequence of zeros.
Notice that the map 
$$
\mathcal Y\times\mathcal E^m\ni(u_0,\boldsymbol{\zeta})\mapsto S_m(u_0;\boldsymbol{\zeta})\in H^s
$$
is uniformly continuous, since $\mathcal{E}$ is compact. Specifically, there exists a constant $\delta>0$ such that 
\begin{equation*}
\|S_m(u_0;\boldsymbol{\zeta})\|_{_{H^s}}<\varepsilon
\end{equation*}
for any $u_0\in\mathcal Y$ and $\boldsymbol{\zeta}=(\zeta_n)$ with $\zeta_0,\cdots, \zeta_{m-1}\in \mathcal E\cap B_{L^2_tH_x^s}(\delta)$. We then conclude that 
\begin{equation*}
\begin{aligned}
P_m(u_0,B_{H^s}(\varepsilon)) \geq \Pb (\|\eta_n\|_{L^2_tH^s_x}<\delta,\,\forall\,0\leq n\leq m-1) = \ell (B_{L^2_tH^s_x}(\delta))^{m}>0;
\end{aligned}
\end{equation*}
the last step is due to $0\in \mathcal E$, which is assured by $\rho_{jk}(0)>0$. The hypothesis $(\mathbf{I})$ is then verified. 

\medskip
\noindent\textbf{Verification of hypothesis $(\mathbf{C})$.} 
In view of Lemma~\ref{couplingcriterion}, it suffices to justify conditions $(\mathbf{C1})$-$(\mathbf{C3})$, where the compact invariant set $\mathcal Y$ is taken as that in $(\mathbf{EAC})$.

Condition $(\mathbf{C1})$ follows from the noise structure \eqref{bounded-noise-0} and (\ref{non-degenerate}), by setting
\[\mathcal{Z}_1={\rm span}\{\chi \alpha_j^T(t)e_k(x);j,|k|\le N\} \quad \text{and}\quad \mathcal{Z}_2=\overline{{\rm span}\{\chi \alpha_j^T(t)e_k(x);j,|k|>N\}}^{L_t^2 H_x^{s+\sigma}}.\]
Next we verify $(\mathbf{C2})$ and $(\mathbf{C3})$. We can choose $R_1>0$ sufficiently large so that 
\begin{equation*}
\mathcal Y\subset \overline{B}_{H^{s+\sigma}}(R_1)\quad \text{and}\quad \mathcal E\subset\overline{B}_{L^2_tH^{s+\sigma}_x}(R_1).
\end{equation*}
Moreover, there exists a constant $R>0$ such that 
$
u\in \overline{B}_{X_T^{s+\sigma,b}}(R)
$
for any solution $u(t)$ of (\ref{Nonlinear-problem}) with $u_0\in \overline{B}_{H^{s+\sigma}}(R_1)$ and $f\in \overline{B}_{L^2_tH^{s+\sigma}_x}(R_1+2)$. We then apply Theorem \ref{Theorem-control} with such $R$, and deduce that there exist constants $d>0,N\in\N^+,q'\in (0,1)$ and a map 
$$
\Phi'\colon \overline{B}_{H^{s+\sigma}}(R_1)\times \overline{B}_{L^2_tH^{s+\sigma}_x}(R_1+2)\rightarrow \mathcal{L}_{\R}(H^s;L_t^2H^s_x)
$$ 
such that 
\begin{equation}\label{Control-squeezing-2}
\|S(u_0,\zeta)-S(v_0,\zeta+\chi\mathcal P_N\Phi'(u_0,\zeta)(v_0-u_0))\|_{_{\tilde{H}^s}}\leq q'\|u_0-v_0\|_{_{\tilde{H}^s}}
\end{equation}
for any $u_0,v_0\in\overline{B}_{H^{s+\sigma}}(R_1)$ with $\|u-v\|_{_{H^s}}\leq d$ and $\zeta\in \overline{B}_{L^2_tH^{s+\sigma}_x}(R_1+1)$. Moreover, the map $\Phi'$ is Lipschitz and continuously differentiable, as it is the composition of two maps of such type. In the sequel, we assume (\ref{non-degenerate}) with $N$ just established. 

Finally, let us define
\[\Phi^{(u_0,v_0)}(\zeta)=\phi(\|\zeta\|_{_{L_t^2H_x^{s+\sigma}}}^2)\chi\mathcal{P}_N \Phi'(u_0,\zeta)(v_0-u_0),\]
where $\phi\colon \mathbb{R}\to \mathbb{R}^+$ is a smooth cutoff function, such that $\phi(r)=1$ for $0\le r\le R_1^2$ and $\phi(r)=0$ for $r\ge (R_1+1)^2$. Then $(\mathbf{C2})$ and $(\mathbf{C3})$ are satisfied, owing to \eqref{Control-squeezing-2}. Consequently, hypothesis $(\mathbf{C})$ is valid for any $q\in (q',1)$.

\medskip

The conclusions of \hyperlink{thm1}{\color{black}Main Theorem} are now obtained, as we have accomplished the verification of all hypotheses involved in Proposition~\ref{Proposition-EX}.

\appendix

\section{Bourgain spaces and global well-posedness}\label{Appendix-GWP}

We collect the global well-posedness result and some basic estimates of the Schr\"{o}dinger equations under consideration. These results can be derived within the framework of restricted norm method due to Bourgain \cite{Bourgain-book}.

\subsection{Basic estimates in Bourgain spaces}\label{Appendix-Bourgainspace}

The Bourgain space is a powerful tool in the study of dispersive PDEs, initially applied to low-regularity well-posedness. We recall the basic properties and multilinear estimates needed in this paper. Most of the proofs can be found in existing literature, while some require adaptions due to the nonlinearity of general order $p\ge 3$.

\begin{definition}
    Let $s,b\in \mathbb{R}$ be arbitrarily given. The Bourgain space $X^{s,b}$ consists of tempered distributions $u$ on $\R\times\T$ for which the norm defined by
    \[\|u\|^2_{_{X^{s,b}}}:=\sum_{k\in\Z}\int_\R \langle k\rangle^{2s}\langle \tau+k^2\rangle^{2b}|\widehat u(\tau,k)|^2d\tau\]
    is finite. For $T>0$, the restriction space $X^{s,b}_T$ to the time interval $[0,T]$ is equipped with norm
    \[\|u\|_{_{X^{s,b}_T}}=\inf\{\|\tilde u\|_{_{X^{s,b}}};\tilde u=u\ {\rm on\ }[0,T]\times\T\}.\]
    For a bounded interval $I$, the associated restricted space $X^{s,b}_I$ can be defined similarly.
\end{definition}

The following properties are easily derived from the definition.

\begin{lemma}\label{Lemma-Bourgainspace}
Let $s,b\in \mathbb{R}$ and $T>0$ be arbitrarily given. Then the following assertions hold.

\begin{itemize}
\item[$(1)$] If $u\in X^{s,b}$, then $\|u\|_{_{X^{s,b}}}=\|S(-t)u(t)\|_{_{H_t^b H_x^s}}$. Here $S(t)=e^{i t\partial_x^2}$ and
\[\|f\|_{_{H_t^b H_x^s}}^2=\sum_{k\in \mathbb{Z}}\int_{\mathbb{R}}  \langle \tau\rangle^{2b} \langle k\rangle^{2s} |\widehat{f}(\tau,k)|^2 d\tau.\]

\item[$(2)$] If $b>1/2$, then $X^{s,b}\hookrightarrow C(\mathbb{R};H^s(\mathbb{T}))$ and $X_T^{s,b}\hookrightarrow C(0,T;H^s(\mathbb{T}))$.
        
\item[$(3)$] If $s_1\le s_2$ and $b_1\le b_2$, then $X^{s_2,b_2}\hookrightarrow X^{s_1,b_1}$. Moreover, if $s_1<s_2$ and $b_1<b_2$, then this embedding is compact. The same results hold for the restricted space.

\item[$(4)$] The dual space of $X^{s,b}$ is $X^{-s,-b}$, and the dual space of $X_T^{s,b}$ is $X_T^{-s,-b}$.
\end{itemize}
\end{lemma}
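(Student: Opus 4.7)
The plan is to reduce all four assertions to standard facts about Sobolev spaces on $\mathbb{R}\times\mathbb{T}$ via the unitary identification $u\mapsto S(-t)u(t)$ that straightens out the weight $\langle \tau+k^2\rangle$.

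For assertion $(1)$, I would compute directly on the Fourier side. Writing $u(t,x)=\frac{1}{\sqrt{2\pi}}\sum_k \widehat u(t,k)e^{ikx}$ and recalling $S(t)e_k = e^{-itk^2}e_k$, the function $S(-t)u(t)$ has space-time Fourier transform equal to $\widehat u(\tau-k^2,k)$ (as $\mathcal F_t[e^{itk^2}\widehat u(t,k)](\tau)=\widehat u(\tau-k^2,k)$). Making the change of variable $\tau\mapsto \tau+k^2$ in the definition of $\|\cdot\|_{H_t^b H_x^s}$ produces exactly the weight $\langle \tau+k^2\rangle^{2b}\langle k\rangle^{2s}$ appearing in $\|\cdot\|_{X^{s,b}}$, and $(1)$ follows.

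For $(2)$, combine $(1)$ with the one-dimensional Sobolev embedding $H^b(\mathbb R)\hookrightarrow C_b(\mathbb R)$ valid for $b>1/2$: the map $u\mapsto S(-t)u(t)$ sends $X^{s,b}$ isometrically into $H_t^b H_x^s$, which embeds continuously into $C(\mathbb R;H^s)$; composing with the $C_0$-group $S(t)$ gives the claim for $X^{s,b}$, and the assertion on the restricted space $X_T^{s,b}$ follows by extension. For $(3)$, the continuous embedding is immediate since $\langle\tau+k^2\rangle^{b_1}\le \langle\tau+k^2\rangle^{b_2}$ and $\langle k\rangle^{s_1}\le \langle k\rangle^{s_2}$ imply pointwise domination of the norms; compactness when $s_1<s_2$ and $b_1<b_2$ follows by truncating in $(\tau,k)$ and using that on any fixed region $\{|k|\le K,\ |\tau|\le M\}$ one has equivalence of weights, after which a standard Rellich-type diagonal argument handles the tails. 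The restricted version is obtained by applying the result on $\mathbb R$ to extensions and taking the infimum. Finally, $(4)$ is a direct application of Hilbert-space duality: the map $u\mapsto\{\langle k\rangle^{s}\langle\tau+k^2\rangle^{b}\widehat u(\tau,k)\}$ identifies $X^{s,b}$ isometrically with $L^2(\mathbb R\times\mathbb Z)$, whose dual is itself, and unwinding the identification identifies the dual of $X^{s,b}$ with $X^{-s,-b}$; for $X_T^{s,b}$, the dual is recovered by the standard quotient/subspace duality for restriction spaces.

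No serious obstacle is expected here: each item is a direct consequence of the identification in $(1)$ together with classical Sobolev and Hilbert-space facts. The only mildly delicate point is the compactness in $(3)$, where one must be careful that the weights only give decay in $\tau+k^2$ (not in $\tau$ alone) when $|k|$ is bounded, so the standard truncation argument should be phrased in terms of the variable $\tau+k^2$ rather than $\tau$.
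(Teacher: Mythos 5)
Your items (1), (2), (4) and the continuous embedding in (3) follow essentially the same route as the paper: the Fourier identity $\mathcal{F}(S(-t)u(t))(\tau,k)=\widehat u(\tau-k^2,k)$ plus a change of variables, the Sobolev embedding $H^b_t\hookrightarrow C_b(\mathbb{R})$ composed with the unitary group $S(t)$, pointwise comparison of the weights, and $L^2$-duality of weighted $\ell^2L^2$ spaces (the paper phrases (4) as $(H_t^bH_x^s)^*=H_t^{-b}H_x^{-s}$ together with unitarity of $S(-t)$, which is the same computation). No issues with any of these.

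The compactness argument in (3) has a genuine gap, and it is not the point you flagged about $\tau$ versus $\tau+k^2$. Truncating the Fourier support to a bounded region $\{|k|\le K,\ |\tau+k^2|\le M\}$ does not produce a compact piece on $\mathbb{R}\times\mathbb{T}$, because nothing localizes the functions in physical time: with $\phi$ Schwartz and $\widehat\phi$ supported in $[-1,1]$, the translates $u_n(t,x)=\phi(t-n)e_0(x)$ all have Fourier support in such a region, are bounded in every $X^{s_2,b_2}$, and admit no subsequence converging in any $X^{s_1,b_1}$. (The same example shows that the full-space compactness claim, as literally stated in the lemma and in the paper's own one-line proof, is false; only the restricted version $X_T^{s_2,b_2}\Subset X_T^{s_1,b_1}$ holds, and that is the only version the paper actually uses.) In particular your reduction ``apply the result on $\mathbb{R}$ to extensions and take the infimum'' cannot work for the restricted space, since the full-space result it invokes fails. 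The correct argument must exploit the time localization: given a bounded sequence in $X_T^{s_2,b_2}$, choose extensions supported in $[-1,T+1]$ with comparable norms (multiplication by a fixed cutoff $\psi(t)\in C_c^\infty(\mathbb{R})$ is bounded on $X^{s,b}$), note that $S(-t)u(t)$ then also has time support in $[-1,T+1]$, apply the genuinely compact embedding $H^{b_2}((-1,T+1);H_x^{s_2})\Subset H^{b_1}((-1,T+1);H_x^{s_1})$ on the bounded domain, and restrict back to $[0,T]$.
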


For the reader's convenience, we provide a brief proof.

\begin{proof}[{\bf Proof of Lemma~\ref{Lemma-Bourgainspace}}]
Assertion (1) follows from definition and a direct computation that
\[\mathcal{F}(S(-t)u(t)) (\tau,k)=\widehat{u}(\tau-|k|^2,k).\]

We only prove the assertions in (2)-(4) involving $X^{s,b}$, as the corresponding results for restricted space $X_T^{s,b}$ follow easily from them, by considering extensions of $u$.
    
    The embedding (2) follows from Sobolev embedding $H_t^b\hookrightarrow C(\mathbb{R})$ when $b>1/2$, which implies $S(-t)u(t)\in C(\mathbb{R};H^s(\mathbb{T}))$ by (1), and hence $u\in C(\mathbb{R};H^s(\mathbb{T}))$ due to the continuity of $S(-t)$.
    
    The continuity of embedding in (3) is trivial by definition. As for compactness, exploit (1) and compact embedding of Sobolev spaces $H_t^{b_2} H_x^{s_2}\Subset H_t^{b_1} H_x^{s_1}$ whenever $s_1<s_2$ and $b_1<b_2$.
    
    The duality (4) follows from $(H_t^b H_x^s)^*=H_t^{-b} H_x^{-s}$ and that $S(-t)$ is a unitary operator. 
\end{proof}

Recall $S_a(t)$ is the group generated by damped operator $i\partial_x^2-a(x)$. The following estimates related to $S_a(t)$ are crucial to well-posedness, as well as deriving basic estimates for the solution. In particular, the estimate (\ref{Multilinear-estimate-3modify}) provides the smallness needed for fixed-point argument.

\begin{proposition}\label{Proposition-S(t)estimate}
For every $T>0$, $b\in(1/2,1)$ and $s\in \mathbb{R}$, the following assertions hold.

\begin{enumerate}
\item[$(1)$] There exists a constant $C>0$, such that for any $u_0\in H^s(\mathbb{T})$,
\begin{equation}\label{Multilinear-estimate-2}
\|S_a(t)u_0\|_{_{X_T^{s,b}}}\leq C\|u_0\|_{_{H^s}}.
\end{equation}

\item[$(2)$] There exists a constant $C>0$, such that for any $F\in X_T^{s,b-1}$,
\begin{equation}\label{Multilinear-estimate-3}
\|\int_0^tS_a(t-\tau)F(\tau)d\tau\|_{_{X_T^{s,b}}}\leq C\|F\|_{_{X_T^{s,b-1}}}.
\end{equation}

\item[$(3)$] Assume further that a parameter $b'\in (0,1/2)$ satisfies $b+b'\le 1$. Then there exists a constant $C>0$, such that for any $0<T<1$ and $F\in X_T^{s,-b'}$,
\begin{equation}\label{Multilinear-estimate-3modify}
\|\int_0^t S(t-\tau)F(\tau)d\tau\|_{_{X_T^{s,b}}}\leq CT^{1-b-b'}\|F\|_{_{X_T^{s,-b'}}}.
\end{equation}
\end{enumerate}
\end{proposition}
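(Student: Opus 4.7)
My plan is to prove the three assertions in the order (3) $\to$ free-group analog of (1) $\to$ (1) for $S_a$ $\to$ (2) for $S_a$, treating estimate (3) as the main technical ingredient and handling the damping $a(x)$ perturbatively via Duhamel relative to the free group. For assertion (3), I would follow the classical Ginibre--Bourgain scheme. Take an extension $\tilde F\in X^{s,-b'}$ of $F$ with $\|\tilde F\|_{X^{s,-b'}}\le 2\|F\|_{X_T^{s,-b'}}$ and consider
\[w(t):=\eta(t/T)\int_0^t S(t-\tau)\tilde F(\tau)\,d\tau,\]
where $\eta\in C_c^\infty(\R)$ satisfies $\eta\equiv 1$ on $[0,1]$ and $\mathrm{supp}\,\eta\subset[-2,2]$. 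By Lemma~\ref{Lemma-Bourgainspace}(1) the $X^{s,b}$-norm of $w$ equals the $H_t^b H_x^s$-norm of $S(-t)w(t)$, and since $S(-t)$ commutes with the spatial Fourier modes, this reduces, modewise in $k$, to the time-fractional-integration inequality
\[\Big\|\eta(t/T)\int_0^t g(\tau)\,d\tau\Big\|_{H_t^b}\le CT^{1-b-b'}\|g\|_{H_t^{-b'}},\]
applied to the Fourier coefficients of $S(-\tau)\tilde F(\tau)$. This latter inequality is proved by writing $\int_0^t g(\tau)\,d\tau=\int_\R \hat g(\sigma)\frac{e^{it\sigma}-1}{i\sigma}\,d\sigma$ and splitting into $|\sigma|\le 1/T$ (where the Taylor expansion of $e^{it\sigma}-1$ produces the power $T^{1-b-b'}$) and $|\sigma|>1/T$ (where one exploits $b<1$ to handle $e^{it\sigma}$ and the constant term separately).

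The free-group analog of (1), namely $\|S(t)u_0\|_{X_T^{s,b}}\le C\|u_0\|_{H^s}$, is immediate from the identity $\mathcal F(\eta(t)S(t)u_0)(\tau,k)=\hat\eta(\tau+k^2)\hat u_0(k)$ and the Schwartz decay of $\hat\eta$. Passing to $S_a$, observe that $v(t):=S_a(t)u_0$ solves $iv_t+v_{xx}+iav=0$ with $v(0)=u_0$, hence by Duhamel relative to the free group
\[v(t)=S(t)u_0-\int_0^t S(t-\tau)\bigl(a(\cdot)v(\tau)\bigr)\,d\tau.\]
Applying the free-group analog of (1) to the first term, estimate (3) to the second with some $b'\in(0,1/2)$ chosen so that $b+b'<1$, the multiplier bound $\|av\|_{L_t^2 H_x^s}\le C\|v\|_{L_t^2 H_x^s}\le CT^{1/2}\|v\|_{L_t^\infty H_x^s}$, and the embeddings $X_T^{s,b}\hookrightarrow L_t^\infty H_x^s$ and $L_t^2 H_x^s\hookrightarrow X_T^{s,-b'}$, I obtain
\[\|v\|_{X_T^{s,b}}\le C\|u_0\|_{H^s}+CT^{3/2-b-b'}\|v\|_{X_T^{s,b}}\]
for $T$ small enough to absorb the last term; iterating over $\lceil T\rceil$ consecutive subintervals yields (1) for arbitrary $T>0$. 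For (2), the analogous Duhamel identity
\[w(t)=-i\int_0^t S(t-\tau)F(\tau)\,d\tau-\int_0^t S(t-\tau)\bigl(a(\cdot)w(\tau)\bigr)\,d\tau,\]
combined with (3) applied to the first term at the endpoint $b':=1-b$ (so that the time factor is $T^0=1$ on unit-length subintervals) and the same smallness/iteration argument for the damping term, gives (2).

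The main obstacle lies in the time-fractional-integration estimate underlying (3). It is sharp near the endpoint $b+b'=1$, where logarithmic losses threaten to appear; the hypothesis $b+b'\le 1$ together with a judicious choice of cutoff $\eta$ is what makes the elementary Fourier-splitting argument close. Equally important is that (3) yields the explicit power $T^{1-b-b'}$ rather than merely \emph{some} positive power of $T$: the perturbative deduction of (1) and (2) hinges on being able to choose $b+b'$ strictly less than $1$ so that $T^{3/2-b-b'}\to 0$ as $T\to 0^+$, which is exactly what allows the damping contribution to be absorbed into the left-hand side and the global bound to be reached by iteration.
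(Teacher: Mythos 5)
Your proposal is correct, and for assertion (3) it is essentially the paper's own argument: the paper also extends $F$, multiplies by a cutoff $\psi(t/T)$, conjugates by $S(-t)$ to reduce to the one-dimensional temporal inequality $\|\psi(t/T)\int_0^t\phi\|_{H^b}\le CT^{1-b-b'}\|\phi\|_{H^{-b'}}$ — except that the paper simply quotes this inequality from \cite[Lemma 1.3]{Laurent-10}, whereas you prove it by the low/high frequency splitting of $(e^{it\sigma}-1)/(i\sigma)$. Your splitting does close at the endpoint $b+b'=1$ because the high-frequency contribution is controlled by a supremum of $\langle\sigma\rangle^{2(b+b'-1)}\le 1$ rather than an integral, so no logarithm appears. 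Where you genuinely diverge from the paper is in assertions (1) and (2): the paper cites \cite{RZ-09} for the damped group $S_a$ and only sketches the free-group analogue of (1), while you derive the damped estimates perturbatively via Duhamel relative to $S(t)$, using (3) with $b+b'<1$ to gain a positive power of $T$, the chain $\|av\|_{X_T^{s,-b'}}\le\|av\|_{L_t^2H_x^s}\le CT^{1/2}\|v\|_{X_T^{s,b}}$ (valid for all $s\in\R$ since $a$ is smooth), absorption for small $T$, and iteration. This buys self-containedness at the cost of two routine points you should make explicit: (i) the absorption step presupposes $\|v\|_{X_T^{s,b}}<\infty$ a priori, which is cleanest to arrange by running the fixed-point argument for the perturbed Duhamel equation in $X_T^{s,b}$ on small intervals and invoking uniqueness; and (ii) the final iteration requires gluing restricted $X^{s,b}$ norms across consecutive subintervals, which is standard for $1/2<b<1$ via smooth time cutoffs (covered by Lemma~\ref{Lemma-multiplication}) but should be stated. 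Neither is a gap in the idea; both are bookkeeping.
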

    
    The proof of (1) and (2) can be found in \cite{RZ-09}. We mention that without the damping $a(x)$, the estimates in (1) and (2) with respect to $S(t)$ are standard.
    
    For the reader's convenience, we sketch the proof of estimate of type (\ref{Multilinear-estimate-2}) for $S(t)$, namely,
    \[\|S(t)u_0\|_{X_T^{s,b}}\le C \|u_0\|_{_{H^s}}.\]
    Indeed, choose a cut-off function $\psi\in C_c^\infty(\mathbb{R})$ so that $\psi(t)=1$ for $t\in [0,T]$, then
    \[\|S(t)u_0\|_{_{X_T^{s,b}}}\le \|\psi(t) S(t) u_0\|_{_{X^{s,b}}}=\|\psi(t) u_0\|_{_{H_t^b H_x^s}}\le C\|u_0\|_{_{H^s}}.\]
    Here we tacitly used Lemma~\ref{Lemma-Bourgainspace}(1) and that $\psi(t)$ commutes with $S(t)$.

    The proof of (\ref{Multilinear-estimate-3}) when $a(x)$ vanishes is similar to Proposition~\ref{Proposition-S(t)estimate}(3), which we now present.

    \begin{proof}[{\bf Proof of Proposition~\ref{Proposition-S(t)estimate}(3)}]
        Choose an extension of $F$ so that $\|F\|_{_{X^{s,-b'}}}\le 2\|F\|_{_{X^{s,-b'}_T}}$.
        We quote from \cite[Lemma 1.3]{Laurent-10} that, if $\psi\in C_c^\infty(\mathbb{R})$, then 
        \[\|\psi(t/T)\int_0^t \phi(\tau)d\tau\|_{_{H^b}}\le C T^{1-b-b'} \|\phi\|_{_{H^{-b'}}}.\]
        In particular, if we choose $\psi$ so that $\psi(t)=1$ for $t\in [0,1]$ and $\phi(t)=S(-t)u(t)$, then
        \begin{align*}
            &\|\int_0^t S(t-\tau)F(\tau)d\tau\|_{_{X_T^{s,b}}}\le \|\psi(t/T)\int_0^t S(-\tau)F(\tau)d\tau\|_{_{_{H_t^b H_x^s}}}\\
            &\le CT^{1-b-b'}\|S(-\tau) F(\tau)\|_{_{H_t^{-b'}H_x^s}}=CT^{1-b-b'}\|F\|_{_{X_T^{s,-b'}}}.\qedhere
        \end{align*}
    \end{proof}
    
\medskip

In applications, the function $F$ in \eqref{Multilinear-estimate-3} represents either the external force or nonlinear terms. The next proposition serves to deal with the nonlinear term $|u|^{p-1} u$.

\begin{proposition}\label{Proposition-multilinear}
Let the operator $\mathcal T$ be defined by {\rm(\ref{multilinear-operator})}. For every $T>0$, $s\ge 1$ and constants $b,b'$ satisfying $0<b'<1/2<b<1$ and $b+b'\le 1$, the following assertions hold.
\begin{enumerate}
\item[$(1)$]  There exists a constant $C>0$, such that for any $u_1,\cdots,u_p\in X_T^{s,b}$,
\begin{equation}\label{Multilinear-estimate}
\|\mathcal T(u_1,\cdots,u_p)\|_{_{X_T^{s,-b'}}}\leq C\sum_{j=1}^p \|u_j\|_{_{X_T^{s,b}}} \prod_{l\not =j}\|u_l\|_{_{X_T^{1,b}}}.
\end{equation}
In particular, if $u_1,\cdots ,u_p\in X_T^{s,b}$, then
\begin{equation}\label{Multilinear-estimate-homogeneous}
\|\mathcal T(u_1,\cdots,u_p)\|_{_{X_T^{s,-b'}}}\leq C\prod_{l=1}^p \|u_l\|_{_{X_T^{s,b}}}.
\end{equation}

\item[$(2)$] There exists a constant $C>0$, such that for $u_1\in X_T^{-s,b}$ and $u_2,\cdots ,u_p\in X_T^{s,b}$,
\begin{equation}\label{Multilinear-estimate-dual}
    \|\mathcal{T}(u_1,\cdots ,u_p)\|_{_{X_T^{-s,-b'}}}\le C\|u_1\|_{_{X_T^{-s,b}}}\prod_{l=2}^p \|u_l\|_{_{X_T^{s,b}}}.
\end{equation}
\end{enumerate}
\end{proposition}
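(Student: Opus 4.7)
The plan is to adapt the duality-and-frequency-counting scheme from Step~1 of the proof of Lemma~\ref{Lemma-smoothing}. Neither part requires the resonance removal that made Lemma~\ref{Lemma-smoothing} subtle, since we are only matching Sobolev indices and claiming no regularity gain. For \eqref{Multilinear-estimate}, the Cauchy--Schwarz reduction shows that it suffices to bound $\sup_{\tau,k}M(\tau,k)$, where
\[M(\tau, k)=\sum_{\Gamma_k^{\mathbb{Z}}}\int_{\Gamma_\tau^{\mathbb{R}}}\frac{\langle k\rangle^{2s}\langle \tau+k^2\rangle^{-2b'}}{\prod_{j}\langle k_j\rangle^{2s_j}\langle \tau_j+k_j^2\rangle^{2b}}\,d\tau_1\cdots d\tau_p,\]
and the Sobolev weights $(s_j)$ assign $s$ to exactly one index (matching the chosen $\|u_j\|_{X_T^{s,b}}$-factor on the right) and $1$ to the others. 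Sorting out the $\tau$-integrals via \eqref{smoothing-estimate2} replaces the Bourgain-time weights by the harmless factor $\langle \Phi\rangle^{-2b'}\leq 1$. Distributing the spatial weight through $\langle k\rangle^{2s}\leq C\sum_{j=1}^p\langle k_j\rangle^{2s}$ and cancelling each piece against $\langle k_j\rangle^{2s}$ in the denominator, the $j$-th summand reduces to $\prod_{l\neq j}\langle k_l\rangle^{-2}$; since the constraint $k=k_1-k_2+\cdots+k_p$ determines the remaining index once the others are fixed, the spatial sum is bounded by $\prod_{l\neq j}\sum_{k_l\in\mathbb{Z}}\langle k_l\rangle^{-2}<\infty$. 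This proves \eqref{Multilinear-estimate}, and \eqref{Multilinear-estimate-homogeneous} follows from the continuous embedding $X_T^{s,b}\hookrightarrow X_T^{1,b}$ for $s\geq 1$.

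For \eqref{Multilinear-estimate-dual}, I plan to transfer the negative Sobolev exponent onto a test function via duality. Since $p$ is odd, writing out the conjugation pattern of $\mathcal{T}$ yields the rearrangement identity
\[\bigl(\mathcal{T}(u_1,\ldots,u_p),v\bigr)_{L^2_{t,x}}=\bigl(u_1,\mathcal{T}(u_2,u_3,\ldots,u_p,v)\bigr)_{L^2_{t,x}};\]
combined with the dualities $(X_T^{-s,-b'})^*=X_T^{s,b'}$ and $(X_T^{-s,b})^*=X_T^{s,-b}$ from Lemma~\ref{Lemma-Bourgainspace}(4), this yields
\[\|\mathcal{T}(u_1,\ldots,u_p)\|_{X_T^{-s,-b'}}\leq \|u_1\|_{X_T^{-s,b}}\sup_{\|v\|_{X_T^{s,b'}}\leq 1}\|\mathcal{T}(u_2,\ldots,u_p,v)\|_{X_T^{s,-b}}.\]
The supremum is to be controlled by a mixed-exponent variant of part (1): on the left the Bourgain-time exponent drops from $-b'$ to $-b$, while on the right the factor $v$ carries the weaker exponent $b'$ in place of $b$. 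The Cauchy--Schwarz reduction proceeds as before, and the asymmetric time weights combine through the one-dimensional estimate
\[\int_{\mathbb{R}}\frac{d\tau}{\langle \tau-s\rangle^{2b}\langle \tau-t\rangle^{2b'}}\leq \frac{C}{\langle s-t\rangle^{2b+2b'-1}},\]
which is applicable because $b+b'>1/2$ (automatic from $b>1/2$) and whose output is harmless since $b+b'\leq 1$. The spatial frequency counting is then identical to that of part (1).

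The main technical obstacle, in my view, will be the careful bookkeeping of the iterated $\tau$-integrations in the mixed-exponent setting of part (2), along with the verification of the conjugation-parity identity above for general odd $p\geq 3$. Once these are settled, the spatial sums are elementary thanks to $s\geq 1$, and neither the normal-form reduction nor the resonance decomposition of Steps~2--4 of Lemma~\ref{Lemma-smoothing} is required.
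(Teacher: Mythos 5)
Your treatment of part (1) is essentially the paper's proof: the paper likewise reduces by duality and Cauchy--Schwarz to a multiplier bound $\sup_{\tau,k}M(\tau,k)<\infty$, splits the frequency sum according to which $|k_j|$ is maximal (equivalent to your decomposition $\langle k\rangle^{2s}\le C\sum_{j}\langle k_j\rangle^{2s}$ --- and, as you arrange, the splitting must be coordinated with which factor receives the $X^{s,b}$-weight), and bounds the $j$-th multiplier by $\prod_{l\neq j}\sum_{k_l}\langle k_l\rangle^{-2}$. For part (2) the paper is terser, declaring it a corollary of (1) ``by duality,'' and your more explicit route is in fact the honest way to fill that in: literal duality applied to the statement of (1) yields the estimate with time exponents $(-b,\,b')$ in place of $(-b',\,b)$, i.e.\ a bound on the \emph{weaker} norm $\|\cdot\|_{X_T^{-s,-b}}$ rather than on $\|\cdot\|_{X_T^{-s,-b'}}$, so one genuinely needs either a mixed-exponent multiplier bound as you propose or a direct rerun of the multiplier computation with the weight $\langle k_1\rangle^{-2s}$ placed on the first slot. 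Your conjugation-parity identity $(\mathcal T(u_1,\dots,u_p),v)=(u_1,\mathcal T(u_2,\dots,u_p,v))$ is correct for odd $p$, and the spatial counting in the mixed variant is unproblematic since every spatial weight equals $s\ge 1$.

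One stated inequality is false, though harmlessly so: for $2b>1>2b'$ one has
\[
\int_{\mathbb{R}}\frac{d\tau}{\langle\tau-s\rangle^{2b}\langle\tau-t\rangle^{2b'}}\;\approx\;\langle s-t\rangle^{-\min(2b,\,2b',\,2b+2b'-1)}=\langle s-t\rangle^{-2b'},
\]
not $\langle s-t\rangle^{-(2b+2b'-1)}$: the mass of the integrable factor $\langle\tau-s\rangle^{-2b}$ concentrated near $\tau=s$ already contributes $\gtrsim\langle s-t\rangle^{-2b'}$, which dominates since $2b'<2b+2b'-1$. Because you only use the output through the trivial bound $\langle\Phi\rangle^{-\gamma}\le 1$ for $\gamma\ge 0$, this does not damage the argument, but the exponent should be corrected; note also that the hypothesis $b+b'\le 1$ plays no role in this particular step.
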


\begin{proof}[{\bf Proof of Proposition \ref{Proposition-multilinear}}]
    When $p=3$, these estimate are well-known, and valid for $s\ge 0$; see, e.g.~\cite[Section V.2]{Bourgain-book}. However, as for general odd $p\ge 3$, we could not find an explicit reference. For this reason, we provide here a sketched proof. First note that (2) is a corollary of (1), owing to duality for the operator $u_1\mapsto \mathcal{T}(u_1,\cdots ,u_p)$.
    
    To establish (1), we use the dual method as in Step 1 of the proof of Lemma~\ref{Lemma-smoothing}. More precisely, for each $k\in \mathbb{Z}$, we split the configurations $(k_1,\cdots ,k_p)\in \mathbb{Z}^p$ with $k=k_1-k_2+\cdots +k_p$ via which $k_j$ has the maximal modulus. Then the dual method as in Lemma~\ref{Lemma-smoothing} leads to
    \[\|\mathcal{T}(u_1,\cdots ,u_p)\|_{_{X_T^{s,-b'}}}\le \sum_{j=1}^p M_j^{1/2} \|u_j\|_{_{X_T^{s,b}}} \prod_{l\not =j}\|u_l\|_{_{X_T^{1,b}}},\]
    where (note that if $|k_j|$ has maximal modulus, then $|k_j|\ge |k|/p$)
    \[M_j:=\sup_{k\in \mathbb{Z}} \sum_{\substack{k=k_1-k_2+\cdots +k_p\\|k_j|\ge |k|/p}}\frac{\langle k\rangle^{2s}}{\langle k_j\rangle^{2s} \prod_{l\not =j} \langle k_l\rangle^{2}}\le C \prod_{l\not =j} \sum_{k_l\in \mathbb{Z}} \frac{1}{\langle k_l\rangle^2}=C.\]
    The boundedness of $M_j$ yields estimate \eqref{Multilinear-estimate}.
\end{proof}

The last thing we need is multiplying spatial and temporal smooth functions.

\begin{lemma}\label{Lemma-multiplication}
For every $T>0$, $s\in \mathbb{R}$, $b\in [-1,1]$ and functions $\phi(x) \in C^\infty(\mathbb{T})$, there exists a constant $C>0$ such that for any $u\in X_T^{s,b}$,
\[\|\phi(x) u\|_{_{X_T^{s-|b|,b}}}\le C \|u\|_{_{X_T^{s,b}}}.\]
Moreover, there exists a constant $C>0$ such that for any $\psi\in H^1(0,T;\mathbb{C})$ and $u\in X_T^{s,b}$,
\[\|\psi(t)u\|_{_{X_T^{s,b}}}\le C\|\psi\|_{_{H^1(0,T;\mathbb{C})}}\|u\|_{_{X_T^{s,b}}}.\]
\end{lemma}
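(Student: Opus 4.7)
Both estimates reduce to the analogous statement on the full Bourgain space $X^{s,b}(\R\times\T)$: taking an extension $\tilde u$ of $u$ with $\|\tilde u\|_{X^{s,b}}\le 2\|u\|_{X_T^{s,b}}$ (and, for the second estimate, a bounded extension $\tilde\psi$ of $\psi$ via $H^1(0,T)\hookrightarrow H^1(\R)$), one has $\phi\tilde u|_{[0,T]}=\phi u$ and $\tilde\psi\tilde u|_{[0,T]}=\psi u$, so restricted-norm bounds follow from unrestricted ones. The two parts require different Fourier analysis, since $\phi(x)$ does not commute with the dispersive group $S(-t)=e^{-it\partial_x^2}$ while $\psi(t)$ does.

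\textbf{Part 1 (spatial multiplication).} The loss of $|b|$ derivatives is the fingerprint of noncommutativity of $\phi(x)\cdot$ with $S(-t)$. I will handle $b\in[0,1]$ directly and extend to $b\in[-1,0]$ by the duality $(X^{s,b})^*=X^{-s,-b}$ from Lemma~\ref{Lemma-Bourgainspace}(4). For $b\in[0,1]$, write $\widehat{\phi u}(\tau,k)=\sum_{k'}\widehat\phi(k-k')\widehat u(\tau,k')$. Starting from the elementary bounds
\[\langle k\rangle \le 2\langle k-k'\rangle\langle k'\rangle,\qquad \langle\tau+k^2\rangle \le C\langle\tau+(k')^2\rangle\langle k-k'\rangle^2\langle k'\rangle,\]
(the second coming from $|k^2-(k')^2|=|k-k'|\cdot|k+k'|\le 2\langle k-k'\rangle^2\langle k'\rangle$), I multiply them together to obtain the pointwise estimate
\[\langle k\rangle^{s-b}\langle\tau+k^2\rangle^b|\widehat{\phi u}(\tau,k)|\le C\sum_{k'}\langle k-k'\rangle^{|s-b|+2b}|\widehat\phi(k-k')|\,\langle k'\rangle^{s}\langle\tau+(k')^2\rangle^{b}|\widehat u(\tau,k')|.\]
Since $\phi\in C^\infty(\T)$, its Fourier coefficients decay faster than any polynomial, so $k\mapsto \langle k\rangle^{|s-b|+2b}|\widehat\phi(k)|$ belongs to $\ell^1_k$. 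Young's inequality in the $k$-variable followed by integration in $\tau$ yields $\|\phi u\|_{X^{s-b,b}}\le C\|u\|_{X^{s,b}}$. For $b\in[-1,0]$, duality gives $\|\phi u\|_{X^{s+b,b}}=\sup_v |\langle\phi u,v\rangle|/\|v\|_{X^{-s-b,-b}}\le \|u\|_{X^{s,b}}\cdot\sup_v\|\bar\phi v\|_{X^{-s,-b}}/\|v\|_{X^{-s-b,-b}}$, and since $-b\in[0,1]$ the already-established case (with exponents shifted by $|b|$) bounds the last supremum by a constant.

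\textbf{Part 2 (temporal multiplication).} Here I exploit the identity $\|u\|_{X^{s,b}}=\|S(-t)u\|_{H^b_t H^s_x}$ of Lemma~\ref{Lemma-Bourgainspace}(1). Because $\psi(t)$ and $S(-t)$ act on disjoint variables, they commute, so setting $v=S(-t)u$ reduces the task to the purely temporal vector-valued multiplier estimate $\|\psi v\|_{H^b(\R;H^s_x)}\le C\|\psi\|_{H^1(\R)}\|v\|_{H^b(\R;H^s_x)}$. At $b=0$, this is immediate from $H^1(\R)\hookrightarrow L^\infty(\R)$. At $b=1$, differentiate $(\psi v)'=\psi'v+\psi v'$ and estimate
\[\|\psi' v\|_{L^2_t H^s_x}\le \|\psi'\|_{L^2_t}\|v\|_{L^\infty_t H^s_x}\le C\|\psi\|_{H^1_t}\|v\|_{H^1_t H^s_x},\]
using the (vector-valued) 1D Sobolev embedding for $v$, while $\|\psi v'\|_{L^2_t H^s_x}\le\|\psi\|_{L^\infty_t}\|v'\|_{L^2_t H^s_x}$ is trivial. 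Complex interpolation covers $b\in[0,1]$, and duality extends to $b\in[-1,0]$.

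\textbf{Main obstacle.} The subtle step is Part 1: the three weights $\langle k\rangle^{s-b}$, $\langle\tau+k^2\rangle^b$, and their primed counterparts must be compared simultaneously, and the identity $k^2-(k')^2=(k-k')(k+k')$ injects an unavoidable polynomial factor of total degree $|b|$ in $\langle k'\rangle$; this is precisely the source of the $|b|$-derivative loss in the target space. Smoothness of $\phi$ is essential to absorb the extra $\langle k-k'\rangle^{|s-b|+2b}$ factor via rapid decay of $\widehat\phi$, which is why the estimate would fail for $\phi$ of merely finite regularity.
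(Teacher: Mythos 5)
Your proof is correct. Note, however, that the paper itself does not prove this lemma: it simply cites Laurent's Lemma 1.2 (the reference \cite{Laurent-ECOCV}), where the spatial part is obtained by treating the endpoint cases — $b=0$ is trivial, and $b=1$ is handled by computing $i(\phi u)_t+(\phi u)_{xx}=\phi(iu_t+u_{xx})+2\phi_x u_x+\phi_{xx}u$, whose commutator term $2\phi_x u_x$ accounts for the one-derivative loss — followed by complex interpolation for $b\in(0,1)$ and duality for $b\in[-1,0]$. Your Part 1 replaces the interpolation step by a single frequency-side convolution estimate, valid uniformly for $b\in[0,1]$, built from $\langle k\rangle\lesssim\langle k-k'\rangle\langle k'\rangle$ and $\langle\tau+k^2\rangle\lesssim\langle\tau+(k')^2\rangle\langle k-k'\rangle^2\langle k'\rangle$ together with Young's inequality and the rapid decay of $\widehat\phi$; this is more elementary and makes transparent exactly where the $|b|$-derivative loss comes from (the factor $\langle k'\rangle^{b}$ forced by $k^2-(k')^2=(k-k')(k+k')$), at the cost of being tied to the flat torus where the explicit symbol $\tau+k^2$ is available, whereas the commutator-plus-interpolation route generalizes to manifolds. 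Your Part 2 (commuting $\psi(t)$ past $S(-t)$, proving the $H^b_t$ multiplier bound at $b=0,1$, then interpolating and dualizing) matches the standard argument. Two cosmetic points: when you "multiply the bounds together" you should state the reversed inequality $\langle k\rangle^{-1}\lesssim\langle k-k'\rangle\langle k'\rangle^{-1}$ used when $s-b<0$ (your final exponent $|s-b|$ shows you are aware of it), and in the duality steps you should say explicitly that the pairing is the $L^2$ pairing so that $\langle\phi u,v\rangle=\langle u,\bar\phi v\rangle$ with $\bar\phi$ again smooth.
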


For a proof, see, e.g.~\cite[Lemma 1.2]{Laurent-ECOCV}. It might be surprising at first sight that multiplying $\phi(x)\in C^\infty(\mathbb{T})$ does not preserve $X^{s,b}$ functions; an example can also be found in \cite{Laurent-ECOCV}.

\subsection{Global well-posedness}

To begin with, we concentrate on the NLS equation (\ref{Nonlinear-problem}).

\begin{proposition}[Global well-posedness of NLS]\label{Proposition-nonlinear}
Let $T>0$, $s\geq 1$ and $b\in(1/2,1)$ be arbitrarily given. Then for every $u_0\in H^s$ and $f\in L^2(0,T;H^s)$, problem {\rm(\ref{Nonlinear-problem})} admits a unique solution $u\in X^{s,b}_T$. Moreover, the solution map
$$
H^s\times L^2(0,T;H^s)\ni (u_0,f)\mapsto u\in X^{s,b}_T,
$$
is locally Lipschitz and continuously differentiable.
\end{proposition}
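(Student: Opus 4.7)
The plan is to prove local well-posedness by a contraction argument in the Bourgain space $X^{s,b}_{T_0}$ for small $T_0$, and then globalize using the a priori $H^s$-bounds already derived in Sections~\ref{Section-AC} and~\ref{Section-stability}. Fix $b'\in(0,1/2)$ with $b+b'\le 1$. For $u_0\in H^s$ and $f\in L^2(0,T;H^s)$, define the Duhamel map
\[
\Phi(u)(t):=S_a(t)u_0-i\int_0^tS_a(t-\tau)\bigl(|u|^{p-1}u+f\bigr)d\tau.
\]
First I would show that $\Phi$ is a contraction on a closed ball $B:=\overline{B}_{X^{s,b}_{T_0}}(R)$ of a suitable radius $R$, for $T_0>0$ sufficiently small. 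The linear part is controlled by \eqref{Multilinear-estimate-2}. For the Duhamel integral, since $f$ is merely $L^2$ in time we embed $L^2_tH^s_x\hookrightarrow X_{T_0}^{s,-b'}$, and apply \eqref{Multilinear-estimate-3}. The nonlinearity is handled by the multilinear estimate \eqref{Multilinear-estimate-homogeneous}, giving $\||u|^{p-1}u\|_{X^{s,-b'}_{T_0}}\lesssim \|u\|_{X^{s,b}_{T_0}}^p$.

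The main obstacle here is producing a small prefactor to close the contraction, since \eqref{Multilinear-estimate-3} for $S_a(t)$ as stated does not give an explicit $T_0^\delta$-gain like \eqref{Multilinear-estimate-3modify} for the free group $S(t)$. To circumvent this, I would compare $S_a(t)$ with $S(t)$ via the Duhamel identity $S_a(t)g=S(t)g-\int_0^tS(t-\tau)a(x)S_a(\tau)g\,d\tau$, substitute it into $\Phi$, and then invoke the sharp estimate \eqref{Multilinear-estimate-3modify} on the free-group terms. Together with the multiplication bound in Lemma~\ref{Lemma-multiplication} applied to $a(x)\in C^\infty(\T)$, this yields, for $T_0\le 1$,
\[
\left\|\int_0^tS_a(t-\tau)F(\tau)d\tau\right\|_{X^{s,b}_{T_0}}\le C T_0^{1-b-b'}\|F\|_{X^{s,-b'}_{T_0}},
\]
up to a Gronwall step absorbing the $a$-dependent iterated term. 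Choosing $R\approx C(\|u_0\|_{H^s}+\|f\|_{L^2_tH^s_x})$ and $T_0$ so that $CT_0^{1-b-b'}R^{p-1}<1/2$, both the self-mapping and contraction inequalities follow.

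Uniqueness on $[0,T_0]$ is automatic from contraction; any two $X^{s,b}_{T_0}$-solutions coincide since their difference satisfies a linear equation estimated by the same inequalities. Next I would globalize: the local existence time $T_0$ depends only on $\|u(\tau)\|_{H^s}$ and $\|f\|_{L^2(\tau,T;H^s)}$, so it suffices to show an a priori bound $\sup_{t\in[0,T]}\|u(t)\|_{H^s}<\infty$. For $s=1$, this follows from the energy identities \eqref{energy-1}--\eqref{energy-2} combined with the apriori estimate \eqref{E_uapriori}. For $s>1$, the Kato--Ponce-type inequality of Lemma~\ref{Lemma-KatoPonce} and the Duhamel formula give a linear-in-$\|u\|_{H^s}$ differential inequality, which yields the required bound by Gronwall's lemma. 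Iterating the local construction then produces the unique global solution $u\in X^{s,b}_T$.

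Finally, for smoothness of the solution map $(u_0,f)\mapsto u$, I would apply the implicit function theorem to the functional
\[
\mathcal F(u;u_0,f):=u-S_a(t)u_0+i\int_0^tS_a(t-\tau)\bigl(|u|^{p-1}u+f\bigr)d\tau,
\]
viewed as a map from $X^{s,b}_T\times H^s\times L^2_tH^s_x$ into $X^{s,b}_T$. The differentiability of the nonlinear term $u\mapsto |u|^{p-1}u$ as a map $X^{s,b}_T\to X^{s,-b'}_T$ is a direct consequence of the multilinear estimate \eqref{Multilinear-estimate}, and invertibility of the Fr\'echet derivative $D_u\mathcal F$ at a solution reduces, after a standard splitting argument on small intervals, to the contraction estimate already proved. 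Local Lipschitz continuity follows from the same bounds by subtracting two Duhamel formulas. The main technical difficulty, as noted, is the absence of a direct $T^\delta$-gain for $S_a(t)$; all other pieces are routine applications of the results assembled in Appendix~\ref{Appendix-Bourgainspace}.
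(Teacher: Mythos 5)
Your proposal is correct and follows essentially the same route as the paper's (sketched) proof: a contraction in $X^{s,b}_{T_0}$ built on \eqref{Multilinear-estimate-2}, \eqref{Multilinear-estimate-3modify} and \eqref{Multilinear-estimate-homogeneous}, globalization via an a priori $H^1$-energy bound plus persistence of regularity, and a standard argument for smoothness of the solution map. Two sub-steps differ in flavor, both harmlessly. First, you recover the small factor $T_0^{1-b-b'}$ for the damped group by comparing $S_a(t)$ with $S(t)$ through Duhamel and absorbing the iterated term; the paper instead cites \eqref{Multilinear-estimate-3modify} for the free group, which amounts to treating $ia(x)u$ as part of the source — since $\|au\|_{X^{s,-b'}_{T_0}}\le \|au\|_{L^2_tH^s_x}\lesssim T_0^{1/2}\|u\|_{X^{s,b}_{T_0}}$, the two devices are equivalent and equally routine. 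Second, for persistence of $H^s$-regularity you use the Kato--Ponce bound of Lemma~\ref{Lemma-KatoPonce} and Gronwall in $C_tH^s_x$, whereas the paper reduces global existence to the case $s=1$ via the ``tame'' multilinear estimate \eqref{Multilinear-estimate}, which is linear in the top norm $X^{s,b}_T$; both hinge on the same a priori $H^1$-control coming from \eqref{energy-2} and the defocusing sign of the nonlinearity, which you use implicitly by working with the energy $E_u$. Your implicit-function-theorem argument for differentiability is the standard packaging of the paper's ``multilinear estimates on small intervals plus induction.'' No gaps.
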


\begin{proof}[{\bf Sketched proof of Proposition~\ref{Proposition-nonlinear}}]

The proof is based on a fixed-point argument similar to that in \cite[Theorem 2.1]{Laurent-ECOCV}. For the sake of completeness, we provide a brief sketch. 
One can first perform a contraction argument in  $X^{s,b}_T$-space and  derive the local existence of solutions. In this step, the estimates (\ref{Multilinear-estimate-2}), (\ref{Multilinear-estimate-3modify}) and (\ref{Multilinear-estimate-homogeneous}) come into play.
Moreover, the multilinear estimate (\ref{Multilinear-estimate}) enables us to reduce the issue of global existence to the case $s=1$, which means $H^1$-norm does not blow up. This could be done by deriving from (\ref{energy-2}) that 
$$
\sup_{\tau\in[0,t]} E_u(\tau)-E_u(0)\lesssim \int_0^{t}\|u(\tau)\|^2_{_{L^2}}d\tau+
\sup_{\tau\in[0,t]}\left[
E_u^{1/2}(\tau)+E_u^{p/(p+1)}(\tau)
\right] 
\int_0^{t}\|f(\tau)\|_{_{H^1}}d\tau.
$$
Here, the defocusing trait of nonlinearity plays an essential role. 
Finally, the Lipschitz property and differentability of solution map can be obtained by applying the multilinear estimates on small intervals, and following an induction procedure.
\end{proof}

In the sequel, we present a profile of the linearization of (\ref{Nonlinear-problem}), reading
\begin{equation}\label{Linear-problem}
\left\{\begin{array}{ll}
iv_t+v_{xx}+ia(x)v=\frac{p+1}{2}|u|^{p-1}v+\frac{p-1}{2}|u|^{p-3}u^2\bar v+f(t,x),\\
v(0,x)=v_0(x),
\end{array}
\right.
\end{equation}
where $v_0\in H^s$, $f\in L^2_tH^{s}_x$ and $u\in X^{s,b}_T$ with $b\in (1/2,1)$.
We are also interested in an adjoint problem for (\ref{Linear-problem}) which is homogeneous and backward in time. That is,
\begin{equation}\label{Adjoint-problem}
\left\{\begin{array}{ll}
i\varphi_t+\varphi_{xx}-ia(x)\varphi=\frac{p+1}{2}|u|^{p-1}\varphi-\frac{p-1}{2}|u|^{p-3}u^2\bar \varphi,\\
\varphi(T,x)=\varphi_T(x),
\end{array}
\right.
\end{equation}
where $v_T\in H^{-s}$.
The solution maps of (\ref{Linear-problem}) and (\ref{Adjoint-problem}) (defined also via the Duhamel formula) are denoted by $v=\mathcal V_u(v_0,f)$ and $\varphi=\mathcal U_u(\varphi_T)$, respectively. Thanks to the estimates \eqref{Multilinear-estimate-2}, \eqref{Multilinear-estimate-3}, \eqref{Multilinear-estimate-homogeneous} and \eqref{Multilinear-estimate-dual}, one can derive the global existence of problems \eqref{Linear-problem} and \eqref{Adjoint-problem}. Some apriori estimates in need are gathered below. The proof is omitted for the sake of simplicity.

\begin{proposition}[Basic estimates of linear equations]\label{Proposition-linearproblem}
Let $T,R>0$ and $s\ge 1$ be arbitrarily given, and assume constants $b,b'$ satisfy $0<b'<1/2<b$ and $b+b'<1$. Then there exists a constant $C>0$, such that the following assertions hold. 
\begin{enumerate}
\item[$(1)$] If $v_0\in H^s$, $f\in X^{s,-b'}_T$ and $u\in \overline{B}_{X_T^{s,b}}(R)$, then the linearized equation \eqref{Linear-problem} admits a unique solution $v=\mathcal V_u(v_0,f)\in X_T^{s,b}$, and
\begin{equation*}
\|v\|_{_{X^{s,b}_T}}\leq C\left(
\|v_0\|_{_{H^s}}+\|f\|_{_{X^{s,-b'}_T}}
\right).
\end{equation*}
		
\item[$(2)$] If $\varphi_T\in H^{-s}$ and $u\in \overline{B}_{X_T^{s,b}}(R)$, then the adjoint problem \eqref{Adjoint-problem} admits a unique solution $\varphi=\mathcal U_u(\varphi_T)\in X_T^{-s,b}$, and
\begin{equation*}
\|\varphi\|_{_{X^{-s,b}_T}}\leq C
\|\varphi_T\|_{_{H^{-s}}}.
\end{equation*}
Moreover, the solution map 
$$
\overline{B}_{X_T^{s,b}}(R)\ni u\mapsto \mathcal U_u(\cdot)\in \mathcal{L}_{\R}(H^{-s};X^{-s,b}_T)
$$
is Lipschitz and continuously differentiable.
\end{enumerate}
\end{proposition}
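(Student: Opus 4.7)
The plan is a fixed-point argument in Bourgain spaces, in the same spirit as the global well-posedness of \eqref{Nonlinear-problem} sketched in Proposition~\ref{Proposition-nonlinear}, combining the linear estimates of Proposition~\ref{Proposition-S(t)estimate} with the multilinear estimates of Proposition~\ref{Proposition-multilinear}. For assertion (1), I would rewrite the Duhamel formula for \eqref{Linear-problem} using the free group $S(t)$ (moving $ia(x)v$ into the source term) so as to gain the time-smallness factor $T^{1-b-b'}$ from \eqref{Multilinear-estimate-3modify}. The potential terms $\tfrac{p+1}{2}\mathcal T(v,u,\ldots,u)$ and $\tfrac{p-1}{2}\mathcal T(u,v,u,\ldots,u)$ are controlled in $X_{T_0}^{s,-b'}$ by $CR^{p-1}\|v\|_{X_{T_0}^{s,b}}$ via \eqref{Multilinear-estimate} (the assumption $s\ge 1$ ensures that the auxiliary $X_{T_0}^{1,b}$-factors are dominated by the $X_{T_0}^{s,b}$-norm of $u$), while the damping term admits the elementary bound
\[\|a(x)v\|_{X_{T_0}^{s,-b'}}\leq \|a(x)v\|_{L^2_tH^s_x}\leq CT_0^{1/2}\|v\|_{X_{T_0}^{s,b}}\]
thanks to the embedding $X_{T_0}^{s,b}\hookrightarrow C(0,T_0;H^s)$ from Lemma~\ref{Lemma-Bourgainspace}(2). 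Taking $T_0=T_0(R)$ small enough makes the Duhamel map a contraction in $X_{T_0}^{s,b}$, yielding local existence, uniqueness, and the bound $\|v\|_{X_{T_0}^{s,b}}\leq C(\|v_0\|_{H^s}+\|f\|_{X_{T_0}^{s,-b'}})$.

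Since the equation is linear in $v$, iterating this local estimate on successive subintervals of length $T_0$ covers $[0,T]$ in $\lceil T/T_0\rceil$ steps, with only a fixed multiplicative factor per step, producing a global estimate with overall constant $C=C(T,R)$. Assertion (2) follows by a parallel argument applied to the backward adjoint problem \eqref{Adjoint-problem} in the dual space $X_T^{-s,b}$: the same $T^{1-b-b'}$-mechanism applies, and the nonlinear potential terms are now estimated via the dual multilinear inequality \eqref{Multilinear-estimate-dual}, which yields
\[\|\mathcal T(\varphi,u,\ldots,u)\|_{X_T^{-s,-b'}}+\|\mathcal T(u,\varphi,u,\ldots,u)\|_{X_T^{-s,-b'}}\leq CR^{p-1}\|\varphi\|_{X_T^{-s,b}}.\]

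For the Lipschitz continuity of $u\mapsto \mathcal U_u$, given $u_1,u_2\in \overline B_{X_T^{s,b}}(R)$ I would set $w:=\mathcal U_{u_1}(\varphi_T)-\mathcal U_{u_2}(\varphi_T)$ and note that $w$ satisfies the adjoint equation associated with $u_1$, with zero terminal data and a source whose summands are $p$-linear expressions containing exactly one factor $u_1-u_2$, factors of $u_1$ or $u_2$, and one factor $\varphi_2:=\mathcal U_{u_2}(\varphi_T)$. Applying assertion (2) to $w$, combined with \eqref{Multilinear-estimate-dual} (placing the $\varphi_2$-factor in the $-s$ slot) and the bound $\|\varphi_2\|_{X_T^{-s,b}}\leq C\|\varphi_T\|_{H^{-s}}$, yields
\[\|w\|_{X_T^{-s,b}}\leq CR^{p-2}\|u_1-u_2\|_{X_T^{s,b}}\|\varphi_T\|_{H^{-s}},\]
which is precisely the Lipschitz bound in operator norm. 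Continuous differentiability then follows from an implicit function theorem applied to the Duhamel fixed-point equation defining $\mathcal U_u$, since $(u,\varphi)\mapsto \mathcal T(\varphi,u,\ldots,u)$ is a real-multilinear, hence smooth, map. The main obstacle I anticipate is the clean bookkeeping of the damping: one must incorporate $ia(x)v$ (and $-ia(x)\varphi$) as source terms without destroying the $T^{1-b-b'}$-smallness, for which the embedding-based estimate above appears to be the most direct route. Beyond this, the iteration constant $C_0^{\lceil T/T_0(R)\rceil}$ is harmless since $T$ and $R$ are fixed in the statement.
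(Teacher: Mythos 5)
The paper does not actually prove this proposition---it is stated with ``the proof is omitted for the sake of simplicity,'' citing only the estimates \eqref{Multilinear-estimate-2}, \eqref{Multilinear-estimate-3}, \eqref{Multilinear-estimate-homogeneous} and \eqref{Multilinear-estimate-dual}---and your contraction-plus-iteration argument (free group $S(t)$ with the $T^{1-b-b'}$ gain, damping and potential terms as perturbations, dual spaces for the adjoint, telescoping for the Lipschitz bound) is a correct instantiation of exactly that intended route. The only details worth tightening are that \eqref{Multilinear-estimate-dual} as stated distinguishes the first slot, so for $\mathcal T(u,\varphi,u,\ldots,u)$ you should note that the symmetric version with the $X^{-s,b}$ factor in the second slot follows by the same duality from \eqref{Multilinear-estimate}, and that assembling the local $X^{s,b}_{I_j}$ bounds into a global $X^{s,b}_T$ bound requires the standard gluing property of restricted Bourgain norms for $b>1/2$, which you use implicitly.
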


\section{Carleman estimate}\label{appendix_Carleman}

his appendix is devoted to a detailed proof of the general Carleman estimate proposed in Proposition~\ref{Proposition-Carleman}.

\vspace{2mm}

Let us introduce the transformation
\begin{equation}\label{transformation-0}
U(t,x)=e^{-s\alpha(t,x)}u(t,x),\quad H(t,x)=e^{-s\alpha(t,x)}h(t,x),
\end{equation}
where $s\geq 1$ is a large parameter. The new variable $U$ verifies that
\begin{equation*}
\left\{\begin{array}{ll}
iU_t+ U_{xx}
+is\alpha_tU+s\alpha_{xx}U+s^2\alpha_x^2U+2s\alpha_xU_x-g(|u|^2)U=H(t,x),\\
U(0,x)=U(T,x)\equiv 0.
\end{array}\right.
\end{equation*}
We then split the LHS by $P_1 U+P_2 U$, where
\[P_1U= is\alpha_tU+s\alpha_{xx}U+2s\alpha_xU_x\quad \text{and}\quad 
P_2U= iU_t+ U_{xx}+s^2\alpha_x^2U-g(|u|^2)U.\]
Then one can readily see
\begin{equation}\label{estimate-1}
2{\rm Re}\int_{Q_T}P_1U\overline{P_2U}\leq \|P_1 U+P_2 U\|_{_{L^2(Q_T)}}^2=\|H\|^2_{_{L^2(Q_T)}}.
\end{equation}
To continue, we write the LHS of \eqref{estimate-1} as $I_1+I_2+I_3+I_4$, where 
\begin{align*}
I_1&= 2{\rm Re}\int_{Q_T}(s\alpha_{xx}U+2s\alpha_xU_x)(\bar U_{xx}+s^2\alpha_x^2\bar U-g(|u|^2)\bar U),\\
I_2&=-2{\rm Re}\left(i\int_{Q_T}(s\alpha_{xx}U+2s\alpha_xU_x)\bar{U}_t\right),\\
I_3&= 2{\rm Re}\int_{Q_T} is\alpha_tU(-i\bar U_t+\bar U_{xx}),\\
I_4&= 2{\rm Re}\int_{Q_T}is\alpha_tU(s^2\alpha_x^2\bar U-g(|u|^2)\bar U)=0.
\end{align*}
Here $I_4=0$ due to $\alpha$ and $g$ being real-valued. We will deal with the terms $I_1,I_2,I_3$ separately. 

Let us begin with $I_1$. Integrating by parts and noticing $\partial_x |U|^2= 2{\rm Re}(U\bar{U}_x)$,
we derive that
\begin{align*}
&2{\rm Re}\int_{Q_T} s\alpha_{xx}U\bar U_{xx}=-2s{\rm Re}\int_{Q_T}(\alpha_{xxx}U+\alpha_{xx}U_x)\bar U_x=s\int_{Q_T}(\alpha_{xxxx}|U|^2-2\alpha_{xx}|U_x|^2),\\
&2{\rm Re}\int_{Q_T}2s\alpha_xU_x\bar U_{xx}=-2s\int_{Q_T} \alpha_{xx}|U_x|^2,\\
&2{\rm Re}\int_{Q_T} 2s\alpha_xU_x s^2\alpha_x^2\bar U=-6s^3\int_{Q_T}\alpha_x^2 \alpha_{xx} |U|^2,\\
&-2{\rm Re}\int_{Q_T}2s\alpha_xU_xg(|u|^2)\bar{U}=-2s\int_{Q_T} \alpha_x g(|u|^2)\partial_x |U|^2.
\end{align*}
The last expression can be further reduced by
\begin{align*}
g(|u|^2)\partial_x |U|^2&=e^{-2s\alpha}g(e^{2s\alpha}|U|^2)\left[\partial_x (e^{2s\alpha}|U|^2)-2s\alpha_x e^{2s\alpha}|U|^2\right]\\
&=e^{-2s\alpha}\left[\partial_x G(e^{2s\alpha}|U|^2)-2s\alpha_x g(|u|^2)|u|^2\right]\\
&=e^{-2s\alpha}\left[\partial_x G(|u|^2)-2s\alpha_x g(|u|^2)|u|^2\right].
\end{align*}
Therefore, we conclude that
\begin{align}
I_1=&\int_{Q_T}(-4s^3\alpha_x^2\alpha_{xx}+s\alpha_{xxxx})|U|^2-4s\int_{Q_T} \alpha_{xx}|U_x|^2-2s\int_{Q_T} \alpha_{xx}g(|u|^2) |U|^2\notag\\
&-2s\int_{Q_T} \alpha_x e^{-2s\alpha}\left[\partial_x G(|u|^2)-2s\alpha_x g(|u|^2)|u|^2\right]\label{estimate-I1}\\
=&\int_{Q_T}(-4s^3\alpha_x^2\alpha_{xx}+s\alpha_{xxxx})|U|^2-4s\int_{Q_T} \alpha_{xx}|U_x|^2+\int_{Q_T} (4s^2\alpha_x^2-2s\alpha_{xx}) e^{-2s\alpha} \Psi(|u|^2).\notag
\end{align}
Next, using $2{\rm Re}\, z=z+\bar{z}$, we rewrite $I_2$ as
\begin{align}
I_2&=-\int_{Q_T}i(s\alpha_{xx}U+2s\alpha_xU_x)\bar U_t+\int_{Q_T}i(s\alpha_{xx}\bar U+2s\alpha_x\bar U_x)U_t,\notag\\
&=is\int_{Q_T}\left(\alpha_{txx}|U|^2+\alpha_{xx}U_t\bar U+2\alpha_{tx}U_x\bar U+2\alpha_xU_{tx}\bar U\right)\notag\\
&\quad+is\int_{Q_T} \alpha_{xx} \bar{U}U_t-2is\int_{Q_T}\left(\alpha_{xx}U_t+\alpha_xU_{tx}\right)\bar U \label{esimate-I2}\\
&=is\int_{Q_T}\alpha_{txx}|U|^2+2is\int_{Q_T}\alpha_{tx}U_x\bar U\notag\\
&=-is\int_{Q_T} a_{tx} U \bar{U}_x+is\int_{Q_T}\alpha_{tx}U_x\bar U=-2s{\rm Re}\left(
i\int_{Q_T} \alpha_{tx} U \bar U_x
\right).\notag
\end{align}
Finally, to deal with $I_3$, let us write
\begin{equation}\label{estimate-I3}
\begin{aligned}
I_3&=s\int_{Q_T}\alpha_t\partial_t|U|^2+2s{\rm Re}\left(
i\int_{Q_T}\alpha_tU\bar U_{xx}
\right)\\
&=-s\int_{Q_T}\alpha_{tt}|U|^2-2s{\rm Re}\left(
i\int_{Q_T}\alpha_{tx}U\bar U_{x}
\right).
\end{aligned}
\end{equation}

Now, substituting (\ref{estimate-I1})-(\ref{estimate-I3}) into \eqref{estimate-1}, it hence follows that
\begin{equation}\label{formula-3}
\begin{aligned}
\int_{Q_T} |H|^2\ge &\int_{Q_T}\left(s\alpha_{xxxx}-4s^3\alpha_x^2\alpha_{xx}-s\alpha_{tt}\right)|U|^2&\\
&-4s\int_{Q_T} \alpha_{xx}|U_x|^2-4s{\rm Re}\left(
i\int_{Q_T}\alpha_{tx}U\bar U_x 
\right)\\
&+\int_{Q_T} (4s^2\alpha_x^2-2s\alpha_{xx}) e^{-2s\alpha} \Psi(|u|^2).
\end{aligned}
\end{equation}
In the sequel, the following facts will be used without explicit mention: 
\begin{equation*}
\begin{aligned}
& \alpha_x=-\lambda\beta\phi',\quad \beta_x=\lambda\beta\phi',\quad \alpha_{xx}=-\lambda\beta\phi''-\lambda^2|\phi'|^2\beta \\
& |\alpha_{tx}|\leq C\lambda T\beta^2,\quad |\alpha_{tt}|\leq CT^2\beta^3,\quad |\alpha_{xx}|\leq C \lambda^2 \beta,\quad |\alpha_{xxxx}|\leq C\lambda^4\beta,
\end{aligned}
\end{equation*}
where the generic constant $C>0$ depends only on $\phi$. Thus for any $x\in \mathbb{T}$,
\begin{equation*}
\begin{aligned}
&|s\alpha_{xxxx}-4s^3\alpha_x^2 \alpha_{xx}-s\alpha_{tt}|\leq C (s\lambda^4 \beta+s^3\lambda^4\beta^3+sT^2\beta^3)\leq C s^3 \lambda^4\beta^3,\\
&|4s \alpha_{xx}|\leq C s\lambda^2 \beta,\\
&|4s^2\alpha_x^2-2s\alpha_{xx}|\leq C s^2\lambda^2 \beta^2+s\lambda^2 \beta\leq Cs^2\lambda^2 \beta^2,
\end{aligned}
\end{equation*}
provided that $s$ and $\lambda$ are sufficiently large (depending only on $T$). In addition, we recall (\ref{weight-function-0}) and find that for $x\in \mathbb{T}\setminus \overline{\mathcal{I}}$,
\begin{equation}\label{estimate-4}
\begin{aligned}
&s\alpha_{xxxx}-4s^3\alpha_x^2 \alpha_{xx}-s\alpha_{tt}\geq -Cs\lambda^4\beta+4c^4 s^3\lambda^4\beta^3-CsT^2\beta^3\ge 2c^4 s^3\lambda^4\beta^3,\\
&-4s\alpha_{xx}\ge 4c^2 s\lambda^2\beta,\\
&4s^2\alpha_x^2-2s\alpha_{xx}\ge 4c^2s^2\lambda^2 \beta^2+2c^2 s\lambda^2\beta^2\ge 4c^2s^2\lambda^2 \beta^2,
\end{aligned}
\end{equation}
provided that $s$ and $\lambda$ are sufficiently large.
Combining (\ref{formula-3}) with (\ref{estimate-4}), we conclude that
\begin{align*}
&s^3\lambda^4 \int_{Q_T\setminus q_T} \beta^3 |U|^2+s\lambda^2 \int_{Q_T\setminus q_T} \beta |U_x|^2+s^2\lambda^2 \int_{Q_T\setminus q_T} \beta^2 e^{-2s\alpha} \Psi(|u|^2)\\
&\leq C\left[ s^3\lambda^4 \int_{q_T}\beta^3|U|^2+s\lambda^2 \int_{q_T} \beta |U_x|^2+s^2\lambda^2\int_{q_T} \beta^2 e^{-2s\alpha} \Psi(|u|^2)\right.\\
&\quad\quad\ \left. +\int_{Q_T} |H|^2+4s{\rm Re}\left(
i\int_{Q_T}\alpha_{tx}U\bar U_x 
\right)\right]\\
&\leq C\left[s^3\lambda^4 \int_{q_T}\beta^3|U|^2+s\lambda^2 \int_{q_T} \beta |U_x|^2+s^2\lambda^2\int_{q_T} \beta^2 e^{-2s\alpha}\Psi(|u|^2)\right.\\
&\quad\quad\  \left.+\int_{Q_T} |H|^2+4s\lambda T\int_{Q_T} \beta^2 |U||U_x|\right]. 
\end{align*}
As a result, we obtain
\begin{equation}\label{C-estimate-5}
\begin{aligned}
&s^3\lambda^4 \int_{Q_T} \beta^3 |U|^2+s\lambda^2 \int_{Q_T} \beta |U_x|^2+s^2\lambda^2 \int_{Q_T} \beta^2 e^{-2s\alpha} \Psi(|u|^2)\\
&\leq C\left[s^3\lambda^4 \int_{q_T}\beta^3|U|^2+s\lambda^2 \int_{q_T} \beta |U_x|^2+s^2\lambda^2\int_{q_T} \beta^2 e^{-2s\alpha} \Psi(|u|^2)\right.\\
&\quad\quad\ \left. +\int_{Q_T} |H|^2+4s\lambda T\int_{Q_T} \beta
^2 |U||U_x|\right].
\end{aligned}
\end{equation}
The last term can be absorbed into the LHS, by setting $s,\lambda$ to be sufficiently large and using
\[4s\lambda T\int_{Q_T} \beta^2 |U||U_x|\le 2s\lambda T\int_{Q_T} (\beta^3
|U|^2+\beta |U_x|^2).\]
Finally, the desired conclusion can be derived from \eqref{C-estimate-5} (with the last term removed) by recalling (\ref{transformation-0}). And Proposition~\ref{Proposition-Carleman} is proved.

\section{Auxiliary proofs in control problems}\label{Appendix-controlproblem}

This appendix consists of three proofs related to  control property studied in Section \ref{Section-control}. These arguments are not novel and are produced here for the sake of completeness.

\subsection{Proof of Theorem {\rm\ref{Theorem-control}} via Proposition {\rm\ref{Proposition-control-1}}}\label{Appendix-lineartest}

Let $\tilde u\in \overline{B}_{X_T^{s+\sigma,b}}(R) $ be the solution of the uncontrolled system (\ref{Uncontrolled-Problem}) with $\tilde u_0\in H^{s+\sigma}$ and $h\in L^2_tH^{s+\sigma}_x$. Also let $u(t)$ be the controlled solution of (\ref{Control-Problem-0}) with undetermined $\xi\in L_t^2 H_x^s$. Consider the equation for $r=u-\tilde{u}$, which reads
\begin{equation*}
\left\{\begin{array}{ll}
ir_t+ r_{xx}+ia(x)r=\frac{p+1}{2}|\tilde u|^{p-1}r+\frac{p-1}{2}|\tilde u|^{p-3}\tilde u^2\bar r+F(r,\tilde u)+\chi(x)\mathcal{P}_N \xi(t,x),\\
r(0,x)=r_0(x)(:=u_0(x)-\tilde{u}_0(x)),
\end{array}\right.
\end{equation*}
where $
F(r,\tilde u)=\sum_{D} c_D \mathcal T(x_1,\cdots,x_p),
$
with the set $D$ consisting of those $(x_1,\cdots,x_p)$ so that at least two  $x_l$ coincide with $r$ and the others equal to $\tilde u$. 

Proposition \ref{Proposition-control-1} yields that there exist constants $N\in\N^+$ and $q_1\in (0,1)$ such that \begin{equation}\label{Squeezing-2}
\|v(T)\|_{_{\tilde{H}^s}}\leq q_1 \|v_0\|_{_{\tilde{H}^s}},
\end{equation}
where $v(t)$ stands for the solution of {\rm(\ref{Control-Problem-1})} with $v_0:=r_0$. Recall $\|\cdot \|_{_{\tilde{H}^s}}$ is equivalent to the standard $H^s$-norm. Moreover, the size of control $\xi$ can be bounded by
\begin{equation}\label{Squeezing-3}\int_0^T\|\xi\|^2_{_{H^s}}\leq C\|v_0\|_{_{H^s}}^2.
\end{equation}
We will fix this $\xi$ in the rest of the proof.

Applying the multilinear estimate (\ref{Multilinear-estimate}), it can be seen that
\begin{equation}\label{Multilinear-estimate-5}
\|F(r,\tilde u)\|_{_{X_T^{s,-b'}}}\leq C \sum_{l=2}^p \|r\|_{_{X_T^{s,b}}}^l.
\end{equation}
It is easy to see that the solution map is locally Lipschitz, whose proof is similar to Proposition~\ref{Proposition-nonlinear}. Therefore if $\|r_0\|_{_{H^s}}\le 1$, owing to \eqref{Squeezing-3} and $v_0=r_0$, we have
\begin{equation}\label{bound-0}
\|r\|_{_{X^{s,b}_{T}}}\leq C\left(\|r_0\|_{_{H^s}}+\|\xi\|_{_{L^2_tH^s_x}}\right)\le C\|r_0\|_{_{H^s}}.
\end{equation}

Letting $y(t)=r(t)-v(t)$, one can readily see that
\begin{equation*}
	\left\{\begin{array}{ll}
		iy_t+y_{xx}+ia(x)y=\frac{p+1}{2}|\tilde u|^{p-1}y+\frac{p-1}{2}|\tilde u|^{p-3}\tilde u^2\bar y+F(r,\tilde u),\\
		y(0,x)=0.
	\end{array}\right.
\end{equation*}
Using (\ref{Multilinear-estimate-5}), \eqref{bound-0} and Proposition \ref{Proposition-linearproblem}(1), it follows that for any $q'\in (q_1,1)$, we have
\[\|y(T)\|_{_{\tilde{H}^s}}\le C\|y\|_{_{X^{s,b}_T}}\leq C  \|F(r,\tilde u)\|_{_{X^{s,-b'}_T}}\leq C \|r_0\|_{_{H^s}}\sum_{l=1}^{p-1}\|r_0\|_{_{H^s}}^l\leq (q'-q_1)\|r_0\|_{_{\tilde{H}^s}}\]
whenever $\|r_0\|_{_{H^s}}\leq d$ with $0<d\le 1$ sufficiently small. This together with (\ref{Squeezing-2}) and $r=v+y$ implies the first conclusion of Theorem \ref{Theorem-control}. Finally, the second conclusion follows directly from the above construction of control and Proposition \ref{Proposition-control-1}(2). 

\subsection{Proof of Lemma {\rm\ref{Lemma-fullobs}}}\label{Appendix-control}

To begin with, we recall some propagation results for linear Schr\"{o}dinger equations, which can be found in \cite[Theorem 4.1 and Theorem 5.1]{Laurent-ECOCV}.

\begin{lemma}\label{Lemma-propagation}
Let $T>0,\,c\in[0,1)$ and $r\in\R$ be arbitrarily given, and let $\mathcal I$ be a nonempty open subset of $\T$. Then the following assertions hold.
\begin{enumerate}
\item[$(1)$] {\rm(}Propagation of regularity{\rm)} Let $\varphi\in X_T^{r,c}$ be a weak solution (in the distribution sense) of 
$$
i\varphi_t+\varphi_{xx}=f(t,x)
$$
with $f\in X_T^{r,-c}$.
Assume further that $\varphi\in L^2_{loc}(0,T;H^{r+\rho}(\mathcal I))$ for some $\rho\leq (1-c)/2$. Then $\varphi\in L^2_{loc}(0,T;H^{r+\rho})$.

\item[$(2)$] {\rm(}Propagation of compactness{\rm)} Let $\{\varphi^n\}\subset X_T^{0,c}$ be a sequence of weak solutions of
$$
i\varphi_t^n+\varphi_{xx}^n=f^n(t,x)
$$
such that $\{\varphi^n\}$ is bounded in $X_T^{0,c}$, $\varphi^n\rightarrow 0$ in $X_T^{-1+c,-c}$ and $f^n\rightarrow 0$ in $X_T^{-1+c,-c}$.
Assume further that $\varphi^n\rightarrow 0$ in $L^2(0,T;L^2(\mathcal I))$. Then $\varphi^n\rightarrow 0$ in $L^2_{loc}(0,T;L^2(\T))$.
\end{enumerate}
\end{lemma}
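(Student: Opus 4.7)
The plan is to establish both propagation statements via microlocal analysis applied to the operator $P := i\partial_t + \partial_x^2$, following the scheme of Dehman--G\'erard--Lebeau adapted to the $X^{r,c}$ framework. The underlying geometry is that the Hamilton flow of the principal symbol $-\tau - \xi^2$ obeys $\dot{x} = 2\xi$, so bicharacteristics with large $|\xi|$ sweep out the entire torus in arbitrarily short time; in particular every null bicharacteristic (with $\xi \ne 0$) meets the cylinder $(0,T) \times \mathcal{I}$. This is the geometric mechanism that makes a single nonempty open set $\mathcal{I}$ enough to control the whole equation.

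To prove (1), I would introduce a tangential pseudodifferential operator $A = \operatorname{Op}(a)$ of order $\rho$, whose symbol $a(x,\xi)$ smoothly interpolates between a subregion of $\mathcal I$ (where regularity is already known) and the target region in $\T$, and is chosen to be monotone along the spatial variable. Pairing the equation $P\varphi = f$ with $A^*A\varphi$ and integrating in time yields
\begin{equation*}
\re \int_0^T ([P, A^*A] \varphi, \varphi) \, dt \;=\; 2\re \int_0^T (Af, A\varphi)\,dt \;+\; \text{boundary terms},
\end{equation*}
and the principal symbol of $[P, A^*A]$ equals $\{-\xi^2, |a|^2\} = 4\xi |a|\, \partial_x |a|$. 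Monotonicity of $a$ renders this Poisson bracket a positive quadratic form in $A\varphi$ on the relevant part of phase space, modulo terms already controlled by the a priori $X^{r,c}_T$ norm. After standard absorption one bounds $\|A\varphi\|_{L^2_{t,x}}$ by the data and the localized hypothesis, giving $\varphi \in L^2_{loc}(0,T; H^{r+\rho})$. The upper bound $\rho \le (1-c)/2$ arises because the calculus remainders from $[P,A^*A]$ and from the pairing $(Af, A\varphi)$ cost exactly $1 - 2\rho$ derivatives, which must be absorbed into the $X^{r,\pm c}_T$ norms of $\varphi$ and $f$.

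To prove (2), I would argue by contradiction using microlocal defect measures. If the conclusion fails, then after extracting a subsequence $\{\varphi^n\}$ defines a nonzero positive Radon measure $\mu$ on $T^*((0,T)\times \T) \setminus 0$ characterized by $\lim_n (\operatorname{Op}(a) \varphi^n, \varphi^n) = \int a \, d\mu$ for every zero-order symbol $a$. The hypothesis $\varphi^n \to 0$ in $L^2((0,T) \times \mathcal{I})$ forces $\mu$ to vanish above $\mathcal{I}$. Using $P\varphi^n = f^n$ with $f^n \to 0$ in $X^{-1+c,-c}_T$ and a commutator computation against $\operatorname{Op}(a)$ shows that $\mu$ is invariant under the Hamiltonian flow of $-\tau - \xi^2$ on $\{\xi \ne 0\}$. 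Since every such bicharacteristic meets $(0,T) \times \mathcal{I}$, flow invariance plus vanishing on $\mathcal I$ gives $\mu \equiv 0$ on $\{\xi \ne 0\}$. The part of $\mu$ concentrated on $\{\xi = 0\}$ is removed by the assumption $\varphi^n \to 0$ in $X^{-1+c,-c}_T$, which yields low-frequency convergence in $L^2_{loc}$. This contradicts $\mu \ne 0$.

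The main obstacle in both arguments is the commutator calculus adapted to anisotropic Bourgain spaces: one must prove bounds of the form $\|[P, \operatorname{Op}(a)] \varphi\|_{X^{r,-c}_T} \lesssim \|\varphi\|_{X^{r,c}_T}$ and verify that the extraction and transport of defect measures are compatible with weak convergence in $X^{-1+c,-c}_T$ rather than in $L^2$. These technicalities (including the sharp threshold $\rho \le (1-c)/2$ and the choice of the correct space in which $f^n$ is required to vanish) are exactly what is handled in \cite{Laurent-ECOCV}, which is why we cite that reference rather than reproducing the full argument here.
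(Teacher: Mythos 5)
Your part (2) has a genuine gap: the microlocal defect measure you construct is the wrong object for the Schr\"odinger operator. You define $\mu$ on $T^*((0,T)\times\T)\setminus 0$ using zero-order (i.e.\ jointly homogeneous in $(\tau,\xi)$) symbols. In that homogeneous calculus the principal symbol of $P=i\partial_t+\partial_x^2$ is $-\xi^2$ alone (the term $-\tau$ has lower order), so $P$ is elliptic wherever $\xi\neq 0$; since $f^n\to 0$ in $X^{-1+c,-c}_T\hookrightarrow H^{-2}_{t,x}$, the localization property of defect measures forces $\supp\mu\subset\{\xi=0\}$ from the outset. This is not a technicality: an $X^{0,c}_T$-bounded sequence solving the equation can only oscillate along the parabola $\tau\approx -k^2$, and in homogeneous directions the parabola collapses onto $(\tau,\xi)=(\mp 1,0)$. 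Concretely, $\varphi^n=e^{i(nx-n^2t)}$ is bounded in $X^{0,c}_T$, solves $P\varphi^n=0$, and converges to $0$ in $X^{-1+c,-c}_T$, yet its space-time defect measure is nonzero and supported in $\{\xi=0\}$. (It does not contradict the lemma, since it fails the hypothesis on $\mathcal I$; but it satisfies every hypothesis your two closing steps rely on.) Consequently your flow-invariance step ``on $\{\xi\neq 0\}$'' is vacuous — the measure already vanishes there — and your final claim that the $\{\xi=0\}$ part ``is removed by $\varphi^n\to 0$ in $X^{-1+c,-c}_T$'' is false, as the example shows. Worse, on $\{\xi=0\}$ the formal Hamilton field of $-\tau-\xi^2$ reduces to $\mp\partial_t$, which never transports points toward $(0,T)\times\mathcal I$, so geometric control cannot be run where the measure actually lives. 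Deferring to \cite{Laurent-ECOCV} does not repair this, because that reference does not prove transport of space-time homogeneous defect measures; the skeleton of your argument, not its technical details, is what fails.

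The anisotropy must be built into the calculus. Either one works with parabolically homogeneous symbols (scaling $(\tau,\xi)\mapsto(\lambda^2\tau,\lambda\xi)$), or — as the paper's own sketch does, following \cite{Laurent-ECOCV} — one tests only with \emph{spatial} operators: take $A=\omega(t)D^{-1}\eta(x)$ with $D^{-1}$ a spatial Fourier multiplier of order $-1$, compute $[A,P]=-2\omega(\partial_x\eta)\partial_xD^{-1}-\omega(\partial_x^2\eta)D^{-1}$ up to harmless terms, use $([A,P]\varphi^n,\varphi^n)=(f^n,A^*\varphi^n)-(A\varphi^n,f^n)\to 0$ (this is exactly where the $X^{0,c}_T$/$X^{-1+c,-c}_T$ duality enters), and deduce $(\omega\,\partial_x\eta\,\partial_xD^{-1}\varphi^n,\varphi^n)_{L^2_{t,x}}\to 0$ for every $\eta\in C^\infty(\T)$. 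Since $-i\partial_xD^{-1}$ is essentially the identity, choosing $\partial_x\eta=\chi-\chi(\cdot-x_0)$ — legitimate on $\T$ precisely because this difference has zero mean — transfers the assumed vanishing on $\mathcal I$ to every translate of $\mathcal I$, and a partition of unity concludes; this is the rigorous one-dimensional implementation of your ``every geodesic meets $\mathcal I$.'' Your part (1) is structurally closer to correct, since there you do take $A$ tangential, so $[i\partial_t,A^*A]$ produces only cutoff terms; but note that a globally monotone symbol does not exist on the circle, so the same zero-mean trick (or an iteration over overlapping arcs) is needed there as well.
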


See also \cite{DGL-06,Laurent-10} for the corresponding results under the setting of compact Riemannian
manifold of dimension $\le 3$, whose proof lies within the framework of microlocal analysis. For reader's convenience, we briefly sketch the proof of conclusion (2), and the argument for (1) is similar.

\begin{proof}[{\bf Sketched proof of Lemma~{\rm\ref{Lemma-propagation}(2)}}]
    Without loss of generality, we can assume the functions are smooth. Let $\omega(t)\in C_c^\infty(0,T)$ and $\chi(x)\in C_c^\infty(\mathcal{I})$ be cutoff functions. It suffices to show
    \begin{equation}\label{propacompact-1}
    \left(\omega(t) \partial_x \eta(x) \varphi_n,\varphi_n\right)_{L^2_tL^2_x}\to 0
    \end{equation}
    for any $\eta\in C^\infty(\mathbb{T})$. In fact, given $x_0\in \mathbb{T}$, we can find $\eta$ such that $\partial_x \eta=\chi-\chi(\cdot-x_0)$. Thus
    \[\left(\omega(t) (\chi(x)-\chi(x-x_0))\varphi_n,\varphi_n\right)_{L^2_tL^2_x}\to 0.\]
    As $\omega(t)\chi(x) u_0\to 0$ in $L_t^2 L_x^2$ by assumption, we deduce that
    \[\left(\omega(t) \chi(x-x_0)\varphi_n,\varphi_n\right)_{L^2_tL^2_x}\to 0.\]
    And conclusion (2) follows by partition of unity. 
    
    The proof of \eqref{propacompact-1} relies on the commutator $[A,B]:=AB-BA$ of pseudo-differential operator $A:=\omega(t) D^{-1} \eta$ and Schr\"odinger operator $B:=i\partial_t+\partial_x^2$, where $D^r(r\in \mathbb{R})$ is defined by
    \[\widehat{D^r f}(k)=\begin{cases}
        \widehat{f}(k),&k=0,\\
        {\rm sgn}(k) |k|^r \widehat{f}(k),&k\not =0.
    \end{cases}\]
    On the one hand, one can derive from the boundedness of $\varphi_n$ and the decay of $f_n$ that
    \[([A,L]\varphi_n,\varphi_n)=(f_n,A^* \varphi_n)-(A\varphi_n,f_n)\to 0.\]
    On the other hand, direct manipulation gives
    \[[A,L]=-2\omega(t)(\partial_x \eta)\partial_x D^{-1}-\omega(t) \partial_x^2 \eta D^{-1}.\]
    A similar argument yields the contribution of the second term tends to $0$, and hence
    \[(\omega(t) \partial_x \eta(x) \partial_x D^{-1}\varphi_n,\varphi_n)_{L_t^2 L_x^2}\to 0.\]
    This is not far from \eqref{propacompact-1}, as $\partial_x D^{-1}$ is almost equal to identity. In fact, $-i\partial_x D^{-1}$ is the orthogonal projection to the complement of $\{f; \widehat{f}=0\}$. The rest of the proof is easy.
\end{proof}

Another tool needed for observability is unique continuation. Consider the linear equation
\begin{equation}\label{Linear-Schrodinger}
i\varphi_t+\varphi_{xx}=V_1\varphi+V_2\bar \varphi,
\end{equation}
where $V_i=V_i(t,x)$ stand for the potentials.

\begin{lemma}\label{Lemma-UCP}
Let $T>0$, $V_1,V_2\in C([0,T];H^1)$ and open subset $\mathcal I$ of $\T$ be arbitrarily given. Then if a solution $\varphi\in C([0,T];H^1)$ of \eqref{Linear-Schrodinger} satisfies $\varphi(t,x)=0$ on $[0,T]\times \mathcal I$, then $\varphi(t,x)=0$ on $[0,T]\times\T$.
\end{lemma}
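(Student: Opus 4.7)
The plan is to reduce the problem to the general Carleman estimate of Proposition~\ref{Proposition-Carleman} by taking $g \equiv 0$ (hence $\Psi \equiv 0$, trivially non-negative) and source term $h := V_1\varphi + V_2\bar\varphi$. Since $V_1, V_2, \varphi \in C([0,T]; H^1(\T))$ and $H^1(\T)$ is a Banach algebra embedding into $L^\infty(\T)$, both $h \in L^2(0,T;H^1)$ and $V_i, \varphi \in L^\infty(Q_T)$. The linear equation \eqref{Linear-Schrodinger} is well-posed in $H^1$, so the Carleman estimate extends to our strong solution $\varphi$ by the standard approximation argument noted immediately after Proposition~\ref{Proposition-Carleman}.

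First, I would invoke Proposition~\ref{Proposition-Carleman} with the subdomain $\mathcal I$ from the lemma statement. Since $\varphi \equiv 0$ on $q_T = [0,T]\times\mathcal I$, the first three right-hand-side integrals drop out, yielding
\[
s^3\lambda^4 \int_{Q_T}\beta^3 e^{-2s\alpha}|\varphi|^2 + s\lambda^2\int_{Q_T}\beta e^{-2s\alpha}|\varphi_x|^2 \le C\int_{Q_T}e^{-2s\alpha}|h|^2
\]
for all $s \ge s_0$ and $\lambda \ge \lambda_0$. Together with the pointwise bound $|h|^2 \le M|\varphi|^2$, where $M := 2\sup_{t\in[0,T]}(\|V_1(t)\|_{L^\infty}^2 + \|V_2(t)\|_{L^\infty}^2) < \infty$, this becomes $s^3\lambda^4 \int_{Q_T}\beta^3 e^{-2s\alpha}|\varphi|^2 \le CM \int_{Q_T} e^{-2s\alpha}|\varphi|^2$.

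Next, I would fix $\lambda = \lambda_0$ and exploit the uniform lower bound $\beta(t,x) \ge 4 e^{\lambda\|\phi\|_\infty}/T^2 =: \beta_0 > 0$ valid throughout $Q_T$ (since $t(T-t) \le T^2/4$ and $\phi(x) + 2\|\phi\|_\infty \ge \|\phi\|_\infty$). Choosing $s$ large enough that $s^3\lambda_0^4\beta_0^3 \ge 2CM$, the previous inequality collapses to $2CM \int_{Q_T} e^{-2s\alpha}|\varphi|^2 \le CM \int_{Q_T}e^{-2s\alpha}|\varphi|^2$, forcing $\int_{Q_T} e^{-2s\alpha}|\varphi|^2 = 0$. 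Since $e^{-2s\alpha} > 0$ pointwise on $(0,T)\times\T$, this gives $\varphi = 0$ almost everywhere there, and the continuity $\varphi \in C([0,T]; H^1)$ propagates the conclusion to the closed cylinder $[0,T]\times\T$.

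This is a textbook absorption argument powered by the already-established Carleman estimate, and I do not anticipate a substantial obstacle. The only delicate point is verifying that Proposition~\ref{Proposition-Carleman} applies to strong (not merely smooth) solutions with $H^1$-valued potentials, which is already taken care of by the paper's own remark through well-posedness of the linear problem in $H^1$.
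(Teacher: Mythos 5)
Your proposal is correct and follows essentially the same route as the paper: apply the Carleman estimate of Proposition~\ref{Proposition-Carleman} with $g\equiv 0$ and $h=V_1\varphi+V_2\bar\varphi$, bound the source term by $C\sup_t(\|V_1\|_{H^1}^2+\|V_2\|_{H^1}^2)\int_{Q_T}e^{-2s\alpha}|\varphi|^2$ via $H^1(\T)\hookrightarrow L^\infty(\T)$, absorb it into the left-hand side using the uniform lower bound on $\beta$ for $s$ large, and conclude from the vanishing of the localized terms on $[0,T]\times\mathcal I$. The only cosmetic difference is the order of operations (you drop the local terms before absorbing; the paper absorbs first), which changes nothing.
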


\begin{proof}[{\bf Proof of Lemma \ref{Lemma-UCP}}]

We apply Proposition \ref{Proposition-Carleman} to (\ref{Linear-Schrodinger}), where the Carleman estimate holds in fact for the $H^1$-solution $\varphi$ (due to an approximation argument). In the present situation, 
\begin{equation}\label{Carleman-estimate-v}
\begin{aligned}
&s^3\lambda^4\int_{Q_T}\beta^3e^{-2s\alpha}|\varphi|^2+s\lambda^2\int_{Q_T}\beta e^{-2s\alpha}| \varphi_x|^2
\\
&\leq C\left[s^3\lambda^4\int_{\tilde q_T}\beta^3e^{-2s\alpha}|\varphi|^2+s\lambda^2\int_{\tilde q_T}\beta e^{-2s\alpha}| \varphi_x|^2+\int_{Q_T}e^{-2s\alpha}|V_1\varphi+V_2\bar \varphi|^2\right],
\end{aligned}
\end{equation}
where $s,\lambda$ are sufficiently large. 
To continue, using the assumption $V_1,V_2\in C([0,T];H^1)$, the last integral on the RHS of (\ref{Carleman-estimate-v}) can be bounded by 
\[\int_{Q_T}e^{-2s\alpha}|V_1\varphi+V_2\bar \varphi|^2\leq C\left(\|V_1\|^2_{_{L_t^\infty H^1_x}}+\|V_2\|^2_{_{L_t^\infty H^1_x}}\right)\int_{Q_T}e^{-2s\alpha}|\varphi|^2\leq C\int_{Q_T}\beta^3 e^{-2s\alpha}|\varphi|^2.\]
Thus, it can be absorbed by the LHS of (\ref{Carleman-estimate-v}), provided that $s$ is large enough. As a consequence,
\[s\lambda\int_{Q_T}\beta e^{-2s\alpha}| \varphi_x|^2+s^3\lambda^3\int_{Q_T}\beta^3e^{-2s\alpha}|\varphi|^2\leq C\left[s\lambda^2\int_{\tilde q_T}\beta e^{-2s\alpha}| \varphi_x|^2+s^3\lambda^4\int_{\tilde q_T}\beta^3e^{-2s\alpha}|\varphi|^2\right].\]
Finally, if $\varphi(t,x)=0$ on $\tilde q_T$, then the RHS  equals $0$. Hence, $\varphi(t,x)= 0$ on $Q_T$.
\end{proof}

The interested reader is referred to \cite[Appendix B]{Laurent-10} for the unique continuation on manifolds.

\begin{proof}[\bf Proof of Lemma~\ref{Lemma-fullobs}] The proof is divided into four steps.

\textit{Step 1: contradiction argument.} Assume the conclusion of this lemma fails, then there exist sequences $\{\varphi_T^k\}\subset H^{-s-\sigma'}$ and $\{\tilde u^k\}\subset \overline{B}_{X_T^{s+\sigma,b}}(R)$ such that $\|\varphi^k_T\|_{_{H^{-s-\sigma'}}}=1$ while 
\begin{equation}\label{converge-1}
\chi\varphi^k\rightarrow 0\quad {\rm in\ }L^2_tH^{-s-\sigma'}_x,
\end{equation}
where $\varphi^k=\mathcal U_{\tilde u^k}(\varphi^k_T)$.
In view of Proposition \ref{Proposition-linearproblem}(2),
it can be also derived that $\{\varphi^k\}$ is bounded in $X_T^{-s-\sigma',b}$. Meanwhile, as $H^{s+\sigma'}$ is an algebra, by duality we obtain that
$$
\||\tilde u^k|^{p-1}\varphi^k\|_{_{H^{-s-\sigma'}}}+\||\tilde u^k|^{p-3}(\tilde u^k)^2\bar \varphi^k\|_{_{H^{-s-\sigma'}}}\leq  C\|\tilde u^k\|^{p-1}_{_{H^{s+\sigma'}}}\|\varphi^k\|_{_{H^{-s-\sigma'}}},
$$
and hence the sequence of lower-order terms
$$
f^k:=ia(x)\varphi^k+\tfrac{p+1}{2}|\tilde u^k|^{p-1}\varphi^k-\tfrac{p-1}{2}|\tilde u^k|^{p-3}(\tilde u^k)^2\bar \varphi^k
$$
is bounded $L^\infty_tH^{-s-\sigma'}_x$ and hence in $X_T^{-s-\sigma',-b'}$. In summary, there exist $\varphi\in X_T^{-s-\sigma',b}$, $\tilde u\in \overline{B}_{X_T^{s+\sigma,b}}(R)$ and $f\in X_T^{-s-\sigma',-b'}$ such that up to extracting subsequences,
\begin{equation}\label{convergence-1}
\begin{aligned}[c]
\varphi^k\rightharpoonup \varphi\quad {\rm in\ }X_T^{-s-\sigma',b},\quad &\tilde u^k\rightharpoonup \tilde u\quad {\rm in\ }X_T^{s+\sigma,b},\quad f^k\rightharpoonup f \quad {\rm in\ }X_T^{-s-\sigma',-b'}.
\end{aligned}
\end{equation}
Moreover, $\varphi\in X_T^{-s-\sigma',b}$ is a mild solution of equation
\[i\varphi_t+\varphi_{xx}=f(t,x).\]In particular, the combination of (\ref{converge-1}) and (\ref{convergence-1}) leads to 
\begin{equation}\label{limit-2}
\chi\varphi(t,x)\equiv 0\quad {\rm on\ }Q_T.
\end{equation}

\medskip

\textit{Step 2: strong convergence of $\varphi^k$.} What follows is to deduce that
\begin{equation}\label{converge-5}
\varphi^k\rightarrow \varphi\quad {\rm in\ }L^2_{loc}(0,T;H^{-s-\sigma'}),
\end{equation}
by adopting the propagation of compactness.
Let us introduce 
\begin{align*}
&\Phi^k=(1-\partial^2_x)^{-(s+\sigma')/2}\varphi^k,\quad \Phi=(1-\partial^2_x)^{-(s+\sigma')/2}\varphi,\\
&F^k=(1-\partial^2_x)^{-(s+\sigma')/2}f^k,\quad
F=(1-\partial^2_x)^{-(s+\sigma')/2}f.
\end{align*}
Clearly, the sequence $\{\Phi^k\}$ is bounded in $X^{0,b}_T$. 
It also follows from (\ref{convergence-1}) that
\begin{align*}
& \Phi^k\rightarrow \Phi \quad {\rm in\ }X_T^{-1+b,-b},\quad F^k\rightarrow F\quad {\rm in\ }X_T^{-1+b,-b},\\
& i\Phi_t^k+\Phi_{xx}^k=F^k(t,x),\quad i\Phi_t+\Phi_{xx}=F(t,x),
\end{align*}
in view of the compact embeddings $X_T^{0,b}\Subset X_T^{-1+b,-b}$ and $X_T^{0,-b'}\Subset X_T^{-1+b,-b}$. Notice that 
\[\chi\Phi^k=[\chi,(1-\partial^2_x)^{-(s+\sigma')/2}]\varphi^k+(1-\partial^2_x)^{-(s+\sigma')/2}(\chi\varphi^k)\]
and communicator $[\chi,(1-\partial^2_x)^{-(s+\sigma')/2}]$ is bounded from $H^{-s-\sigma'-1}$ into $L^2(\T)$ (see, e.g.~\cite[Lemma A.1]{Laurent-ECOCV}). These, together with \eqref{convergence-1} and the compact embedding $X_T^{-s-\sigma',b}\Subset X_T^{-s-\sigma'-1,0}=L_t^2 H_x^{-s-\sigma'-1},$ imply that
$\chi\Phi^k\rightarrow \chi\Phi$ in $L^2(Q_T)$ and hence $\Phi^k\rightarrow \Phi $ in $L^2(0,T;\mathcal I_2).$ 
Therefore, an application of Lemma \ref{Lemma-propagation}(2) yields that $\Phi^k\rightarrow \Phi$ in $L^2_{loc}(0,T;L^2(\T))$, leading to (\ref{converge-5}).

\medskip

\textit{Step 3: expression of $f$.} We proceed to show that 
\begin{equation}\label{limit-4}
f=ia(x)\varphi+\tfrac{p+1}{2}|\tilde u|^{p-1}\varphi-\tfrac{p-1}{2}|\tilde u|^{p-3}\tilde u^2\bar \varphi.
\end{equation}
Indeed, by \eqref{converge-5}, we immediately obtain
\begin{equation}\label{convergence-7}
ia(x)\varphi^k\rightarrow ia(x)\varphi\quad {\rm in\ }L^2_{loc}(0,T;H^{-s-\sigma'})
\end{equation}
Also, we may take $\varepsilon>0$ sufficiently small, then the compact embedding $X_T^{s+\sigma,b}\Subset X_T^{s+\sigma-\varepsilon,b-\varepsilon} \hookrightarrow L^\infty (Q_T)$ allows us to extract a subsequence so that
\begin{equation}\label{converge-6}
\tilde u^k\rightarrow \tilde u\quad {\rm in\ }L^\infty(Q_T).
\end{equation}
Thus for any interval $J\Subset (0,T)$ and $\psi\in L^2(J;H_x^{s+\sigma'})$, we have
\begin{align*}
    &|\int_J \langle |\tilde u^k|^{p-1}\varphi^k-|\tilde u|^{p-1}\varphi,\psi  \rangle_{_{H^{-s-\sigma'},H^{s+\sigma'}}}|\\
    &=|\int_J \langle (|\tilde u^k|^{p-1}-|\tilde u|^{p-1})\varphi^k,\psi  \rangle_{_{H^{-s-\sigma'},H^{s+\sigma'}}}|+|\int_J \langle \varphi^k-\varphi,|\tilde u^k|^{p-1}\psi  \rangle_{_{H^{-s-\sigma'},H^{s+\sigma'}}}|\\
    &\le C\||\tilde{u}^k|^{p-1}-|\tilde{u}|^{p-1}\|_{_{L^\infty(Q_T)}}\|\psi\|_{_{L_2(J;H^{s+\sigma'})}}+C\|\varphi^k-\varphi\|_{L^2(J;H^{s+\sigma'})}\|\psi\|_{_{L^2(J;H^{s+\sigma'})}}.
\end{align*}
Here we tacitly used $\|\tilde{\varphi}^k\|_{_{X^{-s-\sigma',b}_T}}\le C$ and $\|\tilde{u}^k\|_{_{X_T^{s+\sigma,b}}}\le C$. By \eqref{converge-5} and \eqref{converge-6}, we obtain
\begin{equation}\label{convergence-9}
\tfrac{p+1}{2}|\tilde u|^{p-1}\varphi \rightharpoonup \tfrac{p+1}{2}|\tilde u|^{p-1}\varphi\quad {\rm in\ }L^2(J;H^{-s-\sigma'}).
\end{equation}
In the same manner, one may derive
\begin{equation}\label{convergence-10}
\tfrac{p-1}{2}|\tilde u|^{p-3}\tilde u^2\bar \varphi \rightharpoonup \tfrac{p-1}{2}|\tilde u|^{p-3}\tilde u^2\bar \varphi \quad {\rm in\ }L^2(J;H^{-s-\sigma'}).
\end{equation}
By uniqueness, the identity \eqref{limit-4} follows from \eqref{convergence-7}, \eqref{convergence-9} and \eqref{convergence-10}.

\medskip

\textit{Step 4: conclusion.} Now $\varphi(t)$ is a mild solution of linearized Schr\"{o}dinger equation
\[i\varphi_t+\varphi_{xx}-ia(x)\varphi=\tfrac{p+1}{2}|\tilde u|^{p-1}\varphi-\tfrac{p-1}{2}|\tilde u|^{p-3}\tilde u^2\bar \varphi.\]
Due to (\ref{limit-2}), one can apply Lemma~\ref{Lemma-propagation}(1) to derive that $\varphi\in L^2_{loc}(0,T;H^{-s-\sigma'+\rho})$ with $\rho=(1-b)/2$. Then, taking $t_0\in(0,T)$ so that $\varphi(t_0)\in H^{-s-\sigma'+\rho}$, we consider the following initial-value problem
\begin{equation}\label{Adjoint-problem-2}
\left\{\begin{array}{ll}
i\phi_t+\phi_{xx}-ia(x)\phi=\frac{p+1}{2}|\tilde u|^{p-1}\phi-\frac{p-1}{2}|\tilde u|^{p-3}\tilde u^2\bar \phi,\\
\phi(t_0,x)=\varphi(t_0,x).
\end{array}
\right.
\end{equation}
The conclusion of Proposition \ref{Proposition-linearproblem}(2) implies that (\ref{Adjoint-problem-2}) admits a solution $\phi\in X_T^{-s-\sigma'+\rho,b}$. It thus follows by uniqueness that $\varphi=\phi$ and $\varphi\in X_T^{-s-\sigma'+\rho,b}$. Iterating this procedure, we conclude that $\varphi\in C([0,T];H^1)$, which implies that $\varphi$ satisfies the regularity assumption for Lemma \ref{Lemma-UCP}. Accordingly, we conclude
\begin{equation}\label{limit-3}
\varphi(t,x)\equiv 0\quad {\rm on\ }Q_T.
\end{equation}

Combining (\ref{limit-3}) with (\ref{converge-5}), one can take $t_0'\in(0,T)$ so that $\varphi^k(t'_0)\rightarrow 0$ in $H^{-s-\sigma'}$. Moreover, we recall the hypothesis $\{\tilde u^k\}\subset \overline{B}_{X_T^{s+\sigma,b}}(R)$. This together with Proposition \ref{Proposition-linearproblem}(2) implies that there exists a constant $C>0$, not depending on $k$, such that
$$
\|\varphi_T^k\|_{_{H^{-s-\sigma'}}}\leq C\|\varphi^k(t_0')\|_{_{H^{-s-\sigma'}}}\rightarrow 0,
$$
which is contraditory to the assumption that $\|\varphi_T^k\|_{_{H^{-s-\sigma'}}}=1$.
The proof is then complete.
\end{proof}

\subsection{Proof of Lemma {\rm\ref{Lemma-L^2observable}}}\label{Appendix-L^2observable}

This is parallel to (and simpler than) the proof of Lemma~\ref{Lemma-fullobs} carried out in Appendix~\ref{Appendix-control} above, since Step 3 is superfluous at present and it is rather easy to see that the limit function verifies damped linear Schr\"odinger equation. To be precise, we assume on the contrary that there exists a sequence $\{u^k_0\}\subset L^2$ such that $\|u^k_0\|_{_{L^2}}=1$ while
\begin{equation}\label{au^kto 0}
\int_{Q_T} a(x)|u^k|^2 \to 0,
\end{equation}
where $u^k$ is the solution of $iu^k_t+u^k_{xx}=ia(x)u^k$ with $u^k(0)=u^k_0$.

Up to a subsequence, we may assume $u^k\rightharpoonup u$ in $X_T^{0,b}$. As a result, we also have $ia(x)u^k\rightharpoonup ia(x)u$ in $X_T^{0,b}$. Moreover, $u$ is a mild solution of $iu_t+u_{xx}=ia(x)u$. By \eqref{au^kto 0}, we have $u^k\to 0$ in $L^2([0,T]\times \mathcal{I}_1)$, and thus $u(t,x)=0$ for $x\in \mathcal{I}_1$. 

Invoking Lemma~\ref{Lemma-propagation}(2), as $u^k$ is bounded in $X_T^{0,b}$, $u^k$ and $ia(x)u^k$ converges strongly to $u$ and $ia(x)u^k$ in $X_T^{1-b,-b}$ respectively, and $u^k\to 0=u$ in $L^2(0,T;\mathcal{I}_1)$, we conclude that $u^k\to u$ in $L^2_{loc}(0,T;L^2)$ (cf.~Step 2 in the proof of Lemma~\ref{Lemma-fullobs}). 

Finally, thanks to Lemma~\ref{Lemma-propagation}(1), we can deduce that $u\in C(0,T;H^1)$ (cf.~Step 4 in the proof of Lemma~\ref{Lemma-fullobs}). And hence by unique continuation (Lemma~\ref{Lemma-UCP} with $V_1=ia(x)$ and $V_2=0$) we obtain $u(t,x)\equiv 0$. We may find $t_0\in [0,T]$ such that $u^k(t_0)\to u(t_0)$ in $L^2$ due to strong convergence in $L^2_{loc}(0,T;L^2)$, which implies via apriori estimate (cf.~Proposition~\ref{Proposition-linearproblem}(2)) that
\[\|u_0^k\|_{_{L^2}}\le C\|u^k(t_0)\|_{_{L^2}}\to 0.\]
This exhibits a contradictory to our assumption that $\|u_k^0\|_{_{L^2}}=1$. Now the proof is complete.

\medskip

\noindent\textbf{Acknowledgments} \; 
The authors would like to thank Ziyu Liu for valuable discussions and suggestions during the preparation of the paper. The authors also appreciate
the anonymous referees for insightful comments and suggestions.

\vspace{2mm}

\noindent\textbf{Funding} \; Shengquan Xiang is partially supported by NSFC 12301562. Zhifei Zhang is partially supported by NSFC 12288101. Jia-Cheng Zhao is supported by China Postdoctoral Science Foundation 2024M750044.

\vspace{2mm}

\noindent\textbf{Data availability statement}\; No datasets were generated or analyzed during the current study.

\vspace{2mm}

\noindent\textbf{Conflict of interest}\; 
The authors have no conflict of interest to declare.

\normalem
\bibliographystyle{plain}
\bibliography{References}
	
\end{document}